\newcommand{\R}{\mathbf{R}}
\newcommand{\C}{\mathbf{C}}
\newcommand{\Z}{\mathbf{Z}}
\newcommand{\K}{\mathbf{K}}
\newcommand{\Proj}{\mathbf{P}}
\newcommand{\N}{\mathbf{N}}
\newcommand{\Q}{\mathbf{Q}}
\newcommand{\F}{\mathbf{F}}
\newcommand{\I}{\mathcal{I}}
\newcommand{\Le}{\mathbf{L}}
\newcommand{\G}{\mathbf{G}}
\newcommand{\T}{\mathbf{T}}
\newcommand{\Sym}{\mathfrak{S}}
\newcommand{\base}{\mathbf{b}}
\newcommand{\coneorb}{\text{C}_{\text{eff},\text{orb}}(X)}
\DeclareMathOperator{\Conj}{Conj}
\DeclareMathOperator{\Spec}{Spec}
\DeclareMathOperator{\Gal}{Gal}
\DeclareMathOperator{\Pic}{Pic}
\DeclareMathOperator{\mor}{Mor}
\DeclareMathOperator{\Aut}{Aut}
\DeclareMathOperator{\can}{can}
\DeclareMathOperator{\id}{Id}
\DeclareMathOperator{\loc}{loc}
\DeclareMathOperator{\Div}{Div}
\DeclareMathOperator{\proj}{proj}
\DeclareMathOperator{\grp}{grp}
\DeclareMathOperator{\Hom}{Hom}
\DeclareMathOperator{\rank}{rank}
\DeclareMathOperator{\Ext}{Ext}
\DeclareMathOperator{\Vol}{Vol}
\DeclareMathOperator{\stack}{stack}
\DeclareMathOperator{\covol}{covol}
\DeclareMathOperator{\coneform}{cone}
\DeclareMathOperator{\num}{num}
\DeclareMathOperator{\car}{char}
\DeclareMathOperator{\age}{age}
\DeclareMathOperator{\NS}{NS}
\DeclareMathOperator{\coarse}{coarse}
\DeclareMathOperator{\rep}{rep}
\DeclareMathOperator{\orb}{orb}
\DeclareMathOperator{\un}{nr}
\DeclareMathOperator{\stab}{stab}
\DeclareMathOperator{\coker}{coker}
\DeclareMathOperator{\tor}{tor}
\DeclareMathOperator{\Boxe}{Box}
\DeclareMathOperator{\Cl}{Cl}
\DeclareMathOperator{\rig}{rig}
\DeclareMathOperator{\ext}{ext}
\DeclareMathOperator{\Sal}{Sal}
\newcommand{\stackX}{\mathcal{X}}
\newcommand{\stackY}{\mathcal{Y}}
\newcommand{\stackZ}{\mathcal{Z}}
\newcommand{\stackS}{\mathcal{S}}
\newcommand{\stackT}{\mathcal{T}}
\newcommand{\stackG}{\mathcal{G}}
\newcommand{\stackP}{\mathcal{P}}
\newcommand{\affine}{\mathbf{A}}
\newcommand{\sheafL}{\mathcal{L}}
\newcommand{\sheafE}{\mathcal{E}}
\newcommand{\sheafI}{\mathcal{I}}
\newcommand{\sheafO}{\mathcal{O}}
\newcommand{\w}{\omega}
\newcommand{\RHom}{\mathbf{R Hom}}
\newcommand{\sector}{\pi_0(\sheafI_{\mu} X)}
\newcommand{\twistsector}{\pi_0^*(\sheafI_{\mu} X)}
\newcommand{\adelicprod}{\mathop{\underset{v \in M_{\Q}^0}{\prod\nolimits'}}}
\newtheorem{theorem}{Theorem}[section]
\newtheorem{cor}[theorem]{Corollary}
\newtheorem{lemma}[theorem]{Lemma}
\newtheorem{prop}[theorem]{Proposition}
\theoremstyle{definition}
\newtheorem{definition}[theorem]{Definition}
\newtheorem{hypothesis}[theorem]{Hypothesis}
\newtheorem{example}[theorem]{Example}
\theoremstyle{remark}
\newtheorem{remark}[theorem]{Remark}
\newtheorem{Notations}[theorem]{Notations}
\numberwithin{equation}{section}
\begin{document}

% \title[short text for running head]{full title}
\title[Multi-height analysis on toric stacks]{Multi-height distribution of rational points of split toric stacks}

%    Only \author and \address are required; other information is
%    optional.  Remove any unused author tags.

%    author one information
% \author[short version for running head]{name for top of paper}
\author{Nicolas Bongiorno}
\address{}
\curraddr{}
\email{nicolas.bongiorno@univ-grenoble-alpes.fr}
\thanks{}

%    \subjclass is required.
\subjclass[2020]{}

\date{}

\dedicatory{}

%    "Communicated by" -- provide editor's name; required.
%\commby{}

%    Abstract is required.
\begin{abstract}
We study the distribution of rational points of split toric stacks with all heights bounded over $\Q$ by lifting the counting problem to an extended universal torsor under the torus associated with the orbifold Picard group. To achieve this, we prove the existence of an integral parametrization of rational points on toric stacks, which allows us to define a lift of the stacky height to this extended universal torsor. This allows us to define the Tamagawa number of a toric stack $X$ as an Euler product and, for a prime number $p$, to interpret the $p$-adic factor via a mass formula counting $\F_p$-points of the sectors of $X$.
\end{abstract}

%Then, using toric stacks, we show that the same method can be applied to the multi-height study of $\Z$-Campana points of $(X,D)$ where $D$ is a certain type of $\Q$-effective divisors over $X$.

\maketitle

\setcounter{tocdepth}{1} 
\tableofcontents

\section{Introduction}

For a quasi-Fano variety $V$ with infinitely many rational points, one may study asymptotically the finite set of rational points of bounded height $H$ associated with the anticanonical line bundle $\w_{V}^{-1}$. In \cite{FrankeManinTschinkel1989}, \cite{Batyrev1990} and \cite{peyre_duke}, Batyrev, Franke, Manin, Tschinkel and Peyre provided strong evidence supporting conjectures relating the asymptotic behaviour of the number of rational points of bounded height on open subsets of $V$ to geometric invariants of $V$.

In \cite[Question 4.8]{Peyre_beyond_height}, Peyre proposed a new framework to study the asymptotic distribution of rational points of bounded height. Instead of considering a height relative to a single line bundle, it is natural to consider all possible heights. In \cite[Theorem 1.14]{bongiorno2024multiheightanalysisrationalpoints}, the author showed that the expected asymptotic behaviour holds for smooth, projective and split toric varieties over $\Q$, using a lift of the count to universal torsors. This approach is inspired by the work of Salberger in \cite{salberger_torsor} and employs the method of counting lattice points in a bounded domain, developed in \cite{davenport_geometry_numbers}.

The purpose of this work is to extend the multi-height formalism to Deligne–\allowbreak Mumford (DM) stacks and show that, for toric stacks, one can study the multi-height distribution of rational points and interpret the results using geometric invariants of the stacks.

When studying rational points on stacks, the main issue is that even for a proper DM stack, a rational point does not necessarily extend to an integral point, as pointed out in \cite{dardathesis} and \cite{ellenberg2022heights}. In the latter article, Ellenberg, Satriano, and Zureick-Brown showed that instead of extending a rational point to an integral point, it is possible to extend it to a stacky integral lift (see \cite[Proposition 2.5]{ellenberg2022heights}). Later, Loughran and Santens established an analogous result in the local and tame cases, offering a new perspective (see \cite[Paragraph 4.4]{loughran_santens_malle_conjecture}).

In \cite{darda2024batyrevmanin}, Darda and Yasuda introduced a new framework for working with heights over DM stacks. The main innovation in their paper is the introduction of an $\R$-vector space that incorporates both the usual Picard group $\Pic(X)_{\R}$ and contributions from the twisted sectors. These sectors capture the obstruction, at a given place, to lifting a rational point to an integral one.

This formalism allows Darda and Yasuda, and later Loughran and Santens, to reinterpret the work of number theorists on Malle's and Bhargava's conjectures (see \cite{Malle2002,Malle2004} and \cite{bhargava_mass_formula}) through the lens of Manin's program for $B G$. Many results have been obtained in this direction by the aforementioned authors.

The work of Gundlach (see \cite{gundlach2022mallesconjecturemultipleinvariants}) and, respectively, the work of Shankar and Thorne (see \cite{shankar2024asymptoticscubicfieldsordered}) can be reinterpreted as instances of a multi-height conjecture over $B G$ for $G$ an abstract finite abelian group, and over $B \mathfrak{S}_3$, respectively. In the case of toric stacks, the asymptotic behaviour of the number of rational points bounded by the height associated with the orbifold anticanonical bundle was obtained by Darda and Yasuda in \cite[Theorem 1.3.1]{darda_yasuda_toric_stacks_batyrev}, using the Poisson formula on the height zeta function. The non-simplicial nature of the cone of convergence of the height zeta function complicates the use of analytic number theory in the multi-height setting.

To overcome this difficulty, we develop an extended integral parametrization of the rational points of toric stacks (see Theorem~\ref{theorem_description_of_the_extended_parmetrization}), in the spirit of the parametrization provided by universal torsors in the case of toric varieties. In this setting, the role of the Néron--Severi torus $T_{\NS}$ is played by the torus associated with the orbifold Picard group $\Pic_{\orb}(X)$. Using this parametrization, we prove the following result on the asymptotic behaviour of multi-heights of rational points on toric stacks.

\begin{theorem}
Let $X$ be a toric stack over $\Q$. Let $\mathrm{D}_1$ be a finite union of compact polyhedra in $\Pic_{\orb}(X)^{\vee}_{\R}$, and let $u$ be an element of the interior of the dual of the orbifold effective cone $(\coneorb^{\vee})^{\circ}$. For $B > 1$, set
\[
\mathrm{D}_B = \mathrm{D}_1 + \log(B)\, u.
\]
Then the multi-height asymptotic behaviour is given by
\[
\sharp\bigl( X(\Q) \bigr)_{h \in \mathrm{D}_B}
\;\underset{B \to +\infty}{\sim}\;
\nu(\mathrm{D}_1)\, \tau_{\orb}(X)\,
B^{\langle \w_{X,\orb}^{-1}, u \rangle},
\]
where $\tau_{\orb}(X)$ is defined in Definition~\ref{definition_orbifold_tamagawa_number}.
\end{theorem}

In particular, the method developed in this paper allows one to compute explicitly the leading constant in the multi-height asymptotic formula for rational points. It admits an Euler product expression of the form

\[
\prod_{p}
\left( 1 - \frac{1}{p} \right)^{\rank \Pic_{\orb}(X)}
\cdot
\mu_{X,p}(X(\Q_p)),
\]where $\mu_{X,p}$ is a Tamagawa measure on $X(\Q_p)$ (see \ref{definition_quotient_measure_over_orbifold}).

One of the achievements of this paper is the interpretation of this $p$-adic factor via a mass formula counting the $\F_p$-points of the sectors of $X$ (Theorem~\ref{theorem_interpretation_mass_formula}), in the spirit of the mass formula developed by Loughran and Santens for $BG$ (see \cite[Corollary~8.11]{loughran_santens_malle_conjecture}).

\begin{theorem}
Let $p$ be a prime number such that $X_{\Z_p}$ is a tame DM stack. Then the following equality holds:
\[
\mu_{X,p}\bigl(X(\Q_p)\bigr)
= \frac{\sharp X(\F_p)}{p^{\dim(X)}}  + \frac{1}{p} \cdot
\sum\limits_{\stackS \in \twistsector} 
\frac{\sharp\, \stackS\!\left(\F_p\right)}{p^{\dim(\stackS)}}
\]where $\sharp$ means the groupoid cardinality (see equation \ref{equation_groupoid_cardinality})
\end{theorem}

In a forthcoming work \cite{bongiorno_hyperbola_method}, the author explains how the multi-height result implies the classical Batyrev--Manin principle, where one counts rational points of bounded height with respect to the (orbifold) anticanonical bundle. In particular, this makes it possible to interpret the leading constant obtained by Darda and Yasuda in \cite[Corollary 4.4.5]{darda_yasuda_toric_stacks_batyrev} as a residue, to the constant introduced in the present paper. 

A key ingredient highlighted in this work is that, for toric stacks, a twisted sector is completely determined by its age pairing with $\Pic(X)$, and moreover that the age can be lifted to torsors (see Corollary~\ref{corollary_sectors_dual_picard_group_toric_stacks} and Theorem~\ref{theorem_age_extended_universal_torsor}). To this end, we relate the computation of the age to the type of a torsor under a multiplicative group (see Proposition \ref{prop_computation_general_age_pairing_via_type_torseur}) and compute the type of the $T_{\NS}$-torsor
$$
\stackT_{\Sigma} \rightarrow X
$$
in Theorem~\ref{theorem_cox_ring_universal_torsor}. Beyond the study of the multi-height distribution of rational points on toric stacks, we also extend the definition of the residue map $\psi_v$ introduced in \cite[Definition 2.17]{darda2024batyrevmanin}, under additional hypotheses on the universal torsor of $X$ (see Corollary~\ref{corollary_description_universal_stacky_lift}), even if $X$ is wild for $v$. This work also offers a new perspective for treating non-split toric stacks, namely
those endowed with a nontrivial Galois action. The study of rational points on such
stacks includes two particularly interesting cases:
\begin{itemize}[label=--]
\item modular curves $\mathcal{X}_0(N)$ for $N \neq 13$ such that the coarse moduli space of $\mathcal{X}_0(N)$ is $\Proj^1$ (this is consequence of the recent work of Darda and Han, see \cite[Theorem 1.5]{darda2026stackybatyrevmaninconjecturemodular}), whose rational points
parametrize pairs $(E,\varphi)$, where $E$ is an elliptic curve and
$\varphi$ is an isogeny of order $N$;
\item Galois extensions with Galois group $A$, where $A$ is a finite abelian group,
which can be interpreted as rational points of the classifying stack $BA$. I thank Santens for explaining to me how to construct the extended universal torsor of $BA$ which in particular sheds light on the construction of the extended universal torsor of non-split toric stacks.
\end{itemize}

\subsection{Terminology}

We begin by recalling the hypotheses of \cite{darda2024batyrevmanin} on the Deligne-Mumford stacks we shall work with:

\begin{definition}
    A \textit{nice stack} is a Deligne-Mumford stack $X$ over a field $\K$ satisfying the following conditions:
    \begin{enumerate}
        \item $X$ is separated, geometrically irreducible and smooth over $\K$,
        \item the coarse moduli space of $X$, denoted $X^{\coarse}$, is a projective $\K$-scheme,
        \item $X$ is not isomorphic to $\Spec(\K)$.
    \end{enumerate}
    If $\K$ is a number field, we shall also assume that we have a proper, normal model $\stackX$ of $X$ over $\sheafO_S$ the ring of $S$-integers of $\K$ with $S$ a finite set of finite places of $\K$, which is an algebraic stack, such that the diagonal $$ \stackX \rightarrow \stackX \times \stackX$$ is finite and $\stackX$ is almost a tame stack. By almost a tame stack the author means that there exists $S'$ a finite set of $M^0_{\K}$ which contains $S$ such that $\stackX_{\sheafO_S'}$ is a DM stack and for any geometric point $P$ of $\stackX_{\sheafO_{S'}}(k)$, with $k$ algebraically closed, $\car(k)$ does not divide the cardinality of $\Aut_k(P)$.
\end{definition}

The classifying stack $B G$ where $G$ is a finite étale group scheme and proper toric stacks are examples of nice stacks, with a natural model defined over $\sheafO_K$.

 \begin{Notations}\label{notation_exponent_algebraic_stack}
    If $X$ is a Deligne-Mumford stack and $S$ is a scheme, we shall denote by $X[S]$ the groupoid of its $S$-points and by $X(S)$ the set obtained when we quotient by the isomorphism relation.

    We denote by $N_X$ the exponent of $X$, that is to say the least common multiple of the cardinal of the stabilizers of each geometric point of $X$. As the diagonal $X \rightarrow X \times X$ is finite, such integral exponent exists.
\end{Notations}

\subsection{Outline of the article}

In the second section of this article, we recall the machinery introduced in \cite{ellenberg2022heights}, \cite{darda2024batyrevmanin} and \cite{loughran_santens_malle_conjecture} to study the rational points of nice DM stacks. In the third section, we present the age pairing and the integral orbifold Picard group and we recall the definition of heights over nice DM stacks. The fourth section aims at showing how one can lift the age pairing to torsors under split torus. We also construct a residue map in the wild case provided some additional hypotheses. In the fifth section, we present the definition and the main properties of toric stacks. We also establish some useful new properties related to the age pairing and the orbifold Picard group of toric stacks. In the sixth section, we present an integral parametrization of the rational points of toric stacks. Sections 7 and 8 are devoted to the construction of a lift of the stacky height to the extended universal torsor and the study of its main properties. In Section 9, we define a Tamagawa measure on toric stacks and interpret it at places with tame inertia using a mass formula. The last section aims at studying the multi-height asymptotic behaviour of the rational points of toric stacks.

\subsection{Acknowledgements}

I am grateful to my thesis advisor, Emmanuel Peyre, for proposing this stimulating research topic and for his reading of the various drafts of this article. I also wish to thank Régis De La Bretèche, Loïs Faisant, Daniel Loughran, Sylvain Brochard, Ratko Darda and Tim Santens for insightful discussions that contributed to the development of this work. I would like to thank Hiroshi Iritani for communicating to me the complete proof of \cite[Proposition 21]{Coates_Corti_Iritani_Tseng_miror_theorem_toric_stack}.

\section{Stacky integral points and the residue map}

If $X$ is a proper model of a nice variety over $\sheafO_S$, the map $ X(\sheafO_S) \rightarrow X(\K) $ is a bijection. This is no longer necessarily the case for proper stacks: the map is not always surjective. In this section, we explain how to address the problem of extending rational points to integral points. In the global setting, we recall the definition and main properties of the notion of a ``tuning stack'' as introduced in \cite{ellenberg2022heights} which we shall refer to as a \textit{stacky integral lift}. In the local setting, we make use of the stacky Hensel’s lemma provided in \cite{loughran_santens_malle_conjecture}. In the third part of this section, we introduce the notion of twisted sectors. These were first applied in an arithmetic context by Darda and Yasuda in \cite{darda2024batyrevmanin}, and, via the residue map, have proven to be particularly useful for understanding the obstruction to lifting a rational point to an integral one.

\subsection{Global extension of a rational point of a nice stack}

For this paragraph, we consider a Dedekind ring $\sheafO$ with fraction field $\K$ and $X$ a proper normal algebraic stack with finite diagonal which is the model over $\sheafO$ of a nice DM stack $X_{\K}$. We follow \cite[section 2.1]{ellenberg2022heights}.

\begin{definition}
For a rational point $P \in X(\K)$, any triplet $(\mathcal{C},\bar{P},p)$ such that $\mathcal{C}$ is a normal separated Deligne-Mumford stack, $p : \mathcal{C} \rightarrow \Spec(\sheafO)$ a birational coarse space map and the following diagram is 2-commutative:
\begin{center}
\begin{tikzcd}
\Spec(K) \arrow[r] \arrow[rd] \arrow[rr, "P", bend left=49, shift left] & \mathcal{C} \arrow[r, "\bar{P}"] \arrow[d, "p"] & X \arrow[ld] \\
                                                                        & \Spec(\sheafO)                                                 &                     
\end{tikzcd}
\end{center}is called a \emph{stacky integral point} which extends $P$ or a \emph{stacky integral lift} of $P$.
\end{definition}

One can naturally define a notion of morphism between stacky integral lifts of $P$. If $(\mathcal{C},\bar{P},p)$ is terminal among the stacky integral points which extend $P$, we shall call it the universal stacky lift of $P$. In \cite[section 2.1]{ellenberg2022heights}, the author gives an existence theorem for this universal stacky lift and a useful characterization. Let us recall it here. We begin with the following proposition which is well-known for nice schemes and proved in \cite[proposition B.1]{Rydh_2015} for nice stacks:

\begin{prop}
   With our hypotheses, given a rational point $P \in X(\K)$, there exists a finite set $S$ of maximal ideals of $\sheafO$, such that $P$ can be extended to an integral point $\bar{P} \in X(\sheafO_{S})$ where $\sheafO_S$ is the Dedekind ring associated with $\Spec(\sheafO) - S$.
\end{prop}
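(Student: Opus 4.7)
The plan is to reduce the proposition to the fact that a Deligne--Mumford stack of finite presentation commutes with suitable cofiltered limits of affine schemes. First, I would observe that $\K$ coincides with $\bigcup_S \sheafO_S$, where $S$ ranges over finite sets of maximal ideals of $\sheafO$ ordered by inclusion: any nonzero element of $\K$ has only finitely many prime divisors in $\sheafO$, hence becomes a unit in $\sheafO_S$ once $S$ contains these primes. Dually, this identifies $\Spec(\K)$ with the cofiltered limit $\varprojlim_S \Spec(\sheafO_S)$, with transition maps $\Spec(\sheafO_{S'}) \to \Spec(\sheafO_S)$ for $S \subset S'$ being open immersions of affine schemes, in particular affine.

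Next, I would invoke the standard limit formalism for algebraic stacks of finite presentation (Laumon--Moret-Bailly, or the Stacks Project, in a form essentially proved by Rydh). Since $X_{\sheafO}$ is proper over the Noetherian base $\Spec(\sheafO)$, it is of finite type, hence of finite presentation, and this formalism produces an equivalence of groupoids
$$ X_{\sheafO}[\Spec(\K)] \;\cong\; \varinjlim_S X_{\sheafO}[\Spec(\sheafO_S)], $$
where the colimit is taken in the $2$-category of groupoids along the filtered poset of finite sets~$S$. Passing to isomorphism classes yields the set-theoretic bijection $X_{\sheafO}(\K) \cong \varinjlim_S X_{\sheafO}(\sheafO_S)$.

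The rational point $P \in X(\K)$ therefore arises as the image of some $\bar P \in X_{\sheafO}(\sheafO_S)$ for a suitable finite $S$, which is exactly the desired integral lift. The essential technical content of the argument is entirely concentrated in the limit formalism for Deligne--Mumford stacks; once it is granted, properness plays no role and the reasoning is a direct stacky transposition of the classical spreading-out of a morphism from the generic point of a Dedekind scheme. The main obstacle is thus purely one of citation and foundations, precisely what the reference \cite[Proposition B.1]{Rydh_2015} is designed to supply.
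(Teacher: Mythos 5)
Your proposal is correct and takes the same route as the paper, which proves the proposition simply by citing \cite[Proposition B.1]{Rydh_2015}. You have usefully unpacked that citation into the standard spreading-out argument (writing $\Spec(\K)$ as the cofiltered limit $\varprojlim_S \Spec(\sheafO_S)$ and invoking the limit formalism for finitely presented algebraic stacks), but the essential content and the foundational reference are identical.
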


By \cite[proposition 2.5]{ellenberg2022heights}, one can construct the universal stacky lift of $P$ as follows:

\begin{prop}\label{prop_normalization_method_stacky_lift}
    Let $P \in X(\K)$. If $U \rightarrow X$ is any extension of $P$ for $U$ a non empty open subset of $\Spec(\sheafO)$, then the relative normalisation of $X$ in $U$, $$\bar{P} : \mathcal{C} \rightarrow X$$ is a universal lift of $P$, and in particular it is independent of the choice of the lift $U \rightarrow X$ of $P$.
\end{prop}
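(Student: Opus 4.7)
The plan is to show that the relative normalisation $\mathcal{C}$ satisfies the universal property of a terminal object in the category of stacky integral lifts of $P$, from which independence of the chosen $U$ follows automatically. First I would verify that $\mathcal{C}$ is itself a stacky integral lift. By the construction of relative normalisation for algebraic stacks (cf. Rydh), $\mathcal{C}$ is a normal, separated Deligne--Mumford stack, it comes equipped with a canonical morphism $\bar{P} : \mathcal{C} \to X$, and hence with a structural morphism $p : \mathcal{C} \to \Spec(\sheafO)$ via the model structure on $X$. Since $\sheafO$ is Dedekind, the open subscheme $U$ is already normal, so the relative normalisation is an isomorphism above $U$; in particular the coarse map $p$ is birational, the generic fibre of $\mathcal{C}$ is $\Spec(\K)$, and $\bar{P}$ restricts to $P$ on this generic fibre. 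The required 2-commutativity of the diagram is tautological from the construction.

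The core of the proof is the universal property. Let $(\mathcal{C}', \bar{P}', p')$ be any other stacky integral lift of $P$. Since $p'$ is a birational coarse space map, there is a dense open $W \subseteq \Spec(\sheafO)$, which one may shrink so as to be contained in $U$, over which $p'$ is an isomorphism and $\mathcal{C}'_W \to X$ and $U \to X$ coincide (both extend $P$ uniquely up to the 2-isomorphism enforced by $X$ being a DM stack over a reduced base). Now apply the universal property of relative normalisation in the Deligne--Mumford setting: since $\mathcal{C}'$ is normal, separated, and maps compatibly to $X$ and $\Spec(\sheafO)$ while agreeing on $W$ with the open immersion $W \hookrightarrow U \to X$, the morphism $\mathcal{C}' \to X$ factors uniquely through $\mathcal{C}$, producing the required morphism $\mathcal{C}' \to \mathcal{C}$ of stacky integral lifts. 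Uniqueness of this factorisation is again guaranteed by the universal property, using normality of $\mathcal{C}'$ and the fact that two morphisms of separated DM stacks agreeing on a dense open must coincide.

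Once universality is established, independence of the choice of $U$ is formal: two different choices $U$ and $U'$ both yield terminal objects among the stacky integral lifts, hence there is a unique isomorphism between the associated relative normalisations, compatible with the maps to $X$ and $\Spec(\sheafO)$. The main obstacle I expect is the careful verification of the universal property of relative normalisation in the stacky setting, i.e.\ that the classical factorisation statement for normal schemes extends to normal, separated Deligne--Mumford stacks and that the construction commutes with the coarse space formation used to interpret $p$ and $p'$ as birational. For this I would rely on the results of Rydh on integral closures and relative normalisations in \cite{Rydh_2015}, which is already cited in the excerpt; the rest of the argument is then a routine diagram chase.
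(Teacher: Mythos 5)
The paper does not actually prove this proposition; it is stated as a direct citation of \cite[Proposition 2.5]{ellenberg2022heights}, so there is no in-paper argument to compare your sketch against. Your sketch follows the intended structure of that source's proof, but two steps are handled too loosely and would need to be repaired before the argument closes.

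First, you take for granted that the structural morphism $p : \mathcal{C} \to \Spec(\sheafO)$ is the coarse space map, and only argue that it is birational. That $p$ \emph{is} the coarse map is precisely what has to be verified. This requires showing that $\mathcal{C}$ is proper over $\sheafO$ (which in turn uses that the relative normalization $\mathcal{C} \to X$ is finite over its scheme-theoretic image — a fact relying on excellence hypotheses — combined with properness of $X$), and then that the coarse space of $\mathcal{C}$, being a normal, integral, one-dimensional scheme proper and birational over $\Spec(\sheafO)$, must equal $\Spec(\sheafO)$. None of this is automatic from the bare construction of $\mathcal{C}$, and the phrase ``via the model structure on $X$'' does not supply it.

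Second, the universal property of relative normalization you invoke does not by itself produce the desired map $\mathcal{C}' \to \mathcal{C}$. That universal property classifies factorizations of $U \to X$ through \emph{integral} $X$-stacks and yields morphisms \emph{out of} $\mathcal{C}$, not \emph{into} it. What is really needed is a lifting statement of a different flavour: given a normal DM stack $\mathcal{C}'$ with a representable morphism to $X$ whose restriction to a dense open factors through $\mathcal{C}$, the whole morphism factors through $\mathcal{C}$, because $\mathcal{C} \to X$ is integral (essentially a valuative-criterion/going-down argument from a normal source). This is true and is what Ellenberg–Satriano–Zureick-Brown use, but it is a separate ingredient and should be cited or proved as such, together with a check that it is available for representable morphisms of DM stacks. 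Once these two points are filled in, the remainder of your argument — uniqueness by separatedness on a dense open, terminality, and the formal derivation of independence of $U$ — goes through as written.
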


Moreover we have the following characterization of the universal stacky lift associated to $P$ (\cite[corollary 2.6]{ellenberg2022heights}):

\begin{prop}\label{proposition_carac_universal_stacky_lift_representability}
    Let $(\mathcal{C},\bar{P},p)$ be a stacky integral point which extends $P$. Then it is the universal lift of $P$ if and only if $\bar{P} : \mathcal{C} \rightarrow X$ is representable.
\end{prop}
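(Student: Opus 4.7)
The plan is to reduce both implications to the explicit realisation of the universal stacky lift given by Proposition~\ref{prop_normalization_method_stacky_lift}: the universal lift is the relative normalisation $\nu_0 : \mathcal{C}_0 \to \stackX$ of $\stackX$ in any open scheme-theoretic lift $U \to \stackX$ of $P$, where $U$ is a non-empty open subscheme of $\Spec(\sheafO)$. Since universal objects are unique up to canonical isomorphism, proving the equivalence amounts to checking representability of $\nu_0$ and characterising the stacky integral lifts whose structural morphism to $\stackX$ shares this property.

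For the direction \emph{universal $\Rightarrow$ representable}, it suffices to show that $\nu_0$ itself is representable. The relative normalisation of $\stackX$ in a scheme $U$ is constructed smooth-locally on $\stackX$ as the relative spectrum of the integral closure of $\sheafO_{\stackX}$ inside the quasi-coherent $\sheafO_{\stackX}$-algebra $g_* \sheafO_U$, where $g : U \to \stackX$ is the chosen lift. This makes $\nu_0$ affine, and in particular representable.

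For the direction \emph{representable $\Rightarrow$ universal}, starting from a representable lift $(\mathcal{C},\bar{P},p)$, the universal property of $\mathcal{C}_0$ provides a canonical $\stackX$-morphism $f : \mathcal{C} \to \mathcal{C}_0$, and the task becomes to prove that $f$ is an isomorphism. First, $f$ is proper and birational, since both $\mathcal{C}$ and $\mathcal{C}_0$ are proper over $\Spec(\sheafO)$ and share $U$ as a common open substack. Next, the representability of $\bar{P} = \nu_0 \circ f$ together with that of $\nu_0$ forces $f$ itself to be representable: given a scheme $T \to \mathcal{C}_0$, the fibered product $\mathcal{C} \times_{\mathcal{C}_0} T$ can be rewritten as the pullback $(\mathcal{C} \times_{\stackX} T) \times_{\mathcal{C}_0 \times_{\stackX} T} T$, each factor of which is already known to be an algebraic space. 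Working \'etale-locally on $\stackX$, $f$ then becomes a proper birational morphism of normal algebraic spaces whose target is the relative normalisation of a scheme in a dense open subscheme, and the universal property of normalisation forces $f$ to be an isomorphism.

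The step I expect to be the main obstacle is this last one: to conclude it is necessary to verify that the formation of the relative normalisation commutes with the \'etale base change under consideration, and that the representable source $\mathcal{C}$ pulls back in this process to a normal algebraic space to which the universal property of the relative normalisation can actually be applied. Handling this requires some care in controlling the interaction between normalisation, representability, and the various smooth/\'etale atlases of $\stackX$ and $\mathcal{C}_0$.
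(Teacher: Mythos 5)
The paper does not supply a proof of this proposition; it is a direct citation of \cite[Corollary 2.6]{ellenberg2022heights}. So there is no internal argument to compare against, and the only question is whether your sketch is sound.

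Your two implications are organised correctly, and the reduction to the relative normalisation $\nu_0$ from Proposition~\ref{prop_normalization_method_stacky_lift} is exactly the right move. The forward direction (relative normalisation is affine over $\stackX$, hence representable) is fine. The fibered-product manipulation in the converse direction is also correct: it is the standard ``two-out-of-three'' fact that if $\bar P = \nu_0 \circ f$ and $\nu_0$ are representable then so is $f$, since $\mathcal{C} \times_{\mathcal{C}_0} T = (\mathcal{C} \times_{\stackX} T) \times_{\mathcal{C}_0 \times_{\stackX} T} T$ is a fibre product of algebraic spaces (this also quietly uses that the diagonal of a DM stack is representable, which is automatic).

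The genuine gap is the last step, which you half-acknowledge. Saying that ``the universal property of normalisation forces $f$ to be an isomorphism'' is pointing the arrow the wrong way: the universal property of the relative normalisation $V'$ of $V$ in $U_V$ produces maps \emph{out of} $V'$ into integral $V$-schemes through which $U_V \to V$ factors, so at best it yields a section $s : V' \to \mathcal{C}_V := \mathcal{C}\times_{\stackX}V$ of $f_V$, not the invertibility of $f_V$. To even invoke it you must first know $\mathcal{C}_V \to V$ is integral, and that is a real verification: $\bar P : \mathcal{C} \to \stackX$ is quasi-finite (its fibers sit inside those of the coarse map $\mathcal{C}\to\Spec\sheafO$, which are finite) and proper (two-out-of-three against the separated $\stackX/\Spec\sheafO$), hence finite once representable, and finiteness passes to $\mathcal{C}_V\to V$. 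Once you have that, either finish via the section $s$ (it is a closed immersion, dense because $f_V$ is birational, into the normal irreducible $\mathcal{C}_V$, hence surjective, hence an isomorphism), or skip the universal property entirely and apply Zariski's main theorem directly to $f_V$, which is now finite, birational, and targets the normal $V'$. Either repair closes the gap, but as written the final step does not establish the isomorphism.
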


\begin{remark}
    The construction of the universal stacky lift of a rational point is defined regardless of whether $X$ is tame or not. Moreover if $X \rightarrow \Spec(\sheafO)$ is tame, we have a description of $\mathcal{C}$ as a root stack that we shall only use here in the local case.
\end{remark}

\subsection{The stacky Hensel's lemma and its interpretations}

Now we focus on the case where $\sheafO$ is a henselian discrete valuation ring (DVR) with valuation $v$. We write $\mathfrak{p}$ for the maximal ideal of $\sheafO$ and $\mathbf{F}$ for its residue field, which is supposed to be finite. We suppose that $X$ is a normal and proper algebraic stack with finite diagonal over $\sheafO$.

\begin{Notations}
    Let $n \in \N^*$. We follow the notations from \cite[Section 4]{loughran_santens_malle_conjecture}. We denote by $\Spec(\sheafO[\sqrt[n]{v}])$ the root stack $\sqrt[n]{\mathfrak{p} / \Spec(\sheafO)}$. We shall use the following abbreviations: $X(\sheafO[\sqrt[n]{v}]) = X(\Spec(\sheafO[\sqrt[n]{v}]))$ and for any abelian étale sheaf $\mathcal{F}$, $$H^i(\sheafO[\sqrt[n]{v}],\mathcal{F}) = H^i(\Spec(\sheafO[\sqrt[n]{v}]),\mathcal{F})$$ for $i \in \N$.
\end{Notations}

In the remainder of this paragraph, we assume that for every $P \in X(\K)$ there exists $n \in \N^*$ such that the universal stacky lift of $P$ is of the form
$$
\Spec(\sheafO[\sqrt[n]{v}]) \xrightarrow{\overline P} X .
$$
When $X$ is a tame DM stack, this hypothesis is satisfied by the following proposition.

\begin{prop}\label{proposition_description_tame_universal_stacky_lift}
    Assume that $X$ is a tame DM stack and let $P \in X(\K)$ be a rational point. There exists a unique $n \in \N^*$, and up to $2$-isomorphism, a unique representable morphism
    $$
    \Spec(\sheafO[\sqrt[n]{v}]) \rightarrow X
    $$
    lifting $P$.
\end{prop}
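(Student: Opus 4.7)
The plan is to combine the universal stacky lift construction of Propositions~\ref{prop_normalization_method_stacky_lift} and~\ref{proposition_carac_universal_stacky_lift_representability} with the classification of tame Deligne--Mumford stacks over a henselian trait whose coarse moduli space is the base itself. Taking $U = \Spec(\K)$ in Proposition~\ref{prop_normalization_method_stacky_lift} (so that the chosen extension is $P$ itself) produces a universal stacky lift $\bar{P} : \mathcal{C} \to X$ with $p : \mathcal{C} \to \Spec(\sheafO)$ a birational coarse space map, and by Proposition~\ref{proposition_carac_universal_stacky_lift_representability} the morphism $\bar{P}$ is representable.

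I would then identify $\mathcal{C}$ with $\Spec(\sheafO[\sqrt[n]{v}])$. Representability of $\bar{P}$ transfers tameness from $X$ to $\mathcal{C}$ (the inertia groups of $\mathcal{C}$ inject into those of $X$), while the birationality of $p$ concentrates all nontrivial inertia over the closed point. Since $\mathcal{C}$ is normal and one-dimensional, it is regular, so the inertia group at its closed point acts faithfully on the one-dimensional cotangent line, embeds into $\mathbf{F}^{\times}$, and is cyclic of some order $n$ with $\car(\mathbf{F}) \nmid n$. The local structure theorem for such tame stacky curves (Lemma~4.2 of~\cite{loughran_santens_malle_conjecture}) then supplies a canonical isomorphism $\mathcal{C} \cong \sqrt[n]{\mathfrak{p}/\Spec(\sheafO)} = \Spec(\sheafO[\sqrt[n]{v}])$. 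For uniqueness, the integer $n$ is intrinsic as the order of the inertia at the closed point, and the universal stacky lift is unique up to unique 2-isomorphism; conversely, any representable lift $\Spec(\sheafO[\sqrt[m]{v}]) \to X$ of $P$ is itself universal by Proposition~\ref{proposition_carac_universal_stacky_lift_representability}, forcing $m = n$ and 2-isomorphism with $\bar{P}$.

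The main obstacle is the explicit identification $\mathcal{C} \cong \Spec(\sheafO[\sqrt[n]{v}])$: the universal lift is defined abstractly via normalization, and recovering its root-stack presentation requires the classification of tame DM stacks over a henselian trait that are generically schemes. Everything else—existence of the lift, representability, and uniqueness of both $n$ and the morphism—flows formally from the universal property and the proper/tame hypotheses once this structural input is in hand.
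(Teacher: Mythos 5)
Your proposal is correct and follows essentially the same route the paper indicates: the paper's own ``proof'' is a one-line remark that the result is a consequence of the universal stacky lift machinery from Propositions~\ref{prop_normalization_method_stacky_lift} and~\ref{proposition_carac_universal_stacky_lift_representability} together with the classification of tame stacky curves over a henselian trait as root stacks, \emph{or} alternatively follows directly from \cite[Lemma~4.2]{loughran_santens_malle_conjecture}. You take the first route and spell it out, which is exactly what the paper intends.

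One small caveat on attribution: the paper treats \cite[Lemma~4.2]{loughran_santens_malle_conjecture} as an \emph{alternative, self-contained} derivation of the proposition, not merely as the structural ingredient identifying $\mathcal{C}$ with a root stack. By invoking that lemma at the end of your normalization argument, you are in effect mixing the two routes, which is slightly circular in presentation (though not in substance). If you want a genuinely independent variant of the second route, you should instead appeal to the classification of tame, separated, normal DM stacks of dimension one over a henselian DVR that are generically schemes (e.g.\ Geraschenko--Satriano or the stacky-curves literature), which is the structural fact the paper has in mind with the phrase ``tame stacky curves can be described as root stacks.'' Also, the claim that the inertia at the closed point embeds into $\mathbf{F}^{\times}$ is only literally true after passing to $\overline{\mathbf{F}}$; what one really shows is that the geometric inertia is cyclic of order $n$ prime to $\operatorname{char}(\mathbf{F})$, hence that the automorphism group scheme at the closed point is $\mu_{n,\mathbf{F}}$. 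Neither point affects the validity of the argument.
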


\begin{proof}
Since tame stacky curves can be described as root stacks, the result follows either from the previous discussion or from \cite[Lemma 4.2]{loughran_santens_malle_conjecture}.
\end{proof}

We shall see later that in the case of toric stacks this result remains valid even at places with wild ramification (see Corollary \ref{corollary_description_universal_stacky_lift}). If $\pi \in \sheafO$ is a uniformizer, we have a quotient description of the root stack $$\Spec(\sheafO[\sqrt[n]{v}]) = \left[\Spec\left(\sheafO[t]/(t^n - \pi)\right) / \mu_n \right] \, .$$This gives in some particular cases, an important interpretation of the root stack over the valuation ring of a p-adic field:

\begin{prop}\label{proposition_description_root_stack_over_the_unrammified_closure_in_the_tame_case}
    Suppose that $\K$ is a $p$-adic field, we write $\K' = \K[\pi^{\frac{1}{n}}]$ for the totally ramified extension of $\K$ of degree $n$, then $\sheafO_{\K'} = \sheafO[\pi^{\frac{1}{n}}] $ and we have:
    $$\Spec(\sheafO[\sqrt[n]{v}]) = \left[\Spec\left(\sheafO_{\K'} \right) / \mu_{n,\sheafO} \right] .$$ 
\end{prop}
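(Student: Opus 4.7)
The plan is to start from the quotient presentation
$$\Spec(\sheafO[\sqrt[n]{v}]) = \left[\Spec\left(\sheafO[t]/(t^n - \pi)\right) / \mu_n \right]$$
recalled just before the statement, and to exhibit a $\mu_n$-equivariant isomorphism of $\sheafO$-algebras
$$\sheafO[t]/(t^n - \pi) \xrightarrow{\ \sim\ } \sheafO_{\K'}, \qquad t \longmapsto \pi^{1/n},$$
where $\mu_n$ acts on the left-hand side by $\zeta \cdot t = \zeta t$, and on the right-hand side by $\zeta \cdot \pi^{1/n} = \zeta \pi^{1/n}$. Once this is done, taking the stack quotient on both sides gives the claimed identity.

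First, I would argue that $\K' = \K[\pi^{1/n}]$ is indeed of degree $n$ over $\K$ and that $\sheafO_{\K'} = \sheafO[\pi^{1/n}]$. This is a standard consequence of the fact that $f(t) = t^n - \pi$ is an Eisenstein polynomial: its leading coefficient is $1$, the constant term has valuation $1$, and the other coefficients vanish. By the classical theory of Eisenstein extensions of a complete discrete valuation ring, $f$ is irreducible over $\K$, the extension $\K'/\K$ is totally ramified of degree $n$, the element $\pi^{1/n}$ is a uniformizer of $\K'$, and $\sheafO_{\K'} = \sheafO[\pi^{1/n}]$. This justifies the parenthetical claim $\sheafO_{\K'} = \sheafO[\pi^{1/n}]$ in the statement and provides the ring-level isomorphism above.

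Next, I would check the $\mu_n$-equivariance of the isomorphism. The group scheme $\mu_{n,\sheafO}$ acts naturally on $\sheafO[t]/(t^n - \pi)$ via the $\Z/n\Z$-grading given by $\deg(t) = 1$, which is intrinsic to the root stack construction. Under $t \mapsto \pi^{1/n}$, this transports to the natural action of $\mu_{n,\sheafO}$ on $\sheafO_{\K'}$ by multiplication on the uniformizer $\pi^{1/n}$. Here it is essential that $\mu_{n,\sheafO}$ is used rather than a constant group scheme: the tameness hypothesis on $X$ implicitly requires $n$ to be prime to the residue characteristic, so that $\mu_{n,\sheafO}$ is étale and the action makes sense in the required generality.

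Finally, passing to the associated stack quotients yields
$$\Spec(\sheafO[\sqrt[n]{v}]) = \bigl[\Spec(\sheafO[t]/(t^n-\pi))/\mu_{n,\sheafO}\bigr] \simeq \bigl[\Spec(\sheafO_{\K'})/\mu_{n,\sheafO}\bigr],$$
which is the desired identification. The only non-formal input is the Eisenstein argument giving $\sheafO_{\K'} = \sheafO[\pi^{1/n}]$; the rest is essentially a restatement of the quotient description of the root stack, together with a transport of structure along a canonical change of generator. There is no serious analytic obstacle: the only subtlety is keeping track of the $\mu_n$-action when the residue characteristic divides $n$, which is precisely excluded by the tameness assumption already in force.
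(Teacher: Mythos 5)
Your proof is correct and is precisely the argument the paper leaves implicit: the proposition is stated as an immediate consequence of the quotient presentation $\Spec(\sheafO[\sqrt[n]{v}]) = [\Spec(\sheafO[t]/(t^n - \pi))/\mu_n]$ recalled just before it, and the only content to supply is the $\mu_n$-equivariant identification $\sheafO[t]/(t^n-\pi) \simeq \sheafO_{\K'}$, which you obtain via the Eisenstein criterion exactly as one should. One small correction to your closing remark: the identification does \emph{not} in fact depend on tameness. The Eisenstein argument shows $t^n - \pi$ is irreducible and $\sheafO_{\K'} = \sheafO[\pi^{1/n}]$ for every $n \geq 1$, including when the residue characteristic divides $n$; and the $\mu_{n,\sheafO}$-action on $\Spec(\sheafO_{\K'})$ is perfectly well-defined as a group-scheme action whether or not $\mu_{n,\sheafO}$ is étale, so the quotient stack $[\Spec(\sheafO_{\K'})/\mu_{n,\sheafO}]$ and the isomorphism with the root stack make sense in general. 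The tameness hypothesis in force in this section of the paper concerns the stack $X$ and is used elsewhere (e.g.\ for the description of stacky lifts as root stacks), not for this proposition itself. Flagging it as ``essential'' here overstates its role, though it does no harm to the argument.
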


Thus, the choice of a uniformizer $\pi \in \sheafO$ induces a natural closed immersion
$$
B \mu_{n,\F} \xrightarrow{i_{\pi}} \Spec(\sheafO[\sqrt[n]{v}]).
$$
The dependence of this closed immersion $i_{\pi}$ on the choice of the uniformizer $\pi$ is described in \cite[Lemma 4.3]{loughran_santens_malle_conjecture}. Once this uniformizer is fixed, we shall use the following isomorphism, which follows from \cite[Proposition 5.2.11]{Brochard2008}.

\begin{prop}\label{proposition_cohomology_local_root_stack}
    For any split torus $T$ over $\sheafO$, we have:
    $$H^1(\sheafO[\sqrt[n]{v}],T) = H^1(B \mu_{n,\F} , T_{\F }) = H^1(\mu_{n,\F},T_{\F}) .$$
\end{prop}

\subsection{Twisted sectors and the residue map}

\subsubsection{The cyclotomic inertia stack}

In \cite{darda2024batyrevmanin}, Darda and Yasuda proposed to control the ramification (or the stackyness) of the universal stacky lift associated with a rational point using the notion of twisted sectors. 

\begin{Notations}
    For this paragraph, $\K$ is a number field. We denote by $M_{\K}$ the set of places of $\K$, by $M_{\K}^0$ the set of finite places, and by $M_{\K}^{\infty}$ the set of infinite places. For $S$ a finite subset of $M_{\K}^0$, we write $\sheafO_S$ for the ring of $S$-integers. We shall suppose that $X$ is a proper and  normal algebraic stack with finite diagonal over $\sheafO_S$, which is the model of a nice stack $X_{\K}$ over $\K$. We will sometimes write $X_{\sheafO_S}$ to emphasize the base scheme.

    For $v \in M_{\K}^0$, we denote by $\sheafO_v$ the valuation ring associated with $v$, $\F_v$ its residue field, and ${\K}_v$ its fraction field. 
\end{Notations}

Darda and Yasuda defined a residue map which characterizes the obstruction to lifting a point $P \in X(\K)$ as an integral point. This allows them to define families of relevant heights over nice stacks and to formulate precise conjectures on the distribution of rational points on $X$. Let us begin by defining the cyclotomic inertia stack $\I_{\mu} X$, which is called \textit{the stack of 0-twisted jets} of $X$ in \cite[definition 2.3]{darda2024batyrevmanin}.

\begin{definition}
    $$ \sheafI_{\mu} X = \bigcup\limits_{l > 0} \underline{\Hom}^{\rep}(B \mu_l,X) $$
\end{definition}

The definition can be applied over any base scheme and in particular over $\sheafO_S$.

\begin{definition}
    The connected components of $\sheafI_{\mu} X$, $\sector$, are called the sectors of $X$. This is a pointed set, with one distinguished sector given by $[X]$, which shall be written $0 \in \sector$. All the others are called the twisted sectors. The set of twisted sectors is denoted by $ \twistsector$.
\end{definition}

\begin{remark}
    With our assumptions, $\sheafI_{\mu} X$ is smooth (see \cite[lemma 2.7]{darda2024batyrevmanin}). Hence $\sector$ is the set of generic representable morphism $b : B \mu_r \rightarrow X$. We shall denote by $r_b$ the exponent associated with an element $b \in \sector$.
\end{remark}

\begin{example}
    \begin{itemize}
        \item The set of sectors of $B G$ over $\K$ is in bijection with the set of $\K$-conjugacy classes of $G$ (see \cite[example 2.15]{darda2024batyrevmanin}). For example, if $G = \Sym_n$, this corresponds to the conjugacy classes of $\Sym_n$. 
        \item The set of sectors of the weighted projective space $\stackP(w)$ is in  bijection with $\bigcup\limits_{0 \leqslant i \leqslant n} \frac{1}{w_i} \Z \cap [0,1[ $ (see \cite[proposition 3.9]{mann2006orbifoldquantumcohomologyweighted}). 
    \end{itemize}
\end{example}

By \cite[proposition 2.11]{darda2024batyrevmanin}, we have the following useful characterization:

\begin{prop}
    $$\sheafI_{\mu} X_{\K} = \underline{\Hom}(B \widehat{\mu},X_{\K}) \text{ and } \sheafI_{\mu} X_{\sheafO_S} = \underline{\Hom}(B \widehat{\mu}_{\sheafO_S},X_{\sheafO_S}) .$$Moreover, if $N \in \N^*$ is an integer such that the cardinal of any geometric stabilizer of $X_{\sheafO_S}$ divides $N$, then:
    $$\sheafI_{\mu} X_{\K} = \underline{\Hom}(B \mu_N,X_{\K}) \text{ and } \sheafI_{\mu} X_{\sheafO_S} = \underline{\Hom}(B \mu_{N,\sheafO_S},X_{\sheafO_S}) .$$
\end{prop}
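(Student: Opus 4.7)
The plan is to reduce the statement to a single factorization fact: every morphism from the classifying stack of a finite subgroup of $\widehat{\mu}$ to $X$ admits a canonical decomposition as a quotient followed by a representable morphism. Combined with the finiteness of stabilizers of the Deligne-Mumford stack $X$ (which forces every profinite character to factor through a finite quotient), this will promote into an identification of the entire Hom-stack. I would treat the statements over $\K$ and over $\sheafO_S$ in parallel, since the arguments are the same up to working with group schemes in place of abstract groups.

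\textbf{Factorization lemma.} A morphism $f : B\mu_n \to X$ is, by descent, the datum of a section $x$ of $X$ and a group scheme homomorphism $\phi : \mu_n \to \Aut(x)$. Since $\mu_n$ is commutative of multiplicative type, the image of $\phi$ is a cyclic subgroup of multiplicative type, hence isomorphic to $\mu_d$ for a unique divisor $d$ of $n$, and $\phi$ factors canonically as $\mu_n \twoheadrightarrow \mu_d \hookrightarrow \Aut(x)$. This yields a 2-commutative diagram
$$ B\mu_n \twoheadrightarrow B\mu_d \xrightarrow{g} X ,$$
in which $g$ is representable because $\mu_d \hookrightarrow \Aut(x)$ is a closed immersion of group schemes. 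The construction is 2-functorial in $f$, so it upgrades to a morphism of Hom-stacks.

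\textbf{Deduction for $B\mu_N$.} Under the hypothesis that every geometric stabilizer of $X_{\sheafO_S}$ has order dividing $N$, every representable $B\mu_l \hookrightarrow X_{\sheafO_S}$ satisfies $l \mid N$, hence
$$ \sheafI_\mu X_{\sheafO_S} = \bigsqcup_{l \mid N} \underline{\Hom}^{\rep}(B\mu_l, X_{\sheafO_S}) .$$
Each piece $\underline{\Hom}^{\rep}(B\mu_l, X_{\sheafO_S})$ embeds into $\underline{\Hom}(B\mu_N, X_{\sheafO_S})$ by precomposition with the quotient $B\mu_N \twoheadrightarrow B\mu_l$, and the factorization lemma supplies the inverse. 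Assembling these identifies $\underline{\Hom}(B\mu_N, X_{\sheafO_S})$ with $\sheafI_\mu X_{\sheafO_S}$, and the statement over $\K$ is the generic fiber.

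\textbf{Deduction for $B\widehat{\mu}$ and main obstacle.} Writing $B\widehat{\mu}_{\sheafO_S} = \varprojlim_n B\mu_{n,\sheafO_S}$, the finiteness of the geometric stabilizers of $X_{\sheafO_S}$ implies that any homomorphism $\widehat{\mu} \to \Aut(x)$ factors through a finite quotient $\mu_n$, so any morphism $B\widehat{\mu}_{\sheafO_S} \to X_{\sheafO_S}$ factors through some $B\mu_{n,\sheafO_S}$ and the previous case applies. The main technical obstacle is to make the factorization genuinely 2-functorial and to check that the decomposition $\underline{\Hom}(B\mu_N, X) = \bigsqcup_{l \mid N} \underline{\Hom}^{\rep}(B\mu_l, X)$ holds at the level of stacks rather than only on isomorphism classes of objects. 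A further delicate point is the handling of the finitely many places where $X_{\sheafO_S}$ is not tame: $\mu_N$ fails to be étale there, but the almost-tameness hypothesis localises this difficulty to a finite set of places, where a direct analysis of the local structure of $B\mu_N$ over $\sheafO_S$ should suffice.
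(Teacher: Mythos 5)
The paper does not give a proof of this proposition: it is imported verbatim by citation from Darda--Yasuda \cite[Proposition 2.11]{darda2024batyrevmanin}. So there is no internal argument to compare against; what can be said is whether your sketch would constitute a proof. Your overall strategy (a factorization lemma ``every $B\mu_n\to X$ splits as $B\mu_n\twoheadrightarrow B\mu_d$ followed by a representable $B\mu_d\hookrightarrow X$'', together with the observation that finiteness of $\underline{\Aut}(x)$ forces any $\widehat{\mu}\to\underline{\Aut}(x)$ to factor through a finite stage) is the natural skeleton, and the reduction from $B\widehat{\mu}$ to $B\mu_N$ is fine as written.

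The gap is in the factorization lemma itself, which carries the whole weight and is only argued at field level. Writing ``the image of $\phi:\mu_n\to\underline{\Aut}(x)$ is a cyclic subgroup of multiplicative type, hence isomorphic to $\mu_d$'' is correct over a field, but over the arithmetic base $\sheafO_S$ the scheme-theoretic image of $\mu_{n,T}\to\underline{\Aut}_T(x)$ is a priori only a closed subscheme, and it need not be a \emph{flat} subgroup scheme: its rank can drop on special fibres precisely at places $v$ with $p_v\mid n$ and $X$ wild at $v$ (the infinitesimal part of $\mu_{n,\F_v}$ is killed by the unramified target while the generic fibre need not be). A non-flat image cannot be identified with a $\mu_d$, and so the ``canonical decomposition'' you invoke does not automatically exist. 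You do flag the wild places as ``a delicate point,'' but the stated resolution --- ``a direct analysis of the local structure of $B\mu_N$ over $\sheafO_S$ should suffice'' --- is not an argument. Since the hypothesis on $N$ explicitly forces $p_v\mid N$ at every wild place (wild stabilizer orders divide $N$), this is a substantive gap, not a side remark. A genuine proof needs to exploit that $\underline{\Aut}_T(x)$ is finite and unramified (hence étale on geometric fibres) to show that $\phi$ kills the infinitesimal locus of $\mu_{N,T}$ uniformly in $T$ and that the resulting quotient is a flat multiplicative $\mu_d$ with $\mu_d\hookrightarrow\underline{\Aut}_T(x)$ a closed immersion; this does not follow formally from $\mu_N$ being of multiplicative type.
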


We shall suppose for the rest of this article that $\sheafI_{\mu} X_{\sheafO_S}$ is flat. This is true at least for toric stacks. By \cite[corollary 2.8]{darda2024batyrevmanin}, we obtain:
\begin{prop}\label{prop_sector_over_ring_of_integer}
    The following maps are bijections:$$\pi_0(\sheafI_{\mu} X_{\K}) \rightarrow \pi_0(\sheafI_{\mu} X_{\sheafO_S}) $$and for any $v \not\in S$, $$\pi_0(\sheafI_{\mu} X_{\K_v}) \rightarrow \pi_0(\sheafI_{\mu} X_{\sheafO_v}) .$$
\end{prop}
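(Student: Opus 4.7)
The proof proceeds via the Stein factorization of the structural morphism $f\colon \sheafI_{\mu}X_{\sheafO_S} \to \Spec(\sheafO_S)$. First observe that $\sheafI_{\mu}X_{\sheafO_S}$ is proper over $\sheafO_S$: the preceding characterization yields $\sheafI_{\mu}X_{\sheafO_S} = \underline{\Hom}(B\widehat{\mu}_{\sheafO_S}, X_{\sheafO_S})$, and properness of $X_{\sheafO_S}$ combined with properness of $B\widehat{\mu}_{\sheafO_S}$ transfers to the Hom-stack. By our standing hypothesis $f$ is flat, and by \cite[lemma 2.7]{darda2024batyrevmanin} its generic fibre $\sheafI_{\mu}X_{\K}$ is smooth, in particular geometrically reduced.

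Write $f = g \circ \pi$ for the Stein factorization, with $T := \Spec(f_{\ast}\sheafO_{\sheafI_{\mu}X_{\sheafO_S}})$ formed on the coarse space, so that $\pi\colon \sheafI_{\mu}X_{\sheafO_S} \to T$ has geometrically connected fibres and $g\colon T \to \Spec(\sheafO_S)$ is finite. Flatness of $f$ together with the geometric reducedness of the generic fibre forces $g$ to be finite flat. Since $\pi$ has geometrically connected fibres, for any base change $Y \to \Spec(\sheafO_S)$ one obtains a natural bijection $\pi_{0}(\sheafI_{\mu}X_{Y}) = \pi_{0}(T \times_{\sheafO_S} Y)$; the question therefore reduces to proving the analogous statement for $T$.

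Writing $T = \Spec(A)$ with $A$ a finite locally free $\sheafO_S$-algebra, connected components correspond to primitive orthogonal idempotents. Flatness gives $A \hookrightarrow A \otimes_{\sheafO_S} \K$, so distinct idempotents of $A$ remain distinct; conversely, any idempotent of $A \otimes \K$ is integral over $\sheafO_S$, so after replacing $T$ by its normalization (which does not affect $\pi_{0}$) every such idempotent descends to $A$. This yields the first bijection, and the local version at $v \notin S$ follows by the same argument after base changing to $\sheafO_v$. The main subtlety lies in keeping track of the interplay between flatness and generic reducedness, so that the Stein factorization is \emph{étale enough} on the generic fibre for idempotents to extend---this is precisely the role played by the standing flatness hypothesis on $\sheafI_{\mu}X_{\sheafO_S}$.
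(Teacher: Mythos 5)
The paper itself does not give an argument here: it simply invokes \cite[corollary 2.8]{darda2024batyrevmanin}, using the standing flatness hypothesis on $\sheafI_{\mu}X_{\sheafO_S}$. Your proposal is therefore a genuinely different route, an explicit argument via Stein factorization. Unfortunately it has a real gap.

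The problem is the parenthetical claim that passing to the normalization of $T$ \emph{does not affect $\pi_0$}. This is false for a finite flat $\sheafO_S$-algebra whose only known good behaviour is over the generic point. Take $\sheafO_S = \Z$, $p$ a prime, and $A = \Z[x]/(x^2 - p^2)$. Then $\Spec(A)$ is finite and free of rank $2$ over $\Z$, the generic fibre $\Spec(A\otimes\Q) = \Spec(\Q\times\Q)$ is \'etale (so geometrically reduced), yet $\Spec(A)$ is connected, since the two irreducible components $\{x=p\}$ and $\{x=-p\}$ meet in the fibre over $p$. The idempotents $\tfrac{x\pm p}{2p}$ of $A\otimes\Q$ are indeed integral over $\Z$, and they do descend to the normalization $\widetilde{A}\cong\Z\times\Z$; but $\pi_0(\Spec\widetilde{A})$ has two elements while $\pi_0(\Spec A)$ has one. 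So the normalization step precisely loses the comparison you are trying to establish, and nothing in your argument rules out this behaviour for the Stein factorization of $\sheafI_{\mu}X_{\sheafO_S}$.

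What is actually needed is that the Stein factorization $g : T \to \Spec(\sheafO_S)$ be finite \emph{\'etale}, not merely finite flat; then connected components are governed by $\pi_1$, and the surjections $\pi_1(\Spec\K)\twoheadrightarrow\pi_1(\Spec\sheafO_S)$ and $\pi_1(\Spec\K_v)\twoheadrightarrow\pi_1(\Spec\sheafO_v)$ give both bijections. Finite \'etaleness of $g$ requires geometric reducedness of \emph{all} fibres of $\sheafI_{\mu}X_{\sheafO_S}\to\Spec(\sheafO_S)$ (so that cohomology and base change forces $f_*\sheafO$ to be finite locally free with \'etale fibres), and you only establish reducedness of the generic fibre. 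The example above shows that generic reducedness plus flatness is strictly insufficient. To repair the argument you would have to prove that the special fibres of the cyclotomic inertia stack are also geometrically reduced under the paper's hypotheses, which is exactly the additional content that the cited corollary of Darda--Yasuda supplies.
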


\subsubsection{Relation with the inertia stack}

Suppose we work in a geometric setting over an algebraically closed field $\F = \overline{\F}$ or that the action of the absolute Galois group of the base field $\F$ is trivial on $\pi_0(\sheafI_{\mu} X)$, i.e. we have a bijection $\pi_0(\sheafI_{\mu} X) \rightarrow \pi_0(\sheafI_{\mu} \overline{X})$. Over an algebraically closed field, there is a non canonical isomorphism between the cyclotomic inertia stack $\sheafI_{\mu} \overline{X}$ with the inertia stack $I \overline{X}$ of $\overline{X}$ (see \cite[proposition 22]{yasuda2004motivicintegrationdelignemumfordstacks}). Suppose $\overline{X} = [Z/G]$ and $\Lambda$ is a set of representatives of $\Conj_{\text{fin}}(G)$ the set of conjugacy classes of the elements of finite order of $G$. Recall that we have $I \overline{X} \simeq [ I_G Z / G ]$ where:
$$ I_G Z = \{ (g,z) \mid g.z = z \}$$ and $G$ acts on $I_G Z$ via:
$$ (h, (g,z)) \longmapsto (h.g.h^{-1},h.z) .$$Hence:
$$I \overline{X} \simeq \bigsqcup\limits_{g \in \Lambda} [Z^g / C_G(g) ]$$where $Z^g$ is the set of points fixed by $g$ and $C_G(g)$ is the centralizer of $g$.

\begin{prop}
    If for any $g \in \Lambda$, $Z^g$ is connected, then the set of sectors of $\overline{X}$ is given by $$\left\{ [Z^g / C_G(g) ] \mid g \in \Lambda \right\} .$$
\end{prop}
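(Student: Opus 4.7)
The plan is to reduce the computation of the sectors to the decomposition of the inertia stack already displayed just before the statement. Since we are working over an algebraically closed field (or under the Galois triviality assumption on $\pi_0(\sheafI_{\mu} X)$), we have $\sheafI_{\mu} \overline{X} \simeq I \overline{X}$ by the cited result of Yasuda, so $\sector = \pi_0(I \overline{X})$. Using the decomposition
$$ I \overline{X} \simeq \bigsqcup_{g \in \Lambda} [Z^g / C_G(g)] $$
as disjoint union of open and closed substacks, one gets
$$ \pi_0(I \overline{X}) = \bigsqcup_{g \in \Lambda} \pi_0\bigl( [Z^g / C_G(g)] \bigr), $$
so the task is to show that for each $g \in \Lambda$ the quotient stack $[Z^g / C_G(g)]$ is connected.

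Next I would invoke the general principle that if $H$ is a group scheme acting on a scheme $Y$, then the connected components of the quotient stack $[Y/H]$ are in bijection with the $H$-orbits on $\pi_0(Y)$. Equivalently, the smooth surjective atlas $Y \to [Y/H]$ shows that the image of a connected $Y$ lies in a single connected component, and since the atlas is surjective this forces $[Y/H]$ to be connected. Applied to $Y = Z^g$ and $H = C_G(g)$, the hypothesis that $Z^g$ is connected yields at once that $[Z^g / C_G(g)]$ is connected, hence is one sector of $\overline{X}$.

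Putting the two steps together, the map $\Lambda \to \sector$ sending $g$ to the image of $[Z^g/C_G(g)]$ in $I \overline{X}$ is a bijection, and the set of sectors is exactly
$$ \bigl\{ [Z^g / C_G(g)] \,\big|\, g \in \Lambda \bigr\}, $$
as required. The main (mild) obstacle is not the content itself but making the identification between the components of $[Y/H]$ and the $H$-orbits on $\pi_0(Y)$ clean — one has to be a little careful because $C_G(g)$ need not be connected, but since the surjective smooth cover $Z^g \to [Z^g/C_G(g)]$ comes from a connected source the conclusion is immediate regardless of $C_G(g)$'s own component structure.
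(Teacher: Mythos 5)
Your argument is correct and matches what the paper intends: the proposition is stated immediately after the displayed decomposition $I\overline{X} \simeq \bigsqcup_{g\in\Lambda}[Z^g/C_G(g)]$ precisely because it follows from it in the way you describe, namely by observing that a smooth surjective atlas from a connected scheme forces the quotient stack to have connected underlying topological space. The paper gives no separate proof, so there is nothing different to compare; your write-up supplies the expected justification.
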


\subsubsection{The residue map}\label{subsubsection_the_residue_map_definition}

We again suppose that $\K$ is a number field. Let $v \in M_{\K}^0$ be a finite place such that $X_{\sheafO_v}$ is a normal, proper algebraic stack with finite diagonal over $\sheafO_v$ satisfying the additional property that for every $P \in X(\K_v)$, the universal stacky lift of $P$ is of the form
$$
\Spec(\sheafO[\sqrt[n]{v}]) \xrightarrow{\overline P} X
$$
for some $n \in \N^*$. We have seen that this hypothesis holds if $X_{\sheafO_v}$ is a tame DM stack (see Proposition \ref{proposition_description_tame_universal_stacky_lift}) or if $X_{\sheafO_v}$ is a toric stack (see Corollary \ref{corollary_description_universal_stacky_lift}).

In \cite[Paragraph 4.4]{loughran_santens_malle_conjecture}, Loughran and Santens showed that the correct way to define reduction modulo $\mathfrak{p}_v$ is to consider a map with values in $\sheafI_{\mu} X (\F_v)$. Following their method and fixing a uniformizer $\pi \in \sheafO_v$, we define a natural reduction map
$$
- \text{mod} \ \mathfrak{p}_v : X(\K_v) \rightarrow \sheafI_{\mu} X(\F_v)
$$
which associates to the unique (up to $2$-isomorphism) universal stacky lift
$$
\Spec(\sheafO_v[\sqrt[n]{v}]) \xrightarrow{\overline P} X
$$
of $P$ the representable morphism given by the composition
$$
B \mu_{n,\F_v} \xrightarrow{i_\pi} \Spec(\sheafO_v[\sqrt[n]{v}]) \xrightarrow{\overline P} X .
$$The following lemma explains that this reduction modulo $\mathfrak{p}_v$ map does not require the choice of the universal stacky lift: any stacky lift of $P$ of the form
$$
\Spec(\sheafO_v[\sqrt[m]{v}]) \xrightarrow{\overline P'} X
$$
is sufficient.

\begin{lemma}\label{lemma_reduction_map_choice_stacky_lift}
Let
$$
\Spec(\sheafO_v[\sqrt[m]{v}]) \xrightarrow{\overline P'} X
$$
be a stacky lift of $P$. Then the induced morphism
$$
B \mu_{n,\F_v}
\xrightarrow{i_{\pi}}
\Spec(\sheafO_v[\sqrt[n]{v}])
\xrightarrow{\overline P'}
X
$$
defines an element of $\I_{\mu}X[\F_v]$ which is isomorphic to $P \bmod \mathfrak{p}_v$.
\end{lemma}

\begin{proof}
Let
$$
\Spec(\sheafO_v[\sqrt[n]{v}]) \xrightarrow{\overline P} X
$$
be a universal stacky lift of $P \in X(\K_v)$. By the definition of a universal stacky lift, the morphism $\overline P'$ factors through $\overline P$, so that we obtain the following $2$-commutative diagram:
\begin{center}
\begin{tikzcd}
{\Spec(\sheafO_v[\sqrt[m]{v}])} \arrow[d] \arrow[rd, "\overline P'"] &   \\
{\Spec(\sheafO_v[\sqrt[n]{v}])} \arrow[r, "\overline P"]             & X
\end{tikzcd}
\end{center}

By \cite[Lemma~3.6]{2023_Bresciani}, we have $n \mid m$, and the morphism
$$
\Spec(\sheafO_v[\sqrt[m]{v}]) \longrightarrow \Spec(\sheafO_v[\sqrt[n]{v}])
$$
is $2$-isomorphic to the natural map. Hence we obtain the following $2$-commutative diagram:
\begin{center}
\begin{tikzcd}
{B \mu_{m,\F_v}} \arrow[r, "i_{\pi}"] \arrow[d] 
& {\Spec(\sheafO_v[\sqrt[m]{v}])} \arrow[d] \arrow[rd, "\overline P'"] &   \\
{B \mu_{n,\F_v}} \arrow[r, "i_{\pi}"]           
& {\Spec(\sheafO_v[\sqrt[n]{v}])} \arrow[r, "\overline P"]             & X
\end{tikzcd}
\end{center}
where the morphism
$$
B\mu_{m,\F_v} \longrightarrow B\mu_{n,\F_v}
$$
is given by
$$
t \in \mu_m \longmapsto t^{\frac{m}{n}} \in \mu_n .
$$
This concludes the proof.
\end{proof}

The following proposition shows that this reduction modulo $\mathfrak{p}_v$ map is invariant under unramified extensions of $\K_v$.

\begin{prop}\label{prop_invariance_unramified_extension_reduction_map}
   The isomorphism class of the geometric point defined by $$P \mod \mathfrak{p}_v$$ depends only on $P_{| \K_v^{\un}}$.
\end{prop}

\begin{proof}
    Indeed we have the following commutative diagram:
    \begin{center}
    \begin{tikzcd}
\Spec(\K_v^{\un}) \arrow[r] \arrow[d] & {\Spec(\sheafO_v[\sqrt[n]{v}]) \times_{\sheafO_v} \sheafO_v^{\un}} \arrow[d] &   \\
\Spec(\K_v) \arrow[r]                 & {\Spec(\sheafO_v[\sqrt[n]{v}])} \arrow[r]                                    & X
\end{tikzcd}
    \end{center}Moreover, since the right square and the whole square of the following commutative diagram:
    \begin{center}
    \begin{tikzcd}
{\Spec(\sheafO_v^{\un}[\sqrt[n]{v}])} \arrow[d] \arrow[r]                             & {\Spec(\sheafO_v[\sqrt[n]{v}])} \arrow[r] \arrow[d] & {[\affine^1 / \G_m]} \arrow[d, "\wedge n"] \\
\Spec(\sheafO_v^{\un}) \arrow[r] \arrow[rr, "{(\sheafO_v^{\un},\pi)}"', bend right] & \Spec(\sheafO_v) \arrow[r, "{(\sheafO_v,\pi)}"]   & {[\affine^1 / \G_m]}               
\end{tikzcd}
    \end{center}are cartesian, the left square is also cartesian. Hence we get:
    $$\Spec(\sheafO_v[\sqrt[n]{v}]) \times_{\sheafO_v} \sheafO_v^{\un} = \Spec(\sheafO_v^{\un}[\sqrt[n]{v}]) .$$ We get that $\left( P \mod \mathfrak{p}_v \right)_{\mid \overline{\F}_v}$ is the point given by
$$ B \mu_{n,\overline{\F}_v} \xrightarrow{i_{\pi}} \Spec(\sheafO_v^{\un}[\sqrt[n]{v}]) \xrightarrow{\overline P} X $$ that is to say $\left( P \mod \ \mathfrak{p}_v \right)_{| \overline{\F}_v} $ is isomorphic to $ \left( P_{| \K_v^{\un}} \mod  \mathfrak{p}_v \right)$.
\end{proof}

We will also need the following lemma.

\begin{lemma}\label{lemma_independance_uniformizer}
The connected component $\stackS \in \pi_0(\I_{\mu} X_{\sheafO_S})$ containing $$P \bmod \,\mathfrak{p}_v$$given by the reduction map defined via $i_{\pi}$, does not depend on the choice of the uniformizer $\pi \in \sheafO_v$.
\end{lemma}

\begin{proof}
By \cite[Lemma 4.3]{loughran_santens_malle_conjecture}, if $u \in \sheafO_v^{\times}$, we have the following $2$-commutative diagram relating the closed immersions $i_{u \pi}$ and $i_{\pi}$:

\begin{center}
\begin{tikzcd}
{B \mu_{n,\F_v}} \arrow[r, "i_{u \pi}"] \arrow[d, "t_{\overline u}"] & {\Spec(\sheafO_v[\sqrt[n]{v}])} \\
{B \mu_{n,\F_v}} \arrow[ru, "i_{\pi}"']                              &                                
\end{tikzcd}
\end{center}where the map $t_{\overline u}$ denotes the translation by $\overline u \in H^1(\F_v,\mu_n)$, the class associated with $u$. We denote by $(P \bmod \mathfrak{p}_v)_{\pi}$ the element of $\I_{\mu}X(\F_v)$ associated with the uniformizer $\pi$. Using the previous commutative diagram, the geometric points $(P \bmod \mathfrak{p}_v)_{\pi}\!\mid_{\overline{\F}_v}$ and $(P \bmod \mathfrak{p}_v)_{u\pi}\!\mid_{\overline{\F}_v}$ are isomorphic, which proves the result.
\end{proof}

By Lemma~\ref{prop_invariance_unramified_extension_reduction_map} and Lemma~\ref{lemma_reduction_map_choice_stacky_lift}, if $P \in X(\K_v)$, the connected component $\stackS \in \pi_0(\I_{\mu} X_{\sheafO_v})$ of the point $x \in \I_{\mu}X(\F_v)$ given by
$$
B\mu_{n,\F_v}
\xrightarrow{i_{\pi}}
\Spec(\sheafO_v[\sqrt[n]{v}])
\xrightarrow{\overline P}
X
$$
does not depend on the choice of the uniformizer $\pi \in \sheafO_v$, nor on the choice of a stacky lift
$$
\overline{P} \in X(\sheafO_v[\sqrt[n]{v}])
$$
of $P$. Thus the following definition (introduced by Darda and Yasuda in \cite[Definition 2.17]{darda2024batyrevmanin}) makes sense, after identifying $\pi_0(\I_{\mu} X_{\sheafO_S}) = \pi_0(\I_{\mu} X_{\K})$.

\begin{definition}\label{definition_residue_map}
For a finite place $v \not\in S$, we define a residue map
$$
\psi_v : X(\K_v) \rightarrow \sector
$$
which sends a $\K_v$-point to the unique sector corresponding to the connected component $\stackS \in \pi_0(\I_{\mu} X_{\sheafO_S}) = \sector$ containing the geometric point defined by $$P \mod \mathfrak{p}_v .$$
\end{definition}

\begin{remark}
By Proposition \ref{prop_invariance_unramified_extension_reduction_map} and by its definition, the value of $\psi_v(P)$ for $P \in X(\K_v)$ depends only on $P_{| \K_v^{\un}}$.
\end{remark}

% \begin{remark}
%    If $P \in X(\K)$ and $\mathcal{C} \xrightarrow{f} X$ is a stacky lift of $P$, because it is a tame stacky curve by \cite{satriano_bottom_up_characterization}, $$\mathcal{C} = \Spec(\sheafO_v[\sqrt[n']{v}]) .$$Then we get a natural point $\Tilde{x} \in \sheafI_{\mu} X (\F_v)$. By \cite[lemma 3.6]{2023_Bresciani}, the isomorphism class of $\Tilde{x}$ is still $P \mod \mathfrak{p}_v$. 
% \end{remark}

%\begin{question}
    %Can we show that the residue map does not depend on the choice of the stacky lift of a rational point ? If $P \in X(\K)$ and $\mathcal{C} \xrightarrow{f} X$ is a stacky lift of $P$, because it is a tame stacky curve by \cite{satriano_bottom_up_characterization}, $$\mathcal{C} = \Spec(\sheafO_v[\sqrt[n']{v}]) .$$If we denote by $\overline{P} \in X(\sheafO_v([\sqrt[n]{v}]))$ the universal stacky lift of $P$, using that it is terminal among the stacky lift of $P$, we get that $f$ factorizes through $\overline{P}$ to give a map:
    %$$\mathcal{C} \rightarrow \Spec(\sheafO_v[\sqrt[n]{v}]).$$Can we show that $n \mid n'$ and if we write $n' = n.d$, can we show that the previous map is the canonical map $$\Spec(\sheafO_v[\sqrt[n']{v}]) = \sqrt[d]{ \sqrt[n]{v}/\Spec(\sheafO_v[\sqrt[n]{v}]) } \rightarrow \Spec(\sheafO_v[\sqrt[n]{v}]) \ ?$$Here $ \sqrt[n]{v}$ means the root divisor of $\Spec(\sheafO_v[\sqrt[n]{v}])$.
%\end{question}

Using the description of the universal stacky lift of a rational point, we obtain the following proposition:

\begin{prop}
    The set $\left\{ v \not\in S \mid \psi_v(P) \neq 0 \right\}$ is finite. Moreover, if $S'$ contains this set and $S$, there exists $ \bar{P} \in X(\sheafO_{S'})  $ that extends $P$.
\end{prop}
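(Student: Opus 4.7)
The plan is to first use the general existence of an integral extension over some ring of $S$-integers to bound the bad places above, and then to shrink the bad set down to $S'$ by exploiting the global universal stacky lift together with the vanishing of $\psi_v$.

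For the finiteness part, I would invoke the earlier existence proposition (the Rydh-type result preceding Proposition \ref{prop_normalization_method_stacky_lift}) to obtain a finite set $T \supseteq S$ and an integral extension $\bar{P}_T \in X(\sheafO_T)$ of $P$. For any $v \notin T$, the restriction of $\bar{P}_T$ yields a morphism $\Spec(\sheafO_v) \to X$ extending $P$; by uniqueness in the stacky Hensel lemma, this is the local universal stacky lift, so its exponent $n$ equals $1$ and its closed fibre $\Spec(\F_v) \to X$ lies in the distinguished sector. Hence $\psi_v(P) = 0$ for every $v \notin T$, which proves the inclusion $\{v \notin S \mid \psi_v(P) \neq 0\} \subseteq T \setminus S$ and yields finiteness.

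For the second statement, I would construct the global universal stacky lift $p \colon \mathcal{C} \to \Spec(\sheafO_S)$ of $P$ provided by Proposition \ref{prop_normalization_method_stacky_lift} (starting from $U = \Spec(\sheafO_T)$), together with the representable morphism $\bar{P} \colon \mathcal{C} \to X$. The heart of the argument is to show that the restriction $p|_{\mathcal{C}_{\sheafO_{S'}}} \colon \mathcal{C}_{\sheafO_{S'}} \to \Spec(\sheafO_{S'})$ is an isomorphism. I would verify this at each closed point $v$ of $\Spec(\sheafO_{S'})$: either $v \notin T$, and then $p$ is already an isomorphism at $v$ because $\bar{P}_T$ provided the integral lift there; or $v \in T \setminus S'$, in which case necessarily $v \notin S$ (as $S \subseteq S'$), and the hypothesis on $S'$ forces $\psi_v(P) = 0$, so $\mathcal{C}_{\sheafO_v} = \Spec(\sheafO_v)$. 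Since $\mathcal{C}$ is a tame stacky curve over a Dedekind base and $p$ is a proper birational coarse space map, being an isomorphism at every closed point of $\Spec(\sheafO_{S'})$ upgrades to being an isomorphism over the whole of $\Spec(\sheafO_{S'})$. Composing the inverse with $\bar{P}$ produces the desired extension $\bar{P}' \in X(\sheafO_{S'})$.

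The delicate step I expect is the implication $\psi_v(P) = 0 \Rightarrow n = 1$ for the local stacky exponent. This rests on the fact that the exponent $r_b$ is locally constant on $\sheafI_{\mu} X$, so that the distinguished sector, identified with $\underline{\Hom}^{\rep}(B\mu_1, X) \simeq X$, cannot contain any representable morphism $B\mu_n \to X$ with $n > 1$ (representability forces the map $\mu_n \to \Aut(x)$ to be injective, so trivial action forces $n=1$). Combined with Proposition \ref{prop_sector_over_ring_of_integer} matching $\pi_0(\sheafI_{\mu} X_{\sheafO_v})$ with $\pi_0(\sheafI_{\mu} X_{\K_v})$, this transfers the vanishing of $\psi_v(P)$ from the generic point to the special fibre and thereby trivializes the local universal lift. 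Once this is secured, the remaining gluing on the Dedekind scheme $\Spec(\sheafO_{S'})$ is routine.
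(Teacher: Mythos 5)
Your proof is correct and matches the approach the paper intends (the paper states the proposition without proof, remarking only that it follows "using the description of the universal stacky lift"). You correctly combine the Rydh-type existence of an $\sheafO_T$-point for some finite $T\supseteq S$, the uniqueness in the stacky Hensel lemma, the local constancy of the sector exponent $r_b$ (so that the distinguished sector forces $n=1$), and the normalization description of the global universal lift to conclude. The "delicate step" you flag — that $\psi_v(P)=0$ forces the local lift to be $\Spec(\sheafO_v)$ itself — is argued properly, since representability of $B\mu_n \to X$ forces $\mu_n \hookrightarrow \Aut(x)$ and the exponent is locally constant on $\sheafI_\mu X$.

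Two small remarks. First, your final gluing step does not actually require that $\mathcal{C}$ be a \emph{tame} stacky curve: once all geometric stabilizers over $\Spec(\sheafO_{S'})$ are trivial, $\mathcal{C}_{\sheafO_{S'}}$ is an algebraic space which is proper, quasi-finite, normal, and birational over the Dedekind scheme $\Spec(\sheafO_{S'})$, hence equals $\Spec(\sheafO_{S'})$; invoking the root-stack description is more than you need. Second, and this is really a subtlety in the paper's own statement rather than in your proof: the residue map $\psi_v$ of Definition \ref{definition_residue_map} is only defined for $v$ at which $X_{\sheafO_v}$ is tame (equivalently $v\nmid N_X$). If a wild place $v$ happens to lie in $T\setminus S'$, your case analysis has no way to conclude $\mathcal{C}_v=\Spec(\sheafO_v)$ since $\psi_v(P)$ is not defined there in the tame framework. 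In practice one should read the proposition as implicitly enlarging $S'$ to also contain the finitely many wild places (guaranteed finite by the "almost tame" hypothesis), or appeal to the wild residue map constructed later in paragraph \ref{paragraph_wild_residue_map}; with that convention your argument is complete.
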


We finish this paragraph by presenting a transitive property of the residue map (see \cite[proposition 2.20]{darda2024batyrevmanin}).  Let
$$
X \xrightarrow{f} Y
$$
be a morphism of proper normal algebraic stacks with finite diagonal over $\sheafO_v$. There is a functorially induced morphism
$$
\sector \xrightarrow{f} \pi_0(\sheafI_{\mu} Y),
$$
which we also denote by $f$. Assume that $X$ and $Y$ have the property that any rational points admits a universal stacky lift of the form
$$
\Spec(\sheafO_v[\sqrt[n]{v}])
$$
for some $n \in \N^*$. Then we have the following. 

 \begin{prop}\label{prop_functoriality_residue_map}
For any $P \in X(\K_v)$, $$f \left( \psi_v(P) \right) = \psi_v \left( f(P) \right)  \, .$$
 \end{prop}

\begin{proof}
Let
$$
\Spec(\sheafO_v[\sqrt[n]{v}])
\xrightarrow{\overline P}
X
$$
be a universal stacky lift of $P \in X(\K_v)$. Then $f \circ \overline P$ is a stacky lift of $f(P) \in Y(\K_v)$. The proposition therefore follows from Lemma~\ref{lemma_reduction_map_choice_stacky_lift} and from the definition of the residue map $\psi_v$.
\end{proof}

\section{The age pairing and the orbifold Picard group}

\subsection{The age pairing}

For any $n \in \N^*$, for any connected scheme $T$, we have the following canonical isomorphism: $$\Pic(B \mu_{n,T}) = \Hom_{\grp}(\mu_{n,T},\G_{m,T}) = \frac{1}{n} \Z / \Z .$$Hence, $$\Pic( B \widehat{\mu}_T ) = \underset{\rightarrow}{\text{colim}} \Pic(B \mu_{n,T}) = \Q / \Z .$$

\begin{definition}\label{definition_age_pairing}
    The age pairing over $T$ is defined as follows:
    \begin{align*}
        &\age : \ \sheafI_{\mu} X(T) \times \Pic(X) \rightarrow \Q / \Z \\
        &\left( f : B \mu_{n,T} \rightarrow X , L \right) \longmapsto f^* L .
    \end{align*}
\end{definition}

By \cite[paragraph 7.1]{abramovich2008gromovwittentheorydelignemumfordstacks} or \cite[lemma 2.22]{darda2024batyrevmanin}, $\age(f,L)$ depends only on the connected component to which $f$ belongs, hence the age pairing may be re-written as:
\begin{align*}
    \sector &\times \Pic(X) \rightarrow \Q / \Z \\
    (\stackS&,L) \longmapsto \age(\stackS,L) .
\end{align*}

\begin{prop}\label{proposition_age_pairing_morphism_functoriality}
   The age pairing is functorial: for $X \xrightarrow{f} Y$ a morphism of nice DM stacks, if we denote again by $f$ the induced morphism $\sector \rightarrow \pi_0(\sheafI_{\mu} Y)  $ then for any $\stackS \in \sector$ and for any $L \in \Pic(Y)$, $$\age(f(\stackS),L) = \age(\stackS,f^* L) .$$
   
   Moreover, it defines a natural map $\sector \rightarrow \Hom_{\grp}(\Pic(X),\Q/\Z)$.
\end{prop}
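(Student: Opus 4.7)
Both statements reduce to a direct unwinding of the definitions, combined with the functoriality of pullback on Picard groups and with the fact (recalled in the paragraph preceding the proposition) that the age pairing depends only on the connected component of the representing morphism.

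For the first statement, I would fix a representative of $\stackZ \in \sector$: a representable morphism $g : B \mu_{n,T} \rightarrow X$ whose connected component is $\stackZ$. By Proposition \ref{pull_back_map_for_sectors}, the induced morphism $\sector \rightarrow \pi_0(\sheafI_{\mu} Y)$ is given by post-composition with $f$, so $f(\stackZ)$ is the connected component of the composite $f \circ g : B \mu_{n,T} \rightarrow Y$. Unwinding the definition on both sides and using that the pullback of invertible sheaves is compatible with composition of morphisms yields
$$ \age(f(\stackZ), L) = (f \circ g)^* L = g^*(f^* L) = \age(\stackZ, f^* L),$$
which is the desired functoriality.

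For the second statement, I would keep $\stackZ \in \sector$ represented by $g : B \mu_{n,T} \rightarrow X$. The pullback $g^* : \Pic(X) \rightarrow \Pic(B \mu_{n,T}) = \frac{1}{n} \Z / \Z$ is a group homomorphism, and composing it with the inclusion $\frac{1}{n} \Z / \Z \hookrightarrow \Q / \Z$ shows that $L \mapsto \age(\stackZ, L)$ is a group homomorphism. Independence from the choice of representative $g$ follows from the previously cited fact that the age pairing depends only on the connected component. The naturality of the resulting map $\sector \rightarrow \Hom_{\grp}(\Pic(X), \Q / \Z)$ with respect to a morphism $f : X \rightarrow Y$ of nice DM stacks is exactly the commutativity of the square obtained by dualizing $f^* : \Pic(Y) \rightarrow \Pic(X)$, and this commutativity is precisely the content of the first statement.

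I do not anticipate any real obstacle: the argument is mechanical once one accepts the independence from the representative and the formula of Proposition \ref{pull_back_map_for_sectors} for the induced map on sectors. The only minor point worth making explicit is that $g^* L$ truly lives in $\frac{1}{n} \Z / \Z \subset \Q / \Z$, but this is built into the identification $\Pic(B \mu_{n,T}) = \frac{1}{n} \Z / \Z$ recalled at the very beginning of the section.
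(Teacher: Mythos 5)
The paper states this proposition without proof, so there is no argument to compare against directly; your proof is a correct definitional unwinding and is presumably the one the authors had in mind when they omitted it.

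One small point worth tightening. You assert that, by Proposition \ref{pull_back_map_for_sectors}, the map $\sector \rightarrow \pi_0(\sheafI_{\mu} Y)$ is given by post-composition with $f$. If $g : B\mu_{n,T} \rightarrow X$ is a representable morphism representing $\stackZ$, the composite $f \circ g : B\mu_{n,T} \rightarrow Y$ need not itself be representable: the induced map $\mu_n \rightarrow \underline{\Aut}(f(g(\cdot)))$ can have a kernel $\mu_m$, and what actually lands in $\sheafI_{\mu} Y$ is the representable quotient $g' : B\mu_{n/m,T} \rightarrow Y$ through which $f \circ g$ factors. This does not affect your computation, because the covering $p : B\mu_{n,T} \rightarrow B\mu_{n/m,T}$ induces on Picard groups the inclusion $\tfrac{1}{n/m}\Z/\Z \hookrightarrow \tfrac{1}{n}\Z/\Z$, so inside $\Q/\Z$ one has $(g')^{*}L = p^{*}(g')^{*}L = (f\circ g)^{*}L$ and the chain of equalities you wrote is still valid. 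A cleaner way to sidestep this entirely is to use the identification $\sheafI_{\mu}X = \underline{\Hom}(B\widehat{\mu},X)$ recalled from Darda--Yasuda in the paper: there post-composition with $f$ literally is the induced map, no representability condition is imposed, and the age is the pullback to $\Pic(B\widehat{\mu}) = \Q/\Z$, so functoriality is immediate from $(f\circ g)^{*} = g^{*} \circ f^{*}$. Your second paragraph, on the group-homomorphism property and naturality, is fine as written.
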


\begin{remark}
    Recall that we denote by $0 = [X] = [ \Hom^{\rep}(B \mu_{1},X) ]$ the trivial connected component in $\sector$. For any $L \in \Pic(X)$, $\age(0,L) \in \frac{1}{1}. \Z / \Z$ i.e $\age(0,-)$ is the zero morphism.
\end{remark}

\begin{remark}
    By functoriality, we have that for any $L \in \Pic(X^{\coarse})$ and for any $\stackY \in \sector$, if we denote by $X \xrightarrow{p} X^{\coarse}$ the coarse map,  $$\age(\stackY,p^* L) = 0 .$$
\end{remark}

We shall see later (see corollary \ref{corollary_sectors_dual_picard_group_toric_stacks}) that for toric stacks the map $$\sector \rightarrow \Hom_{\grp}(\Pic(X),\Q/\Z)$$ is an injection. This is not true in general for example in the case of the classifying stack $B G$ over $\C$, associated to an abstract finite group $G$, this is true if and only if $G$ is abelian. Indeed, this is a consequence of the fact that irreducible characters and characteristic functions of conjugacy classes are two bases of the $\C$-vector space of class functions $Z( \C[G] )$. 

Nevertheless, we may try to extend corollary \ref{corollary_sectors_dual_picard_group_toric_stacks} as follows. Let $R \widehat{\mu}$ denote the ring of continuous representation of $\widehat{\mu}$ over $\C$. By definition of $\widehat{\mu}$,  $$R \widehat{\mu} = \underset{\rightarrow}{\text{colim}} R \mu_n .$$ For any representation $\mathcal{V}$ of $\mu_n$, we have $$\mathcal{V} = \bigoplus\limits_{i = 1}^r \tau_n^{a_i}$$where $\tau_n : \mu_n \rightarrow \G_m$ is the regular representation, hence we have the first Chern class $$c_1 : R \mu_n \rightarrow \frac{1}{n} \Z / \Z $$ which maps $\mathcal{V}$ to $\sum\limits_{i = 1}^r \frac{a_i}{n} \mod \Z.$ Taking the colimit, it gives the map $$c_1 : R \widehat{\mu} \rightarrow \Q / \Z .$$ If we pull-back a vector bundle/coherent sheaf $\sheafE$ over $X$ by a sector $b \in \sector$ and we apply the Chern class, we get a natural map:
$$\sector \rightarrow \Hom(K_0(X),\Q/\Z) $$which may be injective. It is at least true for toric stacks and for $B G$. The fact that it is true for $B G$ over $\C$ is simply because for any representation $\mathcal{V}$ of $G$ a finite abstract group, the trace of the pull-back of a sector of $B G$ corresponding to a conjugacy class $C$ is the value of the character of the representation $\chi_{\mathcal{V}}(C)$ seen as an element of $\Q / \Z$ once we have chosen a primitive $n$-th root of unity for each $n \in \N^*$.

\subsection{The numerical value associated to the age pairing}

We shall need the possibility to associate a numerical value to the $\age$. We follow \cite[paragraph 2.3]{darda2024batyrevmanin}. To a geometric point $\Tilde{x} \in \sheafI_{\mu} X(\F)$ (where $\F$ is an algebraically closed field), we associate a geometric point $x \in X(\F)$ and an injective group morphism $$\phi : \mu_{l} \hookrightarrow \underline{\Aut}(x)$$ over $\F$ for some $l \in \N^*$.

Let $E \rightarrow X$ be a vector bundle of rank $r$. Then we have a natural representation of $\mu_l$ associated with $\Tilde{x}$ given by $$\rho_{\Tilde{x}} : \mu_l \xrightarrow{\phi} \underline{\Aut}(x) \rightarrow \text{GL}_r( E(x)) .$$ If $\tau$ denotes the standard representation $\mu_l \rightarrow \G_m$ then there exists an isomorphism $$\rho_{\Tilde{x}} \simeq \bigoplus\limits_{i = 1}^r \tau^{a_i}$$ with $a_i \in \{ 0,..,l-1 \}$.

\begin{definition}
    The numerical age is defined by
    $$\age_{\num}(\Tilde{x},E) = \frac{1}{l} \sum\limits_{i = 1}^r a_i .$$
\end{definition}

By \cite[lemma 2.22]{darda2024batyrevmanin}, it depends only on the connected component to which $\Tilde{x}$ belongs.

\begin{remark}
    For a line bundle $L$ and a sector $\stackS \in \sector$, $\age_{\num}(\stackS,L)$ is the unique representative of $\age(\stackS,L)$ with value in $\Q \cap [0,1[.$ More generally, we have that for any $b \in \sector$ and any vector bundle $E$, $$c_1(b^* E) \equiv \age_{\num}(b,E) \mod \Z .$$
\end{remark}

\subsection{The orbifold Picard group}

\subsubsection{Definition}

For $\mathcal{C} \subset \twistsector$, the construction of the $\mathcal{C}$-extended Picard group $\Pic_{\mathcal{C}}(\stackX)$ was introduced in \cite[Definition 17]{Coates_Corti_Iritani_Tseng_miror_theorem_toric_stack} and was later rediscovered independently by Loughran and Santens in \cite[Definition 3.8]{loughran_santens_malle_conjecture}. 

\begin{definition}\label{definition_orbifold_picard_group}
The $\mathcal{C}$-extended Picard group $\Pic_{\mathcal{C}}(X)$ is defined as the group
of pairs $(L,\varphi)$, where $L \in \Pic(X)$ and
\[
\varphi : \mathcal{C} \longrightarrow \Q,
\]
such that for any $\stackS \in \mathcal{C}$,
\[
\varphi(\stackS) \equiv \age(\stackS,L) \pmod{\Z}.
\]

Equivalently, $\Pic_{\mathcal{C}}(X)$ is the kernel of the morphism
\[
\Pic(X)
\oplus
\Q^{\mathcal{C}}
\;\longrightarrow\;
\bigoplus_{\stackS \in \mathcal{C}} \Q / \Z,
\]
defined by
\[
\left( L,\,(q_{\stackS})_{\stackS \in \mathcal{C}} \right)
\longmapsto
\left( \age(\stackS,L) - [q_{\stackS}] \right)_{\stackS \in \mathcal{C}}.
\]

When $\mathcal{C} = \twistsector$, the $\twistsector$-extended Picard group is called
the \emph{orbifold Picard group} and is denoted by $\Pic_{\orb}(X)$.
\end{definition}

\begin{remark}
    In particular, we recover the construction of Darda and Yasuda of the orbifold Néron-Severi space defined in \cite[definition 8.1]{darda2024batyrevmanin}, since we have:
    $$\Pic_{\orb}(X)_{\R} \simeq \Pic(X)_{\R} \oplus \R^{\twistsector} .$$
\end{remark}

\begin{Notations}\label{notation_orbi_line_bundle_associated_to_a_sector}
For $\stackS \in \twistsector$, we denote by $[\stackS]$ the class in
$\Pic_{\orb}(X)$ given by $(\sheafO_X,\delta_{\stackS})$, where
\[
\delta_{\stackS} : \twistsector \longrightarrow \Q
\]
is the indicator function of $\{\stackS\}$, that is, for any
$\stackS' \in \twistsector$,
\[
\delta_{\stackS}(\stackS') =
\begin{cases}
1, & \text{if } \stackS' = \stackS,\\
0, & \text{otherwise}.
\end{cases}
\]
\end{Notations}

\begin{prop}\label{prop_exact_sequence_extended_picard_group}
    For $\mathcal{C} \subset \twistsector$, we have the following natural exact sequence:
    $$0 \rightarrow \Hom(\mathcal{C},\Z) \rightarrow \Pic_{\mathcal{C}}(X) \rightarrow \Pic(X) \rightarrow 0 .$$
\end{prop}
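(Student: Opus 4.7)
The plan is to unpack the definition of $\Pic_{\mathcal{C}}(X)$ as the kernel of the age-plus-projection map, and then read off the short exact sequence essentially from the two obvious maps: the forgetful projection to $\Pic(X)$, and the inclusion of integer-valued functions $\mathcal{C}\to\Z$ as pairs of the form $(\sheafO_X,\varphi)$.

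First I would define the map $\Pic_{\mathcal{C}}(X) \rightarrow \Pic(X)$ by $(L,\varphi) \mapsto L$; this is a group homomorphism by construction. Next, using that $\mathcal{C}$ is just a set of sectors, I would identify $\Hom(\mathcal{C},\Z)$ with $\Z^{\mathcal{C}}$ (functions $\mathcal{C}\to\Z$) and define $\Z^{\mathcal{C}} \rightarrow \Pic_{\mathcal{C}}(X)$ by $\varphi \mapsto (\sheafO_X,\varphi)$. Since $\age(b,\sheafO_X)=0$ for every $b$, the integrality constraint $\varphi(b)+\age(b,\sheafO_X)\in\Z$ is automatic, so the map is well-defined, and it is clearly injective.

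Exactness in the middle is then a direct verification: a pair $(L,\varphi)$ lies in the kernel of the forgetful map exactly when $L=\sheafO_X$, in which case the constraint $\varphi(b)\in\Z$ forces $\varphi\in\Z^{\mathcal{C}}$, matching the image of the left map. For surjectivity of the right map, the key point is that one can always construct a section-like lift: given $L\in\Pic(X)$, set $\varphi_L(b) := -\age_{\num}(b,L)$. Because $\age_{\num}(b,L)$ is a representative in $\Q$ of $\age(b,L)\in\Q/\Z$, the integrality condition $\varphi_L(b)+\age(b,L)\in\Z$ holds, so $(L,\varphi_L)=[L]_{\stack}\in\Pic_{\mathcal{C}}(X)$ maps to $L$. (One should note that $L\mapsto [L]_{\stack}$ is merely a set-theoretic section, not a group homomorphism, because $\age_{\num}$ does not respect addition modulo $\Z$; but this is all that is needed for surjectivity.)

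There is no real obstacle here: the whole statement is essentially a tautology from the definition as a kernel, together with the observation that the projection $\Pic(X)\oplus\bigoplus_{b\in\mathcal{C}}r_b^{-1}\Z \to \Pic(X)$ restricted to the kernel is still surjective, which follows from the existence of the lift $\varphi_L$ above. The only point requiring a small amount of care is keeping track of $\Hom(\mathcal{C},\Z)$ as a set of functions versus as a group, and verifying that the three maps fit together into a functorial exact sequence in $\mathcal{C}$ (which would be useful later, though not claimed here).
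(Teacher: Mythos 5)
Your proof is correct, and it is the direct verification the paper leaves implicit: the paper states this proposition without proof, and your argument (forgetful map, inclusion of integer-valued functions via $(\sheafO_X,\varphi)$, middle-exactness from $\age(b,\sheafO_X)=0$, surjectivity via the set-theoretic lift $L\mapsto[L]_{\stack}$) is exactly what the definition of $\Pic_{\mathcal{C}}(X)$ as a kernel makes unavoidable. Your parenthetical remark that $L\mapsto[L]_{\stack}$ is only a set-theoretic section is also correct and is consistent with how the paper later (Proposition \ref{prop_basis_of_orbifold_picard_group}) builds a genuine group section by fixing a basis of $\Pic(X)$ and extending linearly.
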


Loughran and Santens showed the remarkable fact that the orbifold Picard group of the classifying stack $B G$ is torsion free (see \cite[lemma 3.13]{loughran_santens_malle_conjecture}). In this article, we prove that this is also the case for toric stacks (see Theorem \ref{appendix_a_theorem_freeness_orbifold_picard_group} and its corollary).

\subsubsection{The orbifold degree}\label{paragraph_orbifold_degree}

Let us suppose here that we work over a field of characteristic 0. Let $f : \left(\mathcal{C},P_1,..,P_k\right) \rightarrow X$ be an orbifold stable map (see \cite[section 4.3]{abramovich2008gromovwittentheorydelignemumfordstacks}). For each geometric point $P_i$ with non trivial stabilizer $\mu_{a_i}$, we associate $\psi_{P_i}(f) \in \twistsector$ the corresponding twisted sector given by the natural map:
$$ B \mu_{a_i} \rightarrow \mathcal{C} \rightarrow X .$$

\begin{definition}\label{definition_degree_stacky_curve}
    For $(L,\varphi)$ an element of the orbifold Picard group $\Pic_{\orb}(X)$, we define the associated orbifold degree by:
$$ \deg_{\orb}\left(f^*(L,\varphi)\right) = \deg_{\mathcal{C}}(f^* L) + \sum\limits_{i = 1}^k \varphi(\psi_{P_i}(f)) .$$
\end{definition}

%\begin{remark}
    %We may remark that the orbifold degree associated to $f^*(L,\varphi)$ does not take into account all the twisted sectors of $\mathcal{C}$ but only one for each $P_i$.
%\end{remark}

% We have the following Riemann-Roch theorem for orbifold stable curves from \cite[theorem 7.2.1]{abramovich2008gromovwittentheorydelignemumfordstacks}:
%
% \begin{theorem}
%     $$ \chi(L) - \chi(\sheafO_{\mathcal{C}}) = \deg_{\orb}(f^* [L]_{\stack} ) = \deg_C(\pi_* f^* L)$$
%     where $\pi : \mathcal{C} \rightarrow C$ is the coarse map and $\chi(-)$ denotes the usual Euler characteristic.
% \end{theorem}
%
% \begin{remark}
%     For any invertible sheaf $\sheafE$ over $\mathcal{C}$ and for each $i \geqslant 0$, we have $H^i(\mathcal{C},\sheafE) = H^i(C,\pi_* \sheafE)$ as $\pi_*$ is exact (see \cite[chapter 11]{olsson2016algebraic}).
% \end{remark}
%
% \begin{remark}
%     The previous theorem explains in particular how one can interpret the family of heights studied in \cite{ellenberg2022heights}.
% \end{remark}

\subsubsection{The pull-back between orbifold Picard groups associated to a morphism of DM stacks}

Let $f : X \rightarrow Y$ be a morphism of Deligne-Mumford stacks. Let us denote again by $f$ the induced map $ \sector \rightarrow \pi_0( \sheafI_{\mu} Y)$. We may use this to define a pull-back between the orbifold Picard groups associated with $f$.

\begin{definition}
    For $(L,\varphi) \in \Pic_{\orb}(X)$, we define the class $f^*(L,\varphi) \in \Pic_{\orb}(Y)$ by:
    $$f^*(L,\varphi) = (f^*L,\varphi \circ f) .$$
\end{definition}

\subsubsection{The orbifold anti-canonical bundle}

\begin{Notations}\label{notation_numerical_age_tangent_bundle}
If $\stackS \in \twistsector$, we write $\age_{\num}(\stackS)$ for $\age_{\num}(\stackS,\text{TX})$ where $TX$ is the tangent vector bundle associated with $X$, that is to say the vector bundle whose sheaf of sections is $\Omega_{X/K}$.  
\end{Notations}

Following \cite[Definition 9.1]{darda2024batyrevmanin}, we give the following definition:

 \begin{definition}\label{definition_anti_canonical_orbifold}
     The class of the orbifold anticanonical line bundle $\w_{\orb,X}^{-1} $ in $\Pic_{\orb}(X)_{\R}$ is defined as the class given by $(\w_{X}^{-1},\varphi)$ where \begin{align*}
         \varphi : &\twistsector \rightarrow \Q \\
         &\stackS \longmapsto 1 - \age_{\num}(\stackS) .
     \end{align*}
\end{definition}

% For any $\stackS \in \sector$, we have:
%     \begin{align*}
%         \age(\stackS,\w_{X}^{-1}) &\equiv - \age(\stackS,\w_X) \mod \Z \\
%         &\equiv - \age(\stackS,\text{TX}) \mod \Z .
%     \end{align*}
% because $\w_X = \det(T X)$. Hence we conclude that $$\w_{\orb,X}^{-1} \in \Pic_{\orb}(X) .$$

\subsubsection{The cone of orbifold effective divisors}

 To extend the conjecture of Peyre from \cite[question 4.8]{Peyre_beyond_height} to nice Deligne Mumford stack, we need a general definition for the cone of orbifold effective divisors. It was introduced by Darda and Yasuda in \cite[section 8]{darda2024batyrevmanin}. We follow their presentation in this paragraph. Let $X$ be a nice stack over a number field $\K$ and $\overline{X}$ its base change over $\overline{K}$. Let us recall first the definition of a covering family of stacky curves:

 \begin{definition}
     A covering family of stacky curves of $\overline{X}$ is a pair $$\left( \pi : \tilde{\mathcal{C}} \rightarrow T,  \tilde{f} : \tilde{\mathcal{C}} \rightarrow \overline{X} \right)$$ such that:
     \begin{enumerate}
         \item $\pi$ is smooth and surjective,
         \item $T$ is an integral scheme of finite type over $\overline{\K}$,
         \item for each point $t \in T(\overline{\K})$, the morphism $$\tilde{f}_{\mid \pi^{-1}(t)} : \pi^{-1}(t) \rightarrow \overline{X} $$is a stacky curve on $\overline{X}$.
         \item $\tilde{f}$ is dominant.
     \end{enumerate}
 \end{definition}

By \cite[lemma 8.6]{darda2024batyrevmanin}, there exists an open dense subscheme $T_0 \subset T$ such that for any $(L,\varphi) \in \Pic_{\orb}(X)_{\R}$ the associated orbifold degree (see definition \ref{definition_degree_stacky_curve}) via the stacky curve $\tilde{f}_{\mid \pi^{-1}(t)}$ does not depend on the choice of $t \in T_0(\overline{\K})$. Hence we can write the following definitions:

\begin{definition}\label{definition_orbifold_intersection_number}
    For a covering family $\left( \pi : \tilde{\mathcal{C}} \rightarrow T,  \tilde{f} : \tilde{\mathcal{C}} \rightarrow \overline{X} \right)$ and a $\R$-orbifold line bundle $\theta = (L,\varphi)$ we define the intersection number $(\tilde{f},\theta)$ as the orbifold degree given by $$\deg_{\orb}( \tilde{f}_{\mid \pi^{-1}(t)}^* \theta )$$where $t \in T(\overline{\K})$ is a general point.
\end{definition}

\begin{definition}\label{definition_cone_orbifold_effective_divisors}
    The cone of orbifold effective divisors $\coneorb$ is defined as the cone of elements $\theta \in \Pic_{\orb}(X)_{\R}$ such that for any covering family of stacky curves $\left( \pi : \tilde{\mathcal{C}} \rightarrow T,  \tilde{f} : \tilde{\mathcal{C}} \rightarrow \overline{X} \right)$ the intersection number is positive that is to say $$( \tilde{f},\theta) \geqslant 0 .$$
\end{definition}

\subsection{Multi-height over Deligne-Mumford stacks}

\begin{Notations}
   Let $X$ be a nice DM stack over a number field $\K$. We denote by $X^{\coarse}$ the coarse moduli space of $X$. With our hypotheses, it is a projective $\K$-scheme and it is $\Q$-factorial, with only quotient singularities.
\end{Notations}

%We begin by recalling briefly how to define heights over projective $K$-scheme that are $\Q$-factorial. Then we present the new point of view of Darda and Yasuda.

\subsubsection{Systems of heights over the coarse space}

\begin{Notations}
    We denote by $M_K$ the set of places of $\K$, by $M_K^0$ the set of finite places and by $M_K^{\infty}$ its infinite places.

    If $v \in M_K^0$, we write $q_v = \sharp \F_v$ the cardinal of the residual field associated to $v$, while if $v \in M_K^{\infty}$, we set $q_v = \exp([\K_v:\R])$.

    To $w \in M_K$ which divides $v \in M_{\Q}$, we define $|-|_w $ over $\K_w$ by setting for any $x \in \K_w$ :
    $$ | x |_w = | N_{\K_w/\Q_v}(x) |_v .$$This is an absolute value except when $\K_w$ is $\C$ in which case it is the square of an absolute value.

    To a line bundle $L$ over a scheme $T$, we associate the inversible sheaf $\sheafL$ such that for any open subscheme $U \subset T$, $$\Gamma(U,\sheafL) = \text{Mor}_T(U,L) .$$We denote by $L^{\times} = L \setminus z(T)$, where $z(T)$ is the closed subscheme given by the zero section.
\end{Notations}

We begin by defining the notion of an adelic norm over a vector bundle.

\begin{definition}
    Let $E \xrightarrow{e} X^{\coarse}$ be a vector bundle over $X^{\coarse}$.

   For any field extension $\Le$ over $\K$, for any $P \in X^{\coarse}(\Le)$, we denote by $E_P \subset E(\Le)$ the $\Le$-vector space which corresponds to the fiber $e^{-1}(P)$ of $e$ over $P$.
   
   An adelic norm $E$ over $e$ is the data of a family of continuous applications $(||-|| )_{v \in M_K}$ 
   $$ ||-||_v : E(\K_v) \rightarrow \R_+$$
   such that:
\begin{enumerate}
    \item If $v \in M_K^0$, for any $P \in X^{\coarse}(\K_v)$, the restriction of $||-||_v$ to $E_P$ defines an ultrametric norm with values in the image of the absolute value $|-|_v$ i.e $q_v^{\Z}$.

    \item If $v$ is a real place, for any $P \in X^{\coarse}(\K_v)$, the restriction to $E_P$ is an Euclidean norm.

    \item If $v$ is a complex place, for any $P \in X^{\coarse}(\K_v)$, the restriction to $E_P$ is the square of an Hermitian norm.

    \item There exists a finite number of places $S \subset M_K$ (containing $M_K^{\infty}$) and a model $\mathcal{E} \rightarrow \mathcal{X}^{\coarse}$ of $E \rightarrow V$ over $\sheafO_S$ such that for any place $v \in M_K \setminus S$, for any $P \in \mathcal{X}^{\coarse}(\sheafO_v)$,
    $$ \mathcal{E}_P = \{ y \in E_P | \ \ ||y||_v \leqslant 1 \}$$
    
\end{enumerate}

\end{definition}

\begin{remark}
    For projective, $\Q$-factorial and split toric varieties over $\Q$ such a model always exists over $\Z$.\footnote{A model of the canonical stack (see \cite[Definition 4.4]{fantechi_toric_stack}) associated with the variety is well-defined over $\Z$, as a quotient stack, so the coarse space still exists and gives a model of the toric variety.}
\end{remark}

\begin{definition}
    If $L \rightarrow X^{\coarse}$ is a line bundle, $\left( ||-||_v \right)$ is an adelic norm over $L$, then we define the height $H^{\coarse}_L : X^{\coarse}(\K) \rightarrow \R_+$ for any $x \in X^{\coarse}(\K)$ such that:
    $$H^{\coarse}_L(x) = \prod\limits_{v \in M_K} ||y||_v^{-1}$$where $y \in L^{\times}(\K)$ lies over $x$.
\end{definition}

Now, we recall the definition of the group of Arakelov heights $\mathcal{H}(X^{\coarse})$ (see \cite[Definition 4.1]{Peyre_beyond_height}) over $X^{\coarse}$:

\begin{definition}
    Let $L$ and $L'$ be two adelic line bundles over $X^{\coarse}$ (i.e. equipped with an adelic norm). Let $(||-||_v)$ be the adelic norm on $L$.

    We say that $L$ and $L'$ are equivalent as adelic line bundles if there exist $M > 0$ and a family $(\lambda_v)_v$ of non-negative real numbers such that:
    \begin{enumerate}
        \item $\{v | \lambda_v \neq 1\}$ is finite and $\prod\limits_v \lambda_v = 1$
        \item there exists an isomorphism of adelic line bundles from $L^{\otimes M}$ to $L'^{\otimes M} $, where the adelic norm on $L^{\otimes M}$ is induced by the adelic norm on $L$ given by $\{ (\lambda_v . ||-||_v)_{v \in M_K} \} $.
    \end{enumerate}
    We denote by $\mathcal{H}(X^{\coarse})$ the set of equivalence classes of adelic line bundles. This set is naturally equipped with a group structure given by the tensor product of adelic line bundles.
\end{definition}Hence, we can define the notion of system of heights.

\begin{definition}
    A \textbf{system of heights} over $X^{\coarse}$ corresponds to the data of a section $s : \Pic(X^{\coarse}) \rightarrow \mathcal{H}(X^{\coarse})$ of the forgetful morphism $$ \mathrm{o} : \mathcal{H}(X^{\coarse}) \rightarrow \Pic(X^{\coarse}) \, .$$
\end{definition}

\subsection{Multi-heights over nice stacks}

Heights over nice Deligne-Mumford stacks were defined by Darda and Yasuda in \cite{darda2024batyrevmanin}. We recall their definition here. An important result (see \cite[proposition 3.2]{darda2024batyrevmanin}) is that the coarse map $X \xrightarrow{p} X^{\coarse}$ induces a $\Q$-isomorphism:
$$ \Pic(X^{\coarse})_{\Q} \rightarrow \Pic(X)_{\Q}  .$$To define a stacky multi-height over $X$, we assume that we have fixed a system of Arakelov heights over $X^{\coarse}$ so that for each line bundle $L$ over $ X^{\coarse}$, we have a representative $(||-||_v)_{v \in M_K}$ of an adelic metric on $L$. We can now define the coarse multi-height on $X(\K)$ (called \emph{stable height} in \cite{darda2024batyrevmanin}) as follows.

\begin{definition}\label{definition_coarse_multi_height_map}
    We define
    \[
        h_{\coarse} : X(\K) \to \Pic(X)^{\vee}_{\R}
    \]
    such that for any $P \in X(\K)$, $h_{\coarse}(P)$ is the linear form defined as follows: for any $L \in \Pic(X)_{\R}$, if $L'$ is the unique element of $\Pic(X^{\coarse})_{\R}$ such that $p^* L' = L$, then we set
    \[
        h_{\coarse}(P)(L) = \log H^{\coarse}_{L'}(p(P)).
    \]
\end{definition}

To define the stacky multi-height, we need the notion of adelic stacky data:

\begin{definition}\label{definition_adelic_stacky_data}
    An adelic stacky data over $X$ associated with an element $(L,\varphi)$ of the orbifold Picard group of $X$ is the data of a family of continuous maps $$\{\varphi_v : X(\K_v) \rightarrow \R \}_{v \in M_K^0}$$ such that for all $v \in M_K^0$, except for a finite set of places containing the places $v \mid N_X$, we have:
    $$ \varphi \circ \psi_v = \varphi_v $$where $\psi_v$ is the residue map from definition \ref{definition_residue_map} and $\varphi : \twistsector \rightarrow \Q$ is extended to $0 \in \sector$ by setting $\varphi(0) = 0$.
\end{definition}

For the rest of this paragraph, we shall suppose that we have fixed an adelic stacky data for a basis of $\Pic_{\orb}(X)$.

\begin{definition}\label{definition_stacky_height}
    Let $(L,\varphi) \in \Pic_{\orb}(X)$. Then the logarithm height associated to a pair $$(L,\varphi) \in \Pic_{\orb}(X)_{\R}$$ with our convention of a system of heights over $X^{\coarse}$ and of an adelic stacky data for each element of the orbifold Picard group is:
    $$ h(P)(L,\varphi) = h_{\coarse}(P)(L) + \sum\limits_{v \in M_K^0} \varphi_v(P) . \log(q_v) .$$
\end{definition}

\begin{Notations}
    For $(L,\varphi) \in \Pic_{\orb}(X)$, we write for any $P \in X(\K)$, $$H_{L,\varphi}(P) = e^{ h(P)(L,\varphi) }$$for the exponential height. For $\stackZ \in \twistsector$, we shall write $H_{\stackZ}$ for the exponential height $H_{[\stackZ]}$ where $[\stackZ]$ is the associated element in $\Pic_{\orb}(X)$.
\end{Notations}

\begin{prop}\label{proposition_transivity_property_of_stacky_heights}
    This definition behaves very well by pull-back, that is to say that if $(L,\varphi) \in \Pic_{\orb}(X)_ {\R}$ and $f : Y \rightarrow X$ is a morphism then we have:
    $$h(P)(f^*(L,\varphi)) = h(f(P))(L,\varphi) .$$
\end{prop}

\begin{proof}
    This was shown in \cite[proposition 4.4]{darda2024batyrevmanin} using proposition \ref{prop_functoriality_residue_map} (see also \cite[proposition 2.20]{darda2024batyrevmanin}).
\end{proof}

As the previous definition is additive, we get a natural map, called the stacky multi-height:
$$ h : X(\K) \rightarrow \Pic_{\orb}(X)^{\vee}_{\R}  .$$

%Among the heights we may construct, there are more practical heights we shall consider, these are the ones defined by Ellenberg, Satriano and Zureick-Brown in \cite{ellenberg2022heights}. These heights are associated to a line bundle $L \in \Pic(X)$ and satisfy that for any $P \in X(\K)$, by \cite[section 2.3]{ellenberg2022heights} and \cite[example 4.10]{darda2024batyrevmanin} they correspond to:
%$$h(P)([L]_{\stack}) + O(1)$$where the bound $O(1)$ does not depend on the choice of $P \in X(\K)$ (i.e it is uniform). 

\subsubsection{Result on the multi-height distribution of the rational points of toric stacks}

We define a measure $\nu$ on $\Pic_{\orb}(X)^{\vee}_{\R}$ as follows:

\begin{definition}\label{mesure_picard_group}
    $$ \nu(\mathrm{D}) = \int\limits_{\mathrm{D}} e^{\langle \w_{X,\orb}^{-1},y \rangle} dy $$for any compact subset $\mathrm{D}$ of $\Pic_{\orb}(X)^{\vee}_{\R}$, where the Haar measure $dy$ on $\Pic_{\orb}(X)^{\vee}_{\R}$ is normalized so that the covolume of the dual lattice of the orbifold Picard group is one.
\end{definition}

For any domain $\mathrm{D} \subset \Pic_{\orb}(X)^{\vee}_{\R}$, we write:
$$ X(\K)_{h \in \mathrm{D}} = \{ P \in X(\K) \mid h(P) \in \mathrm{D} \} .$$

In this article, we shall prove the following theorem:

\begin{theorem}\label{theorem_manin_peyre_conjecture}
We assume that $X$ is a toric stack over $\Q$. Let $\mathrm{D}_1$ be a finite union of compact polyhedrons of $\Pic_{\orb}(X)^{\vee}_{\R}$ and $u$ be an element of the interior of the dual of the effective cone $(\coneorb^{\vee})^{\circ}$. For a real number $B > 1$, we set:
$$ \mathrm{D}_B = \mathrm{D}_1 + \log(B) u .$$
 
Let $\epsilon \in ]0,\frac{1}{2}[$. Then the multi-height asymptotic behaviour is of the form:

$$ \sharp ( X(\Q) )_{h \in \mathrm{D}_B} \underset{B \rightarrow +\infty}{=} \nu(\mathrm{D}_1 ) . \tau_{\orb}(X) .  B^{\langle \w_{X,\orb}^{-1},u \rangle} \left( 1 + O\left(B^{- ( \frac{1}{2} - \varepsilon).\Delta} \right) \right)$$ where $\tau_{\orb}(X)$ is defined in \ref{definition_orbifold_tamagawa_number} and $\Delta > 0$ is specified in Theorem \ref{theorem_final_champ_torique}.

\end{theorem}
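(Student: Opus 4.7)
The plan is to adapt the author's multi-height strategy for toric varieties \cite{bongiorno2024multiheightanalysisrationalpoints} to the stacky setting. The backbone is the integral parametrization of Theorem \ref{theorem_description_of_the_extended_parmetrization}, which realizes $X(\Q)$ as a quotient of integer points on an extended universal torsor by the action of the torus associated with $\Pic_{\orb}(X)$, together with the fact that the age pairing lifts to universal torsor coordinates (Theorem \ref{theorem_age_universal_torsor}) and that for toric stacks sectors are determined by the age pairing (Corollary \ref{corollary_sectors_dual_picard_group_toric_stacks}). Consequently every class in $\Pic_{\orb}(X)$ is represented by a natural monomial in the torsor coordinates, so that by the transitivity property of stacky heights (Remark \ref{remark_transivity_property_of_stacky_heights}) the constraint $h(P) \in \mathrm{D}_B$ translates into a polyhedral region in the logarithms of these coordinates.

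After Möbius inversion to handle the coprimality conditions implicit in the parametrization, in the spirit of Salberger's torsor method \cite{salberger_torsor}, the problem reduces to counting lattice points of a fixed lattice inside a bounded convex region that scales with $B$. I would then apply Davenport's geometry-of-numbers lemma \cite{davenport_geometry_numbers}: the main term equals the volume of the region, and the boundary-area error, once re-expressed in terms of $B$, yields the saving $O(B^{-(\frac{1}{2}-\varepsilon)\Delta})$ of Theorem \ref{theorem_final_champ_torique}.

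To identify the asymptotic shape, I would compute the volume as a product of a factor of shape $\mathrm{D}_B$ in $\Pic_{\orb}(X)^{\vee}_{\R}$ together with local densities at each place. The translation $\mathrm{D}_B = \mathrm{D}_1 + \log(B) u$ combined with Definition \ref{mesure_picard_group} isolates $\nu(\mathrm{D}_1) B^{\langle \w_{X,\orb}^{-1}, u \rangle}$, while the Euler product of the local densities, identified at good places with the groupoid count of $\I_{\mu} X(\F_v)$ via Theorem \ref{theorem_tamagawa_mesure_toric_stack} and at the wild places via the extended residue map of Corollary \ref{corollary_description_universal_stacky_lift}, assembles into the orbifold Tamagawa number $\tau_{\orb}(X)$ of Definition \ref{definition_orbifold_tamagawa_number}.

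The main obstacle will be obtaining a uniform error term across the entire polyhedron $\mathrm{D}_B$ in the face of a non-simplicial orbifold effective cone $\coneorb$: although $u$ lies in the interior of $(\coneorb^{\vee})^{\circ}$, the vertices of $\mathrm{D}_1$ can brush the boundary of the domain of convergence of the associated height zeta function, which forces a careful polyhedral decomposition of $\mathrm{D}_1$ and a combination of Davenport-type estimates across strata with varying growth rates. The wild places $v \mid N_X$ will also require separate treatment so that their local contributions correctly match the definition of $\tau_{\orb}(X)$, which is precisely where the extension of the residue map developed earlier in the paper becomes indispensable.
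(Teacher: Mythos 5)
Your proposal follows essentially the same route as the paper: the extended torsor parametrization of Theorem \ref{theorem_description_of_the_extended_parmetrization}, the lift of the stacky height to the torsor via $h^{\ext}_{\stackT,\infty}$, Möbius inversion after Salberger, Davenport's geometry-of-numbers lemma for the lattice point count, and the identification of the constant through local densities and Theorem \ref{theorem_tamagawa_mesure_toric_stack}. This is the paper's strategy.

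However, you misdiagnose the supposed difficulty coming from the non-simplicial cone $\coneorb$. You anticipate needing ``a careful polyhedral decomposition of $\mathrm{D}_1$ and a combination of Davenport-type estimates across strata with varying growth rates'' because vertices of $\mathrm{D}_1$ can approach the boundary of the convergence domain of a height zeta function. But that concern is an artifact of the analytic (Poisson-summation) method of Darda--Yasuda that the torsor/counting approach sidesteps entirely — indeed that is the explicit motivation given in the introduction. Concretely: $\mathcal{D}_B = B^u \cdot \mathcal{D}_1$ by equivariance (Proposition \ref{proposition_property_equivariant_extended_height}), Theorem \ref{theorem_compacity_pull-back-compact_ext_height} shows $\mathcal{D}_1$ is a single bounded region, and Theorem \ref{theorem_crucial_majoration_estimation_number} then gives uniform bounds $|y_\rho| \leq C B^{\langle [D_\rho],u\rangle}$ for every torsor coordinate $\rho \in \Sigma_{\ext}(1)$. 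Since the $[D_\rho]$ generate $\coneorb$ (Proposition \ref{proposition_description_cone_orbifold_effective_divisor_toric_stack}) and $u \in (\coneorb^{\vee})^{\circ}$, every exponent $\langle [D_\rho],u\rangle$ is strictly positive, so a single application of Davenport's lemma (Proposition \ref{bhargava_geometrie_nombre}) yields the main term $\Vol(\mathcal{D}_B)$ with error controlled by the largest proper coordinate projection; the exponent $\Delta$ is simply $\min_{\rho \in \Sigma_{\ext}(1)}\langle [D_\rho],u\rangle > 0$, with the $1/2 - \varepsilon$ saving coming from the tail of the Möbius series (Corollary \ref{majoration_fonction_de_mobius}). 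No stratification of $\mathrm{D}_1$ is needed. Your remark about the wild places is also slightly inverted: the extension of the residue map via Corollary \ref{corollary_description_universal_stacky_lift} is indeed what makes the argument work, but precisely so that all finite places can be treated uniformly through the local age $\age_v$ on the torsor; no separate case analysis at $v \mid N_X$ is required.
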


\begin{remark}
    In \cite{bongiorno_hyperbola_method}, using a general version of the hyperbola method developed by Pieropan and Schindler (see \cite{pieropan_hyperbola_method}), we have showed that \[
    \tau_{\orb}(X) 
    = \left(\rank_{\R}\Pic_{\orb}(X)_{\R} - 1\right)! \cdot
    \w_H\bigl(\stackX(\affine_{\Q})\bigr).
\] where $\w_H(\stackX(\affine_{\Q}))$ is the constant obtained in \cite[theorem 4.4.4]{darda_yasuda_toric_stacks_batyrev}.
\end{remark}

\section{Application of the universal torsor in the case of abelian nice stacks}

\subsection{Universal torsors for nice stacks}\label{subsection_universal_torsor}

Universal torsors were introduced by Colliot-Thélène and Sansuc in a series of foundational articles \cite{ColliotTheleneSansuc1976,ColliotTheleneSansuc1979,ColliotTheleneSansuc1987}. Later, Harari and Skorabogatov extended this notion to open varieties in \cite{HarariSkorobogatov2013}. In the setting of Deligne--Mumford stacks, this notion was revisited by Brochard in \cite{Brochard2021}, who in particular explained the computation of the type in this context and provided a geometric interpretation of the elementary obstruction. This notion was also used for algebraic stacks by Santens in \cite{santens2023brauermaninobstructionstackycurves} to study the existence of rational points on stacky curves.

Let $X \xrightarrow{\pi} S$ be a normal proper algebraic stack with finite diagonal over a scheme $S$, which we assume to be either a Dedekind scheme or a field. We assume that $\pi_* \sheafO_X = \sheafO_S$ and that $\pi$ has a section. We write $\mathcal{D}(X)$ for the derived category of sheaves on the small étale site of $X$. Let
$$K D(X) = (\tau_{\leqslant 1} R \pi_* \G_{m,X} )[1]$$
be the shift of the truncation of $R\pi_* \G_{m,X}$. The map $\G_{m,S} \rightarrow R\pi_* \G_{m,X}$ defines a morphism $\G_{m,S}[1] \rightarrow K D(X)$. Its cokernel will be denoted by $K'D(X)$. We thus obtain a distinguished triangle in $\mathcal{D}(S)$:
$$\G_{m,S}[1] \rightarrow KD(X) \rightarrow K'D(X) \xrightarrow{a_X} \G_{m,S}[2].$$
Under our assumptions,
$$K'D(X) = \R^1 \pi_* \G_{m,X} = \Pic_{X/S},$$
and since $\pi$ admits a global section, the triangle splits and $a_X = 0$.

Let $T$ be any group of multiplicative type and denote by $\widehat{T}$ its associated sheaf of characters. Then we may write the first terms of the exact sequence given by the composition via the derived functor $\textbf{RHom}(\widehat{T},-) $:

$$0 \rightarrow H^1(S,T) \rightarrow H^1(X,T) \xrightarrow{\text{type}} \Hom_S(\widehat{T},KD'(X)) .$$The type of a torsor $Y \rightarrow X$ corresponding to $[Y] \in H^1(X,T)$ is computed, according to \cite[Lemma 2.3.1]{Skorobogatov2001}, as follows: to a character $\chi$ of $T$ one associates the line bundle given by the contracted product $Y \overset{T,\chi}{\times} \affine^1$, which corresponds (see \cite{giraud1971cohomologie} and \cite[Lemma 2.2.3]{Skorobogatov2001}) to the quotient stack
$$[ Y \times \affine^1 / T ]$$
where $T$ acts by:
\begin{equation}\label{equation_definition_contracted_product}
    t \cdot (y,\lambda) = (t^{-1} y ,\chi(t)\lambda).
\end{equation}
Observe that one may equivalently view $Y \overset{T,\chi}{\times} \affine^1$
as the quotient stack $[\, Y \times \affine^1 / T \,]$ endowed with the inverse
of the action described in the equation above. This is the description that we
shall use later (see Theorem~\ref{theorem_cox_ring_universal_torsor}).

Thus $\mathrm{type}([Y])$ is the morphism given by
\begin{align*}
    \mathrm{type}([Y]) : \, &\widehat{T} \rightarrow \Pic_{X/S} \\
    & \chi \longmapsto Y \overset{T,\chi}{\times} \affine^1 \, .
\end{align*}

Brochard later provided another way to view the type of a torsor. We state his result here (see \cite[Proposition 9.3 (ii) and its proof]{Brochard2021}):

\begin{prop}\label{proposition_computation_type_torseur}
Let $X \rightarrow BT$ be a morphism corresponding to the torsor given by $[Y] \in H^1(X,T)$. Then the type of the torsor is given by the natural pull-back
$$\widehat{T} = \Pic_{BT/S} \rightarrow \Pic_{X/S}.$$
\end{prop}

We denote by $T_{\NS}$ the group of multiplicative type whose sheaf of characters is $\R^1 \pi_* \G_{m,X} = \Pic_{X/S}$.

\begin{definition}
    Any $T_{\NS}$-torsor with a rational point and whose type corresponds to the identity shall be called a universal torsor over $X$. Up to isomorphism, the universal torsors over $X$ are classified by $H^1(S,T_{\NS})$.
\end{definition}

We now describe an essential property of the universal torsor. We first explain how it can be used to compute the type of a torsor. Suppose that $\stackT \xrightarrow{q} X$ is a universal torsor and that $Y \rightarrow X$ is a $T$-torsor such that there exists a morphism
$$\phi : \stackT \rightarrow Y$$
making the following diagram commute.
\begin{center}
\begin{tikzcd}
\stackT \arrow[dd, "q"'] \arrow[rd, "\phi"] &              \\
                                            & Y \arrow[ld] \\
X                                           &             
\end{tikzcd}
\end{center}
and such that $\phi$ is equivariant with respect to a morphism of groups
$$\varphi : T_{\NS} \rightarrow T.$$
We then have the following lemma:

\begin{lemma}\label{lemma_computation_type_morphism}
Let $\lambda : \widehat{T} \rightarrow \Pic_{X/S}$ be the morphism dual to $\varphi$. Then
$$\mathrm{type}([Y]) = \lambda.$$
\end{lemma}

\begin{proof}
Using the previous commutative diagram, we obtain a natural morphism of $T$-torsors over $X$ given by
$$ \stackT \overset{T_{\NS},\varphi}{\times} T \longrightarrow Y,$$
which is therefore an isomorphism since both are $T$-torsors. Hence we obtain the following $2$-commutative diagram:
\begin{center}
\begin{tikzcd}
X \arrow[r] \arrow[d] & B T_{\NS} \arrow[ld] \\
BT                    &                     
\end{tikzcd}
\end{center}
where the horizontal arrow is given by $[\stackT]$, the vertical arrow by $[Y]$, and the diagonal arrow by $\varphi$. The lemma then follows by applying pull-backs together with Proposition~\ref{proposition_computation_type_torseur}.
\end{proof}

Suppose now that $S = \Spec(K)$. Then
$$H^0(K,\R^1 \pi_* \G_{m,X}) = \Pic(\overline{X})^{\Gal(\overline{K}/K)} = \Pic(X),$$
where the last equality follows from the fact that $X$ has a rational point. Using the functoriality of the previous exact sequence for the morphism $\underline{\Z} \rightarrow \R^1 \pi_* \G_{m,X}$ induced by $L \in \Pic(X)$, together with the fact that $H^1(K,\G_m)=0$ by Hilbert's theorem, we obtain the isomorphism
$$\stackT \overset{T_{\NS},[L]}{\times} \G_{m} \simeq L^{\times}.$$
By the preceding discussion, the resulting morphism $\stackT \rightarrow L^{\times}$ is invariant under the character $$[L] : T_{\NS} \rightarrow \G_m .$$Now if moreover we fix rational points $z \in L^{\times}(\K)$ and $y \in \stackT(\K)$, then since $$H^0(X,\G_m) = \K^{\times} ,$$ we get that there exists a unique morphism of pointed torsors $$(\stackT,y) \longrightarrow (L^{\times},z)$$ equivariant for the character $[L]$. We get the following universal property of pointed universal torsor, used in \cite[section 4.3.1]{Peyre_beyond_height}:

\begin{prop}\label{proposition_universal_property_of_pointed_versal_torsor}
For any pointed universal torsor $$(\stackT,y) \xrightarrow{q} (X,P)$$ with $P \in X(\K)$ and $y \in \stackT(\K)$, there exists a unique map $\Phi_L : (\stackT,y) \rightarrow (L^{\times},z)$ which is invariant for the character $[L]$ such that we have the following commutative diagram:
\begin{center}
\begin{tikzcd}
{(\stackT,y)} \arrow[rd] \arrow[dd,"q"] &                             \\
                                    & {(L^{\times},z) .} \arrow[ld] \\
{(X,P)}                             &                            
\end{tikzcd}
\end{center}
\end{prop}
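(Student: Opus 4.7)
The plan is to recover $\Phi_L$ from the functoriality of the long exact sequence already set up in the section, and then fix the ambiguity using the basepoints together with the hypothesis $\pi_*\G_{m,X}=\G_{m,S}$.

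First, I would apply $\textbf{RHom}(-,-)$ to the character $[L]:T_{\NS}\to\G_m$, i.e.\ to the morphism $\underline{\Z}\to \R^1\pi_*\G_{m,X}$ associated with $L\in\Pic(X)$. This is exactly the construction recalled in the paragraph above the statement, yielding the contracted product identification $\stackT\overset{T_{\NS},[L]}{\times}\G_m\simeq L^{\times}$ and hence a canonical $[L]$-equivariant morphism $\Phi_L^0:\stackT\to L^{\times}$ of $X$-schemes.

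Second, for existence I would rigidify $\Phi_L^0$ using the basepoints. Both $\Phi_L^0(y)$ and $z$ live in the fiber $(L^{\times})_P$, which is a trivial $\G_m$-torsor over $\Spec(\K)$; therefore there is a unique $\lambda\in\K^{\times}$ with $\lambda\cdot \Phi_L^0(y)=z$. Multiplying by $\lambda$ (an automorphism of $L^{\times}$ as a $\G_m$-torsor over $X$) gives $\Phi_L:=\lambda\cdot\Phi_L^0$, which is still $[L]$-equivariant, still a morphism over $X$, and now satisfies $\Phi_L(y)=z$.

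Third, for uniqueness, suppose $\Phi_L$ and $\Phi_L'$ are two $[L]$-equivariant $X$-morphisms $\stackT\to L^{\times}$ with the same value on $y$. Their ``ratio'' is a well-defined $T_{\NS}$-invariant morphism $\stackT\to\G_m$, hence by $T_{\NS}$-torsor descent it factors through $X$, giving an element of $\Hom_X(X,\G_m)=H^0(X,\G_m)$. The standing hypothesis $\pi_*\G_{m,X}=\G_{m,S}$ identifies this group with $\K^{\times}$, so the ratio is a single scalar $\mu\in\K^{\times}$. Evaluating at $y$ and using $\Phi_L(y)=\Phi_L'(y)=z$ forces $\mu=1$, whence $\Phi_L=\Phi_L'$.

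I expect the only delicate point to be the descent step in the uniqueness argument: one needs the equivariance condition to cut down $\Hom_X(\stackT,\G_m)$ to exactly $\Hom_X(X,\G_m)$, which uses that $\stackT\to X$ is an fppf $T_{\NS}$-torsor together with the assumption $\pi_*\G_{m,X}=\G_{m,S}$. Everything else is a direct unwinding of the material already established before the statement.
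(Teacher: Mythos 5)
Your proposal is correct and follows the same approach the paper takes: the paper does not give a displayed proof of this proposition but derives it from the paragraph immediately preceding it, which records the contracted-product identification $\stackT\overset{T_{\NS},[L]}{\times}\G_m\simeq L^\times$, the $[L]$-equivariance of the resulting map, the triviality of the pointed fiber, and the identification $H^0(X,\G_m)=\K^\times$; you have simply made each of these steps explicit, and your formalization of the uniqueness step via $T_{\NS}$-torsor descent of the ratio to an element of $\K^\times$ is exactly what the paper's invocation of $H^0(X,\G_m)=\K^\times$ amounts to.
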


%Throughout this article, we shall assume that $X$ admits a universal torsor.

%Let us consider $\{ \stackT_i \}_{i \in I}$ a family of representative of the set of universal torsors over $X$ up to isomorphism.

%One needs to show the following universal property of pointed universal torsor, analogue to \cite[section 4.3.1]{Peyre_beyond_height}. 

%\begin{question}
    %For $T$ a torus, when do we have that a $T$-torsor over a stack is a scheme ? What about the case when $T$ is a group of multiplicative type ?
%\end{question}

\subsection{Lift of the age to torsors under split torus}

\begin{Notations}\label{notation_anneau_valuation_discrete}
    Let us suppose that $\sheafO$ is a henselian DVR with valuation $v$. We write $\mathfrak{p}$ for the maximal ideal of $\sheafO$, $\pi$ for a uniformizer of $\sheafO$, $\F$ for its residue field and $\K$ for its fraction field. We shall suppose that $\F$ is a perfect field and $\K$ is a local field with $\car(\K) = 0$. Let $T$ be a split torus. We shall write $X^*(T_{\K}) = H^0(\K,\widehat{T})$, or simply $X^*(T)$, for the character group of $T$.

\end{Notations}

 We suppose that $X_{\sheafO}$ admits a torsor under $T$, that we write $$q : \stackT \rightarrow X \, .$$Recall that $X$ is not necessarily tame over $\sheafO$. We suppose that $\stackT$ verifies the following hypothesis:

 \begin{hypothesis}\label{hypothesis_torsor_is_a_scheme}
    $\stackT$ is a separated scheme over $\sheafO$.
\end{hypothesis}
 
\begin{remark}
    This hypothesis is satisfied by the universal torsor and the extended universal torsor
(see Definition~\ref{definition_extended_universal_torsor}) of a split toric stack. However, for $BG$ with
$G$ a non-commutative finite group, it is more delicate to exhibit such a torsor even under
a group of multiplicative type, and the universal torsor does not provide a suitable example.
\end{remark}
 
Let $P \in X(\K)$ be a rational point and suppose there exists $y \in \stackT(\K)$ such that $q(y) = P$. With our hypothesis on $T$, such $y$ exists by Hilbert's theorem. Let us consider a finite extension $\K' / \K$ and $t \in T(\K')$ such that $$z = t . y \in \stackT(\sheafO_{\K'})$$ where $\sheafO_{\K'}$ is the valuation ring of $\K'$. Such extension exists because by proposition \ref{prop_normalization_method_stacky_lift}, we have the existence of a stacky lift even in the wild case, hence we just have to consider the field of rational functions of a connected étale presentation of this stacky lift. We can state the following lemma:

\begin{lemma}
     For any $\chi \in X^*(T)$, $\frac{v_{\K'}(\chi(t))}{e(\K' / \K)} $ does not depend on the choice of the finite extension $\K' / \K$ and of $z \in \stackT(\sheafO_{\K'})$ such that $q(z) = P_{\mid \K'}$, where $e(\K' / \K)$ is the ramification index of the extension. 
\end{lemma}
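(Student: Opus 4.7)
The plan is to reduce the independence statement to the assertion that two integral lifts of the same rational point to the universal torsor differ by an integral element of $T_{\NS}$, using the torsor structure together with the properness of the diagonal of $X$.

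First, I reduce to a common finite extension. Given $z_i = t_i y \in \stackT(\sheafO_{\K_i'})$ for $i = 1,2$, I choose a finite extension $\K''$ of $\K$ containing both $\K_1'$ and $\K_2'$, base-change the $z_i$ to $\sheafO_{\K''}$-points, and view the $t_i$ inside $T_{\NS}(\K'')$. The identity $z_i = t_i y$ persists in $\stackT(\K'')$. Since $v_{\K''}([L](t_i)) = e(\K''/\K_i')\cdot v_{\K_i'}([L](t_i))$ and $e(\K''/\K) = e(\K''/\K_i')\cdot e(\K_i'/\K)$, the statement reduces to showing that $v_{\K''}([L](t_1)) = v_{\K''}([L](t_2))$ whenever $z_1, z_2 \in \stackT(\sheafO_{\K''})$ both lift $P_{|\K''}$.

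The heart of the argument is to produce an $s \in T_{\NS}(\sheafO_{\K''})$ such that $z_2 = s z_1$. Granted this, the free action of $T_{\NS}$ on $\stackT$ forces $t_2 = s t_1$, hence $[L](t_2) = [L](s) \cdot [L](t_1)$ with $[L](s) \in \G_m(\sheafO_{\K''}) = \sheafO_{\K''}^{\times}$, giving $v_{\K''}([L](t_1)) = v_{\K''}([L](t_2))$. To construct $s$, I work with the pair $(z_1, z_2)$ as an $\sheafO_{\K''}$-point of $\stackT \times_{\sheafO} \stackT$. Hypothesis \ref{hypothesis_univ_torsor_is_a_scheme} ensures both $\stackT \times_{\sheafO} \stackT$ and $\stackT \times_X \stackT$ are schemes, and the torsor axiom gives a canonical isomorphism of $\sheafO$-schemes $\stackT \times_X \stackT \cong \stackT \times_{\sheafO} T_{\NS}$ sending $(z, sz)$ to $(z, s)$. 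The projection $\stackT \times_X \stackT \to \stackT \times_{\sheafO} \stackT$ is a base change of the diagonal $X \to X \times_{\sheafO} X$, which is finite (and a fortiori proper) by the niceness hypothesis on $X$. At the generic point, $q(z_1)|_{\K''} = q(z_2)|_{\K''} = P_{|\K''}$ in $X(\K'')$, so $(z_1, z_2)|_{\K''}$ lifts to a $\K''$-point of $\stackT \times_X \stackT$. The valuative criterion of properness then produces the required $\sheafO_{\K''}$-point of $\stackT \times_X \stackT \cong \stackT \times_{\sheafO} T_{\NS}$, whose $T_{\NS}$-coordinate is the desired element $s \in T_{\NS}(\sheafO_{\K''})$.

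The principal obstacle is the stacky subtlety that an integral extension of a rational point of $X$ is only canonical up to 2-isomorphism, so one cannot directly conclude that $q(z_1) = q(z_2)$ in $X(\sheafO_{\K''})$. The argument circumvents this by working at the level of the universal torsor, which is a scheme by Hypothesis \ref{hypothesis_univ_torsor_is_a_scheme}, and where the comparison of two integral lifts is governed by the free action of $T_{\NS}$; the properness of the diagonal of the nice stack $X$ is precisely what promotes generic agreement to integral agreement through the valuative criterion.
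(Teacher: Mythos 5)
Your proof follows the same strategy as the paper's — pass to a common finite extension, exhibit an integral element of $T_{\NS}$ carrying one integral lift to the other, and then observe that applying the character $[L]$ to a unit gives a unit — and your argument is correct. The difference is one of rigor: the paper simply asserts ``because $z'$ and $z$ are in the same orbit over $\sheafO_{\Le}$, there exists $u \in T_{\NS}(\sheafO_{\Le})$'' without justification, whereas you supply the missing justification by invoking the torsor isomorphism $\stackT \times_X \stackT \cong \stackT \times_{\sheafO} T_{\NS}$, the finiteness (hence properness) of the diagonal of $X$ built into the ``nice stack'' hypothesis, and the valuative criterion of properness to promote the generic agreement $q(z_1)|_{\K''} \cong q(z_2)|_{\K''}$ to an integral $T_{\NS}(\sheafO_{\K''})$-translate. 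This is exactly the stacky subtlety (two integral extensions of a rational point are only canonically identified up to 2-isomorphism) that the paper's proof elides, so your version is a genuine strengthening: it makes explicit the use of Hypothesis \ref{hypothesis_univ_torsor_is_a_scheme} and of the finite diagonal, both of which are doing real work in the argument. One small remark: the paper writes $v_{\Le}([L](u)) = 1$, which is a typo for $v_{\Le}([L](u)) = 0$; your phrasing ``$[L](s) \in \sheafO_{\K''}^{\times}$'' correctly captures the intended fact.
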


\begin{proof}
     Let $\K'' / \K$ be another finite extension such that there exists  $z' \in \stackT(\sheafO_{\K''})$ and $t' \in T(\K'')$ verifying $t' . y = z'$. Let $\Le / \K $ be a field extension which contains $\K'$ and $\K''$. Then, because $z'$ and $z$ are in the same orbit over $\sheafO_{\Le}$, there exists $u \in T(\sheafO_{\Le})$, $t' = u.t$. Since $v_{\Le}(\chi(u)) = 0$, we get:
    $$\frac{v_{\K''}(\chi(t'))}{e(\K'' / \K)} = \frac{v_{\Le}(\chi(t'))}{e(\Le / \K)} = \frac{v_{\Le}(\chi(t))}{e(\Le / \K)} = \frac{v_{\K'}(\chi(t))}{e(\K' / \K)}  .$$
\end{proof}

Using the previous lemma, the following definition has a sense:

\begin{definition}\label{def_age_univ_torsor}
   For $y \in \stackT(\K)$, we define $$\age_T(y,-) \in \left(X^*(T)\right)^{\vee}_{\Q} = \Hom_{\grp}(X^*(T),\Q)$$ such that for any $\chi \in X^*(T)$, $\age_T(y,\chi)$ is the $\Q$-value $\frac{v_{\K'}(\chi(t))}{e(\K' / \K)} $ where $$z = t.y \in \stackT(\sheafO_{\K'})$$ lies in the $T$-orbit of $y$ for some finite extension $\K' / \K$. We shall denote this by $\age_{v,T}(y,-)$ when we want to emphasize the valuation $v$.
\end{definition}

\begin{remark}
Note that the construction of this lift of the age,
$$\age_{T,v},$$
depends not only on $T$ but also on the torsor $q : \stackT \rightarrow X$. This construction will mainly be used with $T = T_{\NS}$ and $[\stackT]$ a universal torsor (when
$\Pic(X)$ is torsion free), or with $T = T_{\NS,\orb}$, where $T_{\NS,\orb}$ denotes the
torus dual to $\Pic_{\orb}(X)$ and $[\stackT]$ is the extended universal torsor (see Definition \ref{definition_extended_universal_torsor}). We shall sometimes abuse notation and write
$\age_v(y,-)$ when the underlying torus $T$ is clear from the context. 

\end{remark}

\begin{Notations}
For the rest of the article, if $T$ is a split torus, we denote by
\[
\log_{T,v} : T(\K) \longrightarrow \bigl(X^*(T)\bigr)^{\vee}
\]
the map defined as follows: for any $t \in T(\K)$ and any $\chi \in X^*(T)$,
\[
\log_{T,v}(t)(\chi) = v\!\left(\chi(t)\right).
\]
  
\end{Notations}
 We have the following useful proposition:

\begin{prop}\label{prop_age_and_logarithm}
    Let $y \in \stackT(\K)$ and $t \in T(\K)$. We have:
    $$\age_v(t.y,-) = \age_v(y,-) - \log_{T,v}(t) .$$
\end{prop}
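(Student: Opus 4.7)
The plan is to unfold the definition of the age directly and exploit the fact that the character $[L]:T_{\NS}\to \G_m$ is a group homomorphism. Given $y\in \stackT(\K)$, by the discussion preceding Definition \ref{def_age_univ_torsor} we may pick a finite extension $\K'/\K$ and an element $s\in T_{\NS}(\K')$ such that $z:=s\cdot y\in \stackT(\sheafO_{\K'})$, and then by definition, for every $L\in\Pic(X)$,
$$\age_v(y,L)=\frac{v_{\K'}([L](s))}{e(\K'/\K)},$$
the preceding lemma guaranteeing independence of the choices.

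To compute $\age_v(t\cdot y,L)$ with $t\in T_{\NS}(\K)\subset T_{\NS}(\K')$, I would use the same finite extension $\K'$ together with the element $s':=s\cdot t^{-1}\in T_{\NS}(\K')$. Then
$$s'\cdot (t\cdot y)=s\cdot y=z\in \stackT(\sheafO_{\K'}),$$
so the definition directly yields $\age_v(t\cdot y,L)=v_{\K'}([L](s'))/e(\K'/\K)$.

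Since $[L]:T_{\NS}\to\G_m$ is a morphism of group schemes, $[L](s\cdot t^{-1})=[L](s)\cdot[L](t)^{-1}$, and applying the valuation $v_{\K'}$ gives
$$v_{\K'}([L](s'))=v_{\K'}([L](s))-v_{\K'}([L](t)).$$
Dividing through by $e(\K'/\K)$ and using that $v_{\K'}$ restricted to $\K^{\times}$ equals $e(\K'/\K)\cdot v$, we get $v_{\K'}([L](t))/e(\K'/\K)=v([L](t))=\log_{T_{\NS},v}(t)(L)$. This produces the announced equality $\age_v(t\cdot y,L)=\age_v(y,L)-\log_{T_{\NS},v}(t)(L)$ for every $L\in\Pic(X)$.

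There is no real obstacle here: the statement is a formal consequence of the definition together with the character being a homomorphism, and the only mild point to keep track of is the normalization by the ramification index $e(\K'/\K)$, which is handled by the identity $v_{\K'}|_{\K^{\times}}=e(\K'/\K)\cdot v$.
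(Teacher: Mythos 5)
Your proof is correct, and since the paper states Proposition \ref{prop_age_and_logarithm} without proof (treating it as an immediate consequence of Definition \ref{def_age_univ_torsor}), your argument is precisely the routine unfolding the author left to the reader: choose a common witness extension, shift the normalizing element by $t^{-1}$, use that $[L]$ is a group homomorphism, and account for the ramification index via $v_{\K'}\vert_{\K^\times} = e(\K'/\K)\cdot v$. The only non-trivial ingredient — that the age is well-defined independently of the choice of $\K'$ and the integral representative — is correctly delegated to the preceding lemma.
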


\begin{example}\label{example_computation_age_weighted_projective_stack}
    We consider the case where $X = \stackP(\w)$ is the weighted projective stack with weight $\w = (\w_0,..,\w_n)$. For $y \in \K^{n+1}-\{0\}$, let us write $$\min\limits_{0 \leqslant i \leqslant n} \frac{v(y_i)}{\w_i} = m + t $$ where $t \in \Q \cap [0,1[$ and $m \in \Z$. Suppose we write $t = \frac{r}{d}$ with $\gcd(r,d) = 1$. Consider $\Le / \K$ a rupture field of $X^d - \pi$. Let $\gamma \in \Le$ denote a root of this polynomial. It is a uniformizer of $\Le$. Then if $u = \pi^{-m} . \gamma^{-r} \in \Le^{\times}$, we have $$u \underset{\w}{.} y \in \left( \affine^{n+1}-\{0\} \right)(\sheafO_{\Le})$$hence $$ \age_v(y,\sheafO(-1)) = - \min\limits_{0 \leqslant i \leqslant n} \frac{v(y_i)}{\w_i}.$$
    
\end{example}

We can also describe the behavior of the lift of the age after a field extension:

\begin{prop}\label{proposition_age_and_field_extension}
    Let $\K' / \K$ be a finite extension. Let $y \in \stackT(\K)$. Then for any $\chi \in X^*(T_{\K})$:
    $$\age_{v_{\K'}}(y_{|\K'},\chi_{|\K'}) = e(\K' / \K) . \age_v(y,\chi) .$$
\end{prop}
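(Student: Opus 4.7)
The plan is to unwind both sides of the equation with a \emph{single} auxiliary finite extension containing $\K'$ and then to use the multiplicativity of the ramification index.

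First I would choose a finite extension $\Le/\K'$ (hence also of $\K$) together with an element $t \in T_{\NS}(\Le)$ such that $z := t\cdot y_{|\Le} \in \stackT(\sheafO_{\Le})$. The existence of such a pair $(\Le,t)$ was explained just before Definition \ref{def_age_univ_torsor}: one simply enlarges $\K'$ enough so that the rational point $y_{|\K'}$ becomes integral after translation by some element of $T_{\NS}$ (coming from a connected étale presentation of the universal stacky lift of $P$ over $\sheafO_{\K'}$). The same pair $(\Le,t)$ is then admissible for defining $\age_{v}(y,L)$ (viewing $\Le$ as an extension of $\K$) and for defining $\age_{v_{\K'}}(y_{|\K'}, L_{|\K'})$ (viewing $\Le$ as an extension of $\K'$).

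Next I would apply Definition \ref{def_age_univ_torsor} to both sides with this common choice. Since $v_{\K'}$ is the unique extension of $v$ to $\K'$ and $v_{\Le}$ is the unique extension of both to $\Le$, the numerator $v_{\Le}\bigl([L](t)\bigr)$ is literally the same integer in both formulas. Thus
\begin{align*}
\age_v(y,L) &= \frac{v_{\Le}([L](t))}{e(\Le/\K)}, \\
\age_{v_{\K'}}(y_{|\K'}, L_{|\K'}) &= \frac{v_{\Le}([L](t))}{e(\Le/\K')}.
\end{align*}

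Finally, invoking the multiplicativity of the ramification index in the tower $\K \subset \K' \subset \Le$, namely $e(\Le/\K) = e(\Le/\K')\cdot e(\K'/\K)$, I would conclude
$$\age_{v_{\K'}}(y_{|\K'}, L_{|\K'}) = \frac{v_{\Le}([L](t))}{e(\Le/\K')} = e(\K'/\K)\cdot \frac{v_{\Le}([L](t))}{e(\Le/\K)} = e(\K'/\K)\cdot \age_v(y,L).$$
The well-definedness lemma preceding Definition \ref{def_age_univ_torsor} guarantees that the answer does not depend on the choice of $(\Le,t)$, so the computation is legitimate. There is no real obstacle here; the only point requiring care is to make sure that the \emph{same} extension $\Le$ is used on both sides, which is why I insist on picking $\Le$ containing $\K'$ from the outset.
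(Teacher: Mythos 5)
Your argument is correct. It simply unwinds Definition \ref{def_age_univ_torsor} with a single auxiliary extension $\Le \supset \K'$ and invokes multiplicativity of ramification indices, $e(\Le/\K) = e(\Le/\K')\,e(\K'/\K)$, together with the well-definedness lemma preceding the definition. The paper states Proposition \ref{proposition_age_and_field_extension} without a written proof, and the proof you give is the natural one implicit in the text.

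One minor point worth making explicit: you quietly use that $[L_{|\K'}](t) = [L](t)$ as elements of $\Le^{\times}$, which holds because $[L_{|\K'}] : T_{\NS,\K'} \to \G_{m,\K'}$ is the base change of $[L] : T_{\NS} \to \G_m$. This is immediate, but since the proposition takes care to write $L_{|\K'}$ rather than $L$, a sentence noting this compatibility would make the argument airtight.
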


In the rest of this article, we shall need the following hypothesis, which is verified by the universal torsor $\stackT_{\Sigma}$ (when $\Pic(X)$ is torsion free) and by the extended universal torsor $\stackT_{\Sigma} \times \G_m^{\twistsector}$ for a split toric stack (see proposition \ref{prop_extended_univ_torsor_zero_age}) :

\begin{hypothesis}\label{hypothesis_zero_of_local_degree}
    For any finite field extension $\K' / \K$, we have:
    $$\stackT(\sheafO_{\K'}) = \{y \in \stackT(\K') \mid \age_{v_{\K'}}(y,-) = 0 \} . $$
\end{hypothesis}

\subsection{Description of the stacky lift using local degrees}\label{paragraph_wild_residue_map}

We continue to fix a $T$-torsor $\stackT \rightarrow X$ defined over $\sheafO$.
We assume that $\stackT$ satisfies Hypotheses~\ref{hypothesis_torsor_is_a_scheme}
and~\ref{hypothesis_zero_of_local_degree}.
We write
\[
X_{\sheafO} = [\stackT_{\sheafO} / T_{\sheafO}] .
\]

We keep the hypothesis that $X_{\K}$ is a nice tame DM stack.
We shall suppose again that $T$ is a split torus, i.e. $\widehat{T}$, the sheaf of characters of $T$, is constant
(hence $X^{*}(T_{\sheafO}) = X^{*}(T_{\K})$) and torsion free. We fix $y \in \stackT(\K)$ and denote by
\[
P = q(y) \in X(\K)
\]
the corresponding rational point. We shall begin by exhibiting a field extension
$\K'/\K$ such that $y$ lies in the orbit of an integral point in
$\stackT(\sheafO_{\K'})$.

\begin{prop}
Let  $N \in \N^*$ such that $$N . \age_v(y,-) \in X^*(T)^{\vee} .$$Let $\K'$ be a rupture field of the irreducible polynomial $X^N - \pi$. It is a totally ramified extension of degree $N$ and $\sheafO_{\K'} = \sheafO[\gamma]$ where $\gamma \in \K'$ is a uniformizer of $\K'$ satisfying $\gamma^N = \pi$.

Then there exists $t \in T(\K')$ such that $t . y \in \stackT(\sheafO_{\K'})$.
\end{prop}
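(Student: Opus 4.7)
The plan is to reduce the problem to solving a linear-algebraic condition on $T_{\NS}(\K')$, namely finding $t$ whose logarithm equals a prescribed element of $\Pic(X)^{\vee}$. More precisely, by base-changing the universal torsor to $\sheafO_{\K'}$, Hypothesis \ref{hypothesis_zero_of_local_degree} becomes the statement that $\stackT(\sheafO_{\K'}) = \{ z \in \stackT(\K') \mid \age_{v_{\K'}}(z,-) = 0\}$ (this transfers under base change, since $\stackT_{\sheafO_{\K'}}$ remains an open subscheme of an affine scheme of finite type over $\sheafO_{\K'}$ and the assumption that $R^1\pi_*\G_{m,X}$ is constant is preserved). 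Consequently, finding $t \in T_{\NS}(\K')$ with $t . y_{\mid \K'} \in \stackT(\sheafO_{\K'})$ is equivalent to finding $t$ with $\age_{v_{\K'}}(t . y_{\mid \K'},-) = 0$.

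Next I would translate this condition into an equation for $\log_{T_{\NS},v_{\K'}}(t)$. By Proposition \ref{prop_age_and_logarithm} (applied over $\K'$),
$$\age_{v_{\K'}}(t . y_{\mid \K'},-) = \age_{v_{\K'}}(y_{\mid \K'},-) - \log_{T_{\NS},v_{\K'}}(t),$$
so the condition becomes $\log_{T_{\NS},v_{\K'}}(t) = \age_{v_{\K'}}(y_{\mid \K'},-)$. Since $\K'/\K$ is totally ramified of degree $N$, Proposition \ref{proposition_age_and_field_extension} yields
$$\age_{v_{\K'}}(y_{\mid \K'},-) = e(\K'/\K) \cdot \age_v(y,-) = N \cdot \age_v(y,-),$$
which by the choice of $N$ is an element $\phi \in \Pic(X)^{\vee} = \Hom_{\grp}(\Pic(X), \Z)$.

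It remains to lift $\phi$ along $\log_{T_{\NS},v_{\K'}} : T_{\NS}(\K') \to \Pic(X)^{\vee}$. Because $R^1\pi_* \G_{m,X_{\sheafO}}$ is constant and equal to $\Pic(X)$, the group of multiplicative type $T_{\NS}$ is split over $\sheafO$, with character group $\Pic(X)$. Fixing a splitting $\Pic(X) \simeq \Z^r \oplus \Pic(X)_{\tor}$ with a basis $L_1,\dots,L_r$ of the free part, any element of $T_{\NS}(\K')$ is a homomorphism $\Pic(X) \to \K'^{\times}$, and since $\phi$ vanishes on $\Pic(X)_{\tor}$, one may define $t \in T_{\NS}(\K')$ by $[L_i](t) = \gamma^{\phi(L_i)}$ for each $i$ and $[L](t)=1$ on the torsion part. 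As $v_{\K'}(\gamma)=1$, this yields $\log_{T_{\NS},v_{\K'}}(t) = \phi$, which is exactly the required identity.

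The main potential obstacle is the need for Hypothesis \ref{hypothesis_zero_of_local_degree} to survive base change to $\sheafO_{\K'}$; once this is granted, the remainder is purely a bookkeeping exercise combining the two functoriality propositions for $\age$ with the explicit construction of a section of $\log_{T_{\NS},v_{\K'}}$ afforded by the splitness of $T_{\NS}$.
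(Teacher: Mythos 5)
Your proposal is correct and follows essentially the same route as the paper: compute $\age_{v_{\K'}}(y,-) = N\cdot\age_v(y,-) \in \Pic(X)^{\vee}$ via Proposition \ref{proposition_age_and_field_extension}, pick $t$ with $\log_{T_{\NS},v_{\K'}}(t)$ equal to that value, and conclude by Proposition \ref{prop_age_and_logarithm} together with Hypothesis \ref{hypothesis_zero_of_local_degree}. You merely make explicit two points the paper leaves implicit — that the hypothesis persists after base change to $\sheafO_{\K'}$, and that $\log_{T_{\NS},v_{\K'}}$ surjects onto $\Pic(X)^{\vee}$ via the splitness of $T_{\NS}$.
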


\begin{proof}
Using proposition \ref{proposition_age_and_field_extension}, we have that:
$$\age_{v_{\K'}}(y,-) = N . \age_v(y,-) \in X^*(T)^{\vee} .$$Hence we can consider $t \in T(\K')$ such that $\log_{T,v_{\K'}}(t) = \age_{v_{\K'}}(y,-)$ and we get $z = t . y \in \stackT(\sheafO_{\K'})$ using proposition \ref{prop_age_and_logarithm} and the fact that we have assumed hypothesis \ref{hypothesis_zero_of_local_degree}.
\end{proof}

\begin{Notations}\label{notation_entier_lcm_local_degree}
    Let $d \in \N^*$ be a generator of the ideal given by the $N \in \Z$ such that $N . \age_v(y,-) \in X^*(T)^{\vee}$. If $\base = (\chi_1,..,\chi_r)$ is a basis of $X^*(T)$ and for $1 \leqslant i \leqslant r$, we write $\age_v(y,\chi_i) = \frac{r_i}{d_i}$ with $r_i \in \Z$, $d_i \in \N^*$ and $\gcd(r_i,d_i) = 1$ then $d = \text{lcm}(d_1,..,d_r)$.
\end{Notations}

In the end of this paragraph, we shall need the following lemma:

\begin{lemma}\label{lemma_representabilite_stacky_lift}
    Let $\K'$ be a rupture field of the irreducible polynomial $X^d - \pi$. Then the integers $d, v_{\K'}(\chi_1(t)),..,v_{\K'}(\chi_r(t))$ are coprime where $t \in T(\K')$ is chosen as before.
\end{lemma}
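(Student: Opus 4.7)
The plan is to reduce the claim to a short arithmetic argument about the denominators $d_i$ of the rationals $\age_v(y, L_i) = r_i/d_i$.

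First I would compute $v_{\K'}([L_i](t))$ explicitly. By the construction of $t$ in the preceding proposition (which was chosen so that $\log_{T_{\NS},v_{\K'}}(t) = \age_{v_{\K'}}(y_{\mid \K'}, -)$) together with Proposition \ref{proposition_age_and_field_extension} applied to the totally ramified extension $\K'/\K$ of degree $d$, one gets
$$v_{\K'}([L_i](t)) = \age_{v_{\K'}}(y_{\mid \K'}, L_{i \mid \K'}) = d \cdot \age_v(y, L_i) = r_i \cdot \frac{d}{d_i}.$$
This is a genuine integer since $d_i \mid d$ by the very definition of $d = \mathrm{lcm}(d_1,\ldots,d_r)$, and all the work of translating the statement into elementary arithmetic is done at this point.

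Next, to show that $d, v_{\K'}([L_1](t)), \ldots, v_{\K'}([L_r](t))$ are globally coprime, I would argue one prime at a time. Let $p$ be any prime divisor of $d$. Because $d = \mathrm{lcm}(d_1,\ldots,d_r)$, there exists an index $i_0$ with $v_p(d_{i_0}) = v_p(d)$; for this $i_0$ one has $v_p(d/d_{i_0}) = 0$, and $\gcd(r_{i_0}, d_{i_0}) = 1$ forces $p \nmid r_{i_0}$. Consequently $p$ does not divide $r_{i_0} \cdot d/d_{i_0} = v_{\K'}([L_{i_0}](t))$, so $p$ fails to divide the full gcd. Since this holds for every prime divisor of $d$, the gcd equals $1$.

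I do not anticipate a conceptual obstacle here: the lemma is really a bookkeeping statement about denominators after clearing through an lcm, and it falls out immediately once the compatibility of the age with finite extensions is combined with the normalization chosen for $t$. The only point that deserves a line of care is checking that the $t$ produced in the previous proposition genuinely realizes $v_{\K'}([L_i](t)) = \age_{v_{\K'}}(y_{\mid \K'}, L_i)$, which is unambiguous given the definition of $\log_{T_{\NS},v_{\K'}}$.
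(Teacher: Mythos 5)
Your proof is correct and follows essentially the same approach as the paper: compute $v_{\K'}([L_i](t)) = d \cdot \age_v(y,L_i) = r_i \cdot d/d_i$ (the paper records this as $a_i d_i = r_i d$) and then, for each prime $p \mid d$, pick an index $k$ with $v_p(d_k) = v_p(d)$ and use $\gcd(r_k,d_k)=1$ to conclude $v_p(a_k)=0$. Your version is slightly more explicit about why $v_{\K'}([L_i](t))$ equals $d\cdot\age_v(y,L_i)$ (invoking Proposition \ref{proposition_age_and_field_extension} and the normalization of $t$), but the arithmetic core is identical.
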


\begin{proof}
    We set $a_i = v_{\K'}(\chi_i(t))$. We have $ a_i . d_i = r_i . d$. Let $p$ a prime integer such that $v_p(d) > 0$ and consider $k \in \{1,..,r\}$ such that $v_p(d_k) = v_p(d)$. Because $\gcd(d_k,r_k) = 1$, we get $v_p(a_k) = 0$ so we have proven the lemma.
\end{proof}

Now we consider again an integer $N \in \N^*$ such that $N . \age_v(y,-) \in X^*(T)^{\vee}$. We shall only later specialize to the case where $N = d$. We write again $\K'$ for a rupture field of the irreducible polynomial $X^N - \pi$. Let us consider the group morphism $$\varphi : \mu_{N,\sheafO} \rightarrow T$$ such that for any $\chi \in X^*(T)$, $$\chi(\varphi(\xi)) = \xi^{v_{\K'}\left(\chi(t)\right)} .$$Recall that we have constructed in the previous paragraph $z = t . y \in \stackT(\sheafO_{\K'})$. We have a natural action of $\mu_{N,\sheafO}$ on $\Spec(\sheafO_{\K'})$ over $\Spec(\sheafO)$.

\begin{prop}
    The morphism $$z : \Spec(\sheafO_{\K'}) \rightarrow \stackT_{\sheafO}$$ is $\varphi$-equivariant and hence induces a morphism $$\Spec(\sheafO[\sqrt[N]{v}]) = [\Spec(\sheafO_{\K'}) / \mu_{N,\sheafO}] \rightarrow X_{\sheafO} .$$
\end{prop}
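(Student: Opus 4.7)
The plan is to reduce the $\varphi$-equivariance of $z$ to a computation on the generic fibre via the characters $[L] : T_{\NS} \to \G_m$ for $L \in \Pic(X)$, and then to verify it after a suitable normalisation of $t$ within its $T_{\NS}(\sheafO_{\K'})$-orbit. First I would observe that, since $\stackT \to X$ is a $T_{\NS}$-torsor (Hypothesis \ref{hypothesis_univ_torsor_is_a_scheme}), the two morphisms $\mu_{N,\sheafO} \times \Spec(\sheafO_{\K'}) \rightrightarrows \stackT$ whose coincidence encodes the desired equivariance (namely $z$ composed with the $\mu_N$-action on $\Spec(\sheafO_{\K'})$, and the $T_{\NS}$-action on $\stackT$ composed with $\varphi \times z$) both project to the same morphism to $X$, because $\mu_N$ acts trivially on $X$. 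They therefore differ by a morphism $\delta : \mu_{N,\sheafO} \times \Spec(\sheafO_{\K'}) \to T_{\NS}$, and showing $z$ is $\varphi$-equivariant amounts to showing $\delta = 1$. Since $T_{\NS}$ has character lattice $\Pic(X)$, this reduces to $[L] \circ \delta = 1$ for $L$ in a basis of $\Pic(X)$, and by separatedness of $\stackT$ together with scheme-theoretic density of the generic fibre, the check may be carried out over $\mu_{N,\K} \times_{\K} \Spec(\K')$.

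Before the computation I would normalise $t$. The map $L \mapsto n_L := v_{\K'}([L](t)) = N \cdot \age_v(y, L)$ is a group homomorphism $\Pic(X) \to \Z$, so the prescription $[L](t_0) := \gamma^{n_L}$ defines a canonical element $t_0 \in T_{\NS}(\K') = \Hom_{\grp}(\Pic(X), \K'^{\times})$. Since $\log_{T_{\NS},v_{\K'}}(t_0) = \age_{v_{\K'}}(y,-)$, Proposition \ref{prop_age_and_logarithm} together with Hypothesis \ref{hypothesis_zero_of_local_degree} gives $t_0 \cdot y \in \stackT(\sheafO_{\K'})$. Replacing $t$ by $t_0$ alters $z$ only by a $T_{\NS}(\sheafO_{\K'})$-translate and leaves $\varphi$ unchanged, as $\varphi$ depends only on the valuations $n_L$. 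So I may assume $[L](t) = \gamma^{n_L}$ for every $L \in \Pic(X)$.

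With this normalisation the computation is immediate. By Proposition \ref{proposition_universal_property_of_pointed_versal_torsor}, $\Phi_L(z)|_{\K'} = [L](t) \cdot \Phi_L(y)|_{\K'} = \gamma^{n_L} \cdot \Phi_L(y)|_{\K'}$. The factor $\Phi_L(y) \in L^\times(\K)$ is $\mu_N$-invariant, being defined over $\K$ while the $\mu_N$-action is over $\sheafO$; and the monomial $\gamma^{n_L}$ transforms under $\gamma \mapsto \xi\gamma$ as
\[
 \gamma^{n_L} \longmapsto (\xi\gamma)^{n_L} = \xi^{n_L} \gamma^{n_L} = [L](\varphi(\xi)) \cdot \gamma^{n_L}.
\]
Hence $\Phi_L \circ z$ is $\mu_N$-equivariant with weight $[L]\circ\varphi$, giving $[L]\circ\delta = 1$ for every $L$ and therefore $\delta = 1$. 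The resulting $\varphi$-equivariance of $z$ then descends, by the universal property of quotient stacks, to the desired morphism $\Spec(\sheafO[\sqrt[N]{v}]) = [\Spec(\sheafO_{\K'})/\mu_{N,\sheafO}] \to [\stackT/T_{\NS}] = X_{\sheafO}$.

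The main obstacle is the normalisation step: for an arbitrary representative $t$ one only knows $[L](t) = u_L \gamma^{n_L}$ with $u_L \in \sheafO_{\K'}^{\times}$, and if $u_L$ is not a constant in $\sheafO^{\times}$ the action $\gamma \mapsto \xi\gamma$ introduces extra $\xi$-dependence that breaks the character-weight equivariance. The possibility of choosing $t_0$ so that $[L](t_0) = \gamma^{n_L}$ exactly -- which relies on $L \mapsto n_L$ being a group homomorphism so that it lifts to $T_{\NS}(\K')$ -- is precisely what makes the generic check close up.
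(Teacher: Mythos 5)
Your proof takes essentially the same route as the paper's: reduce the equivariance check to the generic fibre by a density-and-separatedness argument, then verify it at the level of characters $[L]$ by a monomial computation on $\K' = \K[\gamma]$. The genuine improvement in your version is the normalization step. The paper's final assertion --- that under the coaction $\overline{T}\mapsto\overline{T}\otimes\overline{S}$ the element $[L](t)$ is sent to $[L](t)\otimes\overline{S}^{\,v_{\K'}([L](t))}$ --- is only literally true when $[L](t)$ is a monomial $c\gamma^{n_L}$ with $c\in\K^\times$; for a general $t$ with the prescribed valuations one has $[L](t)=u_L\gamma^{n_L}$ with $u_L\in\sheafO_{\K'}^\times$ a unit that need not be $\mu_N$-invariant, and then the assertion fails. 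The paper does not address this; you correctly identify it as the main obstacle and resolve it by observing that $L\mapsto n_L$ is a group homomorphism, so $[L](t_0):=\gamma^{n_L}$ defines $t_0\in T_{\NS}(\K')$, and by Proposition \ref{prop_age_and_logarithm} together with Hypothesis \ref{hypothesis_zero_of_local_degree} one still has $t_0\cdot y\in\stackT(\sheafO_{\K'})$. That said, your $\delta$-argument is a detour that is both unnecessary and somewhat delicate: to know the two maps to $\stackT$ differ by a $T_{\NS}$-valued $\delta$ you already need $q\circ z:\Spec(\sheafO_{\K'})\to X_{\sheafO}$ to be $\mu_N$-invariant at the integral level, and since $X$ is a stack this agreement (which over $\K$ is clear) only propagates to $\sheafO$ up to a $2$-isomorphism, so $\delta$ is not immediately a well-defined scheme morphism. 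The cleaner move, which both you and the paper also invoke and which suffices on its own, is to use that $\mu_{N,\K}\times_\K\Spec(\K')$ is scheme-theoretically dense in the reduced source and that the \emph{scheme} $\stackT_{\sheafO}$ is separated, reducing the equivariance directly to the generic fibre with no $\delta$ needed.
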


\begin{proof}
    
To prove the proposition, it suffices to check that the following diagram is
commutative:
\begin{center}
\begin{tikzcd}
{\mu_{N,\sheafO} \times_{\sheafO} \Spec(\sheafO_{\K'})}
\arrow[d, "\varphi \times z"']
\arrow[r, "a"]
&
\Spec(\sheafO_{\K'})
\arrow[d, "z"]
\\
{T_{\sheafO} \times_{\sheafO} \stackT_{\sheafO}}
\arrow[r, "b"]
&
\stackT_{\sheafO}
\end{tikzcd}
\end{center}
where $a$ and $b$ are the maps induced by the natural actions. Since the morphism
\[
\mu_{N,\K} \times_{\K} \Spec(\K')
\longrightarrow
\mu_{N,\sheafO} \times_{\sheafO} \Spec(\sheafO_{\K'})
\]
is dominant, $\mu_{N,\sheafO} \times_{\sheafO} \Spec(\sheafO_{\K'})$ is reduced, and
$\stackT_{\sheafO}$ is separated, it is enough to show that the diagram commutes after
the base change $- \times_{\sheafO} \Spec(\K)$.

This is easier, since for any $\K$-scheme $S$, for any $\xi \in \mu_{N,\K}(S)$ and any
$s \in \Spec(\K')(S)$, we have
\[
z(\xi \cdot s) = t(\xi \cdot s) \cdot y(\xi \cdot s).
\]
As $y \in \stackT(\K)$, we have $y(\xi \cdot s) = y(s)$, so it suffices to show that
\[
t(\xi \cdot s) = \varphi(\xi) \cdot t(s).
\]
This is equivalent to requiring that for any character $\chi \in X^{*}(T)$,
\[
\chi(t)(\xi \cdot s) = \xi^{v_{\K'}(\chi(t))} \cdot \chi(t)(s).
\]
In other words, the following diagram is commutative:
\begin{center}
\begin{tikzcd}
{\mu_{N,\K} \times_{\K} \Spec(\K')}
\arrow[d, "{\varphi \times \chi(t)}"']
\arrow[r, "a"]
&
\Spec(\K')
\arrow[d, "{\chi(t)}"]
\\
{\G_{m,\K} \times_{\K} \G_{m,\K}}
\arrow[r, "m"]
&
{\G_{m,\K}}
\end{tikzcd}
\end{center}
where $m$ denotes the multiplication map. Since all schemes involved are affine, the
commutativity of this diagram is equivalent to the commutativity of the corresponding
diagram in the category of $\K$-algebras:
\begin{center}
\begin{tikzcd}
{\K[U,U^{-1}]}
\arrow[r, "U \mapsto X \otimes Y"]
\arrow[d, "{U \mapsto \chi(t)}"']
&
{\K[X,X^{-1}] \otimes \K[Y,Y^{-1}]}
\arrow[d]
\\
{\K[T]/(T^{n}-\pi)}
\arrow[r, "{\overline{T} \mapsto \overline{T} \otimes \overline{S}}"']
&
{\K[T]/(T^{n}-\pi) \otimes \K[V]/(V^{n}-1)}
\end{tikzcd}
\end{center}
where the vertical morphism on the right is given by
\[
X \longmapsto \chi(t) \otimes 1
\qquad\text{and}\qquad
Y \longmapsto 1 \otimes \overline{V}^{v_{\K'}(\chi(t))}.
\]
The commutativity of the diagram then follows from the fact that $\chi(t)$ is sent to
$\chi(t) \otimes \overline{V}^{v_{\K'}(\chi(t))}$ by the lower horizontal morphism.

\end{proof}

Now using the notation of \ref{notation_entier_lcm_local_degree}, we set $N = d$ and using lemma \ref{lemma_representabilite_stacky_lift}, we get that $\varphi_{| \F}$ is injective so by proposition \ref{proposition_carac_universal_stacky_lift_representability}, we may write the following corollary:

\begin{cor}\label{corollary_description_universal_stacky_lift}
    The above construction gives a stacky lift $$\Spec(\sheafO[\sqrt[d]{v}]) = [\Spec(\sheafO_{\K'}) / \mu_{d,\sheafO}] \rightarrow X_{\sheafO} $$ of $P \in X(\K)$ which is the universal stacky lift up to isomorphism, where $\K'$ is a rupture field of $X^d - \pi$.
\end{cor}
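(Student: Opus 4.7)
The plan is to invoke Proposition \ref{proposition_carac_universal_stacky_lift_representability}, which characterizes the universal stacky lift among all stacky integral lifts as the unique one for which the associated morphism to $X_{\sheafO}$ is representable. The preceding proposition has already produced, for $N = d$, a 2-commutative diagram exhibiting
\[ \Spec(\sheafO[\sqrt[d]{v}]) = [\Spec(\sheafO_{\K'})/\mu_{d,\sheafO}] \longrightarrow X_{\sheafO} \]
as a stacky lift of $P$ (in particular it is a normal separated DM stack with a birational coarse space map to $\Spec(\sheafO)$). So the only content of the corollary is verifying representability of this morphism.

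To check representability of a morphism between DM stacks, it suffices to check that the induced map on automorphism groups of every geometric point is a monomorphism. Over $\Spec(\sheafO[1/\pi])$ the stack $\Spec(\sheafO[\sqrt[d]{v}])$ is representable (there is nothing to check), so the question reduces to the closed fiber. The residual gerbe of the unique closed point of $\Spec(\sheafO[\sqrt[d]{v}])$ is $B\mu_{d,\F}$, and since $X_{\sheafO}$ is the quotient $[\stackT_{\sheafO}/T_{\NS,\sheafO}]$, the induced map on automorphism groups at this point is exactly $\varphi_{|\F} \colon \mu_{d,\F} \to T_{\NS,\F}$. Thus the whole problem collapses to showing that $\varphi_{|\F}$ is injective as a morphism of group schemes over $\F$.

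By Cartier duality for groups of multiplicative type over a field, injectivity of $\varphi_{|\F}$ is equivalent to surjectivity on character groups, i.e.\ the statement that the integers $v_{\K'}([L_i](t))$ generate $\Z/d\Z$ together with the relation coming from $d$. Equivalently,
\[ \gcd\bigl(d,\,v_{\K'}([L_1](t)),\,\ldots,\,v_{\K'}([L_r](t))\bigr) = 1, \]
which is exactly the content of Lemma \ref{lemma_representabilite_stacky_lift}. Combined with Proposition \ref{proposition_carac_universal_stacky_lift_representability}, this concludes.

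The main obstacle, conceptually, lies in choosing the integer $N$ correctly: for a general $N$ with $N\cdot\age_v(y,-) \in \Pic(X)^\vee$, the morphism $\varphi_{|\F}$ will typically have a non-trivial kernel and the resulting stacky lift, while valid, will \emph{not} be terminal. The precise choice $N = d$ (the l.c.m.\ of the denominators appearing in the age, introduced in Notation \ref{notation_entier_lcm_local_degree}) is exactly what makes Lemma \ref{lemma_representabilite_stacky_lift} applicable and what forces the lift produced to be universal — this is the one delicate point that motivates the introduction of $d$ and the technical setup of the preceding paragraphs.
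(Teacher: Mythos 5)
Your proposal is correct and follows essentially the same route as the paper: set $N = d$, use Lemma~\ref{lemma_representabilite_stacky_lift} to conclude that $\varphi_{|\F}$ is injective, then invoke Proposition~\ref{proposition_carac_universal_stacky_lift_representability} to identify the resulting stacky lift as the universal one. You spell out the two implicit steps (reducing representability of the map to injectivity of $\varphi_{|\F}$ at the closed fiber, and unwinding via duality of diagonalizable groups how the coprimality of $d, v_{\K'}([L_1](t)),\ldots,v_{\K'}([L_r](t))$ gives that injectivity), which the paper leaves compressed into a single line, but the argument is the same.
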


All the definitions written in paragraph \ref{subsubsection_the_residue_map_definition} in the tame case are still valid in the wild case with our hypotheses. That is to say, we have defined a natural reduction map $$- \text{mod} \ \mathfrak{p} : X(\K) \rightarrow \sheafI_{\mu} X(\F)$$ corresponding to a stacky Hensel's lemma as in \cite[lemma 4.3]{loughran_santens_malle_conjecture} and we can naturally define a residue map $$\psi_v : X(\K) \rightarrow \sector$$ even in the case where $X_{\sheafO}$ is not tame.

\subsection{Interpretation of the residue map via torsors under multiplicative group}\label{section_interpretation_residue_map_univ_torsor}

We keep the hypotheses of the previous section and we write again $\stackT \xrightarrow{q} X$ over $\sheafO$ a $T$-torsor over $X$.

\begin{remark}
Many of the constructions that follow can be generalized to the case where $T$ is
not split. Nevertheless, we retain
our standing assumptions on $T$ for the remainder of the article.
\end{remark}

For the remainder of this section, we shall need a slight extension of the age pairing. Let
$\alpha : X \to B T$ denote the associated morphism to the $T$-torsor $$q : \stackT \rightarrow X$$over $\sheafO$. We shall also denote, by a slight abuse of notation, $\alpha = [\stackT] \in H^1(X,T)$ for the class of this $T$-torsor. Let $b \in \sheafI_{\mu} X (S)$, with $S$ connected, corresponding to a morphism
\[
b : B \mu_{N,S} \to X.
\]

\begin{definition}\label{definition_general_age_pairing}
For any character $\chi \in X^*(T)$, we define
\[
\age_{\alpha,T}(b,\chi) = b^*\bigl(\alpha^*(\chi)\bigr) \in \tfrac{1}{N}\Z/\Z.
\]
This defines an age pairing associated with the multiplicative group $T$,
\[
\age_{\alpha,T} : \sector \times X^*(T) \longrightarrow \Q/\Z.
\]
\end{definition}

By Proposition~\ref{proposition_computation_type_torseur}, we obtain the following result:

\begin{prop}\label{prop_computation_general_age_pairing_via_type_torseur}
If $\lambda = \mathrm{type}([\stackT])$, then
$$\age_{\alpha,T}(b,\chi) = b^*\bigl(\lambda(\chi)\bigr).$$
In particular, if $T = T_{\NS}$ and $[\stackT]$ is a universal torsor, then the age pairing associated with this torsor coincides with the usual age pairing (see Definition~\ref{definition_age_pairing}), i.e. in this case
$$\age_{\alpha,T_{\NS}} = \age.$$
\end{prop}

\subsubsection{Interpretation of the age pairing using a torsor}

For any connected $\sheafO$-scheme $S$, $$\Tilde{x} \in \sheafI_{\mu} X[S]$$ is given by the data of $x \in X[S]$ and an injective morphism $\mu_{d,S} \hookrightarrow \underline{\Aut}(x)$ for some $d \in \N^*$. 

With our hypotheses, we may write $X = [\stackT/T]$. Using the description given in \cite[paragraph 4.2]{loughran_santens_malle_conjecture}, the point $x$ can be lifted as $y \in \stackT \overset{T}{\times} Q (S) $ where $Q \rightarrow S$ is a $T$-torsor and hence $$\underline{\Aut}(x) = \underline{\stab}_{T}(y) $$since $\stackT$ is a scheme. We get the following natural injection:
$$ \mu_{d,S} \hookrightarrow  \underline{\stab}_{T}(y) \hookrightarrow T .$$

Now let us consider $P \in X(\K)$. We set
\[
\Tilde{x}
=
\left( P \bmod \mathfrak{p} \right)
\in \sheafI_{\mu} X(\F).
\]Let $\overline{P} \in X(\sheafO[\sqrt[d]{v}])$ be the universal stacky lift of $P$. We denote by $x \in X(\F)$ the image of $\Tilde{x}$. Consider an element $y \in \stackT(\F)$ lifting $x$ via the torsor $\stackT \rightarrow X$. We denote again by $\alpha \in H^{1}(X,T)$ its class, which corresponds to a morphism
\[
X \xrightarrow{\alpha} B T.
\]Recall that by proposition \ref{proposition_cohomology_local_root_stack}, we have the natural isomorphisms
$$H^1(\sheafO[\sqrt[d]{v}],T) = H^1(B \mu_{d,\F},T) = \Hom(\mu_{d,\F},T_{\F}),$$
which allow us to identify
\[
H^{1}\!\left(\sheafO[\sqrt[d]{v}], T\right)
\quad\text{with}\quad
\Hom_{\grp}\!\left(X^{*}(T), \tfrac{1}{d}\Z / \Z\right).
\]We may then establish the following lemma:

\begin{lemma}\label{lemma_formule_lift_age}
Then $\alpha(\overline{P})$ corresponds to the morphism
\[
\chi \in X^{*}(T)
\longmapsto
\age_{\alpha,T}\bigl(\psi(P), \chi\bigr),
\]
where $\psi : X(\K) \rightarrow \sector$ denotes the residue map. It is moreover induced by the group morphism defined earlier:
\[
\mu_{d,\F}
\hookrightarrow
\underline{\stab}_{T}(y)
\hookrightarrow
T_{\F}.
\]
\end{lemma}

\begin{proof}
The pull-back of $\alpha$ along the morphism
\[
\Tilde{x} : B \mu_{d,\F} \longrightarrow X
\]
is given by the isomorphism class of the composition
\[
B \mu_{d,\F} \longrightarrow X \longrightarrow B T,
\]
which sends the trivial point to
\[
[x^* \stackT] = 0 \in H^1(\F,T),
\]
since there exists $y \in \stackT(\F)$ lifting $x$. Hence this composition is determined by the group morphism defined earlier:
\[
\mu_{d,\F}
\hookrightarrow
\underline{\stab}_{T}(y)
\hookrightarrow
T_{\F}.
\]
We conclude the proof using the following commutative diagram, where the horizontal maps are induced by $\alpha$:
\begin{center}
\begin{tikzcd}
{\mor(B \mu_{d,\F},X)} \arrow[r]                               & {H^1(B \mu_{d,\F},T) = \Hom(\mu_{d,\F},T_{\F})}                                    \\
{\overline{P} \in X(\sheafO[\sqrt[d]{v}])} \arrow[u] \arrow[r] & {{\alpha(\overline{P}) \in H^1(\sheafO[\sqrt[d]{v}],T)}  \, .} \arrow[u, "\simeq"]
\end{tikzcd}
\end{center}
\end{proof}

Moreover, we have given an explicit description of the morphism
\[
\mu_{d,\F} \hookrightarrow T_{\F}
\]
via Corollary~\ref{corollary_description_universal_stacky_lift}. It is given by
$\varphi$ such that, for any $\chi \in X^*(T)$,
\[
\chi\bigl(\varphi(\xi)\bigr) = \xi^{v_{\K'}(\chi(t))},
\]
where the notations are the same as in the previous section. Hence, we can deduce from the lemma \ref{lemma_formule_lift_age}, the following crucial theorem:

\begin{theorem}\label{theorem_age_extended_universal_torsor}
For any $\chi \in X^{*}(T)$, we have:
\[
\age_{\alpha,T}\bigl(\psi_v(P),\chi\bigr) \equiv \age_{v,T}(y,\chi) \mod \Z .
\]
\end{theorem}

This yields the following corollary:

\begin{cor}
We have the following commutative diagram:
\begin{center}
\begin{tikzcd}
\stackT(\K) \arrow[r] \arrow[d] & {\Hom_{\grp}(X^{*}(T),\Q)} \arrow[d] \\
X(\K) \arrow[r]                 & {\Hom_{\grp}(X^{*}(T),\Q / \Z)}
\end{tikzcd}
\end{center}
where the horizontal maps are given by
\[
y \in \stackT(\K) \longmapsto \age_{v,T}(y,-)
\]
and by
\[
P \in X(\K) \longmapsto \age_{\alpha,T}\bigl(\psi_v(P),-\bigr).
\]
\end{cor}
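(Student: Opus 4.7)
The plan is to deduce the corollary directly from Theorem \ref{theorem_age_universal_torsor} by checking that both paths around the square are well-defined maps between the stated targets, and then matching them pointwise. I would first record that the upper horizontal arrow $y \mapsto \age_v(y,-)$ from $\stackT(\K)$ to $\Hom_{\grp}(\Pic(X),\Q)$ is well-defined by Definition \ref{def_age_univ_torsor}: the lemma preceding that definition shows $\age_v(y,L)$ is independent of the auxiliary choices of the finite extension $\K'/\K$ and of $t \in T_{\NS}(\K')$ used to bring $y$ into $\stackT(\sheafO_{\K'})$, and its linearity in $L$ comes from the fact that $L \mapsto [L](t)$ is a group homomorphism. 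The right vertical arrow is just reduction modulo $\Z$, which is natural.

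Next I would check that the bottom horizontal arrow $P \mapsto \age_v(\psi_v(P),-)$ is well-defined as a map $X(\K) \to \Hom_{\grp}(\Pic(X),\Q/\Z)$: the age pairing factors through $\sector$ by the functoriality/locally-constant argument recalled just after its definition, and its values lie in $\Q/\Z$ by construction (coming from $\Pic(B\mu_{n,T}) = \frac{1}{n}\Z/\Z$). The left vertical arrow is the projection $q : \stackT(\K) \to X(\K)$ attached to the universal torsor.

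With these four arrows in place, commutativity of the diagram amounts to the assertion that for every $y \in \stackT(\K)$, setting $P = q(y)$, one has the congruence
$$\age_v(\psi_v(P),L) \equiv \age_v(y,L) \pmod{\Z}$$
for all $L \in \Pic(X)$. This is precisely the content of Theorem \ref{theorem_age_universal_torsor}, so the corollary follows at once.

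The substantive step was already performed in proving Theorem \ref{theorem_age_universal_torsor}: identifying the class of the composition $B\mu_{d,\F} \to X \to BT_{\NS}$ coming from the universal stacky lift of Corollary \ref{corollary_description_universal_stacky_lift} with the explicit character $\varphi$ sending $\xi$ to the element of $T_{\NS}$ determined by $[L](\varphi(\xi)) = \xi^{v_{\K'}([L](t))}$, and matching this with the age pairing computed in $\Pic(B\mu_{d,\F}) = \tfrac{1}{d}\Z/\Z$. No further calculation is needed for the corollary itself; the only mild check is that the diagram makes sense at every $y \in \stackT(\K)$, which is ensured because we start from $y$ rather than from an arbitrary rational point of $X$, so no twisting of the universal torsor is required at this stage.
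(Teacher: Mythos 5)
Your proof is correct and follows essentially the same approach as the paper: the corollary is an immediate reformulation of Theorem \ref{theorem_age_universal_torsor}, and the paper introduces it with "This yields the following corollary" precisely because no further argument is needed beyond verifying, as you do, that the four arrows are well-defined maps to the stated targets.
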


% Using the construction of the infinite root stack
% $\Spec(\sheafO[\sqrt[\infty]{v}]) = \underset{\leftarrow}{\lim}\Spec(\sheafO[\sqrt[n]{v}])$,
% one computes
% \begin{align*}
% H^1(\sheafO[\sqrt[\infty]{v}],T)
% &= \underset{\rightarrow}{\mathrm{colim}}\; H^1(\sheafO[\sqrt[n]{v}],T) \\
% &= \underset{\rightarrow}{\mathrm{colim}}\; H^1(B\mu_{n,\F},T) \\
% &= \underset{\rightarrow}{\mathrm{colim}}\; H^1(\mu_{n,\F},T_{\F}) \\
% &= \underset{\rightarrow}{\mathrm{colim}}\; \Hom_{\grp}\!\left(X^*(T),\tfrac{1}{n}\Z/\Z\right) \\
% &= \Hom_{\grp}\!\left(X^*(T),\Q/\Z\right).
% \end{align*}

% Recall that under our hypotheses any rational point $P \in X(\K)$ admits a
% stacky lift. By Theorem~\ref{rational_point_infinite_integral_point}, the map
% \[
% X\bigl(\sheafO[\sqrt[\infty]{v}]\bigr) \longrightarrow X(\K)
% \]
% is a bijection. It therefore makes sense to consider
% \[
% \alpha(P) \in H^{1}\!\left(\sheafO[\sqrt[\infty]{v}], T\right).
% \]

% By Lemma~\ref{lemma_formule_lift_age}, one obtains the following result:

%\subsubsection{Interpretation of the residue map}

Recall that, by Proposition~\ref{prop_sector_over_ring_of_integer}, we may identify $\pi_0(\I_{\mu} X_{\sheafO})$ with $\sector$. The previous theorem will be particularly useful under the following additional assumption, which is satisfied for toric stacks (see Theorem \ref{theorem_sectors_dual_picard_group_toric_stacks}):

\begin{hypothesis}\label{hypothesis_sectors_dual_picard_group}
The map
\begin{align*}
\sector &\longrightarrow \Hom_{\grp}(\Pic(X),\Q/\Z) \\
\stackS &\longmapsto \age(\stackS,-)
\end{align*}
is injective.
\end{hypothesis}

% Under this hypothesis, $\sector$ can naturally be viewed as a subset of
% \[
% H^1(\sheafO[\sqrt[\infty]{v}], T_{\NS,\orb})
% \;=\;
% \Hom_{\grp}(\Pic_{\orb}(X), \Q/\Z).
% \]
% Hence, applying Theorem~\ref{theorem_age_cohomology}, the residue map
% $\psi : X(\K) \rightarrow \sector$ is determined by the assignment
% \[
% P \in X(\K) \longmapsto
% \alpha(\overline{P}) \in
% H^1(\sheafO[\sqrt[N]{v}], T_{\NS,\orb}).
% \]

\subsubsection{Functoriality of the age pairing}

Instead of using the classical age pairing, we shall work with the age pairing associated with the extended universal torsor (see Definition~\ref{definition_extended_universal_torsor}), as defined in Definition~\ref{definition_general_age_pairing}. To exploit the fact that Hypothesis~\ref{hypothesis_sectors_dual_picard_group} holds for toric stacks, we shall use the following proposition:

\begin{prop}\label{prop_age_functoriality_torsor}
Let $f : Y \rightarrow X$ be a $T$-torsor, and let
\[
\lambda : \widehat{T} \longrightarrow \R^{1} p_{*}\G_{m,X}
\]
denote the type of the torsor. For any $\chi \in X^{*}(T)$ and any
$\stackS \in \sector$, we have
\[
\age_{\alpha,T}(\stackS,\chi) = \age\bigl(\stackS,\lambda(\chi)\bigr).
\]
\end{prop}

\begin{proof}
This is a reformulation of Proposition~\ref{prop_computation_general_age_pairing_via_type_torseur}.
\end{proof}

We shall also need the following result of functoriality in the sequel.

\begin{prop}\label{prop_functoriality_lif_age}
Let $X_1$ and $X_2$ be nice Deligne--Mumford stacks. For $i \in \{1,2\}$, let
$f_i : Y_i \rightarrow X_i$ be a $T_i$-torsor over $\sheafO$ where $T_i$ are split tori and $Y_i$ are separated schemes.

Assume that we are given a morphism $r : Y_1 \rightarrow Y_2$, which is equivariant
with respect to a group morphism $\phi : T_1 \rightarrow T_2$, induced by a
morphism of character groups
\[
\lambda : X^{*}(T_2) \longrightarrow X^{*}(T_1).
\]
Then, for any $y_1 \in Y_1(\K)$ and any $\chi_2 \in X^{*}(T_2)$, we have
\[
\age_{v,T_1}\bigl(y_1,\lambda(\chi_2)\bigr)
=
\age_{v,T_2}\bigl(r(y_1),\chi_2\bigr).
\]
\end{prop}

\begin{proof}
    The proof is a consequence of the equivariance of $r$ and of the definition of the
lift of the age (see Definition~\ref{def_age_univ_torsor}).
\end{proof}

\subsection{Local heights defined over the universal torsors}\label{subsection_local_height_coarse}

In this section, let $\K$ be a number field. We consider a universal torsor
$\stackT \xrightarrow{q} X$ over $X$, defined over the ring of $S$-integers
$\sheafO_S$, where $M_{K}^{\infty} \subset S$, and we assume that $T_{\NS}$ is a split torus. Let $v \in M_K$ be a
place, and recall that we denote by
\[
p : X \longrightarrow X^{\coarse}
\]
the coarse moduli map.

The aim of this section is to define a local degree
\[
h_{\stackT,v} : \stackT(\K_v) \longrightarrow \Pic(X)^{\vee}_{\Q},
\]
in the spirit of Peyre’s construction
\cite[Construction~4.27]{Peyre_beyond_height}. We fix a system of heights
\[
s : \Pic(X^{\coarse}) \longrightarrow \mathcal{H}(X^{\coarse}),
\]
an integral point $y_0 \in \stackT(\sheafO_S)$, and a place
$v_0 \in M^{\infty}_K$.

For any line bundle $L$, Proposition~\ref{proposition_universal_property_of_pointed_versal_torsor}
provides, up to multiplication by a scalar in $\K^{\times}$, a unique morphism
\[
\Phi_L : \stackT \longrightarrow L^{\times},
\]
which is equivariant with respect to the group morphism
\[
[L] : T_{\NS} \longrightarrow \G_m
\]
associated with $[L] \in X^{*}(T_{\NS}) = \Pic(X)$. For each $[L']$ in $\Pic(X^{\coarse})$, we choose a representative $(||-||_v)$ of the adelic norm associated to $s([L'])$. We write $\Phi_{L'}$ for the composition of the map $$\Phi_{p^* L'} : \stackT(\K_v) \rightarrow (p^* L')^{\times}(\K_v)$$with the map $\left( p^*L' \right)(\K_v) \rightarrow L'(\K_v)$. Then we define for any $v \in M_{\K}$ the map:

\begin{align*}
    ||-||_{L',v} : \stackT&(\K_v) \rightarrow \R \\
    &y \longmapsto \frac{||\phi_{L'}(y))||_v}{||\phi_{L'}(y_0)||_v} \text{  if } v \neq v_0 \\
    &y \longmapsto \frac{||\phi_{L'}(y)||_{v_0}}{||\phi_{L'}(y_0)||_{v_0}} H_{L'}\left(p(q(y_0))\right)^{-1}
\end{align*}where $H_{L'}\left(p(q(y_0))\right)$ is the height of the rational point $P_0 = p(q(y_0)) \in X^{\coarse}(\K) $ in the usual sense. Recall that $p^*$ induces an isomorphism $$\Pic(X^{\coarse})_{\Q} \rightarrow \Pic(X)_{\Q} .$$

\begin{definition}\label{definition_naive_local_degree}
    The local degree $h_{\stackT,v} :\stackT(\K_v) \rightarrow \Pic(X)^{\vee}_{\Q}$ at the place $v \in M_{\K}$ associated to the universal torsor $\stackT$ is defined for $y \in \stackT(\K_v)$ so that for any $L' \in \Pic(X^{\coarse})$ we have:
    $$||y||_{L',v} = q_v^{- \left\langle (p^*)^{\vee}(h_{\stackT,v}(y)) , L' \right\rangle }  .$$

\end{definition}

Let $L \in \Pic(X)$. For any $y \in \stackT(\K)$, setting
$P = q(y) \in X(\K)$, we have, by Definitions \ref{definition_coarse_multi_height_map} and \ref{definition_naive_local_degree}, that:
\begin{equation}\label{equation_local_height_global_height_coarse_space}
h_{\coarse}\bigl(P\bigr)(L)
=
\sum_{v \in M_{\K}}
h_{\stackT,v}(y)(L)\, \log(q_v).
\end{equation}

Let $v \in M_{\K}$. Recall that
\[
\log_{T_{\NS},v} : T_{\NS}(\K_v) \longrightarrow \Pic(X)^{\vee}_{\R}
\]
is defined so that for any $t \in T_{\NS}(\K_v)$ and any $L \in \Pic(X)$,
\[
\log_{T_{\NS},v}(t)(L)
=
\begin{cases}
v\!\left([L](t)\right), & \text{if } v \in M_{\K}^{0}, \\[0.3em]
- \log_v\!\left(\lvert [L](t) \rvert_v\right), & \text{if } v \in M_{\K}^{\infty}.
\end{cases}
\]

We can then state the following lemma.

\begin{lemma}\label{lemma_hauteur_locale_torseur_univ_et_action_de_tns}
For any $t \in T_{\NS}(\K_v)$ and any $y \in \stackT(\K_v)$, we have
\[
h_{\stackT,v}(t \cdot y)
=
h_{\stackT,v}(y) + \log_{T_{\NS},v}(t).
\]
\end{lemma}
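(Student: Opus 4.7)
The plan is to unpack the definition of $h_{\stackT,v}$ by testing it against an arbitrary line bundle $L' \in \Pic(X^{\coarse})$, use the equivariance of $\Phi_{p^*L'}$ for the character $[p^*L']:T_{\NS}\to\G_m$, and track how the adelic norm changes. Since $p^*:\Pic(X^{\coarse})_{\Q}\to\Pic(X)_{\Q}$ is an isomorphism, it suffices to check the claimed equality after composing with $(p^*)^{\vee}$, i.e. to check pairings with classes of the form $L'\in\Pic(X^{\coarse})$.

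Fix $L'\in\Pic(X^{\coarse})$ and set $L=p^*L'\in\Pic(X)$. By construction, $\Phi_{L'}$ is equivariant for $[L]:T_{\NS}\to\G_m$, so $\Phi_{L'}(t.y)=[L](t)\cdot\Phi_{L'}(y)$ inside the fiber of $L'$ over $p(q(y))$. Using that the adelic norm on $L'$ is $\K_v$-homogeneous, this gives $\|\Phi_{L'}(t.y)\|_v=|[L](t)|_v\cdot\|\Phi_{L'}(y)\|_v$ for every $v\in M_{\K}$. The normalization factor $\|\Phi_{L'}(y_0)\|_v$ (and the extra $H_{L'}(p(q(y_0)))^{-1}$ at $v=v_0$) depends only on $y_0$, not on $y$, so it cancels when one forms the ratio defining $\|\cdot\|_{L',v}$. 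We obtain
\begin{equation*}
\|t.y\|_{L',v}=|[L](t)|_v\cdot\|y\|_{L',v}.
\end{equation*}

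Now take $-\log_{q_v}$ of both sides. At a finite place $v$, $-\log_{q_v}|[L](t)|_v=v([L](t))=\log_{T_{\NS},v}(t)(L)$; at an infinite place, $-\log_{q_v}|[L](t)|_v=-\log_v|[L](t)|_v/\log q_v=\log_{T_{\NS},v}(t)(L)/\log q_v$, so the same identity holds once the factor $\log q_v$ built into the definition of $\|\cdot\|_{L',v}=q_v^{-\langle(p^*)^{\vee}(h_{\stackT,v}(\cdot)),L'\rangle}$ is absorbed. Translating with the definition of $h_{\stackT,v}$ yields
\begin{equation*}
\bigl\langle(p^*)^{\vee}(h_{\stackT,v}(t.y)),L'\bigr\rangle=\bigl\langle(p^*)^{\vee}(\log_{T_{\NS},v}(t)),L'\bigr\rangle+\bigl\langle(p^*)^{\vee}(h_{\stackT,v}(y)),L'\bigr\rangle,
\end{equation*}
where we used $\langle(p^*)^{\vee}(\log_{T_{\NS},v}(t)),L'\rangle=\log_{T_{\NS},v}(t)(p^*L')=\log_{T_{\NS},v}(t)(L)$.

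Since this holds for every $L'\in\Pic(X^{\coarse})$, the two elements $(p^*)^{\vee}(h_{\stackT,v}(t.y))$ and $(p^*)^{\vee}(\log_{T_{\NS},v}(t)+h_{\stackT,v}(y))$ of $\Pic(X^{\coarse})^{\vee}_{\Q}$ coincide; applying the inverse of the isomorphism $(p^*)^{\vee}:\Pic(X)^{\vee}_{\Q}\to\Pic(X^{\coarse})^{\vee}_{\Q}$ gives the claimed equality in $\Pic(X)^{\vee}_{\Q}$. The only mildly delicate step is bookkeeping the archimedean case and the $v=v_0$ normalization, but as observed those contributions are $y$-independent and cancel, so the argument is essentially a direct consequence of equivariance of $\Phi_L$.
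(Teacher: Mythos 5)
Your proof is correct and takes the only sensible route: the lemma is in essence a direct unwinding of the definitions, and you unwind them precisely as intended — test against an arbitrary $L'\in\Pic(X^{\coarse})$, use the $[p^*L']$-equivariance of $\Phi_{L'}$ coming from Proposition \ref{proposition_universal_property_of_pointed_versal_torsor}, use homogeneity of the adelic norm, cancel the $y_0$-dependent normalization factors (and the extra $H_{L'}(p(q(y_0)))^{-1}$ at $v_0$), and apply $-\log_{q_v}$. The paper leaves this lemma without proof for exactly this reason, so your argument matches what the author would write.

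One small wobble: the archimedean paragraph is muddled. Nothing in the definition $\|y\|_{L',v}=q_v^{-\langle(p^*)^{\vee}(h_{\stackT,v}(y)),L'\rangle}$ supplies an extra $\log q_v$ to be ``absorbed.'' The clean reading is that in the paper's definition of $\log_{T_{\NS},v}$ at an infinite place, $\log_v$ should be understood as $\log_{q_v}$, so that $\log_{T_{\NS},v}(t)(L)=-\log_{q_v}|[L](t)|_v$ holds uniformly at all places (matching $v([L](t))=-\log_{q_v}|[L](t)|_v$ at finite $v$), and then applying $-\log_{q_v}$ to $\|t.y\|_{L',v}=|[L](t)|_v\cdot\|y\|_{L',v}$ immediately gives the result with no further bookkeeping. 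This is a quibble about the paper's own slightly ambiguous notation and does not affect the validity of your argument.
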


We shall also need the following properties of $h_{\stackT,v}$.

\begin{prop}\label{proposition_properties_local_degree}
We have the following properties:
\begin{enumerate}
\item
If $\Le_w / \K_v$ is an extension of $p$-adic fields and $y \in \stackT(\K_v)$, then for any $L \in \Pic(X)$,
\[
h_{\stackT,w}(y) = e(w \mid v)\, h_{\stackT,v}(y),
\]
where $e(w \mid v)$ denotes the ramification index.

\item
If $v \not\in S$, then for any $y \in \stackT(\sheafO_v)$, we have $h_{\stackT,v}(y) = 0$.
\end{enumerate}
\end{prop}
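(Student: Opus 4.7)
The plan is to prove both assertions by unwinding the characterizing equation
$$ \|y\|_{L',v} = q_v^{-\langle (p^*)^{\vee}(h_{\stackT,v}(y)),\,L'\rangle} $$
together with the explicit formula for $\|y\|_{L',v}$ in terms of the equivariant morphism $\phi_{L'} : \stackT \to (p^*L')^{\times}$. In both cases the freedom in the choice of $\phi_{L'}$ (a scalar of $\K^{\times}$) disappears because $h_{\stackT,v}(y)$ is built from a ratio $\|\phi_{L'}(y)\|_v / \|\phi_{L'}(y_0)\|_v$, and likewise the adjustment $H_{L'}(p(q(y_0)))^{-1}$ at the distinguished place $v_0$ is a global constant, independent of $v$, that will cancel from both sides of any comparison.

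For (1), the key input is the scaling of absolute values in extensions of $p$-adic fields. By the convention $|x|_w = |N_{\K_w/\Q_p}(x)|_p$, for $x\in\K_v\subset \Le_w$ one has $N_{\Le_w/\Q_p}(x) = N_{\K_v/\Q_p}(x)^{[\Le_w:\K_v]}$ and therefore $|x|_w = |x|_v^{e(w|v)f(w|v)}$. Since an adelic norm on a line bundle is, locally in a trivialisation, given by an absolute value, this identity upgrades to $\|\phi_{L'}(y)\|_w = \|\phi_{L'}(y)\|_v^{e(w|v)f(w|v)}$ for every $y \in \stackT(\K_v)$. Taking the ratio with the base point $y_0$ yields $\|y\|_{L',w} = \|y\|_{L',v}^{e(w|v)f(w|v)}$. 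Substituting into the defining equation for $h_{\stackT,w}$ and using $q_w = q_v^{f(w|v)}$, the common factor $f(w|v)$ cancels from the exponent and one reads off $h_{\stackT,w}(y) = e(w|v)\, h_{\stackT,v}(y)$.

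For (2), enlarging $S$ by a finite set if needed, assume that at $v$ the universal torsor $\stackT$, the line bundle $L = p^*L'$, the equivariant morphism $\phi_L$ and the integral model $\mathcal{E} \to \mathcal{X}^{\coarse}$ producing the adelic norm at $v$ all extend over $\sheafO_v$. For any $y\in\stackT(\sheafO_v)$, the section $\phi_L(y)$ then lies in the integral part $L^{\times}(\sheafO_v) \subset \sheafE_{P}$, which by the integrality clause in the definition of an adelic norm forces $\|\phi_L(y)\|_v = 1$. The same applies to $y_0 \in \stackT(\sheafO_S)\subset \stackT(\sheafO_v)$ (recall $v$ finite so $v\neq v_0$, and the simpler formula applies). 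Hence $\|y\|_{L',v} = 1$ for every $L'\in \Pic(X^{\coarse})$, and since $p^*$ is a $\Q$-isomorphism between Picard groups, this characterises $h_{\stackT,v}(y) = 0$.

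The main obstacle is essentially bookkeeping: the cancellation of the global-height adjustment at $v_0$ in (1), and the identification of integral points of $L^{\times}$ with norm-one elements in (2). The first is handled by noting that $H_{L'}(p(q(y_0)))^{-1}$ is a global constant that appears identically in the definitions of $h_{\stackT,v}$ and $h_{\stackT,w}$ when $w$ lies above $v_0$; the second is the content of the integrality axiom in the definition of an adelic norm. With both obstructions in place, the proof reduces to the two elementary computations sketched above.
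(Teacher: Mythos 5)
The paper states Proposition~\ref{proposition_properties_local_degree} without proof, so there is no argument of record to compare against. Your proof is correct and is the natural way to fill the gap: part~(1) follows from the scaling $|x|_w = |x|_v^{e(w|v)f(w|v)}$ for $x \in \K_v$ (via $N_{\Le_w/\Q_p}(x) = N_{\K_v/\Q_p}(x)^{[\Le_w:\K_v]}$) combined with $q_w = q_v^{f(w|v)}$, so that the factor $f(w|v)$ cancels and leaves exactly $e(w|v)$ in the exponent; part~(2) follows from the integrality clause once one observes that $\phi_{L'}(y)$ lands in $L'^{\times}(\sheafO_v)$, hence is a \emph{generator} of the local lattice $\mathcal{L}'_P$ (not just an integral element, as $L'^{\times}$ avoids the zero section), giving $\|\phi_{L'}(y)\|_v = 1$ exactly, and likewise for the base point $y_0$.

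Two small points worth making explicit rather than leaving implicit. First, in part~(1) the $v_0$-adjustment is actually vacuous here: $v_0$ is chosen in $M_K^\infty$, while the proposition concerns $p$-adic places, so neither $v$ nor $w$ can equal $v_0$ and the ``simpler'' formula always applies; you do not need the cancellation argument you sketch for places above $v_0$. Second, in part~(2) the conclusion $\|\phi_{L'}(y)\|_v = 1$ is only available at those finite places where the morphism $\Phi_{L'}$ and the chosen adelic-norm model all spread out, i.e.\ outside some finite enlargement $S'$ of $S$. You acknowledge this by ``enlarging $S$ if needed,'' which is consistent with how the proposition is actually used (Theorem~\ref{theorem_local_degree_local_age} invokes it only for $v \notin S$), but it is worth stating the proposition with that caveat so the reader does not expect it to hold at the finitely many bad places where the integral model fails to control the norm.
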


We can now state the following theorem.

\begin{theorem}\label{theorem_local_degree_local_age}
Let $v$ be a finite place not contained in $S$.

Then for any
$y \in \stackT(\K_v)$, we have
\[
h_{\stackT,v}(y) = - \age_{v,T_{\NS}}(y,-).
\]
\end{theorem}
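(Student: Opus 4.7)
The plan is to reduce to the case of an integral point over a ramified extension, where both sides vanish, and use the compatible transformation laws under a $T_{\NS}$-action and under field extensions to recover the equality for general $y$. The strategy exactly parallels the argument used in Definition \ref{def_age_univ_torsor} to show that the local age is well defined.

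First I would invoke the construction underlying Definition \ref{def_age_univ_torsor}: given $y \in \stackT(\K_v)$, choose a finite extension $\K'_w / \K_v$ (with ramification index $e = e(w \mid v)$) and an element $t \in T_{\NS}(\K'_w)$ such that $z := t\cdot y \in \stackT(\sheafO_{\K'_w})$. By Proposition \ref{proposition_properties_local_degree}(2) applied to $z$ over $\sheafO_{\K'_w}$, we have $h_{\stackT,w}(z) = 0$, and by the very definition of the age, $\age_w(z,-) = 0$.

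Next I would combine the transformation laws. On the height side, Lemma \ref{lemma_hauteur_locale_torseur_univ_et_action_de_tns} gives $h_{\stackT,w}(z) = \log_{T_{\NS},w}(t) + h_{\stackT,w}(y)$, and Proposition \ref{proposition_properties_local_degree}(1) gives $h_{\stackT,w}(y) = e \cdot h_{\stackT,v}(y)$, hence
\begin{equation*}
    h_{\stackT,v}(y) = -\frac{1}{e}\,\log_{T_{\NS},w}(t).
\end{equation*}
On the age side, Proposition \ref{prop_age_and_logarithm} gives $\age_w(z,-) = \age_w(y,-) - \log_{T_{\NS},w}(t)$, and Proposition \ref{proposition_age_and_field_extension} gives $\age_w(y,-) = e \cdot \age_v(y,-)$, hence
\begin{equation*}
    \age_v(y,-) = \frac{1}{e}\,\log_{T_{\NS},w}(t).
\end{equation*}
Comparing the two identities yields $h_{\stackT,v}(y) = -\age_v(y,-)$.

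There is no genuine obstacle here; the whole content is bookkeeping of signs and of the factor $e(w \mid v)$. The only subtlety I would be careful about is checking that $\log_{T_{\NS},w}$ is defined consistently with the normalisation used in $h_{\stackT,w}$ and in $\age_w$ (i.e.\ that we use the valuation $v_{\K'}$ with the correct normalisation so that the two computations produce the same $\log_{T_{\NS},w}(t)$). Once the normalisations are fixed as in the preceding paragraphs of the article, the equality is immediate; as a sanity check one can verify the identity directly on the weighted projective stack using Example \ref{example_computation_age_weighted_projective_stack}.
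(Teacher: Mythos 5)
Your proof is correct and follows essentially the same route as the paper: pass to a finite extension $\K'_w/\K_v$ with $t\cdot y\in\stackT(\sheafO_w)$, then use Lemma \ref{lemma_hauteur_locale_torseur_univ_et_action_de_tns} and Proposition \ref{proposition_properties_local_degree} to obtain $h_{\stackT,v}(y)=-\tfrac{1}{e(w|v)}\log_{T_{\NS},w}(t)$. The only difference is that the paper then identifies the right-hand side with $-\age_v(y,-)$ directly from Definition \ref{def_age_univ_torsor}, whereas you re-derive that identity via Propositions \ref{prop_age_and_logarithm} and \ref{proposition_age_and_field_extension}; this is a harmless redundancy, since those two propositions are themselves just unpackings of the definition.
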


\begin{proof}
Let $y \in \stackT(\K_v)$. By
Proposition~\ref{prop_normalization_method_stacky_lift}, we may consider the fraction
field of a connected étale presentation of the universal stacky lift of
$q(y) = P \in X(\K_v)$. Hence there exists a finite extension
$\Le_w / \K_v$ of $p$-adic fields and an element $t \in T_{\NS}(\Le_w)$ such that
\[
t \cdot y \in \stackT(\sheafO_w).
\]
Using Lemma~\ref{lemma_hauteur_locale_torseur_univ_et_action_de_tns} and
Proposition~\ref{proposition_properties_local_degree}, we obtain
\[
h_{\stackT,v}(y)
=
\frac{- 1}{e(w \mid v)}\, \log_{T_{\NS},w}(t),
\]
which is precisely $- \age_{v,T_{\NS}}(y,-)$.
\end{proof}

\section{Introduction to toric stacks}

\subsection{Preliminaries}

In \cite{borisov_toric_stack}, Borisov, Chen and Smith introduced toric stacks which was later characterized in different ways in \cite{fantechi_toric_stack} by Fantechi and Mann. Let us first define the notion of \textit{stacky fan}.

\begin{definition}\label{definition_stacky_fan_toric_stack}
    A stacky fan is a triple $$\mathbf{\Sigma} = (\Sigma,\Z^{\Sigma(1)} \xrightarrow{\beta} N)$$ where $N$ is a finitely generated abelian group and $\Sigma$ is a rational complete simplicial fan of $N_{\Q} = N \otimes_{\Z} \Q $ whose set of rays is denoted by $\Sigma(1)$. We suppose moreover that if we denote by $\overline{\beta}$ the composition of $\beta$ with the natural map $N \rightarrow N_{\Q}$, then for any edge $\rho \in \Sigma(1)$, $\overline{\beta}(e_{\rho}) \in \rho$.
\end{definition}

\begin{remark}
    With this definition, $\coker(\beta)$ is finite.
\end{remark}

We define a quasi-affine scheme $\stackT_{\Sigma}$ over $\Z$ as an open subset of $\affine^{\Sigma(1)}$ obtained by cutting out a certain closed subset $Z(\Sigma)$. As in \cite[Chapter 5, §1]{CoxLittleSchenck2011}, we write for a cone of the fan $\sigma \in \Sigma$, the monomial:
$$ X^{\Tilde{\sigma}} = \prod\limits_{\rho \not\in \sigma(1)} X_{\rho} $$where $\sigma(1)$ is the set of rays of the cone $\sigma$. Then we shall use the following definition of the exceptional set $Z(\Sigma)$:

\begin{definition}
    We denote by $B(\Sigma)$ the ideal of $\Z[X_{\rho},\rho \in \Sigma(1)]$ generated by the monomials $X^{\Tilde{\sigma}} = \prod\limits_{\rho \not\in \sigma(1)} X_{\rho}$ where $\sigma$ belongs to $\Sigma_{\max}$, the set of cones of maximal dimension of the fan $\Sigma$. Then we define the closed subset of $\affine^{\Sigma(1)}$ as:
    $$Z(\Sigma) =  V(B(\Sigma)) = \bigcap\limits_{\sigma \in \Sigma_{\max}} \left( \bigcup\limits_{\rho \not\in \sigma(1)} \left( X_{\rho} = 0 \right) \right).$$We define $\stackT_{\Sigma} = \affine^{\Sigma(1)} - Z(\Sigma)$.
\end{definition}

We shall give here another description of $\stackT_{\Sigma}$ as an open subset of $\affine^{\Sigma(1)}$ obtained by cutting out a certain closed subset which can be described in terms of primitive collections which are defined as follows (see \cite[definition 5.1.5]{CoxLittleSchenck2011}):

%\footnote{On serait très tenté de donner une 3ème description en définissant l'éventail de la variété quasi-affine $\stackT_{\Sigma}$ à partir de $\Sigma$}

\begin{definition}\label{def_primitive_collection}
    A subset $C \subset \Sigma(1)$ is a primitive collection if:
\begin{itemize}
    \item $C \not\subset \sigma(1)$ for any $\sigma \in \Sigma$
    \item for every proper subset $C' \subsetneq C$, there exists $\sigma \in \Sigma$ such that $C' \subset \sigma(1)$
\end{itemize}
\end{definition}

Hence $Z(\Sigma)$ can be described as a union of irreducible components as in \cite[proposition 5.1.6]{CoxLittleSchenck2011}:
\begin{prop}\label{description_torseur_univ_primitive_collection}
    $$Z(\Sigma) = \bigcup\limits_{C} V( x_{\rho} \mid \rho \in C ) $$where $C$ goes over the set of primitive collections.
\end{prop}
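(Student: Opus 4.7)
The plan is to prove the two inclusions separately, using the explicit description of $Z(\Sigma)$ as $\bigcap_{\sigma \in \Sigma_{\max}} \bigcup_{\rho \notin \sigma(1)} (X_\rho = 0)$ given just before the statement.

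For the inclusion $\bigcup_C V(x_\rho \mid \rho \in C) \subset Z(\Sigma)$, I would fix a primitive collection $C$ and a geometric point $p \in V(x_\rho \mid \rho \in C)$, and check membership in the intersection defining $Z(\Sigma)$. Given $\sigma \in \Sigma_{\max}$, the first bullet of Definition \ref{def_primitive_collection} ensures that $C \not\subset \sigma(1)$, so some $\rho_0 \in C$ satisfies $\rho_0 \notin \sigma(1)$; then $x_{\rho_0}(p)=0$ places $p$ in $\bigcup_{\rho \notin \sigma(1)} (X_\rho = 0)$, as required.

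For the reverse inclusion, I would take $p \in Z(\Sigma)$ and form $$C_p = \{\rho \in \Sigma(1) \mid x_\rho(p) = 0\}.$$ The definition of $Z(\Sigma)$ says exactly that for every $\sigma \in \Sigma_{\max}$, some $\rho \in C_p$ lies outside $\sigma(1)$, which means $C_p \not\subset \sigma(1)$ for every maximal cone. Since any cone of $\Sigma$ is a face of a maximal one, this is equivalent to $C_p \not\subset \sigma(1)$ for every $\sigma \in \Sigma$. Then I would choose $C \subset C_p$ minimal with this property (which exists because $\Sigma(1)$ is finite). Minimality guarantees that for every proper subset $C' \subsetneq C$ there exists $\sigma \in \Sigma$ with $C' \subset \sigma(1)$, so $C$ satisfies both bullets of Definition \ref{def_primitive_collection} and is a primitive collection. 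Since $C \subset C_p$, one has $p \in V(x_\rho \mid \rho \in C)$, finishing the inclusion.

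The only genuinely delicate point is the equivalence between ``$C$ is contained in the rays of some cone of $\Sigma$'' and ``$C$ is contained in the rays of some maximal cone of $\Sigma$'', which uses the completeness of $\Sigma$ to extend any cone to a maximal one; once this is noted the two arguments are straightforward. I expect no further obstacle, since simpliciality of $\Sigma$ guarantees that any subset of $\sigma(1)$ is itself the ray set of a face of $\sigma$, so there is no subtlety in how primitive collections interact with the combinatorics of $\Sigma$.
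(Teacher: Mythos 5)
Your proof is correct. The paper does not supply its own argument here; it simply cites Cox--Little--Schenck, Prop.\ 5.1.6, and your two inclusions reproduce the standard proof found there: the first bullet of Definition \ref{def_primitive_collection} gives one containment directly, and for the other you pass from the vanishing set $C_p$ of a point $p \in Z(\Sigma)$ to a minimal subset not contained in any $\sigma(1)$, which minimality forces to be a primitive collection. Your remark about completeness is well placed: since the paper defines $\Sigma_{\max}$ as the cones of maximal \emph{dimension} (rather than as cones maximal under inclusion), completeness is exactly what guarantees that every cone is a face of one in $\Sigma_{\max}$, which is the step turning ``$C_p \not\subset \sigma(1)$ for all $\sigma \in \Sigma_{\max}$'' into ``for all $\sigma \in \Sigma$.'' One could add the small observation that $C_p$ is nonempty (else $p$ lies in the torus and the condition $C_p \not\subset \sigma(1)$ would fail), so the minimal $C$ you extract is nonempty, but this is immediate and does not affect the argument.
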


Now let us consider the following exact triangle in $\mathcal{D}(\text{Ab})$:
$$ \Z^{\Sigma(1)} \xrightarrow{\beta} N \rightarrow \coneform(\beta) .$$ We apply the functor $\RHom(-,\Z)$, so we get the following long exact sequence:

$$ 0 \rightarrow N^* \xrightarrow{\beta^*} (\Z^{\Sigma(1)})^* \rightarrow \mathbf{R^1 Hom}(\coneform(\beta),\Z) \rightarrow \Ext^1(N,\Z) \rightarrow 0 .$$If we write $DG(\beta) = \mathbf{R^1 Hom}(\coneform(\beta),\Z)$, we shall denote by $G_{\beta}$ the associated group of multiplicative type, which acts over $\stackT_{\Sigma}$ via the dual morphism of the exact sequence. Then we have the following definition:

\begin{definition}
    The toric stack $X$ associated to the stacky fan $\mathbf{\Sigma}$ is defined over $\Z$ as the global quotient $[\stackT_{\Sigma} / G_{\beta} ]$. 
\end{definition}

\begin{remark}
    In \cite[definition 3.1]{fantechi_toric_stack}, a smooth toric Deligne-Mumford stack is introduced as a smooth separated Deligne-Mumford stack $\stackX$ together with an open immersion of the stacky torus $T$ $$\iota : T \hookrightarrow \stackX$$with dense image such that the action of $T$ on itself extends to an action $$a : T \times \stackX \rightarrow \stackX.$$The article \cite{fantechi_toric_stack} shows that it is equivalent to the characterization as a global quotient, using stacky fan.
\end{remark}

By \cite[proposition 3.2, 3.7]{borisov_toric_stack} and by \cite[section 7]{fantechi_toric_stack}:

\begin{prop}
    The toric stack $X$ is a Deligne-Mumford stack, its Picard group is isomorphic to $DG(\beta)$ and the map $\Div_T(X) \rightarrow \Pic(X)$ naturally identifies with the map $(\Z^{\Sigma(1)})^* \rightarrow \mathbf{R^1 Hom}(\coneform(\beta),\Z)$. From now on, we shall also denote by $T_{\NS}$ the multiplicative group $G_{\beta}$.

    If $X^{\coarse}$ denotes the proper toric variety defined by the fan $\Sigma$, then we have a natural coarse map $X \rightarrow X^{\coarse}$. 
\end{prop}

We now explain how to construct the universal torsor of the toric stack $X$ from the previous quotient description.

\begin{theorem}\label{theorem_cox_ring_universal_torsor}
The $T_{\NS}$-torsor given by the morphism
$$\stackT_{\Sigma} \rightarrow X$$
constructed above has type
$$- \mathrm{id}_{\Pic(X)}.$$
In other words, the universal torsor of $X$ is given by the quotient morphism
$$\stackT_{\Sigma} \rightarrow X = [\stackT_{\Sigma} / T_{\NS}],$$
where $T_{\NS}$ acts on $\stackT_{\Sigma}$ via the opposite action. That is, an element $t$ of $T_{\NS}$ acts on the coordinate associated with $\rho \in \Sigma(1)$ by multiplication by $[D_{\rho}](t)^{-1}$.
\end{theorem}

\begin{remark}
In the case of smooth, proper, and split toric varieties over $\Q$, the theorem is a special case of the construction of the universal torsor via the Cox ring. The result of \cite[Proposition~1.6.1.7]{ArzhantsevDerenthalHausenLaface2015} computes the type of the torsor obtained in this way. However, the final step of the proof appears to contain an inaccuracy. Since this point is relevant in our setting, we provide a complete argument here.
\end{remark}

\begin{proof}
Assume that $\stackT_{\Sigma}$ is endowed with the opposite action of $T_{\NS}$
as defined in the statement of the theorem. For $\rho \in \Sigma(1)$, by the
definition of the contracted product (see equation~\ref{equation_definition_contracted_product}) and the remark that follows,
we have
\[
L \;=\;
\stackT_{\Sigma} \overset{T_{\NS},[D_{\rho}]}{\times} \affine^1
\;=\;
\bigl[\, \stackT_{\Sigma} \times \affine^1 \,/\, T_{\NS} \bigr],
\]
where the action of $T_{\NS}$ is given by
\[
t \cdot (y,\lambda)
=
\left(
\bigl([D_{\rho}](t)^{-1} \cdot y_{\rho}\bigr)_{\rho \in \Sigma(1)},
\;
[D_{\rho}](t)^{-1} \cdot \lambda
\right).
\]

Observe that the map
\begin{align*}
\stackT_{\Sigma} &\longrightarrow \stackT_{\Sigma}\times \affine^1 \\
y &\longmapsto (y,y_{\rho})
\end{align*}
is $T_{\NS}$-equivariant and therefore induces by quotient a section of the line bundle $$L = \bigl[\, \stackT_{\Sigma} \times \affine^1 \,/\, T_{\NS} \bigr] $$over $X = [\stackT_{\Sigma} / T_{\NS} ]$ given by $y_{\rho}$. Consequently,
\[
\mathrm{type}([\stackT_{\Sigma}])([D_{\rho}])
=
[\mathrm{div}(y_{\rho})]
=
[D_{\rho}] \, .
\]

This proves the claim when $N$ is torsion free or $\Pic(X)$ is torsion free. We conclude for the general case using the description of toric stack over their rigidification (see \cite[Corollary 6.26]{fantechi_toric_stack}) and by lemma \ref{lemma_root_stack_type_torseur}.

\end{proof}

\begin{lemma}\label{lemma_root_stack_type_torseur}
Let $X = [\stackT / T_{\NS}]$ be a toric stack such that
\[
\mathrm{type}([\stackT]) = \mathrm{id}.
\]
Let $k \in \N^*$, let $\mathrm L = (L_1,\dots,L_k) \in \Pic(X)^k$, and let
$n = (n_1,\dots,n_k) \in \N^{*\,k}$. Let $Y$ be the toric stack defined by
\[
Y = \sqrt[n]{\mathrm L / X}.
\]
Denote by $\widetilde{T_{\NS}}$ the group of multiplicative type defined by the
following cartesian diagram:
\begin{center}
\begin{tikzcd}
\widetilde{T_{\NS}} \arrow[r] \arrow[d] & \G_m^k \arrow[d, "{\wedge n}"] \\
T_{\NS} \arrow[r, "{[\mathrm L]}"] & \G_m^k
\end{tikzcd}
\end{center}
Then $\widetilde{T_{\NS}} = T_{\NS,Y}$ is the Néron--Severi torus of $Y$.
Moreover, the upper horizontal arrow is given by the characters
\[
[\mathrm{L}^{\frac{1}{n}}]
=
\left(
[L_1^{\frac{1}{n_1}}],\dots,[L_k^{\frac{1}{n_k}}]
\right)
\in X^*(\widetilde{T_{\NS}})^k = \Pic(Y)^k,
\]
and the left vertical arrow is induced, by duality, from the pullback
\[
\Pic(X) \longrightarrow \Pic(Y).
\]
Finally, we have the quotient presentation
\[
Y = [\stackT / \widetilde{T_{\NS}}],
\]
and the torsor given by the associated quotient morphism
\[
\stackT \longrightarrow Y
\]
is still universal.
\end{lemma}
\begin{proof}
All assertions in the statement except the last one are proved in \cite[Lemma 7.1 (2)]{fantechi_toric_stack}. To prove the last assertion, consider the following 2-commutative diagram
\begin{center}
\begin{tikzcd}
{Y = \sqrt[n]{\mathrm{L} / X}} \arrow[d] \arrow[r] & B \widetilde{T_{\NS}} \arrow[r, "{[ \mathrm{L}^{\frac{1}{n}} ]}"] \arrow[d] & B \G_m^k \arrow[d, "\wedge n"] \\
X \arrow[r]                               & B T_{\NS} \arrow[r, "{[\mathrm L]}"]                                       & B \G_m^k                  
\end{tikzcd}    
\end{center}

We now consider the commutative diagram obtained by pull-backs on Picard groups, using moreover Proposition~\ref{proposition_computation_type_torseur} and our hypothesis:
\begin{center}
\begin{tikzcd}
{\Pic\left( \sqrt[n]{\mathrm{L} / X} \right)} & {\Pic\left( \sqrt[n]{\mathrm{L}  / X} \right)} \arrow[l, "{\mathrm{type}([\stackT])}"'] & \Z^k \arrow[l, "{[ \mathrm{L} ^{\frac{1}{n}} ]}"']     \\
\Pic(X) \arrow[u]                    & \Pic(X) \arrow[u] \arrow[l, "\mathrm{id}_{\Pic(X)}"']                 & \Z^k \arrow[u, "{\mathrm{diag}(n_1,\dots,n_k)}"'] \arrow[l, "{[\mathrm{L} ]}"']
\end{tikzcd}
\end{center}By \cite[Remark~7.4]{fantechi_toric_stack}, the right square is cocartesian. 
If for every $1 \leqslant i \leqslant k$ we have
\[
\mathrm{type}([\stackT])\!\left([L_i^{\frac{1}{n_i}}]\right)
= [L_i^{\frac{1}{n_i}}],
\]
then, again by \cite[Remark~7.4]{fantechi_toric_stack}, the outer rectangle is cocartesian. 
It follows that the left square is cocartesian, and therefore
\[
\mathrm{type}([\stackT]) = \mathrm{id}_{\Pic(Y)}.
\]

For $i \in \{1,..,k\}$, to prove that $\mathrm{type}([\stackT])\left([L_i^{\frac{1}{n_i}}]\right) = [L_i^{\frac{1}{n_i}}]$, it suffices to exhibit a $\G_m$-equivariant morphism over $Y$,
$$
[\stackT \times \affine^1 / \widetilde{T_{\NS}}] = \stackT \overset{\widetilde{T_{\NS}},[L_i^{\frac{1}{n_i}}]} {\times} \affine^1 \longrightarrow L_i^{\frac{1}{n_i}} .
$$
By \cite[Appendix B]{abramovich2008gromovwittentheorydelignemumfordstacks}, the line bundle
$$
L_i^{\frac{1}{n_i}} \rightarrow Y = \prod_{j=1}^k \sqrt[n_j]{L_j/X}
$$
is equal to the morphism of quotient stacks
$$
 [L_i^{\times} \times \affine^1 / \G_m] \underset{X}{\times} \prod_{j \neq i} \sqrt[n_j]{L_j/X} \rightarrow [L_i^{\times} / \G_m ] \underset{X}{\times} \prod_{j \neq i} \sqrt[n_j]{L_j/X} = Y,
$$
where the symbol $\prod$ denotes the fiber product over $X$, $\G_m$ acts on $L_i^{\times}$ with weight $n_i$ and on $\affine^1$ with weight $- 1$.

Under our hypothesis,
$$
L_i^{\times} = \stackT \overset{T_{\NS},[L_i]} {\times} \G_m
= [\stackT \times \G_m / T_{\NS} ] .
$$
Consider the morphism
\begin{align*}
   &\stackT \times \affine^1 \rightarrow L_i^{\times} \times \affine^1 \\
   &( y , \lambda ) \longmapsto \left( [(y,1)],\lambda \right) \,.
\end{align*}
It is invariant under the morphism $t \in \widetilde{T_{\NS}} \mapsto [L_i^{\frac{1}{n}}](t) \in \G_m$, where $\G_m$ acts on $L_i^{\times} \times \affine^1$ as defined above. Thus we obtain the desired morphism
$$
[\stackT \times \affine^1 / \widetilde{T_{\NS}}]
= \stackT \overset{\widetilde{T_{\NS}},[L_i^{\frac{1}{n_i}}]} {\times} \affine^1
\longrightarrow L_i^{\frac{1}{n_i}},
$$
which concludes the proof.
\end{proof}

\subsection{Rigidification and canonical stack}

We refer to \cite[section 1.4, section 4.1]{fantechi_toric_stack} for the notion of (universal) rigidification of $X$ and the universal canonical stack associated to $X$. Let us just say that the "rigidification" $r : X \rightarrow X^{\rig}$ makes in a certain way $X^{\rig}$ the universal orbifold associated to $X$ and $r$ makes $X$ a gerbe over $X^{\rig}$ along the generic stabilizer of $X$. While $X^{\can}$ is the "universal" orbifold resolution of the coarse space $X^{\coarse}$. In particular, if $X^{\coarse}$ is smooth, we have $X^{\can} = X^{\coarse}$.

\begin{Notations}\label{notation_stacky_fan_beta}
    For $\rho \in \Sigma(1)$, we write $b_ {\rho} = \beta(e_{\rho})$ and $\overline{N}$ for  the lattice $N / N_{\tor}$, which is naturally embedded in $N_{\Q}$. We can see $\Sigma$ as a fan associated to this lattice. Let us denote by $u_{\rho} \in N_{\Q}$ the minimal generator of the edge $\rho \in \Sigma(1)$. Let $a_{\rho} \in \N^*$ be such that $\overline{\beta}(e_{\rho}) = \overline{b_{\rho}} = a_{\rho} . u_{\rho}$. We denote by $$\beta^{\rig} : \Z^{\Sigma(1)} \rightarrow \overline{N}$$ the endomorphism which maps $e_{\rho}$ to $a_{\rho} . u_{\rho}$ and by $$\beta^{\can} : \Z^{\Sigma(1)} \rightarrow \overline{N}$$ the one which maps $e_{\rho}$ to $ u_{\rho}$.
\end{Notations}

With this notation, we get the following theorem from \cite[Paragraph 7.2]{fantechi_toric_stack}:

\begin{theorem}
    The stacky fan $\mathbf{\Sigma^{\rig}} = (\Sigma,\beta^{\rig})$ corresponds to the toric orbifold $X^{\rig}$ obtained by rigidification along the generic stabilizer of $X$. The stacky fan $\mathbf{\Sigma^{\can}} = (\Sigma,\beta^{\can})$ corresponds to the canonical stack associated to $X^{\coarse}$. Moreover the morphisms $$X \rightarrow X^{\rig} \rightarrow X^{\can} $$ come from the natural morphisms of stacky fans.
\end{theorem}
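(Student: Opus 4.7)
The plan is to exploit the dictionary between toric stacks and stacky fans recalled in the previous theorem, and to verify the three claims through direct computations on the level of $N$ and $\beta$, combined with the universal properties of $X^{\rig}$ and $X^{\can}$. The natural first step is to identify the generic stabilizer of $X = [\stackT_{\Sigma}/T_{\NS}]$: applying $\RHom(-,\Z)$ to the exact triangle $\Z^{\Sigma(1)} \xrightarrow{\beta} N \to \coneform(\beta)$ gives the exact sequence
$$ 0 \to N^* \to (\Z^{\Sigma(1)})^* \to DG(\beta) \to \Ext^1(N,\Z) \to 0 ,$$
and Cartier-dualising identifies the generic stabilizer (the kernel of the action of $T_{\NS} = G_\beta$ on the dense torus of $\stackT_{\Sigma}$) with the diagonalisable group associated with $\Ext^1(N,\Z) \simeq \Hom(N_{\tor},\Q/\Z)$, i.e.\ $N_{\tor}$ itself after Cartier duality.

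For the rigidification, since $\overline{N} = N/N_{\tor}$ is torsion-free, $\Ext^1(\overline{N},\Z) = 0$, so the toric stack $X'$ associated with $(\Sigma,\beta^{\rig})$ has trivial generic stabilizer by the same computation. The quotient $N \twoheadrightarrow \overline{N}$ paired with $\id_{\Z^{\Sigma(1)}}$ is a morphism of stacky fans, inducing a morphism $X \to X'$ whose restriction to the dense torus is exactly the quotient by $N_{\tor}$. A chart-by-chart check that $X \to X'$ satisfies the universal property of rigidification along the generic stabilizer then gives $X' = X^{\rig}$.

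For the canonical stack, the toric stack $X''$ attached to $(\Sigma,\beta^{\can})$ is again an orbifold, and its stabilizer along each torus-invariant prime divisor $D_\rho$ is trivial because $\beta^{\can}(e_\rho) = u_\rho$ is the primitive generator of $\rho$. Hence $X'' \to X^{\coarse}$ is an isomorphism outside a codimension-two subset, which characterises the canonical stack. One then verifies, via the stacky fan dictionary, that any smooth toric DM orbifold with coarse space $X^{\coarse}$ arises from some $(\Sigma,\gamma)$ with $\gamma(e_\rho) = c_\rho u_\rho$ and $c_\rho \in \N^*$, and the minimal choice $c_\rho = 1$ gives $\beta^{\can}$. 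The required sequence of morphisms $X \to X^{\rig} \to X^{\can}$ then comes from $(\Sigma,\beta) \to (\Sigma,\beta^{\rig})$ via $N \twoheadrightarrow \overline{N}$, and $(\Sigma,\beta^{\rig}) \to (\Sigma,\beta^{\can})$ via $\id_{\overline{N}}$ together with the dilatation $e_\rho \mapsto a_\rho e_\rho$ on $\Z^{\Sigma(1)}$, which realises $\beta^{\rig}$ as $\beta^{\can} \circ (e_\rho \mapsto a_\rho e_\rho)$.

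The main obstacle is the verification of the universal properties of $X^{\rig}$ and $X^{\can}$ in the $2$-categorical sense directly from the stacky fan description. While the lattice-level arithmetic is straightforward, the $2$-morphism data on overlapping affine charts of the quotient presentation must be tracked carefully, and this is the step where an attempted proof is most likely to go astray. An alternative would be to bypass the explicit check by invoking the Fantechi--Mann characterisation of smooth toric DM stacks, which makes the universal properties of $X^{\rig}$ and $X^{\can}$ into direct consequences of the universal properties of the morphisms $N \to \overline{N}$ and of the lift $\beta^{\can}$ of the primitive generators.
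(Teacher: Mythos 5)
The paper does not actually prove this statement: it states the theorem as a direct citation of \cite[Paragraph~7.2]{fantechi_toric_stack}, with no argument of its own. Your proposal is an attempt at a first-principles verification, and the final paragraph of your write-up — where you suggest ``bypassing the explicit check by invoking the Fantechi--Mann characterisation'' — is precisely the paper's approach. So the honest comparison is: you sketch a proof; the paper outsources it.

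On the merits of your sketch: the lattice-level computations are correct. The generic stabilizer of $X$ is indeed Cartier dual to $\Ext^1(N,\Z) \simeq \Hom(N_{\tor},\Q/\Z)$, i.e.\ it is $N_{\tor}$ as a finite diagonalisable group; since $\overline{N}=N/N_{\tor}$ is free, $\Ext^1(\overline{N},\Z)=0$ and the stack $X'$ attached to $(\Sigma,\beta^{\rig})$ has trivial generic stabilizer; and the two morphisms of stacky fans you write down, $(\id_{\Z^{\Sigma(1)}},\,N\twoheadrightarrow\overline{N})$ and $(e_\rho\mapsto a_\rho e_\rho,\,\id_{\overline{N}})$, do satisfy the required commutativity $\beta^{\rig}=\pi\circ\beta$ and $\beta^{\rig}=\beta^{\can}\circ(e_\rho\mapsto a_\rho e_\rho)$. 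What is missing, and you flag it yourself, is the verification that $X\to X'$ is a gerbe banded by the generic stabilizer and is \emph{terminal} among such (that is, the actual universal property of rigidification), and the analogous minimality statement for the canonical stack. Having trivial generic stabilizer does not by itself identify $X'$ with $X^{\rig}$, and being an isomorphism in codimension one does not by itself identify $X''$ with $X^{\can}$: in both cases the defining data is universal, not merely a list of invariants that $X'$ or $X''$ happens to satisfy. Checking this $2$-categorical universal property across the affine quotient charts is the genuine content of \cite[Paragraph~7.2]{fantechi_toric_stack}; since you would end up re-proving that reference, the cleanest route is simply to cite it, as the paper does.
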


Now let us give a quotient description of the morphism $X \rightarrow X^{\rig}$ in the case of toric stacks:

\begin{cor}\label{cor_description_map_stack_to_rigidification}
    The natural morphism of stacky fans $(\Sigma,\beta) \rightarrow (\Sigma,\beta^{\rig})$ gives the following exact sequence:
    $$0 \rightarrow \Pic(X^{\rig}) \rightarrow \Pic(X) \rightarrow \Ext^1(N_{\tor},\Z) \rightarrow 0 .$$Hence if $G$ denotes the multiplicative group associated to $\Ext^1(N_{\tor},\Z)$, $G$ identifies with the kernel of the natural map $T_{\NS,X} \rightarrow T_{\NS,X^{\rig}}$. The universal morphism $$X \xrightarrow{p_{\rig}} X^{\rig}$$ is obtained by quotient from the identity morphism $\id_{\stackT}$ which is equivariant for the group morphism $T_{\NS,X} \rightarrow T_{\NS,X^{\rig}}$.
\end{cor}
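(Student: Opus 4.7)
The plan is a derived-category computation followed by Cartier duality. Write $q : N \to \overline{N}$ for the quotient, so that $\beta^{\rig} = q \circ \beta$ and $\coneform(q) \simeq N_{\tor}[1]$ in $\mathcal{D}(\mathrm{Ab})$ (since $q$ is surjective with kernel $N_{\tor}$). Applying the octahedral axiom to the composite $\Z^{\Sigma(1)} \xrightarrow{\beta} N \xrightarrow{q} \overline{N}$ produces a distinguished triangle
\[
\coneform(\beta) \to \coneform(\beta^{\rig}) \to N_{\tor}[1] \to \coneform(\beta)[1].
\]

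Next, I would apply $\RHom(-,\Z)$ and read off the long exact sequence in cohomology. Using $\Ext^i(N_{\tor}[1],\Z) = \Ext^{i-1}(N_{\tor},\Z)$, this group vanishes for $i=1$ (as $N_{\tor}$ is torsion, so $\Hom(N_{\tor},\Z) = 0$) and equals $\Ext^1(N_{\tor},\Z)$ for $i=2$. Since $\coneform(\beta^{\rig})$ is represented by the two-term complex $[\Z^{\Sigma(1)} \to \overline{N}]$ of free abelian groups, one also has $\Ext^i(\coneform(\beta^{\rig}),\Z) = 0$ for $i \geq 2$. Recalling $\Pic(X) = \Ext^1(\coneform(\beta),\Z)$ and $\Pic(X^{\rig}) = \Ext^1(\coneform(\beta^{\rig}),\Z)$, the long exact sequence collapses to the (in fact short) exact sequence
\[
0 \to \Pic(X^{\rig}) \to \Pic(X) \to \Ext^1(N_{\tor},\Z) \to 0,
\]
which gives the first assertion.

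For the identification of $G$, I would apply Cartier duality to this short exact sequence. The functor sending a finitely generated abelian group to its associated group of multiplicative type is contravariant and exact, so dualizing yields
\[
1 \to G \to T_{\NS,X} \to T_{\NS,X^{\rig}} \to 1,
\]
exhibiting $G$ as the kernel. The last assertion then follows formally: the morphism of stacky fans $(\Sigma,\beta) \to (\Sigma,\beta^{\rig})$ is the identity on $\Z^{\Sigma(1)}$ and the quotient $q$ on the target, so both toric stacks have the same exceptional set $Z(\Sigma)$ and hence share the quasi-affine scheme $\stackT = \affine^{\Sigma(1)} - Z(\Sigma)$. By the preceding theorem, the induced morphism $X \to X^{\rig}$ is therefore the quotient of $\id_{\stackT}$ along the surjection $T_{\NS,X} \twoheadrightarrow T_{\NS,X^{\rig}}$ just produced, which is visibly equivariant.

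The main obstacle is naturality: one must verify that the map $\Pic(X^{\rig}) \to \Pic(X)$ arising from the octahedral triangle coincides with pullback along the rigidification $r : X \to X^{\rig}$, and correspondingly that the dualized torus sequence matches the genuine comparison of the two Néron–Severi tori acting on $\stackT$. Both compatibilities are formal but demand tracing the identifications $\Pic(X) = DG(\beta)$, $T_{\NS} = \mathrm{Hom}(DG(\beta),\G_m)$, and the universal quotient presentation, through each step of the construction.
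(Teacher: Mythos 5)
Your proof is correct and is the natural computation one would do here: the paper omits the argument, but applying $\RHom(-,\Z)$ to the octahedral triangle $\coneform(\beta) \to \coneform(\beta^{\rig}) \to N_{\tor}[1]$ and then Cartier-dualizing is exactly the expected route, and the final quotient description follows since $\stackT_{\Sigma}$ depends only on the fan $\Sigma$, not on $\beta$. You in fact establish a bit more than the stated corollary — the vanishing of $\Hom(N_{\tor},\Z)$ upgrades the right-exact sequence to a short exact sequence, which is precisely what guarantees the surjectivity of $T_{\NS,X} \to T_{\NS,X^{\rig}}$ used in the last assertion; the remaining compatibility you flag (that the abstract triangle map is pullback along $r$) does follow, as the maps to $\Pic$ from $(\Z^{\Sigma(1)})^{*}$ commute with the vertical arrows in the morphism of triangles.
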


Now we may describe the natural map $X \rightarrow X^{\can}$ when $X$ is a toric orbifold (that is to say the map $X^{\rig} \rightarrow X^{\can}$ in the general case).

\begin{cor}\label{cor_description_map_stack_to_canonical}
Suppose that $X$ is a toric orbifold. The natural map $X \xrightarrow{p_{\can}} X^{\can}$ is obtained by quotient from the map 
\begin{align*}
\Phi_a : \  &\stackT \rightarrow \stackT \\
&y \longmapsto (y_{\rho}^{a_{\rho}}) . 
\end{align*}which is $T_{\NS,X} \rightarrow T_{\NS,X^{\can}}$-equivariant. Moreover if $\Cl(X^{\coarse})$ is the class group of $X^{\coarse}$, $\Pic(X^{\can}) \simeq \Cl(X^{\coarse})$.
\end{cor}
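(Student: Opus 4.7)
\emph{Proof plan.} The argument follows the pattern of Corollary \ref{cor_description_map_stack_to_rigidification}, which I would take as a template. Since $X$ is a toric orbifold we have $N = \overline{N}$ torsion-free and $\beta = \beta^{\rig}$, while $X$ and $X^{\can}$ arise as quotients of the \emph{same} quasi-affine scheme $\stackT = \stackT_{\Sigma}$ by distinct tori acting through distinct characters. The first step is to exhibit a morphism of stacky fans $(\Sigma,\beta) \to (\Sigma,\beta^{\can})$ given by the identity on $\overline{N}$ together with the endomorphism $a : \Z^{\Sigma(1)} \to \Z^{\Sigma(1)}$, $e_{\rho} \mapsto a_{\rho} e_{\rho}$. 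Compatibility is immediate from the definition of the $a_\rho$: $\beta^{\can}(a(e_{\rho})) = \beta^{\can}(a_{\rho} e_{\rho}) = a_{\rho} u_{\rho} = \beta(e_{\rho})$. By the functoriality of the toric-stack construction from stacky fans, this yields the universal map $X \to X^{\can}$.

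To identify this morphism on the level of torsors with $\Phi_a$, I would use that $\stackT$ sits inside $\affine^{\Sigma(1)}$ and that $\G_m^{\Sigma(1)}$ has cocharacter lattice $\Z^{\Sigma(1)}$: the endomorphism of $\G_m^{\Sigma(1)}$ whose action on cocharacters is $a$ is, on points, $y \longmapsto (y_{\rho}^{a_{\rho}})$, which preserves $\stackT$ and is exactly $\Phi_a$. For the equivariance, I would apply $\RHom(-,\Z)$ to the morphism of two-term complexes $(a,\id) : [\Z^{\Sigma(1)} \xrightarrow{\beta} \overline{N}] \to [\Z^{\Sigma(1)} \xrightarrow{\beta^{\can}} \overline{N}]$. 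Using $\Pic(X) = DG(\beta)$ and $\Pic(X^{\can}) = DG(\beta^{\can})$, the map on $H^1$ of the dualized complexes is precisely the pullback $\Pic(X^{\can}) \to \Pic(X)$, and a direct inspection (using that $(a^{*})(e_{\rho}^{*}) = a_{\rho}e_{\rho}^{*}$ and that $[D_{\rho}^{\can}]$, resp.\ $[D_{\rho}]$, is the class of $e_{\rho}^{*}$) shows it sends $[D_{\rho}^{\can}] \mapsto a_{\rho} [D_{\rho}]$. Taking Cartier duals yields the stated homomorphism $T_{\NS,X} \to T_{\NS,X^{\can}}$, and equivariance is then automatic: the weight of $y_{\rho}^{a_{\rho}}$ under $T_{\NS,X}$ is $a_{\rho}[D_{\rho}]$, which is exactly the pullback of the weight $[D_{\rho}^{\can}]$ of the $\rho$-th coordinate for $T_{\NS,X^{\can}}$.

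For the final assertion, since $\overline{N}$ is torsion-free, $\Ext^{1}(\overline{N},\Z) = 0$, so the long exact sequence from the construction of $DG(\beta^{\can})$ collapses into the short exact sequence
\[
0 \to \overline{N}^{*} \xrightarrow{(\beta^{\can})^{*}} (\Z^{\Sigma(1)})^{*} \to DG(\beta^{\can}) \to 0,
\]
where $(\beta^{\can})^{*}(m) = (\langle m, u_{\rho}\rangle)_{\rho}$. This is exactly the standard presentation of $\Cl(X^{\coarse})$ for the simplicial (hence $\Q$-factorial) toric variety with fan $\Sigma$ (see \cite{toric_varieties_cox}), which gives $\Pic(X^{\can}) = DG(\beta^{\can}) \simeq \Cl(X^{\coarse})$. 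The main bookkeeping obstacle I anticipate is the dualization step producing the factor $a_{\rho}$ in the pullback on Picard groups with the correct direction of arrows; once this is pinned down, the equivariance and the identification of $\Phi_a$ with the quotient map follow by applying the dictionary of the previous subsection.
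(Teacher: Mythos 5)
Your proof is correct and follows exactly the route the paper implicitly intends: the paper states this corollary without proof, leaving it as an immediate consequence of the preceding theorem identifying the natural morphisms $X \to X^{\rig} \to X^{\can}$ with morphisms of stacky fans, in parallel with Corollary~\ref{cor_description_map_stack_to_rigidification}. Your unpacking via the morphism of two-term complexes $(a,\id_{\overline{N}})$, the dualization giving $[D_\rho^{\can}] \mapsto a_\rho[D_\rho]$ on Picard groups and hence $T_{\NS,X} \to T_{\NS,X^{\can}}$ on tori, the observation that $\Phi_a$ preserves $\stackT_\Sigma$, and the vanishing of $\Ext^1(\overline{N},\Z)$ yielding the Cox short exact sequence for $\Cl(X^{\coarse})$, is precisely the argument one would write out.
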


\begin{remark}\label{remark_picard_group_canonical_stack}
    By \cite[Theorem 4.1.3]{CoxLittleSchenck2011}, this means that the Picard group of the canonical stack $X^{\can}$ of a toric stack $X$ is always torsion free.
\end{remark}

\subsection{Sectors of toric stacks}

By \cite[proposition 4.3, remark 4.4]{borisov_toric_stack}, we can give an open covering of the toric stack $X$ defined via the stacky fan $$\mathbf{\Sigma} = (\Sigma,\Z^{\Sigma(1)} \xrightarrow{\beta} N).$$Let us set $d = \dim_{\Q} N_{\Q}$. For $\sigma \in \Sigma_{\max}$, we denote by $N(\sigma) = N / N_{\sigma}$ where $N_{\sigma}$ is the subgroup generated by $\{ b_{\rho} \mid \rho \in \sigma(1) \}$. We denote by $D(X^{\tilde{\sigma}})$ the open subscheme of $\stackT_{\Sigma}$ obtained by cutting out $V(X^{\tilde{\sigma}}) = \bigcup\limits_{\rho \not\in \sigma(1)} \left( X_{\rho} = 0 \right) $. Note that in this subsection, $$\stackT_{\Sigma} \rightarrow X$$still denotes the classical quotient presentation of the toric stack. By Theorem~\ref{theorem_cox_ring_universal_torsor}, we showed that it corresponds to the $T_{\NS}$-torsor of type $-\mathrm{id}_{\Pic(X)}$.

We also define by $\G_{\sigma}$ the multiplicative group associated to 
\begin{equation}\label{equation_identification_ext_1}
    \Ext^1(N(\sigma),\Z) = \Hom_{\Q/\Z}(N(\sigma),\Q/\Z).
\end{equation}Applying \cite[proposition 4.3]{borisov_toric_stack}, we get:

\begin{theorem}
    The stack $U_{\sigma} = [\affine^d / \G_{\sigma} ]$ is an open substack of $X$ such that we have the following cartesian and commutative diagram:
    \begin{center}
\begin{tikzcd}
D(X^{\tilde{\sigma}}) \arrow[r] \arrow[d] & \stackT_{\Sigma} \arrow[d] \\
U_{\sigma} \arrow[r]                      & X                         
\end{tikzcd}
    \end{center}
\end{theorem}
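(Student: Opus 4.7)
The plan is to recognize $U_\sigma$ as the toric stack of the sub-stacky fan supported on $\sigma$ and explicitly identify it with $[D(X^{\tilde\sigma})/G_\beta]$, following the strategy of \cite[proposition 4.3]{borisov_toric_stack}. First, since $\sigma \in \Sigma_{\max}$ is simplicial of dimension $d = \Dim N_\Q$, we have $|\sigma(1)| = d$, so
$$D(X^{\tilde\sigma}) = \left\{ (x_\rho) \in \affine^{\Sigma(1)} \mid x_\rho \neq 0 \text{ for all } \rho \notin \sigma(1) \right\} \simeq \affine^d \times \G_m^{\Sigma(1) \setminus \sigma(1)}.$$
This subset lies in $\stackT_\Sigma$ because $Z(\Sigma) = \bigcap_{\tau \in \Sigma_{\max}} V(X^{\tilde\tau}) \subset V(X^{\tilde\sigma})$, and it is plainly $G_\beta$-stable since $G_\beta$ acts on $\affine^{\Sigma(1)}$ by characters, preserving each coordinate's vanishing locus. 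Thus the right-hand quotient restricts to give an open substack $[D(X^{\tilde\sigma})/G_\beta]$ of $X$, and the square is Cartesian by construction.

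Next, I would identify $U_\sigma$ with the toric stack of the sub-stacky fan $(\langle \sigma \rangle, \beta_\sigma : \Z^{\sigma(1)} \to N)$, where $\beta_\sigma$ is the restriction of $\beta$ and $\langle\sigma\rangle$ denotes the fan of faces of $\sigma$. Since $\sigma$ is simplicial of maximal dimension, $\beta_\sigma$ is injective with finite cokernel $N(\sigma)$, so $\coneform(\beta_\sigma) \simeq N(\sigma)$ in degree zero and $DG(\beta_\sigma) = \Ext^1(N(\sigma), \Z)$. Dualising gives $G_{\beta_\sigma} = \G_\sigma$ via (\ref{equation_identification_ext_1}). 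Moreover the exceptional set $Z(\langle \sigma \rangle)$ is empty, so the universal torsor for $(\langle \sigma \rangle, \beta_\sigma)$ is $\affine^d$ and the corresponding toric stack is exactly $[\affine^d/\G_\sigma] = U_\sigma$.

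To produce the desired open immersion $U_\sigma \hookrightarrow X$ with image $[D(X^{\tilde\sigma})/G_\beta]$, I would exploit the morphism of stacky fans $(\langle \sigma \rangle, \beta_\sigma) \to (\Sigma, \beta)$ induced by $\Z^{\sigma(1)} \hookrightarrow \Z^{\Sigma(1)}$ and the identity on $N$. Functoriality of $DG$, applied via a diagram chase on the compatible five-term exact sequences for $\beta$ and $\beta_\sigma$, yields a short exact sequence of diagonalisable groups
$$0 \to \G_\sigma \to G_\beta \to K \to 0$$
where $K \simeq \G_m^{\Sigma(1) \setminus \sigma(1)}$ is a torus. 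The characters of $G_\beta$ on the $\affine^d$ factor of $D(X^{\tilde\sigma})$ come from $(\Z^{\sigma(1)})^*$, so only $\G_\sigma$ acts non-trivially on this factor; meanwhile $K$ acts simply transitively on the $\G_m^{\Sigma(1) \setminus \sigma(1)}$ factor. Hence the projection $D(X^{\tilde\sigma}) \to \affine^d$ is a $K$-torsor equivariant for the $G_\beta$-action, and one concludes
$$[D(X^{\tilde\sigma})/G_\beta] \simeq [\affine^d/\G_\sigma] = U_\sigma,$$
as required.

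The main obstacle is the character-level bookkeeping: extracting the short exact sequence $0 \to \G_\sigma \to G_\beta \to K \to 0$ from the diagram of five-term sequences and verifying that the decomposition of the action on $D(X^{\tilde\sigma})$ matches the product of the expected actions on the two factors. This reduces to linear algebra over $\Z$ using the splitting $\Z^{\Sigma(1)} = \Z^{\sigma(1)} \oplus \Z^{\Sigma(1) \setminus \sigma(1)}$, but the care required in tracking the image of $N^*$ in $(\Z^{\Sigma(1)})^*$ makes it the technical heart of the proof.
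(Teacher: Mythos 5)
Your proposal is correct and follows the same route as the paper, whose proof is in fact just the citation of \cite[proposition 4.3]{borisov_toric_stack}; you are essentially reconstructing that argument. The identification of $U_\sigma$ with the toric stack of the sub-stacky fan $(\langle\sigma\rangle,\beta_\sigma)$, the observation that $\beta_\sigma$ is injective with finite cokernel $N(\sigma)$ (so $\coneform(\beta_\sigma)\simeq N(\sigma)$ in degree zero and $DG(\beta_\sigma)=\Ext^1(N(\sigma),\Z)$), and the short exact sequence $1\to\G_\sigma\to G_\beta\to \G_m^{\Sigma(1)\setminus\sigma(1)}\to 1$ are all sound. For the latter, the cleanest derivation is perhaps the distinguished triangle $\coneform(\beta_\sigma)\to\coneform(\beta)\to\Z^{\Sigma(1)\setminus\sigma(1)}[1]$ obtained from the split exact sequence $\Z^{\sigma(1)}\to\Z^{\Sigma(1)}\to\Z^{\Sigma(1)\setminus\sigma(1)}$ over the identity on $N$; applying $\RHom(-,\Z)$ and using that $\mathbf{R^0Hom}(\coneform,\Z)$ vanishes (since $\coker(\beta)$ and $\coker(\beta_\sigma)$ are finite) gives $0\to(\Z^{\Sigma(1)\setminus\sigma(1)})^*\to DG(\beta)\to DG(\beta_\sigma)\to 0$, which dualizes to your sequence. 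Injectivity of $(\Z^{\Sigma(1)\setminus\sigma(1)})^*\to DG(\beta)$ amounts to $\sigma$ being full-dimensional, exactly as you indicate.

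One caution on the wording of the final step: $K=G_\beta/\G_\sigma$ is a \emph{quotient} of $G_\beta$, not a subgroup, so it does not act on $D(X^{\tilde\sigma})$ and the phrase ``the projection $D(X^{\tilde\sigma})\to\affine^d$ is a $K$-torsor equivariant for the $G_\beta$-action'' does not parse as stated; likewise, all of $G_\beta$ (not only $\G_\sigma$) acts on the $\affine^d$ factor through $\chi_\sigma=(\chi_\rho)_{\rho\in\sigma(1)}$. The correct slice formulation is that the \emph{second} projection $D(X^{\tilde\sigma})\to\G_m^{\Sigma(1)\setminus\sigma(1)}$ is $G_\beta$-equivariant, where $G_\beta$ acts on the target by translation through the surjection $\phi$, so that $[D(X^{\tilde\sigma})/G_\beta]\simeq[\phi^{-1}(1)/\ker\phi]=[\affine^{\sigma(1)}/\G_\sigma]$. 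One then checks, via the commuting square relating $(\Z^{\Sigma(1)})^*\to DG(\beta)$, $(\Z^{\sigma(1)})^*\to DG(\beta_\sigma)$ and $p_\sigma$, that this residual $\G_\sigma$-action on $\affine^{\sigma(1)}$ agrees with the one defined by $\beta_\sigma$. With this rephrasing the argument is complete.
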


\begin{remark}
    The stack $U_{\sigma}$ has a description as a toric stack with stacky fan $$\mathbf{\sigma} = (\sigma, \beta_{\sigma} : \Z^{\sigma(1)} \rightarrow N) $$ where $\beta_{\sigma}$ is defined such that if $i : \Z^{\sigma(1)} \hookrightarrow \Z^{\Sigma(1)}$ is the natural injection, $\beta_{\sigma} = \beta \circ i$. With this notation, note that $\mathbf{R^1 Hom}(\coneform(\beta_{\sigma}),\Z) = \Ext^1(N(\sigma),\Z)$.
\end{remark}

Now for $\sigma \in \Sigma$, we define $\Boxe(\sigma)$ as follows:
\begin{equation}
    \Boxe(\sigma) = \{ n \in N \mid \exists \ (q_{\rho})_{\rho \in \sigma(1)} \in \left( [0,1[ \cap \Q \right)^{\sigma(1)}, \overline{n} = \sum\limits_{\rho \in \Sigma(1)} q_{\rho} . \overline{b_{\rho}} \in \overline{N} \}
\end{equation}We set $$\Boxe(\Sigma) = \bigcup\limits_{\sigma \in \Sigma_{\max}} \Boxe(\sigma) .$$Let us explain now how this set describe the sectors of $X$.

\begin{theorem}\label{theorem_description_sector_box_element}
    The set of sectors of $X$, $\sector$, is in one and one correspondence with $\Boxe(\Sigma)$. This correspondence is defined as follows: for $b \in \Boxe(\sigma)$ its class in $$N(\sigma) = \Hom_{\grp}(\Ext^1(N(\sigma),\Z),\Q/\Z)$$defines an injective morphism $$\mu_{r_b} \hookrightarrow \G_{\sigma} .$$The sector $\stackY_b$ associated to $b$ corresponds to the connected component of $\sheafI_{\mu} X$ which contains the morphism $$B \mu_{r_b} \rightarrow U_{\sigma} \rightarrow X$$associated to the pair $(0_{\affine^d},\mu_{r_b} \hookrightarrow \G_{\sigma})$.
\end{theorem}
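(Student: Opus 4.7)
The plan is to exploit the open cover of $X$ by the charts $U_{\sigma}$, describe the sectors of each chart via Cartier duality, and then identify the combinatorial output with $\Boxe(\Sigma)$, taking care of the gluing along face inclusions.

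First, by the preceding theorem, $X = \bigcup_{\sigma \in \Sigma_{\max}} U_{\sigma}$ with $U_{\sigma} = [\affine^d / \G_{\sigma}]$. Since $\sheafI_{\mu}$ is compatible with open immersions, $\sheafI_{\mu} X = \bigcup_{\sigma} \sheafI_{\mu} U_{\sigma}$, so it suffices to describe $\pi_0(\sheafI_{\mu} U_{\sigma})$ together with the compatibility along faces. The group $\G_{\sigma}$ is a finite diagonalizable group scheme acting linearly and diagonally on $\affine^d$. A representable morphism $B \mu_{l} \to U_{\sigma}$ is equivalent to a closed immersion $\phi : \mu_{l} \hookrightarrow \G_{\sigma}$ together with a $\phi(\mu_{l})$-fixed point of $\affine^d$. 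Because the action is linear and diagonal, the fixed locus $(\affine^d)^{\phi(\mu_{l})}$ is a coordinate linear subspace, hence connected and always containing the origin. Therefore $\pi_0(\sheafI_{\mu} U_{\sigma})$ is in natural bijection with closed immersions $\mu_{l} \hookrightarrow \G_{\sigma}$ as $l$ ranges over $\N^*$. By Cartier duality and the identification \eqref{equation_identification_ext_1}, such immersions correspond exactly to elements of $N(\sigma)$: an element $\overline{b}$ of order $r$ produces $\mu_r \hookrightarrow \G_{\sigma}$ and thus, paired with $0 \in \affine^d$, the morphism $B\mu_r \to U_{\sigma}$ appearing in the statement.

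Second, I would construct the bijection $\Boxe(\sigma) \xrightarrow{\sim} N(\sigma)$ sending $n$ to its class $[n] = n + N_{\sigma}$. Surjectivity follows from the fact that the half-open parallelepiped $\sum_{\rho \in \sigma(1)} [0,1[ \cdot \overline{b}_{\rho}$ contains a unique representative of every coset of $\overline{N}_{\sigma}$ in $\overline{N}$, combined with the inclusion $N_{\tor} \subset \Boxe(\sigma)$, which accounts for the $N$-torsion part of $N(\sigma)$. Injectivity follows from the same uniqueness statement, together with a careful analysis of $N_{\sigma} \cap N_{\tor}$. For the global statement, for any face $\tau \subset \sigma$ of the fan, the open immersion $U_{\tau} \hookrightarrow U_{\sigma}$ induces an open immersion $\sheafI_{\mu} U_{\tau} \hookrightarrow \sheafI_{\mu} U_{\sigma}$, and under the previous bijection this corresponds to the inclusion $\Boxe(\tau) \subset \Boxe(\sigma)$ (since $\tau(1) \subset \sigma(1)$ and the parallelepiped of $\tau$ sits inside that of $\sigma$). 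Gluing over all $\sigma \in \Sigma_{\max}$ then yields the desired bijection $\sector \leftrightarrow \Boxe(\Sigma)$.

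The main obstacle is to handle correctly the interplay between $N$-torsion and $N(\sigma) = N/N_{\sigma}$ in the identification $\Boxe(\sigma) \simeq N(\sigma)$: when $N$ has nontrivial torsion, the box definition only sees $\overline{n} \in \overline{N}$ while the cokernel remembers the full $n \in N$, so one must verify that the fibers of $\Boxe(\sigma) \to \overline{N}$ recover exactly $N_{\tor}/(N_{\sigma} \cap N_{\tor})$. A second subtlety, which is resolved by the compatibility with face inclusions, is checking that a box element shared by two maximal cones $\sigma$ and $\sigma'$ through a common face $\tau$ produces the same sector in both charts, i.e. that the embeddings $\mu_{r_b} \hookrightarrow \G_{\sigma}$ and $\mu_{r_b} \hookrightarrow \G_{\sigma'}$ factor compatibly through $\G_{\tau}$.
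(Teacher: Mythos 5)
Your argument is a genuine alternative to the paper's: the paper dispatches the theorem in one line by citing \cite[Lemma 4.6]{borisov_toric_stack} (for the box decomposition of the inertia stack) together with \cite[Lemma 3.1.4]{darda2023maninconjecturetoricstacks} (relating this to the cyclotomic inertia stack), whereas you reconstruct the bijection from scratch by covering $X$ with the charts $U_\sigma = [\affine^d/\G_\sigma]$, computing representable morphisms $B\mu_l \to U_\sigma$ via Cartier duality, and then gluing. The chart-by-chart structure of your proof makes the origin of each sector very concrete, at the cost of having to check a few things that the black-box references absorb.

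Two points are flagged as ``careful analysis'' but not carried out, and one of them is where a reader would pause. For the bijectivity of $\Boxe(\sigma) \to N(\sigma)$, what you actually need is the clean statement $N_\sigma \cap N_\tor = 0$: if $n, n' \in \Boxe(\sigma)$ have the same class modulo $N_\sigma$, the parallelepiped uniqueness gives $\overline{n}=\overline{n'}$, so $n - n' \in N_\sigma \cap N_\tor$; and since the $\overline{b_\rho}$ for $\rho\in\sigma(1)$ are $\Q$-linearly independent (simplicial cone of full dimension), any $\Z$-relation $\sum c_\rho b_\rho \in N_\tor$ forces all $c_\rho = 0$. Your phrasing that ``the fibers of $\Boxe(\sigma)\to\overline{N}$ recover exactly $N_\tor/(N_\sigma\cap N_\tor)$'' is slightly off: those fibers are full $N_\tor$-cosets by definition of $\Boxe(\sigma)$, and the content is precisely that $N_\sigma\cap N_\tor$ vanishes so that they inject into $N(\sigma)$. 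The second thing to make explicit is the gluing: you need not only that $\Boxe(\tau)\subset\Boxe(\sigma)$ for a face $\tau\subset\sigma$ with compatible embeddings $\mu_{r_b}\hookrightarrow\G_\tau\to\G_\sigma$, but also the converse, that distinct boxes never produce components that merge when you pass from $\sheafI_\mu U_\sigma$ to $\sheafI_\mu X$; this follows from the fact that $\sheafI_\mu U_\sigma \cap \sheafI_\mu U_{\sigma'} = \sheafI_\mu U_{\sigma\cap\sigma'}$ and from uniqueness of the representation of $\overline{b}$ in each chart, but it is worth stating. With those two items filled in, your proof is complete and self-contained, which is more than the paper offers.
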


\begin{remark}
The isomorphism
\[
N(\sigma) \simeq \Hom_{\grp}\!\left(\Ext^1(N(\sigma),\Z),\Q/\Z\right)
\]
is the one induced by the short exact sequence
\[
0 \longrightarrow \Z \longrightarrow \Q \longrightarrow \Q/\Z \longrightarrow 0,
\]
to which we apply the functor $\mathbf{RHom}(N(\sigma),-)$.
\end{remark}

\begin{proof}
    It is a consequence of \cite[lemma 4.6]{borisov_toric_stack} and \cite[lemma 3.1.4]{darda_yasuda_toric_stacks_batyrev}.
\end{proof}

\begin{Notations}\label{notation_cone_minimal_sector}
For a sector $\stackS \in \sector$ and the corresponding element
$$b \in \Boxe(\Sigma),$$we denote by either $r_{\stackS}$ or $r_b$ the associated exponent and  we denote by $\sigma_{\stackS}$ or $\sigma_b$ the minimal
cone of $\Sigma$ containing $\overline{b} \in N_{\Q}$.  Note that $\sigma_0 = \{0\}$.
\end{Notations}

Recall that the toric morphism of toric stacks $U_{\sigma} \rightarrow X$ is given by the natural morphism of stacky fan $$(\sigma, \beta_{\sigma}) \rightarrow (\Sigma,\beta) .$$We have the following morphism of exact triangles:
\begin{equation}\label{equation_morphism_triangle_distingue_associe_open_immersion}
\begin{tikzcd}
\Z^{\sigma(1)} \arrow[r, "\beta_{\sigma}"] \arrow[d, "i"] & N \arrow[r] \arrow[d, equal] & \coneform(\beta_{\sigma}) \arrow[d] \\
\Z^{\Sigma(1)} \arrow[r, "\beta"']                        & N \arrow[r]                       & \coneform(\beta)                   
\end{tikzcd}
\end{equation}which gives by duality via $\RHom(-,\Z)$ a natural map $$ \Pic(X) \xrightarrow{p_{\sigma}} \Ext^1(N(\sigma),\Z) .$$Again this map can be interpreted by duality of groups of multiplicative type as the map $\G_{\sigma} \rightarrow T_{\NS}$. We can deduce the following expression of the age pairing for toric stacks:

\begin{prop}\label{prop_expression_age_sector_toric_stack}
    Let $b \in \Boxe(\sigma)$. Denote by $$\phi_b : \Ext^1(N(\sigma),\Z) \rightarrow \frac{1}{r_b} \Z / \Z \subset \Q / \Z $$ the associated morphism. If $\stackS_b$ is the corresponding sector then the composition $$ \Pic(X) \xrightarrow{p_{\sigma}} \Ext^1(N(\sigma),\Z) \xrightarrow{\phi_b}  \frac{1}{r_b} \Z / \Z$$ is the map $$- \age(\stackS_b,-) : L \in \Pic(X) \longmapsto -\age(\stackS_b,L) .$$
\end{prop}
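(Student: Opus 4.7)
The strategy is to unpack the description of $\stackY_b$ given in the theorem above and to follow the pullback of $L$ through the factorization $f : B\mu_{r_b} \xrightarrow{g} U_{\sigma} \xrightarrow{j} X$, where $j$ is the open immersion $U_{\sigma} \hookrightarrow X$ and $g$ is the representable morphism coming from the pair $\left(0_{\affine^d}, \mu_{r_b} \hookrightarrow \G_{\sigma}\right)$ that represents the sector $\stackY_b$. Since the age pairing depends only on the connected component of $f$ in $\sheafI_{\mu} X$, we have $\age(\stackY_b, L) = f^* L \in \Pic(B \mu_{r_b}) = \tfrac{1}{r_b}\Z / \Z$, so it suffices to identify $j^*$ with $p_{\sigma}$ and $g^*$ with $\phi_b$, and to conclude by functoriality of pullback.

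For the first identification, the open immersion $j$ is induced by the natural morphism of stacky fans $(\sigma, \beta_{\sigma}) \to (\Sigma, \beta)$. Applying $\RHom(-,\Z)$ to the morphism of distinguished triangles \eqref{equation_morphism_triangle_distingue_associe_open_immersion} and taking first cohomology yields, via the identifications $\Pic(X) = \mathbf{R^1 Hom}(\coneform(\beta), \Z)$ and $\Pic(U_{\sigma}) = \mathbf{R^1 Hom}(\coneform(\beta_{\sigma}), \Z) = \Ext^1(N(\sigma), \Z)$ (the cone of the injection $\beta_{\sigma}$ being concentrated in degree $0$ with value $N(\sigma)$), a morphism $\Pic(X) \to \Ext^1(N(\sigma), \Z)$ which is by construction $p_{\sigma}$.

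For the second identification, a line bundle on $U_{\sigma} = [\affine^d / \G_{\sigma}]$ is a character of $\G_{\sigma}$, and pullback along $g$ corresponds to precomposition of characters with the embedding $\mu_{r_b} \hookrightarrow \G_{\sigma}$. By Cartier duality, the character group of $\G_{\sigma}$ is $\Ext^1(N(\sigma), \Z) = \Hom_{\grp}(N(\sigma), \Q/\Z)$ via \eqref{equation_identification_ext_1}, and under this identification the embedding $\mu_{r_b} \hookrightarrow \G_{\sigma}$ is precisely the Cartier dual of the evaluation morphism at $\overline{b} \in N(\sigma)$, i.e. $\phi_b$. Composing the two steps yields $\age(\stackY_b, L) = g^*(j^* L) = \phi_b(p_{\sigma}(L))$, which is the desired formula.

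The only delicate point is bookkeeping between an algebraic group of multiplicative type and its character lattice: one must verify that the Cartier duality sending the injection $\mu_{r_b} \hookrightarrow \G_{\sigma}$ to the surjection $\phi_b$ is compatible with the fixed isomorphisms $\Pic(B\mu_{r_b}) \simeq \tfrac{1}{r_b}\Z/\Z$ (via the regular representation) and $\Hom_{\grp}(\G_{\sigma}, \G_m) \simeq \Hom_{\grp}(N(\sigma), \Q/\Z)$ that are used throughout the paper; once these conventions are pinned down, the argument is purely formal.
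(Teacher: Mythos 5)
Your proof is correct and follows exactly the route the paper leaves implicit. The paper states this proposition without a detailed proof, writing only "From this description, we can deduce the following expression of the age pairing for toric stacks" after setting up the morphism of distinguished triangles in \eqref{equation_morphism_triangle_distingue_associe_open_immersion} and the resulting map $p_{\sigma}$; your argument is precisely the intended unpacking. You factor the representing morphism as $B\mu_{r_b} \to U_{\sigma} \to X$, identify the two pullbacks with $\phi_b$ (by Cartier duality, using the definition of the injection $\mu_{r_b}\hookrightarrow \G_{\sigma}$ from the class of $b$ in $N(\sigma)$) and with $p_{\sigma}$ (by applying $\RHom(-,\Z)$ to the morphism of triangles, noting that $\coneform(\beta_{\sigma})$ is concentrated in degree zero with value $N(\sigma)$ since $\beta_{\sigma}$ is injective), and conclude by functoriality of pullback together with the fact that age depends only on the connected component. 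This matches the paper's deduction.
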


\begin{proof}
Recall that to an element $b \in \Boxe(\Sigma)$ we associate its class in
$$
N(\sigma) = \Hom_{\grp}(\Ext^1(N(\sigma),\Z),\Q/\Z)
$$
which defines an injective morphism
$$
\mu_{r_b} \hookrightarrow \G_{\sigma}.
$$
By Theorem~\ref{theorem_description_sector_box_element}, this corresponds to a unique sector $\stackS \in \sector$, given bijectively as the connected component of the element $b_{\stackS}$
$$
(0_{\affine^d},\mu_{r_b} \hookrightarrow \G_{\sigma}).
$$Moreover, following the statement of this theorem, we described the dual of the morphism
$$ \G_{\sigma} \rightarrow T_{\NS}. $$We have the following $2$-commutative diagram:
\begin{center}
\begin{tikzcd}
B \mu_{r_b} \arrow[r] \arrow[d, equal] \arrow[rr, "b_{\stackS}", bend left] & U_{\sigma} \arrow[r] \arrow[d] & X \arrow[d] \\
B \mu_{r_b} \arrow[r]                                                            & B G_{\sigma} \arrow[r]         & B T_{\NS}  \, .
\end{tikzcd}
\end{center}
By considering the pull-back on Picard groups and applying Proposition~\ref{prop_computation_general_age_pairing_via_type_torseur},
which relates the age pairing to the type of a torsor, together with
Theorem~\ref{theorem_cox_ring_universal_torsor}, which determines the type of
$\stackT_{\Sigma} \rightarrow X$, we can conclude the proof.
\end{proof}

\begin{remark}
    The map $p_{\sigma} : \Pic(X) \rightarrow \Ext^1(N(\sigma),Z)$ is surjective. Indeed consider the exact triangle $$\coneform(\beta_{\sigma}) \rightarrow \coneform(\beta) \rightarrow W .$$ We have $\mathbf{R^2 Hom}(W,\Z) = 0$ because $W$ is an element of $\mathcal{D}_{[0,1]}(\Z)$ with $H^0(W)$ and $H^1(W)$ abelian groups of finite type.
\end{remark}

\begin{prop}\label{prop_age_divisor_associated_to_an_edge}
For $\sigma \in \Sigma_{\max}$ and for any $b \in \Boxe(\sigma) \subset \sector$ such that $$\overline{b} = \sum\limits_{\rho \in \sigma(1)} q_{\rho} . \overline{b_{\rho}} \in N_{\Q},$$ we have for any $\rho \in \sigma(1)$:
$$\age_{\num}(\stackY_b,-[D_{\rho}]) = q_{\rho} .$$Moreover, if $\rho \not\in \sigma(1)$, $\age(\stackY_b,[D_{\rho}]) = 0 \mod \Z$.
\end{prop}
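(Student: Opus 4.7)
The plan is to reduce the computation of the age to a small linear-algebra problem by invoking Proposition~\ref{prop_expression_age_sector_toric_stack}. That proposition writes $\age(\stackY_b, -)$ as the composition $\Pic(X) \xrightarrow{p_\sigma} \Ext^1(N(\sigma),\Z) \xrightarrow{\phi_b} \Q/\Z$, where $\phi_b$ corresponds to $\overline{b}$ under the canonical identification $\Ext^1(N(\sigma),\Z) = \Hom(N(\sigma),\Q/\Z)$ (which holds because $\sigma$ is maximal simplicial, so $\beta_\sigma$ is injective and $N(\sigma)$ is finite). I therefore only need to understand $p_\sigma([D_\rho])$ and evaluate it on $\overline{b}$.

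Dualising the morphism of exact triangles (\ref{equation_morphism_triangle_distingue_associe_open_immersion}) via $\RHom(-,\Z)$ produces the commutative square
\begin{center}
\begin{tikzcd}
(\Z^{\Sigma(1)})^* \arrow[r, "i^*"] \arrow[d] & (\Z^{\sigma(1)})^* \arrow[d, "\delta"] \\
\Pic(X) \arrow[r, "p_\sigma"]                 & \Ext^1(N(\sigma), \Z)
\end{tikzcd}
\end{center}
in which $[D_\rho]$ is the image of $e_\rho^*$ along the left column and $\delta$ is the connecting map associated with the short exact sequence $0 \to \Z^{\sigma(1)} \xrightarrow{\beta_\sigma} N \to N(\sigma) \to 0$. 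When $\rho \notin \sigma(1)$ we have $i^*(e_\rho^*) = 0$, so $p_\sigma([D_\rho]) = 0$ and $\age(\stackY_b, [D_\rho]) = 0 \mod \Z$, which is the second statement.

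For $\rho \in \sigma(1)$, I would give a rational description of $\delta$. Because $\beta_\sigma \otimes \Q$ is an isomorphism, any $\varphi \in (\Z^{\sigma(1)})^*$ extends uniquely to $\varphi_\Q : \Q^{\sigma(1)} \to \Q$, and a Baer-sum computation identifies $\delta(\varphi) : N(\sigma) \to \Q/\Z$ with the reduction modulo $\Z$ of $\varphi_\Q \circ (\beta_\sigma \otimes \Q)^{-1}$ restricted along $N \hookrightarrow N_\Q$. The image of any lift $b \in N$ of $\overline{b}$ in $N_\Q$ coincides with the image of $\overline{b} \in \overline{N}$, namely $\sum_{\rho' \in \sigma(1)} q_{\rho'} \overline{b_{\rho'}}$, so $(\beta_\sigma \otimes \Q)^{-1}(b) = (q_{\rho'})_{\rho' \in \sigma(1)}$. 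Taking $\varphi = e_\rho^*$ therefore yields $\delta(e_\rho^*)(\overline{b}) = q_\rho \mod \Z$, and since $q_\rho \in [0,1[$ this is already the numerical representative, giving $\age_{\num}(\stackY_b,[D_\rho]) = q_\rho$.

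The main subtlety lies in the possible torsion of $N$: it forces the decomposition $\overline{b} = \sum q_\rho \overline{b_\rho}$ to live in $\overline{N}$ rather than in $N$ itself. This dissolves upon tensoring with $\Q$, but if one prefers a torsion-free elementary treatment, multiplying $b$ by the exponent $s$ of $N_{\tor}$ and by any $r$ with $rq_{\rho'} \in \Z$ produces an honest equality $srb = \beta_\sigma(srq)$ in $N$, to which the classical formula $\delta(\varphi)(\overline{n}) = \varphi(c)/m \mod \Z$ whenever $mn = \beta_\sigma(c)$ applies and returns the same value $q_\rho$.
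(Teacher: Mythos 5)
Your proof is correct and follows the same route as the paper: dualise the morphism of exact triangles coming from the open toric immersion $U_\sigma\hookrightarrow X$, observe that $e_\rho^*\mapsto 0$ for $\rho\notin\sigma(1)$, and for $\rho\in\sigma(1)$ identify the map $(\Z^{\sigma(1)})^*\to\Ext^1(N(\sigma),\Z)\cong\Hom(N(\sigma),\Q/\Z)$ with the dual basis $b_\rho^*$ of $(\overline{b_\rho})_{\rho\in\sigma(1)}$ in $N_\Q$, then evaluate at $\overline b$ and read off $q_\rho\in[0,1[$. The only cosmetic difference is that you make the connecting-map formula (and the rational description via $(\beta_\sigma\otimes\Q)^{-1}$) explicit where the paper states the dual-basis identification directly; this is a minor elaboration of the same computation, not a different argument.
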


\begin{proof}
    Applying the functor $\RHom(-,\Z)$ to the morphism of exact triangles from the diagram \ref{equation_morphism_triangle_distingue_associe_open_immersion}, we get the following commutative diagram:
    \begin{center}
\begin{tikzcd}
\Z^{\Sigma(1)} \arrow[r] \arrow[d] & \Pic(X) \arrow[d, "p_{\sigma}"] \\
\Z^{\sigma(1)} \arrow[r]           & {\Ext^1(N(\sigma),\Z) .}         
\end{tikzcd} 
    \end{center}The top horizontal map associates to a vector of the canonical basis $e_{\rho}$ the class $[D_{\rho}]$. If $\Ext^1(N(\sigma),\Z)$ is identified with $\Hom_{\grp}(N(\sigma),\Q / \Z)$ seen as a subgroup of $$\text{Map}(\Boxe(\sigma),\Q / \Z) ,$$ the whole map is $$b \in \Boxe(\sigma) \longmapsto - \age(\stackY_b,[D_{\rho}]) $$by proposition \ref{prop_expression_age_sector_toric_stack}. For $\rho \not\in \sigma(1)$, $e_{\rho}$ is sent to $0$ via the left vertical map. Hence  $\age(\stackY_b,[D_{\rho}]) = 0 \mod \Z$ for $\rho \not\in \sigma(1)$.

    Now to conclude the proof for $\rho \in \sigma(1)$, it is enough to see that the bottom horizontal map sends a vector of the canonical basis $e'_{\rho}$ to the morphism $$\overline n \in N(\sigma) \longmapsto \langle b_{\rho}^* , n \rangle \mod \Z $$where $(b_{\rho}^*)_{\rho \in \sigma(1)}$ is the dual basis of the basis $(\overline{b_{\rho}})_{\rho \in \sigma(1)}$ of $N_{\Q}$ and $n \in N$ is any lift of $\overline n \in N(\sigma)$. This is proved in Proposition \ref{proposition_diagramme_chasing_appendix}.
\end{proof}

\begin{theorem}\label{theorem_sectors_dual_picard_group_toric_stacks}
    For $\stackY_1, \stackY_2 \in \sector$, if
\[
\age(\stackY_1,-) = \age(\stackY_2,-),
\]
then $\stackY_1 = \stackY_2$. In other words, Hypothesis~\ref{hypothesis_sectors_dual_picard_group}
is satisfied by toric stacks.

\end{theorem}

\begin{proof}
    First by proposition \ref{prop_age_divisor_associated_to_an_edge}, we have that if $\stackY_1, \stackY_2 \in \sector$ verify $$\age(\stackY_1,-) = \age(\stackY_2,-)$$ then $\stackY_1, \stackY_2 \in \Boxe(\sigma) $ for a same $\sigma \in \Sigma_{\max}$. Then the sector $\stackY_i \in \sector$ corresponds bijectively to a morphism $\varphi_i : \Ext^1(N(\sigma),\Z) \rightarrow \Q / \Z$. By proposition \ref{prop_expression_age_sector_toric_stack}, the fact that $\age(\stackY_1,-) = \age(\stackY_2,-)$ means that $\varphi_1 \circ p_{\sigma} = \varphi_2 \circ p_{\sigma}$ where $p_{\sigma}$ is the map $ \Pic(X) \rightarrow \Ext^1(N(\sigma),Z)$ defined earlier. Because $p_{\sigma}$ is surjective, we get that $\varphi_1 = \varphi_2$.
\end{proof}

\subsection{The orbifold Picard group of toric stacks}

\subsubsection{Extended quotient description for toric stacks}

Let us consider $\mathcal{C} \subset \twistsector$. In \cite{Jiang2005TheOC}, Jiang gave a new quotient description of the toric stack $$X = [\stackT_{\Sigma} / T_{\NS} ].$$ This description is based on the introduction of an extended stacky fan $$\mathbf{\Sigma}_{\mathcal{C}} = \left(\Sigma, \beta_{\mathcal{C}} : \Z^{\Sigma(1)} \oplus \Z^{\mathcal{C} } \rightarrow N \right)$$such that for any $\stackS \in \mathcal{C}$, we set $$\beta_{\mathcal{C}}(e_{\stackS}) = b_{\stackS}$$ where $b_{\stackS} \in \Boxe(\Sigma)$ corresponds to $\stackS$ and $\beta_{\mathcal{C}}$ and $\beta$ coincides over $\Z^{\Sigma(1)}$. We have the following exact sequence:

$$ 0 \rightarrow N^* \xrightarrow{\beta_{\mathcal{C}}^*} (\Z^{\Sigma(1)} \oplus \Z^{\mathcal{C} })^* \xrightarrow{\pi} \mathbf{R^1 Hom}(\coneform(\beta_{\mathcal{C}}),\Z) \rightarrow \Ext^1(N,\Z) \rightarrow 0 .$$The map $\pi$ induces by duality an action of $G_{\mathcal{C}}$, the multiplicative group associated to $\mathbf{R^1 Hom}(\coneform(\beta_{\mathcal{C}}),\Z)$  on $\stackT_{\Sigma} \times \G_m^{\mathcal{C}}$.

\begin{Notations}\label{notation_orbifold_T_invariant_divisor}
To each $\rho \in \Sigma(1)$ we associate an orbifold line bundle $[D_{\rho}]_0$
defined by the pair
$$([D_\rho],\varphi_{\rho}),$$
where
$$\varphi_{\rho}(\stackS) = - \age_{\num}(\stackS,-[D_{\rho}]) \quad \text{for all } \stackS \in \twistsector.$$   
\end{Notations}
We can then state the following result:

\begin{theorem}\label{theorem_action_orbi_neron_severi_torus}
    We have an isomorphism:
    $$ \Pic_{\mathcal{C}}(X) \simeq \mathbf{R^1 Hom}(\coneform(\beta_{\mathcal{C}}),\Z).$$Moreover, the corresponding map $$ \Z^{\Sigma(1)} \oplus \Z^{\mathcal{C} } \xrightarrow{\pi} \Pic_{\mathcal{C}}(X)$$is obtained by mapping $e_{\rho}$ to $[D_{\rho}]_{0}$ for $\rho \in \Sigma(1)$ and $e_{\stackS}$ to $[\stackS]$ for $\stackS \in \mathcal{C}$.
\end{theorem}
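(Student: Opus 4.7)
The plan is to match two natural short exact sequences via the five lemma, one coming from the octahedral axiom applied to the factorisation of $\beta$ through the extended stacky fan, and the other being the sequence of Proposition \ref{prop_exact_sequence_extended_picard_group}.

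First I would define a homomorphism $g \colon (\Z^{\Sigma(1)} \oplus \Z^{\mathcal{C}})^{*} \to \Pic_{\mathcal{C}}(X)$ on the dual basis by $e_{\rho} \mapsto [D_{\rho}]_{\stack}$ for $\rho \in \Sigma(1)$ and $e_{\stackY} \mapsto [\stackY]$ for $\stackY \in \mathcal{C}$, using the notation of \ref{notation_stacky_line_bundle_associated_to_a_line_bundle}. The crucial step is to check that $g \circ \beta_{\mathcal{C}}^{*} = 0$, so that $g$ descends to $\bar g \colon \coker(\beta_{\mathcal{C}}^{*}) \to \Pic_{\mathcal{C}}(X)$. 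For $m \in N^{*}$, the $\Pic(X)$-component of $g(\beta_{\mathcal{C}}^{*}(m))$ is the image of $\beta^{*}(m)$ in $\Pic(X) = \mathbf{R^{1}Hom}(\coneform(\beta), \Z)$, hence zero. For the $\varphi$-component evaluated at a sector $\stackY_{b} \in \mathcal{C}$ with $b \in \Boxe(\sigma)$ and $\overline{b} = \sum_{\rho \in \sigma(1)} q_{\rho} \overline{b_{\rho}}$, Proposition \ref{prop_age_divisor_associated_to_an_edge} together with the fact that $\langle m, \cdot \rangle$ factors through $\overline{N}$ gives the vanishing
\[
  \langle m, b \rangle - \sum_{\rho \in \sigma(1)} q_{\rho} \langle m, b_{\rho} \rangle = 0.
\]

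Next I would apply the octahedral axiom to the factorisation $\Z^{\Sigma(1)} \hookrightarrow \Z^{\Sigma(1)} \oplus \Z^{\mathcal{C}} \xrightarrow{\beta_{\mathcal{C}}} N$ of $\beta$, yielding a distinguished triangle $\Z^{\mathcal{C}} \to \coneform(\beta) \to \coneform(\beta_{\mathcal{C}}) \to \Z^{\mathcal{C}}[1]$. Taking the long exact sequence for $\RHom(-, \Z)$ and using that $\Z^{\mathcal{C}}$ is free, together with the finiteness of $\coker(\beta)$ which forces $\Hom(\coker(\beta), \Z) = 0$, one extracts the short exact sequence
\[
  0 \to \Hom(\mathcal{C}, \Z) \to \mathbf{R^{1}Hom}(\coneform(\beta_{\mathcal{C}}), \Z) \to \Pic(X) \to 0,
\]
running parallel to the one of Proposition \ref{prop_exact_sequence_extended_picard_group}. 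I would then verify that $\bar g$ restricts to the identity on $\Hom(\mathcal{C}, \Z) \subset \coker(\beta_{\mathcal{C}}^{*})$ and that composing with the projection $\Pic_{\mathcal{C}}(X) \to \Pic(X)$ matches the natural projection on the cokernel, so that the diagram is compatible on both outer terms.

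The main obstacle is extending $\bar g$ from the subgroup $\coker(\beta_{\mathcal{C}}^{*})$ to all of $\mathbf{R^{1}Hom}(\coneform(\beta_{\mathcal{C}}), \Z)$, since the quotient $\Ext^{1}(N, \Z) = \Hom(N_{\tor}, \Q/\Z)$ can be nontrivial when $N$ has torsion. I would handle it by constructing an explicit inverse $\Psi \colon \Pic_{\mathcal{C}}(X) \to \mathbf{R^{1}Hom}(\coneform(\beta_{\mathcal{C}}), \Z)$: given $(L, \varphi)$, choose any lift of $L$ along the octahedral projection (which exists by surjectivity) and correct it by the element of $\Hom(\mathcal{C}, \Z) \hookrightarrow \mathbf{R^{1}Hom}(\coneform(\beta_{\mathcal{C}}), \Z)$ prescribed by $\varphi + \age_{\num}(-, L)$. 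Independence from the chosen lift reduces precisely to the vanishing identity established in the first step, and $\Psi$ is visibly a two-sided inverse of the middle map; the five lemma then yields the claimed isomorphism.
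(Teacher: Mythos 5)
The paper itself does not prove this statement: it is quoted from \cite[Proposition 21 and its proof]{Coates_Corti_Iritani_Tseng_miror_theorem_toric_stack}, so there is no internal argument to compare your proposal against. Your approach is therefore an independent attempt, and much of it is sound. The verification that $g\circ\beta_{\mathcal C}^{*}=0$, using Proposition~\ref{prop_age_divisor_associated_to_an_edge} and the fact that elements of $N^{*}$ kill $N_{\tor}$, is correct. The octahedral argument applied to $\Z^{\Sigma(1)}\hookrightarrow\Z^{\Sigma(1)}\oplus\Z^{\mathcal C}\xrightarrow{\beta_{\mathcal C}}N$ does yield the short exact sequence $0\to\Hom(\mathcal C,\Z)\to\mathbf{R^1Hom}(\coneform(\beta_{\mathcal C}),\Z)\to\Pic(X)\to 0$, and your check that $\bar g$ is the identity on $\Hom(\mathcal C,\Z)$ and respects the two projections to $\Pic(X)$ is the right way to set up the five lemma diagram. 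In the case where $N$ is free (so $\Ext^{1}(N,\Z)=0$), this completes the proof.

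The gap is in the final paragraph, i.e.\ in handling the case $\Ext^{1}(N,\Z)\neq 0$. As written, the map $\Psi$ is not well defined. Two lifts $\widetilde L$ and $\widetilde L'$ of the same $L\in\Pic(X)$ along the octahedral projection differ by an arbitrary element of $\Hom(\mathcal C,\Z)$, and your correction term $\varphi+\age_{\num}(-,L)$ depends only on $(L,\varphi)$, not on the chosen lift. Hence $\Psi$ changes by $\widetilde L-\widetilde L'$ when you change the lift; the vanishing identity from the first step controls the effect of changing a preimage of $L$ in $(\Z^{\Sigma(1)})^{*}$ (i.e.\ modulo $\operatorname{Im}\beta^{*}$), but it says nothing about modifying a lift in $\mathbf{R^1Hom}(\coneform(\beta_{\mathcal C}),\Z)$ by a class that is not in the image of $f$. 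Relatedly, the phrase ``two-sided inverse of the middle map'' presupposes a map $\mathbf{R^1Hom}(\coneform(\beta_{\mathcal C}),\Z)\to\Pic_{\mathcal C}(X)$ which you have only constructed on the index-$|\Ext^{1}(N,\Z)|$ subgroup $\coker(\beta_{\mathcal C}^{*})$. The honest content of your diagram is that $\bar g$ is an injection with cokernel $\Ext^{1}(N,\Z)$, and that both $\Pic_{\mathcal C}(X)$ and $\mathbf{R^1Hom}(\coneform(\beta_{\mathcal C}),\Z)$ contain $\coker(\beta_{\mathcal C}^{*})$ with isomorphic finite quotient; this does not by itself identify the two ambient groups. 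What is actually needed is an argument that the two extensions of $\Pic(X)$ by $\Hom(\mathcal C,\Z)$ represent the same class in $\Ext^{1}(\Pic(X),\Z^{\mathcal C})\cong\prod_{b\in\mathcal C}\Ext^{1}(\Pic(X),\Z)$. On the $\Pic_{\mathcal C}(X)$ side, the $b$-th component is the Baer pullback of $0\to\Z\to\tfrac{1}{r_b}\Z\to\tfrac{1}{r_b}\Z/\Z\to 0$ along $\age(b,-)$, hence is represented by $\age(b,-)\in\Hom(\Pic(X),\Q/\Z)$; one must then unwind the boundary map $\coneform(\beta_{\mathcal C})\to\Z^{\mathcal C}[1]$ after $\RHom(-,\Z)$ and show it produces the same class, via Proposition~\ref{prop_expression_age_sector_toric_stack}. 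That identification is the missing step, and it does not reduce to your earlier vanishing computation.
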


\begin{remark}\label{remark_exact_sequence_dual_orbifold_picard_group}
    In particular, in the case where $\mathcal{C} = \twistsector$, we shall write $$\beta_{\twistsector} = \beta^{\ext} .$$Since $\beta^{\ext}$ is surjective, we obtain the exact sequence
\[
0 \longrightarrow \Pic_{\orb}(X)^{\vee} \xrightarrow{\pi^{\vee}} \Z^{\Sigma(1)} \oplus \Z^{\twistsector} \xrightarrow{\beta^{\ext}} N \longrightarrow 0 \, .
\]This also proves that $\Pic_{\orb}(X)$ is torsion free (see Corollary \ref{corollaire_orbifold_picard_group_torsion_free} for an other proof). 

\end{remark}

This theorem was stated by Coates, Corti, Iritani, and Tseng in \cite[Proposition 21]{Coates_Corti_Iritani_Tseng_miror_theorem_toric_stack}. Since our definition of the orbifold Picard group (see Definition \ref{definition_orbifold_picard_group}) differs from the one given in \cite[Definition 15]{Coates_Corti_Iritani_Tseng_miror_theorem_toric_stack}, we give a complete proof of the previous theorem in appendix (see Theorem \ref{theorem_computation_orbifold_picard_group_toric_stack_appendix}).

\begin{Notations}
    From now on, we shall denote by $T_{\NS,\mathcal{C}}$ the multiplicative group associated to $\Pic_{\mathcal{C}}(X).$ When $\mathcal{C} = \twistsector$, it shall be denoted $T_{\NS,\orb}$.
\end{Notations}

There is a natural map $\stackT \rightarrow \stackT \times \G_m^{\mathcal{C}}$ given by $y \mapsto (y,1_{\mathcal{C}})$, which is equivariant with respect to $T_{\NS} \rightarrow T_{\NS,\mathcal{C}} $, where $T_{\NS} \rightarrow T_{\NS,\mathcal{C}} $ is the group morphism dual to the natural map $\Pic_{\mathcal{C}}(X) \rightarrow \Pic(X)$. By \cite[proposition 2.3]{Jiang2005TheOC}, we have the following theorem:

\begin{theorem}\label{theorem_extended_universal_torsor}
    The map described earlier $\stackT \rightarrow \stackT \times \G_m^{\mathcal{C}}$ gives by quotient an isomorphism:
    $$ X = [\stackT_{\Sigma} \times \G_m^{\mathcal{C}} / T_{\NS,\mathcal{C}} ].$$
\end{theorem}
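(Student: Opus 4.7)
The plan is to identify $\stackT_\Sigma \times \G_m^{\mathcal{C}}$, viewed as a $T_{\NS,\mathcal{C}}$-scheme, with a contracted product and then invoke the standard quotient-stack identity $[G \times^H Z/G] \simeq [Z/H]$. As a preparatory step, I dualize the short exact sequence
$$0 \to \Hom(\mathcal{C}, \Z) \to \Pic_{\mathcal{C}}(X) \to \Pic(X) \to 0$$
of Proposition \ref{prop_exact_sequence_extended_picard_group} under $\Hom(-, \G_m)$ to obtain the short exact sequence of multiplicative groups
$$1 \to T_{\NS} \to T_{\NS,\mathcal{C}} \to \G_m^{\mathcal{C}} \to 1,$$
in which the projection to $\G_m^{\mathcal{C}}$ records pairing against the classes $[\stackY]$ for $\stackY \in \mathcal{C}$. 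By Theorem \ref{theorem_action_orbi_neron_severi_torus} the action of $T_{\NS,\mathcal{C}}$ on $\stackT_\Sigma \times \G_m^{\mathcal{C}}$ is coordinatewise, through $[D_\rho]_{\stack}$ on the $\rho$-th coordinate of $\stackT_\Sigma$ and through $[\stackY]$ on the $\stackY$-th coordinate of $\G_m^{\mathcal{C}}$. Two observations follow: the subgroup $T_{\NS}$ acts trivially on $\G_m^{\mathcal{C}}$ (being the kernel of pairing with the $[\stackY]$), and through the restriction $[D_\rho]_{\stack}|_{T_{\NS}} = [D_\rho]$ it recovers the usual action that defines $X = [\stackT_\Sigma / T_{\NS}]$.

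The core step is to construct the $T_{\NS,\mathcal{C}}$-equivariant morphism
$$\Psi : T_{\NS,\mathcal{C}} \times^{T_{\NS}} \stackT_\Sigma \longrightarrow \stackT_\Sigma \times \G_m^{\mathcal{C}}, \qquad [t, y] \longmapsto (t \cdot y, \bar{t}),$$
where $\bar{t}$ denotes the image of $t$ in $\G_m^{\mathcal{C}}$, and verify that it is an isomorphism. Well-definedness follows from the computation $(th^{-1}, h \cdot y) \mapsto (t \cdot y, \bar{t})$ for $h \in T_{\NS}$, since $h$ has trivial image in $\G_m^{\mathcal{C}}$; equivariance for $T_{\NS,\mathcal{C}}$ is immediate by commutativity. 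For bijectivity on points, surjectivity uses the surjection $T_{\NS,\mathcal{C}} \twoheadrightarrow \G_m^{\mathcal{C}}$ to lift any $z \in \G_m^{\mathcal{C}}$ to some $t$ and write $(y, z) = \Psi([t, t^{-1} \cdot y])$, while injectivity follows by observing that two preimages $(t_1, y_1)$ and $(t_2, y_2)$ of the same point satisfy $t_2 = t_1 h$ and $y_2 = h^{-1} \cdot y_1$ for some $h \in T_{\NS}$, which is precisely the equivalence relation defining the contracted product.

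Having identified $\stackT_\Sigma \times \G_m^{\mathcal{C}}$ with $T_{\NS,\mathcal{C}} \times^{T_{\NS}} \stackT_\Sigma$ as $T_{\NS,\mathcal{C}}$-schemes, applying $[G \times^H Z / G] \simeq [Z/H]$ with $G = T_{\NS,\mathcal{C}}$, $H = T_{\NS}$, $Z = \stackT_\Sigma$ yields
$$[\stackT_\Sigma \times \G_m^{\mathcal{C}} / T_{\NS,\mathcal{C}}] \simeq [\stackT_\Sigma / T_{\NS}] = X,$$
and tracing the identifications confirms that this isomorphism is induced by the embedding $y \mapsto (y, 1)$ of the statement. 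The main technical obstacle is upgrading the pointwise bijectivity of $\Psi$ to an isomorphism of schemes: this requires exploiting the smoothness of all tori involved, matching dimensions, and checking that $\Psi$ respects the natural $\G_m^{\mathcal{C}}$-torsor structures carried by both sides. Compatibility of these constructions over the arithmetic base $\Z$ follows then from standard descent for flat affine group schemes.
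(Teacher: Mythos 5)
Your argument is correct, and it supplies what the paper leaves implicit: the paper proves this theorem purely by citation to Jiang (\cite[proposition 2.3]{Jiang2005TheOC}), whereas you reconstruct the argument directly. Your route — identify $\stackT_\Sigma \times \G_m^{\mathcal{C}}$ with the contracted product $T_{\NS,\mathcal{C}} \times^{T_{\NS}} \stackT_\Sigma$ via $[t,y]\mapsto(t\cdot y,\bar t)$ and invoke $[G\times^H Z/G]\simeq[Z/H]$ — is in substance the same mechanism underlying Jiang's proof, so there is no genuinely different idea, just an explicitly written one. Two small points worth tightening if you wanted a self-contained version: (1) exactness of the dual sequence $1\to T_{\NS}\to T_{\NS,\mathcal{C}}\to\G_m^{\mathcal{C}}\to 1$ is automatic from the anti-equivalence between finitely generated abelian groups and diagonalizable group schemes, not something to check by hand; and (2) the "upgrade to a scheme isomorphism" that you flag at the end is painless here, since a splitting $\sigma:\G_m^{\mathcal{C}}\to T_{\NS,\mathcal{C}}$ of the surjection of split tori exists, and $(y,z)\mapsto[\sigma(z),\sigma(z)^{-1}\cdot y]$ is then an explicit scheme-theoretic inverse of $\Psi$, which also handles the base $\Z$ without invoking descent.
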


\begin{remark}\label{remark_computation_type_extended_universal_torsor}
From the construction of the isomorphism
\[
[\stackT_{\Sigma} / T_{\NS}]
\simeq
[\stackT_{\Sigma} \times \G_m^{\mathcal{C}} / T_{\NS,\mathcal{C}}]
\]
we get that the type of the $T_{\NS,\mathcal{C}}$-torsor given by the quotient map
\[
\stackT \times \G_m^{\mathcal{C}} \rightarrow X
\]
is given using lemma \ref{lemma_computation_type_morphism} and theorem \ref{theorem_cox_ring_universal_torsor} by:
\begin{align*}
    &\Pic_{\mathcal{C}}(X) \longrightarrow \Pic(X) \\
    &(L,\varphi) \longmapsto - L \, .
\end{align*}
\end{remark}

\begin{Notations}\label{notation_extended_torus_fan}
Throughout this text, we will mainly consider the case $\mathcal{C} = \twistsector$.  
For any field $\K$, we write $\stackT^{\ext}(\K)$ for $\stackT(\K) \times \G_m(\K)^{\twistsector}$, and we set
\[
T^{\ext} = \G_m^{\Sigma(1)} \times \G_m^{\twistsector}
\]
to be the open torus in $\stackT \times \G_m^{\twistsector}$. We shall write $\Sigma_{\ext}(1)$ the set $\Sigma(1) \cup \twistsector$.
\end{Notations}

We now define the extended universal torsor, which will be a key ingredient of this work.

\begin{definition}\label{definition_extended_universal_torsor}
The quotient morphism
\[
q^{\ext} : \stackT \times \G_m^{\twistsector} \longrightarrow X,
\]
arising from the quotient presentation of $X$, where $T_{\NS,\orb}$ acts on
$\stackT \times \G_m^{\twistsector}$ via the inverse of the action defined at the
beginning of this paragraph, defines a $T_{\NS,\orb}$-torsor. We call it the
\emph{extended universal torsor}.

For $\stackS \in \sector$ and $(L,\varphi) \in \Pic_{\orb}(X)$, we denote by
\[
\age\bigl(\stackS,(L,\varphi)\bigr)
\]
the age pairing defined via this torsor (see Definition~\ref{definition_general_age_pairing}).
\end{definition}

The extended universal torsor satisfies the following property.

\begin{prop}\label{proposition_functoriality_age}
The type of the extended universal torsor is given by the natural morphism
\[
(L,\varphi) \longmapsto L.
\]
Moreover, for any $\stackS \in \sector$ and any $(L,\varphi) \in \Pic_{\orb}(X)$, we have
\[
\age\bigl(\stackS,(L,\varphi)\bigr) = \age(\stackS,L).
\]
\end{prop}

\begin{proof}
The first assertion follows from Remark~\ref{remark_computation_type_extended_universal_torsor}
together with the definition of the extended universal torsor. The second statement
is a consequence of Proposition~\ref{prop_age_functoriality_torsor}.
\end{proof}

We conclude this paragraph by checking that Hypothesis~\ref{hypothesis_zero_of_local_degree} is indeed satisfied by
\[
q : \stackT \longrightarrow X,
\]
the universal torsor, and by
\[
q^{\ext} : \stackT \times \G_m^{\twistsector} \longrightarrow X,
\]
the extended universal torsor, in the case of toric stacks.

\begin{prop}\label{prop_extended_univ_torsor_zero_age}
    Let $Y \rightarrow X$ a $T$-torsor other the toric stack $X$, which is either:
    \begin{itemize}
        \item the $T_{\NS}$-torsor given by $Y = \stackT_{\Sigma}$ when $\Pic(X)$ is torsion free;
        \item the $T_{\NS,\orb}$-torsor given by $Y = \stackT_{\Sigma} \times \G_m^{\twistsector}$.
    \end{itemize}
     Then hypothesis \ref{hypothesis_zero_of_local_degree} is verified: for any finite field extension $\K' / \K$, we have:
    $$\stackT(\sheafO_{\K'}) = \{y \in \stackT(\K') \mid \age_{v_{\K'}}(y,-) = 0 \} . $$
\end{prop}

\begin{proof}
    We can assume without loss of generality that $\K' = \K$. For any $y \in Y(\sheafO)$, we have $\age_v(y,-) = 0$. Now let $y \in Y(\K)$ be such that $\age_v(y,-) = 0$. We know there exists a finite field extension $\Le/ \K$ of local fields and $t \in T(\Le)$ such that $t.y \in Y(\sheafO_w)$ where $w$ is the valuation of $\Le$. Then by definition, $$\age_v(y,-) = \frac{1}{e(w|v)} \log_{T,w}(t) .$$We get because $T$ is split, that $t \in T(\sheafO_w)$ hence $$y \in Y(\sheafO_w) \cap Y(\K) .$$With our hypothesis on $Y$, we have $Y(\sheafO_w) \cap Y(\K) = Y(\sheafO)$ which concludes the proof.
\end{proof}

\subsubsection{The cone of orbifold effective divisors of toric stacks}

Darda and Yasuda showed the following result on the cone of orbifold effective divisors of toric stacks (see \cite[Theorem 1.1]{darda2025orbifoldpseudoeffectiveconestoric}):

\begin{prop}\label{proposition_description_cone_orbifold_effective_divisor_toric_stack}
    The cone of  orbifold effective divisor of $X$ is the cone generated by the set of classes $$\left\{ [D_{\rho}]_{0},[\stackY] \mid \rho \in \Sigma(1), \stackY \in \twistsector \right\}.$$ Moreover, we have the following description for the orbifold anti-canonical line bundle:
$$\w_{X,\orb}^{-1} = \sum\limits_{\rho \in \Sigma(1)} [D_{\rho}]_{0} + \sum\limits_{\stackY \in \twistsector} [\stackY] .$$
\end{prop}

\begin{proof}
    To prove the first assertion, it is enough to see that the rational number $a_{\rho}(\stackY)$ in \cite[Theorem 1.1]{darda2025orbifoldpseudoeffectiveconestoric} corresponds to $\age_{\num}(\stackY,-[D_{\rho}])$. This is a consequence of Proposition \ref{prop_age_divisor_associated_to_an_edge}.  The second assertion also follows from Proposition \ref{prop_age_divisor_associated_to_an_edge} together with \cite[Lemma 3.3.7 (2)]{darda_yasuda_toric_stacks_batyrev}.
\end{proof}

\begin{Notations}\label{notation_class_orbifold_picard_group}
When there is no ambiguity, we shall write for $\rho \in \Sigma_{\ext}(1)$,  $[D_{\rho}]$ the element of $\Pic_{\orb}(X)$ such that $[D_{\rho}] = [D_{\rho}]_{0}$ if $\rho \in \Sigma(1)$ and $[D_{\rho}] = [\stackS] $ if $\rho = \stackS \in \twistsector$. In particular, with these notations, 
$$\w_{X,\orb}^{-1} = \sum\limits_{\rho \in \Sigma_{\ext}(1)} [D_{\rho}] .$$
\end{Notations}

%\begin{remark}
%Using a geometric variant of the parametrization developed in this text, we expect to provide another proof of this result in a forthcoming article.
%\end{remark}

\subsection{Computing the residue map in the case of toric stacks}

A fundamental question when studying rational points on a stack using the formalism
introduced by Darda and Yasuda in \cite{darda2024batyrevmanin} is how to compute, for a
rational point $P \in X(\K_v)$, the sector given by the residue map
$\psi_v(P) \in \sector$. In their work on toric stacks, Darda and Yasuda addressed this
problem using the logarithm map (see \cite[Lemma~3.1.6]{darda_yasuda_toric_stacks_batyrev}).
Our approach is entirely different, and we present it in this subsection. To this end,
we will need the following proposition.

\begin{prop}\label{corollary_sectors_dual_picard_group_toric_stacks}
For $\stackY_1, \stackY_2 \in \sector$, if for every $(L,\varphi) \in \Pic_{\orb}(X)$,
\[
\age(\stackY_1,(L,\varphi)) = \age(\stackY_2,(L,\varphi)),
\]
then $\stackY_1 = \stackY_2$. 

\end{prop}

\begin{proof}
This is a consequence of theorem
\ref{theorem_sectors_dual_picard_group_toric_stacks} and proposition~\ref{proposition_functoriality_age}.
\end{proof}

This proposition will be used repeatedly. Indeed, for
$y \in \stackT^{\ext}(\K_v)$, the element
\[
\age_v(y,-) \in \Pic_{\orb}(X)^{\vee}_{\Q}
\]
determines
\[
\age\bigl(\psi_v(P),-\bigr) \in \Hom_{\grp}\!\left(\Pic_{\orb}(X), \Q / \Z\right)
\]
by Theorem~\ref{theorem_age_extended_universal_torsor}. By
Proposition~\ref{corollary_sectors_dual_picard_group_toric_stacks}, this uniquely
determines $\psi_v(P)$.

Recall that we denote by
\[
\pi : \Z^{\Sigma_{\ext}(1)} \longrightarrow \Pic_{\orb}(X)
\]
the map appearing in Theorem~\ref{theorem_action_orbi_neron_severi_torus}. We shall use
its injective dual map $\pi^{\vee}$. We write
$\{e_{\rho}\}_{\rho \in \Sigma_{\ext}(1)}$ for the canonical basis of
\[
(\Z^{\Sigma_{\ext}(1)})^{*} = \Z^{\Sigma_{\ext}(1)}.
\]
The following lemma will be particularly useful.

\begin{lemma}\label{lemma_computation_residue_map}
Let $y \in \stackT^{\ext}(\K_v)$, set $P = q^{\ext}(y)$, and let
$\stackS \in \twistsector$. Assume that
\[
\pi^{\vee}(\age_v(y,-))
= e_{\stackS} -
\sum_{\rho \in \Sigma(1)}
\age_{\num}\!\left(\stackS,-[D_{\rho}]\right)\, e_{\rho} \, .
\]
Then
\[
\psi_v(P) = \stackS .
\]
\end{lemma}

\begin{proof}
Let $(L,\varphi) \in \Pic_{\orb}(X)$. The family of classes of $T$-invariant divisors
$\{[D_{\rho}]\}_{\rho \in \Sigma(1)}$ generates $\Pic(X)_{\Q}$, hence there exist
rational numbers $m_{\rho} \in \Q$ such that
\[
L = \sum_{\rho \in \Sigma(1)} m_{\rho}\,[D_{\rho}].
\]
It follows using notation \ref{notation_orbifold_T_invariant_divisor} that, in $\Pic_{\orb}(X)_{\Q}$,
\[
(L,\varphi)
=
\sum_{\rho \in \Sigma(1)} m_{\rho}\,[D_{\rho}]_{0}
+
\sum_{\stackZ \in \twistsector} m_{\stackZ}\,[\stackZ],
\]
where
\[
m_{\stackZ}
=
\sum_{\rho \in \Sigma(1)}
m_{\rho}\,
\age_{\num}\!\left(\stackZ,[-D_{\rho}]\right)
+
\varphi(\stackZ).
\]
Using $\Q$-linearity, we therefore obtain
\begin{equation}\label{equation_lemma_computation_residue_map}
\age_v\bigl(y,(L,\varphi)\bigr) = \varphi(\stackS).    
\end{equation}Passing to classes modulo $\Z$ and applying
Theorem~\ref{theorem_age_extended_universal_torsor}, we obtain
\[
\age\bigl(\psi_v(P),(L,\varphi)\bigr)
=
\age\bigl(\psi_v(P),L\bigr)
=
\age(\stackS,L),
\]
where the first equality follows from Proposition~\ref{proposition_functoriality_age}, and the third term comes from
Definition~\ref{definition_orbifold_picard_group}, since
\[
\varphi(\stackS) \equiv \age(\stackS,L) \mod \Z.
\]
Applying Theorem~\ref{theorem_sectors_dual_picard_group_toric_stacks}, we conclude
that $\psi_v(P) = \stackS$.

\end{proof}

\section{Integral parametrization of rational points of toric stacks }

For the remainder of this article, we work over $\Q$ for simplicity. Recall that for a
toric stack, Hypotheses~\ref{hypothesis_torsor_is_a_scheme},
\ref{hypothesis_zero_of_local_degree}, and
\ref{hypothesis_sectors_dual_picard_group} are satisfied. We equip the proper and $\Q$-factorial toric variety $X^{\coarse}$ with its natural
system of heights (see \cite[Definition~9.2]{salberger_torsor} and
\cite[Proposition~2.1.2]{Batyrev1990}). The choice of an adelic stacky data (see
Definition~\ref{definition_adelic_stacky_data}) is given, for any $(L,\varphi)$, by the
collection $\{\varphi_v\}_{v \in M_{\Q}^0}$ defined by
\[
\varphi_v(P) = \varphi\bigl(\psi_v(P)\bigr).
\]

With these conventions, for any $P \in X(\Q)$ and any $\stackS \in \twistsector$, we have
\[
H_{\stackS}(P)
=
\prod_{\psi_v(P) = \stackS} p_v .
\]

% We extend the natural system of heights (see \cite[Definition~9.2]{salberger_torsor} and
% \cite[Proposition~2.1.2]{Batyrev1990}) for smooth and proper toric varieties to the proper
% $\Q$-factorial toric variety $X^{\coarse}$ as follows. Since, by
% \cite[Proposition~4.27]{CoxLittleSchenck2011}, for every $T$-invariant Weil divisor $D$
% on $X^{\coarse}$ there exists $n \in \N^{*}$ such that $nD$ is a Cartier divisor,
% it is possible to apply \cite[Definition~9.2]{salberger_torsor}, using in addition
% the characterization of $T$-invariant Cartier divisors (see
% \cite[Theorem~4.28]{CoxLittleSchenck2011}). This is enough to define a system of heights. 

\subsection{Definition of the integral parametrization}

We now aim to define an integral parametrization $\stackT^{\ext}(\Z)$ of the rational points $X(\Q)$ of the toric stack, which is invariant under the action of $T_{\NS,\orb}(\Z)$, using the extended quotient description (see Definition \ref{definition_extended_universal_torsor}) given by
\[
q^{\ext} : \stackT^{\ext}(\Q) \longrightarrow X(\Q).
\]
The various examples studied by the author, together with the geometry of the stack, naturally lead to the following definition.

\begin{definition}\label{definition_orb_universal_torsor_gcd_cond}
    For $R$ a principal ring with fraction field $\K$, we define $\stackT^{\ext}(R)$ the set of $(y',k) \in \stackT^{\ext}(\K) \cap R^{\Sigma_{\ext}(1)}$ such that:
    \begin{itemize}
        \item for any $\stackS \in \twistsector $, $k_{\stackS}$ is square free,
        \item for $\stackS \neq \stackS'$, $\gcd(k_{\stackS},k_{\stackS'}) = 1$,
        \item $y' \in \stackT(R)$,
        \item for any $\stackS \in \twistsector $ and for any primitive collection $C$ (see definition \ref{def_primitive_collection}) which contains edges of $\sigma(\stackS)$, we have:
        $$\gcd\left(k_{\stackS},y'_{\rho} \mid \rho \in C - \sigma_{\stackS}(1) \right) = 1 .$$
    \end{itemize}
\end{definition}

\begin{remark}
The definition of the previous integral parametrization is explained by the following geometric interpretation. For $b \in \Boxe(\Sigma)$, to which we associate the corresponding sector $\stackS_b \in \sector$, following \cite[Section~4]{borisov_toric_stack}, one can
associate a closed substack $$X\bigl(\Sigma / \sigma_b\bigr)$$of the toric stack
$X$ isomorphic to $\stackS_b \subset \I_{\mu} X$. Geometrically, this substack is given by
\[
X\bigl(\Sigma / \sigma_b\bigr)
= \bigcap_{\rho \in \sigma_b} D_{\rho}.
\]
In particular, for any primitive collection $C$, one has
\[
X\bigl(\Sigma / \sigma_b\bigr)
\cap
\bigcap_{\rho \in C \setminus \sigma_b} D_{\rho}
= \varnothing.
\]
\end{remark}

\begin{remark}
The set $\stackT^{\ext}(R)$ should not be confused with the set of $R$-points of the quasi-affine scheme $\stackT \times \G_m^{\twistsector}$. Indeed, for $R$ a principal ring, we have a strict inclusion
\[
\stackT(R) \times \G_m(R)^{\twistsector} \subsetneq \stackT^{\ext}(R).
\]
\end{remark}

We first show that the above subset is invariant under the action of $T_{\NS,\orb}$.

\begin{prop}\label{proposition_invariance_ext_param_action_t}
Let $R$ be a principal ring. For any $y \in \stackT^{\ext}(R)$ and any $t \in T_{\NS,\orb}(R)$, we have
\[
t \cdot y \in \stackT^{\ext}(R).
\]
\end{prop}

\begin{proof}
An element $t \in T_{\NS,\orb}(R)$ acts on the coordinate of $y$ associated with $\rho \in \Sigma_{\ext}(1)$ by multiplication by
\[
[D_{\rho}](t)^{-1} \in R^{\times},
\]
which proves the claim.
\end{proof}

\subsection{Local description}

%Recall that $\Z_v$ denotes the associated discrete valuation ring, and that $\Q_v = \mathrm{Frac}(\Z_v)$ is its field of fractions.

We fix a finite place $v \in M^0_{\Q}$. In this paragraph, we show that $\stackT^{\ext}(\Z_v)$ parametrizes $X(\Q_v)$. We begin by introducing some notations:

\begin{Notations}
For any finite extension $\Le/\Q_v$, with valuation $v_{\Le}$, we also denote by
$v_{\Le}$, or simply by $v$ when $\Le=\Q_v$, the map
\[
\stackT^{\ext}(\Le) \longrightarrow \left(\Z \cup \{+\infty\}\right)^{\Sigma_{\ext}(1)},
\qquad
z \longmapsto \bigl(v_{\Le}(z_{\rho})\bigr)_{\rho \in \Sigma_{\ext}(1)} .
\]We also write
\[
\pi : \Z^{\Sigma_{\ext}(1)} \longrightarrow \Pic_{\orb}(X)
\]
the map appearing in Theorem~\ref{theorem_action_orbi_neron_severi_torus}, and we
write $\{e_{\rho}\}_{\rho \in \Sigma_{\ext}(1)}$ for the canonical basis of
$(\Z^{\Sigma_{\ext}(1)})^* = \Z^{\Sigma_{\ext}(1)}$.
We will simplify our notations by writing $\log_{v_{\Le}}$ instead of $\log_{T_{\NS,\orb},v_{\Le}}$ and $\age_v$ instead of $\age_{v,T_{\NS,\orb}}$.
Note that for any $t \in T_{\NS,\orb}(\Le)$ and any $y \in \stackT^{\ext}(\Le)$, we have
\begin{equation}\label{equation_formula_equivariance_valuation}
v_{\Le}(t \cdot y) = v_{\Le}(y) - \pi^{\vee}\left(\log_{v_{\Le}}(t)\right)    
\end{equation}by Definition \ref{definition_extended_universal_torsor}.
\end{Notations}

We first show that the computation of the residue map is simplified when working with the integral parametrization.

\begin{prop}\label{proposition_computation_residue_map_extended_parametrisation}
Let $y \in \stackT^{\ext}(\Z_v)$ and set $P = q^{\ext}(y)$. Then, for $\stackS \in \twistsector$, we have the equivalence
\[
p_v \mid y_{\stackS}
\;\Longleftrightarrow\;
\psi_v(P) = \stackS \, .
\]
In particular, for $y \in \stackT^{\ext}(\Z_v)$ such that $p_v \mid y_{\stackS}$, we have
\[
\pi^{\vee}\!\left( \age_v(y,-) \right)
= e_{\stackS}
-
\sum_{\rho \in \sigma_\stackS(1)}
\age_{\num}\!\left(\stackS,-[D_{\rho}]\right)\, e_{\rho} \, .
\]
\end{prop}

\begin{remark}
It follows that for $y \in \stackT^{\ext}(\Z_v)$, if for every $\stackS \in \twistsector$ we have
$p_v \nmid y_{\stackS}$, then $\psi_v(P) = 0$, and conversely. Note that in this case we also have $\age_v(y,-) = 0$.
\end{remark}

\begin{proof}
If for every $\stackS \in \twistsector$ we have $p_v \nmid y_{\stackS}$, then
\[
y \in \stackT(\Z_v) \times \G_m(\Z_v)^{\twistsector},
\]
and therefore $\psi_v(P) = 0$. To deduce the equivalence, it remains to show that
if $p_v \mid y_{\stackS}$, then $\psi_v(P) = \stackS$. We denote by $b_{\stackS}$ the element of $\Boxe(\Sigma)$ corresponding to the twisted sector $\stackS$. Recall that by proposition \ref{prop_age_divisor_associated_to_an_edge}, we have the equality
\[
\beta_{\Q}^{\ext}(e_{\stackS}) = \overline b_{\stackS}
=
\sum_{\rho \in \sigma_\stackS(1)}
\age_{\num}\!\left(\stackS,-[D_{\rho}]\right)\, \beta^{\ext}(e_{\rho})
\]
in $N_{\Q}$. There exists an integer $d \in \N^{*}$ such that
\[
d \cdot \sum_{\rho \in \sigma_\stackS(1)}
\age_{\num}\!\left(\stackS,-[D_{\rho}]\right)\, e_{\rho}
\in \Z^{\Sigma_{\ext}(1)}
\]
and since the torsion subgroup of $N$ is finite, such that
\[
\beta^{\ext}\!\left( d \cdot e_{\stackS}
-
\sum_{\rho \in \sigma_\stackS(1)}
d \cdot \age_{\num}\!\left(\stackS,-[D_{\rho}]\right)\, e_{\rho}
\right)
= 0 
\]in $N$. We may therefore consider an element $\alpha \in \Pic_{\orb}(X)^{\vee}$ such that
\[
\pi^{\vee}(\alpha)
= 
d \cdot e_{\stackS} -
\sum_{\rho \in \sigma_\stackS(1)}
d \cdot \age_{\num}\!\left(\stackS,-[D_{\rho}]\right)\, e_{\rho} \, .
\]
Let $\Le_d$ be a rupture field of the irreducible polynomial $X^{d}- p_v$ (it is a totally ramified extension of $\Q_v$ of degree $d$), and let
$t \in T_{\NS,\orb}(\Le_d)$ be such that
\[
\log_{v_{\Le_d}}(t) = \alpha .
\]
We set $z = t \cdot y \in \stackT^{\ext}(\Le_d)$. We have
\[
v_{\Le_d}(z)
=
d \cdot v(y)
-
d \cdot e_{\stackS}
+
\sum_{\rho \in \sigma_\stackS(1)}
d \cdot \age_{\num}\!\left(\stackS,-[D_{\rho}]\right)\, e_{\rho}.
\]
Thus, for any primitive collection $C$ of $\Sigma(1)$, we obtain
\[
\min_{\rho \in C}\bigl(v_{\Le_d}(z_{\rho})\bigr)
=
d \cdot
\left(
\min_{\rho \in C}
\left(
v(y_{\rho}) + \age_{\num}\!\left(\stackS,-[D_{\rho}]\right)
\right)
\right).
\]
Since for every $\rho \in \sigma_\stackS(1)$ we have
$\age_{\num}\!\left(\stackS,-[D_{\rho}]\right) > 0$, and for
$\rho \notin \sigma_\stackS(1)$ we have
$\age_{\num}\!\left(\stackS,-[D_{\rho}]\right) = 0$, it follows that
\[
\min_{\rho \in C}\bigl(v_{\Le_d}(z_{\rho})\bigr)
=
d \cdot
\left(
\min_{\rho \in C \setminus \sigma_\stackS(1)}
v(y_{\rho})
\right)
= 0,
\]
where the last equality follows from the definition of
$\stackT^{\ext}(\Z_v)$. Moreover, for every $\stackS' \in \twistsector$, we have
\[
v_{\Le}\bigl(z_{\stackS'}\bigr) = 0,
\]
and therefore
\[
z \in \stackT(\sheafO_{\Le}) \times \G_m(\sheafO_{\Le})^{\twistsector}.
\]We get that, by Definition~\ref{def_age_univ_torsor},
\[
\pi^{\vee}\!\left( \age_v(y,-) \right)
=
e_{\stackS} -
\sum_{\rho \in \sigma_\stackS(1)}
\age_{\num}\!\left(\stackS,-[D_{\rho}]\right)\, e_{\rho} \, .
\]Hence applying lemma \ref{lemma_computation_residue_map}, we can conclude the proof, $\psi_v(P) = \stackS$.

\end{proof}

For $\stackS \in \sector$, we define
\begin{equation}\label{equation_definition_notation_t_stacks}
    \stackT_{\stackS}(\Z_v)=
    \begin{cases}
        \stackT(\Z_v)\times \G_m(\Z_v)^{\twistsector} & \text{if } \stackS=0, \\[10pt]
        \left\{y\in\stackT^{\ext}(\Z_v)\mid p_v\mid y_{\stackS}\right\}
        & \text{if } \stackS\in\twistsector .
    \end{cases}
\end{equation}
We remark that
\[
\stackT^{\ext}(\Z_v)=\bigsqcup_{\stackS\in\sector}\stackT_{\stackS}(\Z_v) 
\]and by Proposition~\ref{proposition_computation_residue_map_extended_parametrisation}, for every $\stackS \in \sector$, we have
\begin{equation}\label{equation_description_2_stackT_ext_local}
    \stackT_{\stackS}(\Z_v) = \left\{y\in\stackT^{\ext}(\Z_v) \mid \psi_v\left(q^{\ext}(y)\right) = \stackS \right\} \, \, .
\end{equation}

For every $(L,\varphi)\in\Pic_{\orb}(X)$, we extend $\varphi$ by setting
$\varphi(0)=0$. From Proposition~\ref{proposition_computation_residue_map_extended_parametrisation}
and the equation \eqref{equation_lemma_computation_residue_map} in the proof of Lemma~\ref{lemma_computation_residue_map}, we deduce a first step toward another description of $\stackT^{\ext}(\Z_v)$.

\begin{lemma}\label{lemma_description_first_inclusion_extended_parametrisation}
For $\stackS\in\sector$, we have
\[
\stackT_{\stackS}(\Z_v)\subset
\left\{
y\in\stackT^{\ext}(\Q_v)\;\middle|\;
\age_v(y,(L,\varphi))=\varphi(\stackS)
\text{ for all } (L,\varphi)\in\Pic_{\orb}(X)
\right\}.
\]
\end{lemma}

From the previous lemma, we can deduce the injectivity
\[
\stackT^{\ext}(\Z_v) / T_{\NS,\orb}(\Z_v) \hookrightarrow X(\Q_v).
\]

\begin{prop}\label{prop_injectivite_local_extended_parametrisation}
Let $y_1, y_2 \in \stackT^{\ext}(\Z_v)$ be two elements such that
$$q^{\ext}(y_1) = q^{\ext}(y_2) .$$Then there exists $t \in T_{\NS,\orb}(\Z_v)$ such that
\[
y_2 = t \cdot y_1 .
\]
\end{prop}

\begin{proof}
Let $y_1, y_2 \in \stackT^{\ext}(\Z_v)$ be such that
$q^{\ext}(y_1) = q^{\ext}(y_2)$. Then there exists $t \in T_{\NS,\orb}(\Q_v)$ such that
\[
y_2 = t \cdot y_1 .
\]By
Lemma~\ref{lemma_description_first_inclusion_extended_parametrisation}, we have
\[
\age_v(y_2,-) = \age_v(y_1,-) \, .
\]
Proposition~\ref{prop_age_and_logarithm} then yields
\[
\log_v(t) = 0 \, .
\]
Hence $t \in T_{\NS,\orb}(\Z_v)$, which proves injectivity.

\end{proof}

We now show that $\stackT^{\ext}(\Z_v)$ is a natural candidate for defining an integral parametrization of $X(\Q_v)$. Namely, every element of $\stackT^{\ext}(\Q_v)$ lies in the same orbit as an element of $\stackT^{\ext}(\Z_v)$, that is to say the map \[
\stackT^{\ext}(\Z_v) / T_{\NS,\orb}(\Z_v) \rightarrow X(\Q_v).
\]is surjective. We will need the following lemma.

\begin{lemma}\label{lemma_inequality_valuation_plus_age}
Let $y \in \stackT^{\ext}(\Q_v)$. Then for every $\rho \in \Sigma(1)$, we have the inequality
\[
v(y_{\rho}) + \lfloor \age_v(y,-[D_{\rho}]) \rfloor \geqslant 0 .
\]
\end{lemma}

\begin{proof}
Let $\Le/\Q_v$ be a finite extension such that there exists
\[
z \in \stackT(\sheafO_{\Le}) \times \G_m(\sheafO_{\Le})^{\twistsector}
\]
lying in the same $T_{\NS,\orb}(\Le)$-orbit as $y_{|\Le}$. Using the definition of the lift of the age associated with the $T_{\NS,\orb}$-torsor (see Definition~\ref{def_age_univ_torsor}) and the definition of the extended universal torsor (see Definition \ref{definition_extended_universal_torsor}), we obtain the equality for any $\rho \in \Sigma(1)$:
\[
\frac{v_{\Le}(z_{\rho})}{e(\Le/\Q_v)}
=
v(y_{\rho}) + \age_v(y,-[D_{\rho}]) \geqslant 0
\]
in $\Z \cup \{+\infty\} $. Passing to the lower integer part, we obtain the lemma.

\end{proof}

We may now state the key proposition of this paragraph.

\begin{prop}\label{prop_surjectivite_local_extended_quotient_description}
Let $y \in \stackT^{\ext}(\Q_v)$. Then there exists an element
$\alpha \in \Pic_{\orb}(X)^{\vee}$ such that, if
$t \in T_{\NS,\orb}(\Q_v)$ satisfies $\log_v(t) = \alpha$, then
\[
t \cdot y \in \stackT^{\ext}(\Z_v).
\]
\end{prop}

\begin{proof}
Let $P = q^{\ext}(y)$. Assume first that $\psi_v(P) = 0$. In this case, by Theorem~\ref{theorem_age_extended_universal_torsor}, we have
\[
\age_v(y,-) \in \Pic_{\orb}(X)^{\vee},
\]
so we may consider an element $t \in T_{\NS,\orb}(\Q_v)$ such that
\[
\log_v(t) = \age_v(y,-).
\]
Then, by Proposition~\ref{prop_age_and_logarithm}, we obtain
\[
\age_v(t \cdot y,-) = 0.
\]
It therefore follows from Proposition~\ref{prop_extended_univ_torsor_zero_age} that
\[
t \cdot y \in \stackT(\Z_v) \times \G_m(\Z_v)^{\twistsector} \subset \stackT^{\ext}(\Z_v).
\]

Assume now that $\psi_v(P) = \stackS \in \twistsector$. Let $\Le/\Q_v$ be a finite extension such that there exists
\[
z \in \stackT(\sheafO_{\Le}) \times \G_m(\sheafO_{\Le})^{\twistsector}
\]
lying in the same $T_{\NS,\orb}(\Le)$-orbit as $y_{|\Le}$. Using the definition of the lift of the age associated with the $T_{\NS,\orb}$-torsor (see Definition~\ref{def_age_univ_torsor}), we obtain the equality
\[
\frac{v_{\Le}(z)}{e(\Le/\Q_v)}
=
v(y) - \pi^{\vee}(\age_v(y,-)) 
\]
in $\left( \Z \cup \{+\infty\} \right)^{\Sigma_{\ext}(1)}$.

By
Theorem~\ref{theorem_age_extended_universal_torsor}, Proposition \ref{proposition_functoriality_age} and
Proposition~\ref{prop_age_divisor_associated_to_an_edge}, we further have
\begin{equation}\label{equation_computation_value_n}
\pi^{\vee}\!\left( \age_v(y,-) \right)
=
n - \sum_{\rho \in \sigma_\stackS(1)} q_{\rho}\, e_{\rho} \, ,   
\end{equation}where
$q_{\rho} = \age_{\num}\!\left(\stackS,-[D_{\rho}]\right) \in [0,1[$ and
$n \in \Z^{\Sigma_{\ext}(1)}$.

We denote by $b_{\stackS} \in \Boxe(\Sigma)$ the element associated with $\stackS$.
Recall that
\[
\overline b_{\stackS}
=
\sum_{\rho \in \sigma_\stackS(1)} q_{\rho}\, \beta^{\ext}_{\Q}(e_{\rho}).
\]
By Remark~\ref{remark_exact_sequence_dual_orbifold_picard_group}, we have the following
commutative diagram with exact rows:
\begin{center}
\begin{tikzcd}
0 \arrow[r]
& \Pic_{\orb}(X)^{\vee}
\arrow[r, "\pi^{\vee}"]
\arrow[d]
& \Z^{\Sigma_{\ext}(1)}
\arrow[r, "\beta^{\ext}"]
\arrow[d]
& N
\arrow[r]
\arrow[d]
& 0
\\
0 \arrow[r]
& \Pic_{\orb}(X)^{\vee}_{\Q}
\arrow[r]
& \Q^{\Sigma_{\ext}(1)}
\arrow[r]
& N_{\Q}
\arrow[r]
& 0
\end{tikzcd}
\end{center}

From the previous equality, we obtain
\[
\beta^{\ext}_{\Q}(n) = \overline b_{\stackS}.
\]
Therefore, there exists $\stackS' \in \twistsector$ such that
$\sigma_\stackS' = \sigma_\stackS$,
$\overline b_{\stackS'} = \overline b_{\stackS}$, and
\[
\beta^{\ext}(n) = b_{\stackS'}.
\]
It follows that $n - e_{\stackS'} \in \pi^{\vee}\left(\Pic_{\orb}(X)^{\vee}\right)$, thus there exists $\alpha \in \Pic_{\orb}(X)^{\vee}$ such that
\[
n - e_{\stackS'} = \pi^{\vee}\left(\alpha\right).
\]

We then consider $t \in T_{\NS,\orb}(\Q_v)$ such that
\[
\log_v(t) = \alpha.
\]
Using \eqref{equation_formula_equivariance_valuation}, we obtain the equality
\[
\frac{v_{\Le}(z)}{e(\Le/\Q_v)}
=
v(t \cdot y)
+
\sum_{\rho \in \sigma_\stackS(1)} q_{\rho}\, e_{\rho}
-
e_{\stackS'}
\]
in $\left( \Z \cup \{+\infty\} \right)^{\Sigma_{\ext}(1)}$. We now show that $y' = t \cdot y \in \stackT^{\ext}(\Z_v)$. First, we observe that for
every $\stackZ \neq \stackS'$, we have $v(y'_{\stackZ}) = 0$, and that
$v(y'_{\stackS'}) = 1$. It therefore remains to prove that for any primitive
collection $C$, we have
\[
\min_{\rho \in C \setminus \sigma_{\stackS'}(1)} v(y'_{\rho}) = 0 .
\]

To begin with, since for every $\rho \in \Sigma(1)$ we have by \eqref{equation_computation_value_n}
\[
n(e_{\rho}) = - \lfloor \age_v(y,-[D_{\rho}]) \rfloor,
\]
it follows that
\[
v(y'_{\rho})
=
v(y_{\rho}) + \lfloor \age_v(y,-[D_{\rho}]) \rfloor
\geqslant 0
\]
by Lemma~\ref{lemma_inequality_valuation_plus_age}. Now, using the fact that
\[
z \in \stackT(\sheafO_{\Le}) \times \G_m(\sheafO_{\Le})^{\twistsector},
\]
we have
\[
0
=
\min_{\rho \in C}
\frac{v_{\Le}(z_{\rho})}{e(\Le/\Q_v)}
=
\min_{\rho \in C}
\left(
v(y'_{\rho}) + \age_{\num}\!\left(\stackS',-[D_{\rho}]\right)
\right).
\]
Since for every $\rho \notin \sigma_{\stackS'}(1)$ we have
$\age_{\num}\!\left(\stackS',-[D_{\rho}]\right) = 0$, while for
$\rho \in \sigma_{\stackS'}(1)$ we have
$\age_{\num}\!\left(\stackS',-[D_{\rho}]\right) \in ]0,1[$, and since moreover
$v(y'_{\rho}) \geqslant 0$, it follows that
\[
\min_{\rho \in C \setminus \sigma_{\stackS'}(1)} v(y'_{\rho}) = 0 .
\]Note that, by Proposition~\ref{proposition_computation_residue_map_extended_parametrisation},
we obtain $\stackS' = \stackS$.

\end{proof}

We deduce the following corollary, which gives another description of the set $\stackT_{\stackS}(\Z_v)$ for $\stackS \in \sector$.

\begin{cor}\label{corollary_computation_extended_parametrization}
For $\stackS\in\sector$, we have
\[
\stackT_{\stackS}(\Z_v) =
\left\{
y\in\stackT^{\ext}(\Q_v)\;\middle|\;
\age_v(y,(L,\varphi))=\varphi(\stackS)
\text{ for all } (L,\varphi)\in\Pic_{\orb}(X)
\right\}.
\]In particular, we have:
\[
\stackT^{\ext}(\Z_v) =
\left\{
y\in\stackT^{\ext}(\Q_v)\;\middle|\;
\age_v(y,(L,\varphi))=\varphi\left(\psi_v\left(q^{\ext}(y)\right)\right)
\text{ for all } (L,\varphi)\in\Pic_{\orb}(X)
\right\}.
\]
\end{cor}

\begin{proof}
The inclusion $\subset$ is Lemma~\ref{lemma_description_first_inclusion_extended_parametrisation}. 
For $\stackS=0$, the reverse inclusion follows from 
Proposition~\ref{prop_extended_univ_torsor_zero_age}. Finally, let $\stackS\in\twistsector$ and let 
$y\in\stackT^{\ext}(\Q_v)$ satisfy
\[
\age_v(y,(L,\varphi))=\varphi(\stackS)
\quad \text{for all } (L,\varphi)\in\Pic_{\orb}(X).
\]
Then Proposition~\ref{prop_surjectivite_local_extended_quotient_description}
and its proof ensure that there exists 
$t\in T_{\NS,\orb}(\Q_v)$ such that 
$t.y\in\stackT_{\stackS}(\Z_v)$. Again by Lemma~\ref{lemma_description_first_inclusion_extended_parametrisation}, we get
$$
\age_v(t \cdot y,-) = \age_v(y,-).
$$Hence, by Proposition~\ref{prop_age_and_logarithm}, we deduce that $t \in T_{\NS,\orb}(\Z_v)$, and therefore
$$y \in \stackT_{\stackS}(\Z_v).$$The remainder of the corollary follows from the other description of $\stackT_{\stackS}(\Z_v)$ (see equation~\ref{equation_description_2_stackT_ext_local}) and from the fact that
$$
\stackT^{\ext}(\Z_v)
=
\bigsqcup\limits_{\stackS \in \twistsector}
\stackT_{\stackS}(\Z_v).
$$
\end{proof}

By Propositions~\ref{prop_injectivite_local_extended_parametrisation} and
\ref{prop_surjectivite_local_extended_quotient_description}, we have shown that the
map
\[
q^{\ext} : \stackT^{\ext}(\Z_v) \longrightarrow X(\Q_v)
\]
induces a bijection
\[
\stackT^{\ext}(\Z_v) / T_{\NS,\orb}(\Z_v)
\;\simeq\;
X(\Q_v).
\]We have also seen that for $\stackS \in \sector$, the restriction of the map
$q^{\ext}$ to $\stackT_{\stackS}(\Z_v)$ induces a bijection
\[
\stackT_{\stackS}(\Z_v) / T_{\NS,\orb}(\Z_v)
\;\simeq\;
\psi_v^{-1}(\stackS).
\]We now turn to the study of this parametrization in the global setting.

\subsection{Global description}

\begin{definition}
    An adelic family of sectors $\underline{\stackS} = (\stackS_v)_{v \in M_{\Q}^0} \in \sector^{(M_{\Q}^0)}$ is defined such that $\{ v \in M_{\Q}^0 \mid \stackS_v \neq 0 \} $ is finite. The set of adelic family of sectors will be written as the restricted product $\adelicprod \sector$.
\end{definition}

\begin{definition}
    We define the adelic residue map $$\psi : X(\Q) \rightarrow \adelicprod \sector$$ which maps a rational point $P \in X(\Q)$ to the associated adelic family $$\underline{\stackS} = \left( \psi_v(P) \right)_{v \in M_{\Q}^0} .$$
\end{definition}

The results obtained in the previous paragraph allow us to deduce the following
theorem.

\begin{theorem}\label{theorem_description_of_the_extended_parmetrization}
The quotient map
\[
q^{\ext} : \stackT^{\ext}(\Z) \longrightarrow X(\Q)
\]
induces a bijection
\[
\stackT^{\ext}(\Z) / T_{\NS,\orb}(\Z)
\;\simeq\;
X(\Q).
\]
Moreover, for $y \in \stackT^{\ext}(\Z)$ and $P = q^{\ext}(y)$, we have the equivalence
\[
\psi(P) = \underline{\stackS}
\quad\Longleftrightarrow\quad
\forall\, v \in M_{\Q}^{0},\
\begin{cases}
p_v \mid y{}_{\substack{\stackS_v}} & \text{if } \stackS_v \neq 0, \\[4pt]
p_v \nmid y{}_{\substack{\stackZ}} \text{ for all } \stackZ \in \twistsector & \text{if } \stackS_v = 0 .
\end{cases}
\]
\end{theorem}

\begin{proof}
  We first prove injectivity. Let $y_1, y_2 \in \stackT^{\ext}(\Z)$ be such that
$$q^{\ext}(y_1) = q^{\ext}(y_2) .$$Then there exists $t \in T_{\NS,\orb}(\Q)$ such that
\[
y_2 = t \cdot y_1 .
\]
Note that for every $v \in M_{\Q}^{0}$, we have
$y_i{}_{|\Q_v} \in \stackT^{\ext}(\Z_v)$. By Corollary \ref{corollary_description_extended_parametrization}
\[
\age_v(y_2,-) = \age_v(y_1,-)
\]
for all $v \in M_{\Q}^{0}$. Proposition~\ref{prop_age_and_logarithm} then yields
\[
\log_v(t) = 0
\qquad \text{for all } v \in M_{\Q}^{0}.
\]
Hence $t \in T_{\NS,\orb}(\Z)$, which proves injectivity. We now prove surjectivity. Let $y \in \stackT^{\ext}(\Q)$. For each
$v \in M_{\Q}^{0}$, by Proposition~\ref{prop_surjectivite_local_extended_quotient_description},
there exists an element $\alpha_v \in \Pic_{\orb}(X)^{\vee}$ such that, if
$t \in T_{\NS,\orb}(\Q_v)$ satisfies $\log_v(t) = \alpha_v$, then
\[
t \cdot y \in \stackT^{\ext}(\Z_v).
\]

Since the set $\{\, v \in M_{\Q}^0 \mid \age_v(y,-) \neq 0 \,\}$ is finite, it follows that
$$\{\, v \in M_{\Q}^0 \mid \alpha_v \neq 0 \,\}$$is also finite. Hence the family
$(\alpha_v)_{v \in M_{\Q}^0}$ defines an element of the direct sum
\[
\bigoplus_{v \in M_{\Q}^{0}} \Pic_{\orb}(X)^{\vee}.
\]
We may therefore use the surjectivity of the map
\[
T_{\NS,\orb}(\Q)
\xrightarrow{\ \oplus\,\log_v\ }
\bigoplus_{v \in M_{\Q}^{0}} \Pic_{\orb}(X)^{\vee}
\]
to choose an element $t \in T_{\NS,\orb}(\Q)$ whose image is precisely
$(\alpha_v)_v$. We then have
\[
t \cdot y \in
\stackT^{\ext}(\Q)
\cap
\bigcap_{v \in M_{\Q}^{0}} \stackT^{\ext}(\Z_v)
=
\stackT^{\ext}(\Z).
\]
This proves surjectivity. The final assertion is a consequence of
Proposition~\ref{proposition_computation_residue_map_extended_parametrisation}.

\end{proof}

We can now state a corollary of the previous result, which will allow us to define
local heights on the extended universal torsor.

\begin{cor}\label{corollary_description_extended_parametrization}
Let $y \in \stackT^{\ext}(\Z)$ and let $P \in X(\Q)$ be the corresponding rational
point. Then for every $\stackS \in \twistsector$, we have
\[
|y{}_{\stackS}| = H_{[\stackS]}(P) .
\]
\end{cor}

\begin{proof}
    Recall that
\[
H_{[\stackS]}(P) = \prod_{\psi_v(P) = \stackS} p_v .
\]
The corollary therefore follows from the final assertion of
Theorem~\ref{theorem_description_of_the_extended_parmetrization} and the fact that $y{}_{\stackS}$ is square free by the definition of $\stackT^{\ext}(\Z)$.

\end{proof}

\section{A closer look at the case of toric stacks with torsion-free Picard group}\label{appendix_B}

The goal of this section and of the next one is to define a lift of the stacky height to the extended universal torsor. Here, we first consider the simpler case of toric stacks with torsion-free Picard group. We begin by presenting the key example of weighted projective stacks. We then introduce a more general formalism that explains this example, and finally we use this formalism to construct a naive local height over the extended universal torsor to lift the coarse multi-height map $h_{\coarse}$ (see Definition~\ref{definition_coarse_multi_height_map}).

\subsection{The case of the weighted projective stacks}

Let us consider the example of the weighted projective stack $\stackP(\w)$ with weight $\w = (w_0,..,w_n) \in (\N^*)^{n + 1}$. In this particular case, the map $$\age_{\num}(-,\sheafO(1)) : \pi_0(\I_{\mu} \stackP(\w)) \rightarrow \bigcup\limits_{0 \leqslant i \leqslant n} \frac{1}{w_i} \Z \cap [0,1[$$ is a bijection (see Theorem \ref{theorem_sectors_dual_picard_group_toric_stacks}). We denote by $\stackS_t$ the sector such that $$\age_{\num}(\stackS_t,\sheafO(1)) = t \, .$$In what follows, we shall describe both a naive and the extended parametrization of $\stackP(\w)(\Q)$ in the spirit of the theory we develop this paper.

\begin{definition}
    For $(x_0,..,x_n) \in \K^{n+1} \setminus \{0\}$, we define a fractional ideal of $\sheafO_{\K}$, $I_w(x) = \bigcap\limits_{\forall i \ x_i \in \mathrm{b}^{w_i} } \mathrm{b}$. It is easier to understand this fractional ideal through its inverse:
    $$ I_w(x)^{-1} = \{ \alpha \in \K \mid \alpha \underset{w}{.} x \in \sheafO_{\K} \} .$$
\end{definition}

As we work over $\Q$, we introduce here some nice arithmetic notations. For $(x_0,..,x_n) \in \Z^{n+1} \setminus \{0\}$, we denote by $\gcd_w(x)$ the element of $\N^*$, such that $$I_w(x) = \left( \text{gcd}_w (x) \right).$$ Here are a few characterization of $\gcd_w(x)$.

\begin{prop}
    $\gcd_w(x)$ is the unique non negative integer $\delta \in \N^*$ such that the following assertions are equivalent:
    \begin{itemize}
        \item for all $0 \leqslant i \leqslant n$, $d^{w_i} \mid x_i$
        \item $d \mid \delta$
    \end{itemize}
    It is also characterized such that for any prime $p$, $v_p (\gcd_w (x)) = \lfloor \min\limits_{0 \leqslant i \leqslant n} \big( \frac{v_p(x_i)}{w_i} \big) \rfloor$. Moreover we have the property that $\gcd_w (d \underset{w}{.} x) = d . \gcd_w (x)$.
\end{prop}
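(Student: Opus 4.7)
My plan is to unpack the definitions and translate everything into a statement about $p$-adic valuations, after which the proposition becomes a direct computation.

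\smallskip

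First, I would note that the condition ``$d^{w_i} \mid x_i$ for every $0 \leqslant i \leqslant n$'' is exactly the condition $d^{-1} \underset{w}{.} x \in \Z^{n+1}$, that is $d^{-1} \in I_w(x)^{-1}$. Since $I_w(x) = (\gcd_w(x))$ as a fractional ideal of $\Z$, we have $I_w(x)^{-1} = \gcd_w(x)^{-1} \Z$, so $d^{-1} \in I_w(x)^{-1}$ is equivalent to $d \mid \gcd_w(x)$. This proves the equivalence, and the uniqueness of $\delta$ follows because a positive integer is determined by the set of its divisors.

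\smallskip

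Next, to establish the valuation formula, I would fix a prime $p \in \Prems$ and apply the equivalence just proved to $d = p^k$. The condition $p^{k w_i} \mid x_i$ reads $k w_i \leqslant v_p(x_i)$, i.e.\ $k \leqslant \frac{v_p(x_i)}{w_i}$; requiring this for all $i$ and using that $k$ is an integer gives $k \leqslant \lfloor \min_{0 \leqslant i \leqslant n} \frac{v_p(x_i)}{w_i} \rfloor$. Hence $p^k \mid \gcd_w(x)$ if and only if $k \leqslant \lfloor \min_i \frac{v_p(x_i)}{w_i} \rfloor$, which is exactly the claimed formula for $v_p(\gcd_w(x))$.

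\smallskip

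Finally, for the homogeneity statement $\gcd_w(d \underset{w}{.} x) = d \cdot \gcd_w(x)$, it suffices to check equality of $p$-adic valuations for every prime $p$. Since $v_p(d^{w_i} x_i) = w_i v_p(d) + v_p(x_i)$, one has
\[ \min_{0 \leqslant i \leqslant n} \frac{v_p(d^{w_i} x_i)}{w_i} = v_p(d) + \min_{0 \leqslant i \leqslant n} \frac{v_p(x_i)}{w_i}, \]
and since $v_p(d) \in \Z$ it commutes with the floor, yielding $v_p(\gcd_w(d \underset{w}{.} x)) = v_p(d) + v_p(\gcd_w(x))$ via the valuation formula. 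There is no genuine obstacle here; the only point demanding slight care is the passage between the fractional-ideal description of $I_w(x)^{-1}$ and the divisibility statement, which I would spell out cleanly at the start to avoid confusion between $I_w(x)$ and its inverse.
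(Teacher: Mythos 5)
Your proof is correct. The paper states this proposition without proof, treating it as a routine unwinding of the definition of $I_w(x)$, so there is no argument of the author's to compare against; the route you take---translating ``$d^{w_i}\mid x_i$ for all $i$'' into ``$d^{-1}\in I_w(x)^{-1}$'' via the paper's explicit description of $I_w(x)^{-1}$, then passing to $p$-adic valuations for the remaining two assertions---is exactly the natural one. The one step where a careless reader might slip, namely that $v_p(d)\in\Z$ commutes with the floor in the homogeneity computation, you do flag, so the write-up is complete.
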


A naive parametrization of $\stackP(\w)(\Q)$ is given by:
\begin{equation}\label{equation_example_parametrisation_naive_pw}
    \stackT^{\stack}(\Z) = \{ y \in \Q^{n+1}-\{0\} \mid \text{gcd}_w(y) = 1 \}  .
\end{equation}One may verify that we get a new quotient description of $\stackP(\w)(\Q)$ as $$ \stackT^{\stack}(\Z) / \G_m(\Z) $$where $\G_m$ continue to act with weight $\w$. In example \ref{example_computation_age_weighted_projective_stack}, we have seen that for any $v \in M_{\Q}^0$ and for any $y \in \Q_v^{n+1}-\{0\}$, $$\age_v(y,\sheafO(1)) = \min\limits_{0 \leqslant i \leqslant n} \frac{v(y_i)}{w_i} .$$If $P = q(y) \in \stackP(\w)(\Q_v)$, we get by theorem \ref{theorem_age_extended_universal_torsor}:
\begin{equation}\label{equation_computation_residue_map_weighted_projective_stack}
    \psi_v(P) = \stackY_t  \Leftrightarrow  \age_v(y,\sheafO(1)) \equiv t \mod \Z \, \, .
\end{equation}

For $t \in \bigcup\limits_{0 \leqslant i \leqslant n} \frac{1}{w_i} \Z \cap [0,1[$ and $v \in M_{\Q}^0$, we define:
\begin{equation}
    \stackT^{\stack}_{\stackS_t}(\Z) = \{ y \in \Q_v^{n+1}-\{0\} \mid \min\left(\frac{v(y_i)}{\w_i} \right) = t \}  .
\end{equation}We also introduce the following notation:
\begin{Notations}
We denote by $[\sheafO(1)]_{\stack}$ the class in $\Pic_{\orb}(\stackP(\w))$ given by
$(\sheafO(1),\varphi)$, where the map
\[
\varphi : \pi_0^*(\I_{\mu} \stackP(\w)) \longrightarrow \Q
\]
is defined by the condition that, for any $\stackS_t \in \pi_0^*(\I_{\mu} \stackP(\w))$,
\[
\varphi(\stackS_t) = \age_{\num}(\stackS_t,\sheafO(1)) = t.
\]  
\end{Notations}
We can compute a family of heights for any rational point of $\stackP(\w)$:

\begin{prop}\label{proposition_height_weighted_projective_stack}
    Let $y \in \Q^{n + 1}-\{0\}$ and consider $P = q(y) \in \stackP(\w)(\Q)$. We have:
    \begin{enumerate}
        \item For any $t \in \bigcup\limits_{0 \leqslant i \leqslant n} \frac{1}{w_i} \Z \cap ]0,1[$, $$H_{\stackS_t}(P) = \prod\limits_{\left\{ \min\limits_{0 \leqslant i \leqslant n} \frac{v(y_i)}{w_i} \right\} = t} p_v ;$$
        \item $$H_{[\sheafO(1)]_{\stack}}(P) = \frac{\max\limits_{0 \leqslant i \leqslant n }|y_i|^{\frac{1}{w_i}}}{\gcd_w(y)} .$$
    \end{enumerate}
\end{prop}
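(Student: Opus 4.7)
The plan is to unwind Definition~\ref{definition_stacky_height} in both cases, using the explicit descriptions of sectors, the lifted age (Example~\ref{example_computation_age_weighted_projective_stack}), and the natural height on the weighted projective coarse space.

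For item~(1), observe that by Notation~\ref{notation_stacky_line_bundle_associated_to_a_line_bundle} the class $[\mathcal{Y}_t] \in \Pic_{\orb}(\stackP(\w))$ is $(\sheafO_X, \delta_{\mathcal{Y}_t})$, so the underlying line bundle is trivial and the coarse contribution $h_{\coarse}(p(P))(\sheafO_{X^{\coarse}})$ vanishes. Only the adelic stacky data remains, and with our choice $\varphi_v(P) = \delta_{\mathcal{Y}_t}(\psi_v(P))$ the height reduces to a product over the finite places $v$ where $\psi_v(P) = \mathcal{Y}_t$. To identify this set in terms of $y$, I would invoke Corollary~\ref{corollary_sectors_dual_picard_group_toric_stacks} (sectors of a toric stack are determined by their age pairing with $\Pic$) together with Theorem~\ref{theorem_age_universal_torsor} to get $\psi_v(P)=\mathcal{Y}_t$ iff $\age_v(y,\sheafO(1)) \equiv t \bmod \Z$. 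By Example~\ref{example_computation_age_weighted_projective_stack}, $\age_v(y,\sheafO(1)) = \min_i v(y_i)/w_i$, so the condition reads $\{\min_i v(y_i)/w_i\} = t$, and formula~(1) follows.

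For item~(2), apply the same unwinding to $[\sheafO(1)]_{\stack} = (\sheafO(1), \varphi_{\sheafO(1)})$ with $\varphi_{\sheafO(1)}(b) = -\age_{\num}(b,\sheafO(1))$. Two ingredients are needed. First, the coarse part: $p^*L' = \sheafO(1)$ for a unique $L' \in \Pic(X^{\coarse})_{\Q}$, and the natural system of heights on the weighted projective coarse space, obtained from the toric parametrization $\affine^{n+1}\setminus\{0\} \to \stackP(\w)^{\coarse}$, is
\[
H_{L'}(p(P)) \;=\; \prod_{v \in M_{\Q}} \max_{0 \leqslant i \leqslant n} |y_i|_v^{1/w_i} \;=\; \max_i |y_i|_\infty^{1/w_i} \cdot \prod_p p^{-\min_i v_p(y_i)/w_i},
\]
which is well-defined by the product formula applied to the weighted scaling action. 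Second, the stacky correction at each finite place $v$ contributes $-\age_{\num}(\psi_v(P), \sheafO(1))\log p_v = -\{\min_i v(y_i)/w_i\}\log p_v$, again by Example~\ref{example_computation_age_weighted_projective_stack} combined with Theorem~\ref{theorem_age_universal_torsor}. Combining the two, the exponent of $p$ at each finite place becomes
\[
-\min_i v_p(y_i)/w_i \;+\; \{\min_i v_p(y_i)/w_i\} \;=\; -\lfloor \min_i v_p(y_i)/w_i \rfloor \;=\; -v_p(\gcd_w(y)),
\]
using the proposition characterizing $\gcd_w(y)$ at the beginning of the section. The product of the finite contributions is therefore $\gcd_w(y)^{-1}$, and the formula $H_{[\sheafO(1)]_{\stack}}(P) = \max_i |y_i|^{1/w_i}/\gcd_w(y)$ follows.

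The main obstacle is pinning down the exact shape of the coarse height on $\stackP(\w)^{\coarse}$ in terms of the coordinates $(y_0,\ldots,y_n)$; once the toric/Salberger-style description is in hand, the only genuine content is the identity $x = \lfloor x\rfloor + \{x\}$ applied place by place, which exactly converts the irrational coarse exponents $\min_i v_p(y_i)/w_i$ into integer ones $v_p(\gcd_w(y))$, absorbing the stacky correction along the way. This place-by-place matching is precisely the mechanism by which the extended/stacky parametrization from Theorem~\ref{theorem_naive_parametrization} (specialized to Example~\ref{example_stacky_univ_torsor_weighted_projective_stack}) refines the naive one.
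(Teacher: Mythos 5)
Your proof takes essentially the same route as the paper's. For item (1), both rely on Example~\ref{example_computation_age_weighted_projective_stack} to get $\age_v(y,\sheafO(1)) = \min_i v(y_i)/w_i$ and on Theorem~\ref{theorem_age_universal_torsor} (together with Corollary~\ref{corollary_sectors_dual_picard_group_toric_stacks}, which the paper uses implicitly) to translate this into $\psi_v(P)=\stackY_t \Leftrightarrow \{\min_i v(y_i)/w_i\}=t$; you simply spell out the unwinding of $[\stackY_t]=(\sheafO_X,\delta_{\stackY_t})$ in Definition~\ref{definition_stacky_height}, which the paper leaves tacit. For item (2), both compute the toric coarse height $\prod_v \max_i|y_i|_v^{1/w_i}$ (so the $p$-adic exponent is $-t_v$), add the stacky contribution $\{t_v\}$, and finish with $-t_v+\{t_v\}=-\lfloor t_v\rfloor = -v_p(\gcd_w(y))$. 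Your writeup is more verbose but mathematically identical.

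One small wording slip worth flagging: in item (2) you describe the stacky correction as contributing $-\age_{\num}(\psi_v(P),\sheafO(1))\log p_v$. That is the value of $\varphi_v(P)\log q_v$ (with $\varphi_{\sheafO(1)}=-\age_{\num}$), not its contribution to the height: since Definition~\ref{definition_stacky_height} \emph{subtracts} $\varphi_v(P)\log q_v$, the actual contribution to $h(P)([\sheafO(1)]_{\stack})$ is $+\age_{\num}(\psi_v(P),\sheafO(1))\log p_v = +\{t_v\}\log p_v$. You implicitly correct this in the very next display (where the exponent is $-t_v+\{t_v\}$, as it should be), so the computation is fine, but the intermediate sentence contradicts the formula that follows it.
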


\begin{proof}
The first assertion is a consequence of the equivalence given in equation~\eqref{equation_computation_residue_map_weighted_projective_stack}. Now for any $v \in M_{\Q}^0$, if we write $t_v = \min\limits_{0 \leqslant i \leqslant n} \left( \frac{v(y_i)}{w_i} \right)$, we have $$\max\limits_{0 \leqslant i \leqslant n }|y_i|_v^{\frac{1}{w_i}} = p_v^{-t_v}$$and $\age_{\num}\left({\stackS_{\{t_v\}}},\sheafO(1)\right) = \{t_v\}$. Hence we have by Definition \ref{definition_stacky_height}:
    $$
H_{[\sheafO(1)]_{\stack}}(P)
=
\max\limits_{0 \leqslant i \leqslant n} |y_i|^{\frac{1}{w_i}}
\cdot
\prod\limits_{v \in M_{\Q}^0} p_v^{-t_v + \{t_v\}}
=
\frac{\max\limits_{0 \leqslant i \leqslant n} |y_i|^{\frac{1}{w_i}}}{\gcd_w(y)}.
$$
\end{proof}

\begin{remark}\label{remark_computation_weighted_stack_height}
    In particular, we see that if $y \in \stackT^{\stack}(\Z)$ and $P = q(y)$, then we have
$$
H_{[\sheafO(1)]_{\stack}}(P) = \max\limits_{0 \leqslant i \leqslant n }|y_i|^{\frac{1}{w_i}}.
$$This is the phenomenon that we will exploit in the sequel to define a lift of the stacky height.
\end{remark}

We now describe the relation between this naive parametrisation (see equation~\ref{equation_example_parametrisation_naive_pw}) and the extended parametrisation (see Definition~\ref{definition_orb_universal_torsor_gcd_cond}) of $\stackP(w)(\Q)$. Recall that we proved in Theorem~\ref{theorem_description_of_the_extended_parmetrization} that the map $q^{\ext} : \stackT^{\ext}(\Q) \rightarrow \stackP(w)(\Q)$ induces a bijection:
$$
\stackP(\w)(\Q) = \stackT^{\ext}(\Z) / T_{\NS,\orb}(\Z).
$$

\begin{theorem}\label{theorem_first_example_orbifold_universal_torsor}
 Let $(y',k) \in \Q^{n + 1}-\{0\} \times \left( \Q^{\times} \right)^{\bigcup\limits_{0 \leqslant i \leqslant n} \frac{1}{w_i} \Z \cap ]0,1[}$ be a tuple of integers such that:
\begin{enumerate}
    \item for any $t \in \bigcup\limits_{0 \leqslant i \leqslant n} \frac{1}{w_i} \Z \cap ]0,1[$, the $k_t$ are square-free non-negative integers and are pairwise coprime;
    \item $\gcd(y'_0,..,y'_n) = 1$;
    \item $\gcd\left( k_t, y'_j, j \in J_t \right) = 1 \text{ where } J_t = \left\{ j \mid \w_j t \in \Z \right\} \quad \forall t \in \bigcup\limits_{0 \leqslant i \leqslant n} \frac{1}{w_i} \Z \cap ]0,1[$.
\end{enumerate}
That is, $(y',k) \in \stackT^{\ext}(\Z)$.

Then the element $y \in \Q^{n + 1}-\{0\}$ defined, for every $j$, by
$$
y_j = \left(\prod\limits_{t \in \bigcup\limits_{0 \leqslant i \leqslant n} \frac{1}{w_i} \Z \cap ]0,1[} k_t^{m_t(j)} \right) \cdot y'_j,
\qquad \text{where } m_t(j) = \lceil \w_j t \rceil,
$$
satisfies
$$
y = (y_0,..,y_n) \in \stackT^{\stack}(\Z).
$$
    
\end{theorem}

\begin{proof}
This is a particular case of Theorem~\ref{theorem_lien_extended_parametrisation_naive_parametrisation}, using Remark~\ref{remark_choice_section_weighted_proj_stack}.
\end{proof}

Since the family
$$
\left\{ [\sheafO(1)]_{\stack} \right\}
\cup
\left\{ [\stackS_t] \mid t \in \bigcup\limits_{0 \leqslant i \leqslant n} \frac{1}{w_i}\Z \cap ]0,1[ \right\}
$$
is a basis of $\Pic_{\orb}(\stackP(\w))$ (see Proposition~\ref{prop_basis_of_orbifold_picard_group}), we can construct a lift of the stacky height. We refer the reader to Definitions~\ref{definition_lift_free_picard_stacky_height}, \ref{definition_naive_local_height_extended_universal_torsor} and \ref{definition_local_height_extended_universal_torsor} for the definition of $h_{\stackT,\infty}^{\ext} = h_{\stackT,\infty}^{\stack}$ in the general case.

For $(y',k) \in \stackT^{\ext}(\R)$, we define a lift of the stacky height
$$
h_{\stackT,\infty}^{\ext} : \stackT^{\ext}(\R) \longrightarrow \Pic_{\orb}(X)^{\vee}_{\R}
$$
by setting, for $t \in \bigcup\limits_{0 \leqslant i \leqslant n} \frac{1}{w_i}\Z \cap ]0,1[$,
$$
h_{\stackT,\infty}^{\ext}(y',k)\bigl([\stackS_t]\bigr)
=
\log \left| k_t \right|
$$
and
$$
h_{\stackT,\infty}^{\ext}(y',k)\bigl([\sheafO(1)]_{\stack}\bigr)
=
\log \left(
\max\limits_{0 \leqslant j \leqslant n}
\left|
\left(
\prod\limits_{t \in \bigcup\limits_{0 \leqslant i \leqslant n} \frac{1}{w_i}\Z \cap ]0,1[}
k_t^{m_t(j)}
\right)
y'_j
\right|^{\frac{1}{w_j}}
\right).
$$

Thus, by Theorem \ref{theorem_lift_height_extend_universal_torsor}, for every $y \in \stackT^{\ext}(\Z)$, if $P = q^{\ext}(y)$ we have
$$
h(P) = h_{\stackT,\infty}^{\ext}(y).
$$

\subsection{A naive parametrisation of rational points of toric stacks}

Throughout this appendix, the Picard group of the toric stack $X$, $\Pic(X)$, is assumed to be torsion-free. Let us fix a basis $\base = \{ L_1,\ldots,L_t \}$ of $\Pic(X)$. We shall consider a set-theoretic section $s$ of
$$
\Hom_{\grp}(\Pic(X),\Q) \longrightarrow \Hom_{\grp}(\Pic(X),\Q / \Z) \, .
$$We introduce the following notation.

\begin{Notations}
For $\stackS \in \twistsector$, we define a lift of $\age(\stackS,-)$, denoted by $\widetilde{\age}(\stackS,-)$, as the unique group homomorphism such that for every $1 \leqslant i \leqslant t$,
$$
\widetilde{\age}(\stackS,L_i)
=
s\!\left(\age(\stackS,-)\right)(L_i).
$$By convention, we take $\widetilde{\age}(0,-)$ to be the zero morphism. Observe that by definition for every $\stackS \in \sector$ and every $L \in \Pic(X)$,
$$
\age(\stackS,L) \equiv \widetilde{\age}(\stackS,L) \mod \Z .
$$
\end{Notations}

\begin{remark}\label{remark_choice_section_weighted_proj_stack}
The case of the weighted projective stack $\stackP(\w)$ presented in the previous paragraph can be interpreted as a particular case of the construction that follows if we choose $\base = \{ \sheafO(1) \}$ as a basis of $\Pic(\stackP(\w))$ and the set-theoritic section $s$ such that for every $\varphi \in \Hom_{\grp}(\Pic(X),\Q / \Z)$ we have
$$
s(\varphi)(\sheafO(1)) \in [0,1[ \, .
$$
\end{remark}

For $v \in M_{\Q}^0$, we define a naive parametrisation of $X(\Q_v)$ as follows.

\begin{definition}
For every $\stackS \in \sector$, we define
$$
\stackT^{\stack}_{\stackS}(\Z_v)
=
\left\{
y \in \stackT(\Q_v)
\;\middle|\;
\age_v(y,-) = \widetilde{\age}(\stackS,-)
\right\}
$$
and more generally
$$
\stackT^{\stack}(\Z_v)
=
\bigsqcup\limits_{\stackS \in \sector}
\stackT^{\stack}_{\stackS}(\Z_v) \, .
$$
\end{definition}

\begin{remark}
From Theorem~\ref{theorem_age_extended_universal_torsor} and Theorem~\ref{theorem_sectors_dual_picard_group_toric_stacks}, we deduce that if $y \in \stackT^{\stack}_{\stackS}(\Z_v)$ and $P = q(y)$, then $\psi_v(P) = \stackS$.
\end{remark}

In the global case, we set
\begin{equation}\label{equation_global_naive_parametrisation}
\stackT^{\stack}(\Z)
=
\bigcap\limits_{v \in M_{\Q}^0}
\stackT^{\stack}(\Z_v)
\cap
\stackT(\Q) \, .
\end{equation}

Associated with our set-theoretic section $s$ and our choice of basis $\base$, we define the following group homomorphism
\begin{equation}\label{equation_appendix_B_definition_lambda_map}
\begin{aligned}
\lambda :\;& \Pic(X) \longrightarrow \Pic_{\orb}(X) \\
&L \longmapsto (L,\widetilde{\age}(-,L)) .
\end{aligned}
\end{equation}

We then obtain the following description of $\Pic_{\orb}(X)$.

\begin{prop}\label{prop_basis_of_orbifold_picard_group}
The family
$$
\{ [\lambda(L_i)] \mid 1 \leqslant i \leqslant t \}
\cup
\{ [\stackS] \mid \stackS \in \twistsector \}
$$
is a basis of $\Pic_{\orb}(X)$.
\end{prop}

\begin{proof}
This follows from the fact that the map $\lambda$ defines a splitting of the exact sequence of Proposition~\ref{prop_exact_sequence_extended_picard_group}.
\end{proof}

We now consider the map
$$
f : \Z^{\Sigma(1)} \longrightarrow \Z^{\Sigma(1) \cup \twistsector}
$$
defined, for every $\rho \in \Sigma(1)$, by
$$
f(e_{\rho})
=
e_{\rho}
+
\sum\limits_{\stackS \in \twistsector}
m_{\stackS}(\rho) e_{\stackS},
$$
where $m_{\stackS}(\rho)$ denotes the integer
$$
\widetilde{\age}(\stackS,[D_{\rho}])
+
\age_{\num}(\stackS,-[D_{\rho}]) \, .
$$

By construction, we obtain the following lemma.

\begin{lemma}\label{lemma_appendix_B_commutative_diagram}
The following diagram is commutative:
\begin{center}
\begin{tikzcd}
\Z^{\Sigma(1)} \arrow[d] \arrow[r, "f"] & \Z^{\Sigma(1) \cup \twistsector} \arrow[d] \\
\Pic(X) \arrow[r, "\lambda"]            & \Pic_{\orb}(X)
\end{tikzcd}
\end{center}
where the vertical maps associate to an element its class as a line bundle (respectively orbifold line bundle).
\end{lemma}

We define the map
$$
\Phi : \stackT \times \G_m^{\twistsector} \longrightarrow \stackT
$$
to be the morphism induced by the dual of $f$, in the sense that for every $\rho \in \Sigma(1)$, the coordinate of $\Phi(y)$ associated with $\rho$ is
\begin{equation}\label{equation_definition_retraction_appendix_B}
\Phi(y)_{\rho}
=
y_{\rho}
\prod\limits_{\stackS \in \twistsector}
y_{\stackS}^{m_{\stackS}(\rho)} \, .
\end{equation}

By Lemma~\ref{lemma_appendix_B_commutative_diagram}, the morphism $\Phi$ is equivariant with respect to the morphism
$$
\varphi : T_{\NS,\orb} \longrightarrow T_{\NS}
$$
defined as the dual of $\lambda$. We obtain the following result.

\begin{prop}\label{prop_appendix_B_retraction_torsion_free_case}
The quotient morphism induced by $\Phi$,
$$
[\stackT \times \G_m^{\twistsector} / T_{\NS,\orb}]
\xrightarrow{\overline \Phi}
[\stackT / T_{\NS}],
$$
is $2$-isomorphic to the identity.
\end{prop}

\begin{proof}
The map
$$
\stackT \longrightarrow \stackT \times \G_m^{\twistsector}
$$
defined by $y \mapsto (y,1)$ defines a section of $\Phi$. In particular, we obtain the following commutative diagram:
\begin{center}
\begin{tikzcd}
\stackT \arrow[r] \arrow[rd, "\mathrm{id}_{\stackT}"'] & \stackT \times \G_m^{\twistsector} \arrow[d, "\Phi"] \\
                                                      & \stackT \, .
\end{tikzcd}
\end{center}

Passing to the quotient, we obtain the following $2$-commutative diagram:
\begin{center}
\begin{tikzcd}
X \arrow[r] \arrow[rd, "\mathrm{id}_{X}"'] & X \arrow[d, "\overline{\Phi}"] \\
                                                 & X
\end{tikzcd}
\end{center}where the horizontal map corresponds to the morphism of Theorem~\ref{theorem_extended_universal_torsor}, which is $2$-isomorphic to $\mathrm{id}_{X}$. This concludes the proof.
\end{proof}

We now show that the map $\Phi$ allows us to relate the extended parametrisation defined in Definition~\ref{definition_orb_universal_torsor_gcd_cond} with the naive parametrisation (see Equation~\eqref{equation_global_naive_parametrisation}):

\begin{theorem}\label{theorem_lien_extended_parametrisation_naive_parametrisation}
For $y \in \stackT^{\ext}(\Z)$, we have
$$
\Phi(y) \in \stackT^{\stack}(\Z).
$$
\end{theorem}

\begin{proof}
It suffices to show that for $v \in M_{\Q}^0$, if $y \in \stackT^{\ext}(\Z_v)$ then $\Phi(y) \in \stackT^{\stack}(\Z_v)$. By Proposition~\ref{prop_functoriality_lif_age}, for every $L \in \Pic(X)$ we have
$$
\age_v(\Phi(y),L)
=
\age_v(y,\lambda(L))
=
\widetilde{\age}(\psi_v(P),L),
$$
where the last equality follows from the description of $\stackT^{\ext}(\Z_v)$ given in Corollary~\ref{corollary_computation_extended_parametrization} and Equation \eqref{equation_appendix_B_definition_lambda_map}.
\end{proof}

\subsection{A naive local height over the extended universal torsor}

In this paragraph, we define for $v \in M_{\Q}$ a local height
\[
    h_{\stackT,v}^{\stack} : \stackT^{\ext}(\Q_v) \longrightarrow \Pic_{\orb}(X)^{\vee}_{\R}
\]
which will allow us to lift the coarse multi-height $h_{\coarse}$ (see Definition~\ref{definition_coarse_multi_height_map}) to toric stacks. Recall that the toric variety $X^{\coarse}$ is always equipped with its natural system of heights (see \cite[Definition~9.2]{salberger_torsor} and \cite[Proposition~2.1.2]{Batyrev1990}).

To define a lift of the coarse height to the extended universal torsor, we use the fact that, by Proposition~\ref{prop_basis_of_orbifold_picard_group}, we have the direct sum decomposition 
$$
\Pic_{\orb}(X)_{\R}
=
\lambda\left(\Pic(X)_{\R}\right)
\oplus
\bigoplus_{\stackS \in \twistsector} \R [\stackS].
$$
We can therefore introduce the following definition.

\begin{definition}\label{definition_lift_free_picard_stacky_height}
Let $v \in M_{\Q}$. We define a local height on the extended universal torsor
\[
h_{\stackT,v}^{\stack} : \stackT^{\ext}(\Q_v) \longrightarrow \Pic_{\orb}(X)^{\vee}_{\R}
\]
by requiring that, for any $y \in \stackT^{\ext}(\Q_v)$, the following conditions hold:
\begin{itemize}
\item
for any $\stackS \in \twistsector$,
\[
h_{\stackT,v}^{\stack}(y)\bigl([\stackS]\bigr)
=
\log_{p_v} \left| y_{\stackS} \right|_v;
\]
\item
for any $L \in \Pic(X)_{\R}$,
\[
h_{\stackT,v}^{\stack}(y)\bigl(\lambda(L)\bigr)
=
h_{\stackT,v}\bigl(\Phi(y)\bigr)(L)
\]where $h_{\stackT,v}$ is the local height from Definition~\ref{definition_naive_local_degree}, with the choice \\ $y_0 = 1 \in \stackT(\Z)$.
\end{itemize}
\end{definition}

We will need the following invariance property under the action of $T_{\NS,\orb}(\Q_v)$.

\begin{prop}\label{proposition_appendix_B_property_equivariant_extended_height}
    For any $t \in T_{\NS,\orb}(\Q_v)$ and any $y \in \stackT^{\ext}(\Q_v)$, $$h_{\stackT,v}^{\stack}(t.y) = h_{\stackT,v}^{\stack}\left(y\right) + \log_{T_{\NS,\orb},v}(t)$$
\end{prop}

\begin{proof}
    By the description of the action of $T_{\NS,\orb}$ on the extended universal torsor (see Definition \ref{definition_extended_universal_torsor}), we have for any $\stackS \in \twistsector$,
    \begin{align*}
        h_{\stackT,v}^{\stack}(t.y)([\stackS]) &=  \log_{p_v} | y{}_{\stackS} |_v + \log_{p_v} |[\stackS](t)^{-1}|_v \\
        &= h_{\stackT,v}^{\stack}\left(y\right)([\stackS]) + \log_{T_{\NS,\orb},v}(t)([\stackS]).
    \end{align*}
    Moreover, for any $L \in \Pic(X)_{\R}$,
    \begin{align*}
        h_{\stackT,v}^{\stack}(t.y)(\lambda(L)) &= h_{\stackT,v}(\Phi(t.y))(L) \\
        &= h_{\stackT,v}\left(\varphi(t).\Phi(y)\right)(L)  \\
        &= h_{\stackT,v}\left(\Phi(y)\right)(L) + \log_{T_{\NS},v}(\varphi(t))(L) \\
        &= h_{\stackT,v}^{\stack}\left(y\right)(\lambda(L)) + \log_{T_{\NS,\orb},v}(t)(\lambda(L)) \, .
    \end{align*}
\end{proof}
We now justify the notation $h_{\stackT,v}^{\stack}$ by showing that this construction does not depend on the choice of the section $s$ or of the basis $\base$.

\begin{prop}\label{proposition_extended_height_do_not_depend_on_the_base}
    The definition of $h_{\stackT,v}^{\stack}$ does not depend on the choice of $s$ or $\base$.
\end{prop}

\begin{proof}
Let $\base_1$ and $\base_2$ be two bases of $\Pic(X)$, and let $s_1$ and $s_2$ be two choices of set-theoretic sections. We denote by $g_i$ the map $h_{\stackT,v}^{\stack}$ obtained from the choice of the basis $\base_i$ and the section $s_i$, and we write
$$
\widetilde{\age}_i(-,-)
$$
for the corresponding lift of the age pairing. This choices also give retractions
$$
\Phi_i : \stackT^{\ext} \longrightarrow \stackT
$$
as constructed in Equation~\eqref{equation_definition_retraction_appendix_B}. Even if this means replacing $y$ by $t \cdot y$, with $t \in T_{\NS,\orb}(\Z_v)$ for $v \in M_{\Q}^0$ or $t \in T_{\NS,\orb}(\Z)$ for $v = \infty$, Proposition~\ref{prop_basis_of_orbifold_picard_group} allows us to assume the following.

For $v \in M_{\Q}^0$, we can assume that for any $\stackS \in \twistsector$,
\[
    (t \cdot y)_{\stackS} = p_v^{n_\stackS}
\]for some $n_\stackS \in \Z$, while for $v = \infty$, we can assume that
\[
    (t \cdot y)_{\stackS} = |y_{\stackS}| \geqslant 0.
\]Moreover, this normalization does not change $g_i(y)$ by Proposition~\ref{proposition_appendix_B_property_equivariant_extended_height}.

We want to show that
$$
g_1(y) = g_2(y).
$$
For any $\stackS \in \twistsector$, we already have
$$
g_1(y)([\stackS]) = g_2(y)([\stackS]).
$$

Let
$$
\lambda_i : \Pic(X) \longrightarrow \Pic_{\orb}(X)
$$
be the section defined by the choice of $s_i$ and $\base_i$. For $L \in \Pic(X)$, we have
$$
\lambda_1(L)
=
\lambda_2(L)
+
\sum_{\stackS \in \twistsector}
\left(
\widetilde{\age}_1(\stackS,L)
-
\widetilde{\age}_2(\stackS,L)
\right)
[\stackS].
$$

Hence we obtain
\begin{align*}
g_2(y)(\lambda_1(L))
&=
g_2(y)(\lambda_2(L))
+
\sum_{\stackS \in \twistsector}
\left(
\widetilde{\age}_1(\stackS,L)
-
\widetilde{\age}_2(\stackS,L)
\right)
\log |y_{\stackS}|_v \\
&= h_{\stackT,v}(\Phi_2(y))(L) + \log_{T_{\NS},v}(t) \\
&=
h_{\stackT,v}(t \cdot \Phi_2(y))(L),
\end{align*}
where we choose $t \in T_{\NS}(\Q_v)$ such that
$$
\log_{T_{\NS},v}(t)
=
\alpha
=
\sum_{\stackS \in \twistsector}
\left(
\widetilde{\age}_2(\stackS,-)
-
\widetilde{\age}_1(\stackS,-)
\right)
\log_{p_v} |y_{\stackS}|_v.
$$

Now, using equation~\eqref{equation_definition_retraction_appendix_B} for $\Phi_1$ and $\Phi_2$, we have for every $\rho \in \Sigma(1)$
$$
\Phi_1(y)_{\rho}
=
\Phi_2(y)_{\rho}
\prod_{\stackS \in \twistsector}
y_{\stackS}^{\widetilde{\age}_1(\stackS,[D_{\rho}])-\widetilde{\age}_2(\stackS,[D_{\rho}])}.
$$

Using the exponential map, we can therefore choose such $t = p_v^{-\alpha} \in T_{\NS}(\Q_v)$ which satisfies
$$
t \cdot \Phi_2(y) = \Phi_1(y)
$$
and
$$
\log_{T_{\NS},v}(t) = \alpha .
$$

It follows that
$$
g_2(y)(\lambda_1(L))
=
h_{\stackT,v}(\Phi_1(y))(L)
=
g_1(y)(\lambda_1(L)),
$$
which proves the claim.
\end{proof}

We have the following adelic formula relating these naive local heights $h_{\stackT,v}^{\stack}$ and the coarse multi-height map $h_{\coarse}$:

\begin{prop}\label{proposition_appendix_b_coarse_height_naive_local_height}
    Let $y \in \stackT^{\ext}(\Q)$ and let $P = q^{\ext}(y) \in X(\Q)$ be its image. Then we have for any $(L,\varphi) \in \Pic_{\orb}(X)_{\R}$
    \[
        h_{\coarse}(P)(L) = \sum\limits_{v \in M_{\Q}} \log(p_v) \cdot h_{\stackT,v}^{\stack}(y)(L,\varphi) \, .
    \]
\end{prop}

\begin{proof}
For $\stackS \in \twistsector$, the equality $$h_{\coarse}(P)(0) = \sum\limits_{v \in M_{\Q}} \log(p_v) \cdot h_{\stackT,v}^{\stack}(y)([\stackS]) $$is a consequence of the product formula. For $L \in \Pic(X)_{\R}$, by Proposition~\ref{prop_appendix_B_retraction_torsion_free_case} and Equation~\eqref{equation_local_height_global_height_coarse_space}, we have the equality
\begin{align*}
h_{\coarse}(P)(L) &= \sum\limits_{v \in M_{\Q}} \log(p_v) \cdot h_{\stackT,v}(\Phi(y))(L) \\
&= \sum\limits_{v \in M_{\Q}} \log(p_v) \cdot  h_{\stackT,v}^{\stack}(y)(\lambda(L)) 
\end{align*}from which the proposition follows since $$\Pic_{\orb}(X)_{\R}
=
\lambda\left(\Pic(X)_{\R}\right)
\oplus
\bigoplus_{\stackS \in \twistsector} \R [\stackS].$$
\end{proof}

We conclude this paragraph with the following lemma, which compute the naive local height $h_{\stackT,v}^{\stack}$ for $v \in M_{\Q}^0$.

\begin{lemma}\label{lemma_appendix_b_computation_naive_height_finite_place}
Let  $v \in M_{\Q}^0$. For any $y \in \stackT^{\ext}(\Q_v)$, we have:
$$h_{\stackT,v}^{\stack}(y) = - \age_v(y,-) $$where $\age_v$ is the lift of the age associated to the extended universal torsor (see Definitions \ref{def_age_univ_torsor} and \ref{definition_extended_universal_torsor}). 
\end{lemma}

\begin{proof}
First, by Propositions~\ref{prop_age_and_logarithm} and \ref{proposition_appendix_B_property_equivariant_extended_height}, the map
\[
    y \mapsto h_{\stackT,v}^{\stack}(y) + \age_v(y,-)
\]is $T_{\NS,\orb}(\Q_v)$-invariant. Hence, by Proposition~\ref{prop_surjectivite_local_extended_quotient_description}, it suffices to prove the equality for $y \in \stackT^{\ext}(\Z_v)$.

For $\stackS \in \twistsector$, we have
\[
    h_{\stackT,v}^{\stack}(y)([\stackS]) = \log_{p_v} |y_{\stackS}|_v = - \age_v(y,[\stackS])
\]by Definition~\ref{definition_lift_free_picard_stacky_height} and Proposition~\ref{proposition_computation_residue_map_extended_parametrisation}. Next, for $L \in \Pic(X)$, by Theorem~\ref{theorem_local_degree_local_age} and since $\Phi(y) \in \stackT^{\stack}(\Z)$ by Theorem~\ref{theorem_lien_extended_parametrisation_naive_parametrisation}, we have for any $v \in M_{\Q}^{0}$,
\begin{align*}
h_{\stackT,v}^{\stack}(y)(\lambda(L))&= h_{\stackT,v}\bigl(\Phi(y)\bigr)(L) \\
&= - \age_{v,T_{\NS}}\bigl(\Phi(y),L\bigr) \\
&= - \widetilde{\age}\bigl(\psi_v(P),L\bigr).   
\end{align*}Moreover by Equation \eqref{equation_appendix_B_definition_lambda_map} and Corollary \ref{corollary_computation_extended_parametrization}, we get $\age_v(y,\lambda(L)) = \widetilde{\age}\bigl(\psi_v(P),L\bigr)$ so we can conclude the proof.

\end{proof}

\subsection{Boundedness result for this local height}\label{section_appendix_B_boundedness}

We shall also need a boundedness result, stating that if $\mathrm{D} \subset
\Pic_{\orb}(X)^{\vee}_{\R}$ is a compact subset, then
$h^{\stack}_{\stackT,\infty}{}^{-1}(\mathrm{D})$ is bounded. To this end, we first need to study
the real points of our stack. 

\subsubsection{Compactness of the real points of our stack}

In this paragraph only, $X$ denotes a general toric stack, the assumption that $\Pic(X)$ is torsion-free is not required. We equip $X(\R)$ with its real analytic topology (see
\cite[Definition~7.5]{de_gaay_definition_real_topology}).

\begin{lemma}\label{lemma_compacity_real_points_stacks}
The space $X(\R)$ is compact for the real analytic topology.
\end{lemma}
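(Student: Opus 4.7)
The strategy is to leverage the compactness of $X^{\coarse}(\R)$. By the nice stack hypothesis, $X^{\coarse}$ is a projective $\Q$-scheme, so $X^{\coarse}(\R)$ is a compact real analytic space. Since the coarse moduli map $p : X \to X^{\coarse}$ is a proper morphism of DM stacks (the coarse map of a proper stack is proper, and $X$ is proper over $\Q$ because its coarse space is projective), it induces a continuous map of real analytic spaces $p_{\R} : X(\R) \to X^{\coarse}(\R)$; I would show $p_{\R}$ has compact fibres and is a closed map, which combined with compactness of the base yields the conclusion.

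To analyze the fibres, I would work étale-locally at each $\bar{x} \in X^{\coarse}(\R)$ via an étale presentation $[U/G] \to X$ around $\bar{x}$, where $G$ is the (finite) automorphism group of a geometric lift of $\bar{x}$. The fibre $p_{\R}^{-1}(\bar{x})$ is then a finite disjoint union of quotients of finite sets of $\R$-points of twists of $U$ by the corresponding twisted groups $G^{\alpha}$ over $\alpha \in H^1(\R, G)$; in particular, each fibre is finite, and hence compact. For toric stacks this is compatible with the description of $X(\R)$ as a disjoint union $\bigsqcup_{\alpha} \stackT_{\Sigma}^{\alpha}(\R) / T_{\NS}^{\alpha}(\R)$ over classes $\alpha \in H^1(\R, T_{\NS})$, which is a finite disjoint union since $T_{\NS}$ is a group scheme of multiplicative type and $H^1(\R, -)$ of such a group scheme is a finite $2$-elementary abelian group.

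Closedness of $p_{\R}$ will follow from properness of $p$ together with the functoriality of the real analytic topology of \cite{de_gaay_definition_real_topology}, applied to the proper morphism $p$. The main obstacle I anticipate is making this ``properness transfers to closedness on $\R$-points'' principle rigorous within the chosen framework for the real analytic topology on a DM stack. As a fallback, I would argue directly using Theorem \ref{theorem_description_of_the_extended_parmetrization}: one describes $X(\R)$ as a quotient of a closed subset of $\stackT^{\ext}(\R) = \stackT(\R) \times (\R^{\times})^{\twistsector}$ by $T_{\NS,\orb}(\R)$, and then chooses, over each compact subset of $X^{\coarse}(\R)$, a compact fundamental domain for the $T_{\NS,\orb}(\R)$-action using a polar-type decomposition of the torus; compactness of $X(\R)$ is then immediate by covering $X^{\coarse}(\R)$ with finitely many such charts.
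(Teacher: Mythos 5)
Your main route is the same as the paper's: reduce to the compactness of $X^{\coarse}(\R)$ and show that the induced map on real points $p_{\R}: X(\R) \to X^{\coarse}(\R)$ is proper, i.e.\ has compact fibres and is closed. The paper cites \cite[proposition 2.1]{ambrosi2025topologyrealalgebraicstacks} for quasi-compactness of fibres and then, precisely where you flag the obstacle, establishes closedness \emph{locally} on the explicit toric cover by quotient stacks $U_\sigma = [\affine^d/N(\sigma)]$, using \cite[lemma 4.8]{ambrosi2025topologyrealalgebraicstacks} for the closedness of $[\affine^d/G](\R) \to (\affine^d/G)(\R)$. So the ``properness on $\R$-points'' step is not derived abstractly from properness of $p$; it is an honest local computation on each chart, glued over the finite cover $\{U_\sigma\}_{\sigma\in\Sigma_{\max}}$ — exactly the kind of explicit input you suspected was needed.

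Your fibre analysis (finite union of twists indexed by $H^1(\R,G)$, hence finite) is correct and in line with the cited result, though the paper does not need to spell this out since it invokes the general proposition.

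The fallback via Theorem \ref{theorem_description_of_the_extended_parmetrization} and a compact fundamental domain for the $T_{\NS,\orb}(\R)$-action is risky: in the paper, the cocompactness statements you would need (Proposition \ref{prop_upper_bound_non_smooth_case} and Theorem \ref{theorem_compacity_pull-back-compact_ext_height}) are proved \emph{using} the present lemma, so that route would be circular unless you redo them independently. Stick with the main approach and supply the local closedness input from \cite[lemma 4.8]{ambrosi2025topologyrealalgebraicstacks} on the toric charts.
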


\begin{proof}
By \cite[Proposition~2.1]{ambrosi2025topologyrealalgebraicstacks}, the coarse moduli map
on real points
\[
p : X(\R) \longrightarrow X^{\coarse}(\R)
\]
satisfies that $p^{-1}(P)$ is quasi-compact for any
$P \in X^{\coarse}(\R)$, and moreover $p$ is closed, since this holds locally on a
covering by open subsets. Indeed, for any $\sigma \in \Sigma_{\max}$, the map
\[
[\affine^{d}/N(\sigma)](\R)
\longrightarrow
\affine^{d}/N(\sigma)(\R)
\]
is closed by \cite[Lemma~4.8]{ambrosi2025topologyrealalgebraicstacks}. Hence $p$ is
proper. Since $X^{\coarse}(\R)$ is compact for the real analytic topology, it follows
that $X(\R)$ is compact.
\end{proof}

\subsubsection{The boundedness result}

We again assume that $\Pic(X)$ is torsion-free. Let $\mathrm{C}$ be a compact subset of
$\Pic(X)^{\vee}_{\R}$. We then have the following proposition.

\begin{prop}\label{prop_appendix_B_upper_bound_non_smooth_case}
The set $h_{\stackT,\infty}^{-1}(\mathrm{C})$ is compact.
\end{prop}

\begin{proof}
The arguments used in the proofs of
\cite[Proposition 3.20, Corollary~3.21, Lemma~4.3]{bongiorno2024multiheightanalysisrationalpoints}
apply verbatim in the present setting, using the previous lemma.
\end{proof}

We can now state the boundedness result announced above:

\begin{theorem}\label{theorem_appendix_B_compacity_pull-back-compact_ext_height}
    Let $\mathrm{D} \subset \Pic_{\orb}(X)^{\vee}_{\R}$ be a compact set. Then $(h^{\stack}_{\stackT,\infty})^{-1}(\mathrm{D})$ is bounded.
\end{theorem}

\begin{proof}
Let $K > 0$ be a constant such that $\mathrm{D}$ is contained in the ball of radius $K$
centered at the origin for the supremum norm. Then, for any
$\stackS \in \twistsector$, we have
\[
\bigl| h^{\stack}_{\stackT,\infty}(y)\bigl([\stackS]\bigr) \bigr| \leqslant K,
\]
and therefore
\[
|y_{\stackS}| \leqslant e^{K}.
\]
Moreover, by Proposition~\ref{prop_appendix_B_upper_bound_non_smooth_case}, the coordinates
$\Phi(y)_{\rho}$ for $\rho \in \Sigma(1)$ are uniformly bounded. Using the explicit
description of the map $\Phi$ (see Equation~\eqref{equation_definition_retraction_appendix_B}), together with
the fact that for any $\stackS \in \twistsector$ we have $$1 \leqslant |y_{\stackS}| \leqslant e^K$$
we deduce that the coordinates $y_{\rho}$ for $\rho \in \Sigma(1)$ are also uniformly
bounded. It follows that $(h^{\stack}_{\stackT,\infty})^{-1}(\mathrm{D})$ is bounded.
\end{proof}

\section{Lift of the stacky height to the extended universal torsor}

We now return to the general case where $X$ is a toric stack without any restrictive assumption on $\Pic(X)$. Recall that its coarse moduli space $X^{\coarse}$ is still endowed with its natural system of heights, while the choice of an adelic stacky data (see Definition~\ref{definition_adelic_stacky_data}) is given, for any $(L,\varphi)$, by the collection $\{\varphi_v\}_{v \in M_{\Q}^0}$ defined by
\[
    \varphi_v(P) = \varphi\bigl(\psi_v(P)\bigr).
\]In the previous Section~\ref{appendix_B}, we explained how to construct a lift of the coarse height to the extended universal torsor in the case of a toric stack with torsion-free Picard group. We will use the construction developed in that section (see Definition~\ref{definition_lift_free_picard_stacky_height}) to define a lift of the stacky height to the extended universal torsor.

\subsection{Lifting the morphism from the toric stack to its canonical stack to the extended universal torsor}

Let $v \in M_{\Q}$. To define part of the extended local height, we need to use the canonical stack
of $X$, denoted by $X^{\can}$. By Remark~\ref{remark_picard_group_canonical_stack}, we
know that the Picard group of $X^{\can}$ is torsion free. We can therefore apply the results obtained in Section~\ref{appendix_B}. We shall use the naive local
height
\[
h^{\stack,\can}_{\stackT,v} : \stackT^{\ext,\can}(\R) \longrightarrow \Pic_{\orb}(X^{\can})^{\vee}_{\R}
\]
defined in Definition~\ref{definition_lift_free_picard_stacky_height} in order to define part of
the extended local height
\[
h_{\stackT,v}^{\ext} : \stackT^{\ext}(\R) \longrightarrow \Pic_{\orb}(X)^{\vee}_{\R}.
\]

The aim of this paragraph is to explain how the natural morphism
\[
X \xrightarrow{p_{\can}} X^{\can}
\]
lifts to a morphism between the extended universal torsor of $X$ and $X^{\can}$
\[
\stackT \times \G_m^{\twistsector}
\xrightarrow{r}
\stackT \times \G_m^{\pi_0^*(\I_{\mu} X^{\can})},
\]
which is equivariant with respect to the action of $T_{\NS,\orb}$ on
$\stackT \times \G_m^{\twistsector}$ and the action of $T^{\can}_{\NS,\orb}$, the orbifold Néron--Severi
torus of $X^{\can}$, on $\stackT \times \G_m^{\pi_0^*(\I_{\mu} X^{\can})}$.

We have seen in Corollaries~\ref{cor_description_map_stack_to_rigidification} and
\ref{cor_description_map_stack_to_canonical} that the morphism
from $X$ to its canonical stack $X^{\can}$ lifts to the universal torsor via a map
\[
\stackT \longrightarrow \stackT
\]
given by
\[
y \longmapsto \bigl( y_{\rho}^{a_{\rho}} \bigr)_{\rho \in \Sigma(1)},
\]
where the integers $a_{\rho} \in \N^{*}$ are defined by the condition that, for every
$\rho \in \Sigma(1)$,
\[
\overline{\beta(e_{\rho})} = a_{\rho} \cdot \beta^{\can}(e_{\rho}) = a_{\rho} u_{\rho}
\]where $u_{\rho}$ is the minimal vector of the edge $\rho \in \Sigma(1)$ (see Notations \ref{notation_stacky_fan_beta}). We also denote by $p_{\can}$ the induced map
$$
\sector \longrightarrow \pi_0(\I_{\mu} X^{\can}) \, .
$$We will need the following proposition.

\begin{prop}\label{prop_computation_box_element_canonicalisation}
For every $\stackS \in \twistsector$, let $b_{\stackS} \in \Boxe(\Sigma)$ denote the corresponding element. Then we have
$$
\overline{b}_{\stackS}
=
\sum_{\rho \in \sigma_\stackS(1)}
\left\lfloor
a_{\rho}\,\age_{\num}(\stackS,-[D_{\rho}])
\right\rfloor
u_{\rho}
+
b_{p_{\can}(\stackS)}
$$
in $N / N_{\tor}$, where $b_{p_{\can}(\stackS)}$ denotes the element of $\Boxe^{\can}(\Sigma)$ corresponding to $$p_{\can}(\stackS) \in \pi_0(\I_{\mu} X^{\can}).$$
\end{prop}

We denote by $D_{\rho}^{\can}$ the boundary divisor of $X^{\can}$ associated with $\rho \in \Sigma(1)$. Recall that we have
$$
p_{\can}^*[D^{\can}_{\rho}] = a_{\rho}[D_{\rho}] .
$$

\begin{proof}
We need to show that for every $\rho \in \Sigma(1)$,
$$
\{a_{\rho}\,\age_{\num}(\stackS,-[D_{\rho}]) \}
=
\age_{\num}(p_{\can}(\stackS),-[D^{\can}_{\rho}]) \, .
$$
For this, it suffices to prove the equality in $\Q/\Z$. We compute
\begin{align*}
a_{\rho}\,\age(\stackS,-[D_{\rho}])
&=
\age(\stackS,-a_{\rho}[D_{\rho}]) \\
&=
\age(\stackS,p_{\can}^*(-[D^{\can}_{\rho}])) \\
&=
\age(p_{\can}(\stackS),-[D^{\can}_{\rho}]),
\end{align*}
where the last equality follows from the functoriality property of the age pairing (see Proposition~\ref{proposition_age_pairing_morphism_functoriality}).
\end{proof}

We define the morphism
$$
f : \Z^{\Sigma(1) \cup \twistsector}
\longrightarrow
\Z^{\Sigma(1) \cup \pi_0^*(\I_{\mu} X^{\can})}
$$
by setting, for $\rho \in \Sigma(1)$,
\[
f(e_{\rho}) = a_{\rho}\, e_{\rho},
\]
and for $\stackS \in \twistsector$,
\[
f(e_{\stackS}) =
\begin{cases}
\displaystyle
\sum_{\rho \in \sigma_\stackS(1)}
a_{\rho}\,
\age_{\num}\!\left(\stackS,-[D_{\rho}]\right)\, e_{\rho},
& \text{if } p_{\can}(\stackS)=0, \\[1.6em]
\displaystyle
\sum_{\rho \in \sigma_\stackS(1)}
\left\lfloor
a_{\rho}\,
\age_{\num}\!\left(\stackS,-[D_{\rho}]\right)
\right\rfloor e_{\rho}
+
e_{p_{\can}(\stackS)},
& \text{if } p_{\can}(\stackS)\in \pi_0^*(\I_{\mu} X^{\can}).
\end{cases}
\]
By Proposition~\ref{prop_computation_box_element_canonicalisation}, we obtain the following commutative diagram:
\begin{equation}\label{equation_diagram_lift_canonical_map_extended_torsor}
\begin{tikzcd}
\Z^{\Sigma(1) \cup \twistsector} \arrow[r, "\beta^{\ext}"] \arrow[d, "f"']      & N \arrow[r] \arrow[d, two heads] & \coneform(\beta^{\ext}) \arrow[d] \\
\Z^{\Sigma(1) \cup \pi_0^*(\I_{\mu} X^{\can})} \arrow[r, "\beta^{\ext}_{\can}"] & N / N_{\tor} \arrow[r]           & \coneform(\beta^{\ext}_{\can}) \, .   
\end{tikzcd}
\end{equation}

The map $f$ defines a morphism
\[
\stackT \times \G_m^{\twistsector}
\xrightarrow{r}
\stackT \times \G_m^{\pi_0^*(\I_{\mu} X^{\can})},
\]equivariant for the morphism of tori
\[
T_{\NS,\orb} \xrightarrow{\phi} T^{\can}_{\NS,\orb},
\]induced by the previous diagram, such that the coordinates of $r(y)$ are given by
\begin{equation}\label{equation_description_r}
\begin{cases}
\displaystyle
r(y)_{\rho}
=
y_{\rho}^{a_{\rho}}
\prod_{\stackS \in \twistsector}
y_{\stackS}^{\left\lfloor
a_{\rho}\,\age_{\num}\!\left(\stackS,-[D_{\rho}]\right)
\right\rfloor},
& \text{if } \rho \in \Sigma(1), \\[1.6em]

\displaystyle
r(y)_{\stackZ}
=
\prod_{p_{\can}(\stackS)=\stackZ}
y_{\stackS},
& \text{if } \stackZ \in \pi_0^*(\I_{\mu} X^{\can}).
\end{cases}
\end{equation}

Let $\lambda_{\phi}$ denotes the morphism of character groups induced by $\phi$, that is: \[
\Pic_{\orb}(X^{\can}) = \mathbf{R^1Hom}(\coneform(\beta^{\ext}_{\can}) ,\Z)
\xrightarrow{\lambda_{\phi}}
\Pic_{\orb}(X) = \mathbf{R^1Hom}(\coneform(\beta^{\ext}) ,\Z) \, .
\]We have the following lemma.

\begin{lemma}
The map $\lambda_{\phi}$ is given by the pullback on orbifold Picard groups induced by $p_{\can}$, that is to say
$$
p_{\can}^* : \Pic_{\orb}(X^{\can}) \longrightarrow \Pic_{\orb}(X),
$$
such that for every $(L,\varphi) \in \Pic_{\orb}(X^{\can})$,
$$
p_{\can}^*(L,\varphi) = \left(p_{\can}^*L,\varphi \circ p_{\can}\right).
$$
\end{lemma}

\begin{proof}
Since $\beta^{\ext}$ and $\beta^{\ext}_{\can}$ are surjective, the morphism $\lambda_{\phi}$ is simply the horizontal arrow in the following commutative diagram induced by $f$:
\begin{center}
\begin{tikzcd}
\Z^{\Sigma(1) \cup \pi_0^*(\I_{\mu} X^{\can})} \arrow[r, "f^{\vee}"] \arrow[d] & \Z^{\Sigma(1) \cup \twistsector} \arrow[d] \\
\Pic_{\orb}(X^{\can}) = \ker(\beta^{\ext}_{\can})^{\vee} \arrow[r, "\lambda_{\phi}"] & \Pic_{\orb}(X) = \ker(\beta^{\ext})^{\vee}
\end{tikzcd}
\end{center}
where the vertical arrows are the usual projections.

Moreover, since $X^{\can}$ is a toric orbifold, the canonical map
$$
\Z^{\Sigma(1) \cup \pi_0^*(\I_{\mu} X^{\can})}
\longrightarrow
\Pic_{\orb}(X^{\can})
$$
is surjective. It therefore suffices to show that $\lambda_{\phi}$ and $p_{\can}^*$ coincide on
$$
\{ [D_{\rho}^{\can}]_0 \mid \rho \in \Sigma(1) \}
\cup
\{ [\stackZ] \mid \stackZ \in \pi_0^*(\I_{\mu} X^{\can}) \}
$$where $[D_{\rho}^{\can}]_0$ is the orbifold line bundle defined in Notations \ref{notation_orbifold_T_invariant_divisor}.

Now $f^{\vee}$ is computed as follows:
\[
f^{\vee}(e_{\rho}) =
\begin{cases}
\displaystyle
a_{\rho} e_{\rho}
+
\sum_{\stackS \in \twistsector}
\left\lfloor
a_{\rho}\,\age_{\num}\!\left(\stackS,-[D_{\rho}]\right)
\right\rfloor e_{\stackS},
& \text{if } \rho \in \Sigma(1), \\[1.6em]

\displaystyle
\sum_{p_{\can}(\stackS)=\stackZ}
e_{\stackS},
& \text{if } \rho = \stackZ \in \pi_0^*(\I_{\mu} X^{\can}).
\end{cases}
\]

Hence, for every $\rho \in \Sigma(1)$, we obtain
$$
\lambda_{\phi}([D_{\rho}^{\can}]_0)
=
\left(
a_{\rho}[D_{\rho}],
-
\{a_{\rho}\,\age_{\num}\!\left(-,-[D_{\rho}]\right)\}
\right)
=
p_{\can}^*([D_{\rho}^{\can}]_0),
$$where the last equality is a consequence of the proof of Proposition~\ref{prop_computation_box_element_canonicalisation}
and for every $\stackZ \in \pi_0^*(\I_{\mu} X^{\can})$,
$$
\lambda_{\phi}([\stackZ])
=
\sum_{p_{\can}(\stackS)=\stackZ} [\stackS]
=
p_{\can}^*([\stackZ]) \, .
$$
    
\end{proof}

From now on, we will write $p_{\can}^*$ instead of $\lambda_{\phi}$. We now explain how the map $r$ lifts the map $p_{\can}$ to the extended universal torsor.

\begin{prop}\label{proposition_lift_of_canonical_map_to_extended_univ_torsor}
The following diagram is $2$-commutative:
\begin{center}
\begin{tikzcd}
\stackT \times \G_m^{\twistsector} \arrow[r, "r"] \arrow[d, "q^{\ext}"] 
& \stackT \times \G_m^{\pi_0^*(\I_{\mu} X^{\can})} \arrow[d, "q^{\ext}_{\can}"] \\
X \arrow[r, "p_{\can}"]                                                 
& X^{\can}
\end{tikzcd}
\end{center}
\end{prop}

\begin{proof}
We have the following commutative diagram:
\begin{center}
\begin{tikzcd}
\stackT \times \G_m^{\twistsector} \arrow[r, "r"] & \stackT \times \G_m^{\pi_0^*(\I_{\mu} X^{\can})} \\
\stackT \arrow[r, "\wedge a"] \arrow[u]           & \stackT \arrow[u]
\end{tikzcd}
\end{center}
where the vertical arrows are given by the inclusion $y \mapsto (y,1)$. Passing to the quotient, and using Corollaries~\ref{cor_description_map_stack_to_rigidification} and
\ref{cor_description_map_stack_to_canonical} together with Theorem~\ref{theorem_extended_universal_torsor}, we obtain the following $2$-commutative diagram:
\begin{center}
\begin{tikzcd}
{[ \stackT \times \G_m^{\twistsector} / T_{\NS,\orb} ]} \arrow[r, "\overline r"] 
& {[ \stackT \times \G_m^{\pi_0^*(\I_{\mu} X^{\can})} / T_{\NS,\orb}^{\can} ]} \\
{[ \stackT / T_{\NS} ]} \arrow[r, "p_{\can}"] \arrow[u, equal]             
& {[ \stackT / T_{\NS,\can} ]} \arrow[u, equal]
\end{tikzcd}
\end{center}
where $\overline r$ denotes the quotient morphism induced by $r$. This completes the proof.
\end{proof}

% The morphism $r$ has the following remarkable property, which we will not use in the sequel but which we state and prove as it is of independent interest.
%
% \begin{lemma}\label{lemma_transfer_extended_integral_parametrisation_canonical}
% For $y \in \stackT^{\ext}(\Z)$, we have
% $$
% r(y) \in \stackT^{\ext,\can}(\Z) \, .
% $$
% \end{lemma}
%
% \begin{proof}
% Let $y \in \stackT^{\ext}(\Z)$ and let $P = q^{\ext}(y) \in X(\Q)$ be the corresponding rational point. By Corollary~\ref{corollary_computation_extended_parametrization}, it suffices to show that for every $v \in M_{\Q}^0$ and every $(L,\varphi) \in \Pic_{\orb}(X^{\can})$ we have
% $$
% \age_v(r(y),(L,\varphi))
% =
% \varphi\!\left(\psi_v(q^{\ext,\can}(r(y)))\right)
% =
% \varphi\!\left(\psi_v(p_{\can}(P))\right).
% $$
%
% By Proposition~\ref{prop_functoriality_lif_age}, we have
% $$
% \age_v(r(y),(L,\varphi))
% =
% \age_v(y,p_{\can}^*(L,\varphi))
% =
% \varphi\!\left(p_{\can}(\psi_v(P))\right)
% =
% \varphi\!\left(\psi_v(p_{\can}(P))\right),
% $$
% where the penultimate equality follows from Corollary~\ref{corollary_computation_extended_parametrization}, and the last equality from Proposition~\ref{prop_functoriality_residue_map}.
% \end{proof}

We now explain why the construction of $h_{\stackT,v}^{\stack}$ naturally proceeds via
this lifting.

\subsection{The naive local height in the general case}

To define the naive local height $h_{\stackT,v}^{\stack}$ in the general case, we use the morphism $r$ from the previous
paragraph together with the fact that the real vector space generated by
$\Pic_{\orb}(X)$ decomposes as
\[
\Pic_{\orb}(X)_{\R}
=
\Pic(X)_{\R} \oplus \R^{\twistsector}.
\]
Moreover, the pullback induced by
\[
p_{\can} : X \longrightarrow X^{\can}
\]
defines an isomorphism of real vector spaces
\[
p_{\can}^{*} : \Pic(X^{\can})_{\R} \longrightarrow \Pic(X)_{\R}.
\]Thus, we will actually replace $\Pic(X)_{\R}$ by
$p_{\can}^{*}\!\left(\Pic(X^{\can})_{\R}\right)$ and use $$\Pic_{\orb}(X)_{\R}
=
p_{\can}^{*}\!\left(\Pic(X^{\can})_{\R}\right) \oplus \R^{\twistsector} .$$

\begin{definition}\label{definition_naive_local_height_extended_universal_torsor}
For $v \in M_{\Q}$, we define a naive local height on the extended universal torsor
\[
h_{\stackT,v}^{\stack} : \stackT^{\ext}(\Q_v) \longrightarrow \Pic_{\orb}(X)^{\vee}_{\R}
\]by requiring that the following conditions hold:
\begin{itemize}
\item
for any $\stackS \in \twistsector$,
\[
h_{\stackT,v}^{\stack}(y)\bigl([\stackS]\bigr)
=
\log_{p_v} |y_{\stackS}|_v \, ,
\]

\item
for any $L \in \Pic(X^{\can})_{\R}$,
\[
h_{\stackT,v}^{\stack}(y)\bigl((p_{\can}^*(L),0)\bigr)
=
h_{\stackT,v}^{\stack,\can}\!\left(r(y)\right)((L,0))
\]
\end{itemize}where $h_{\stackT,v}^{\stack,\can}$ corresponds to the naive local height for $X^{\can}$ defined in \ref{definition_lift_free_picard_stacky_height}.
\end{definition}

\begin{remark}
    The previous definition of $h_{\stackT,v}^{\stack}$ coincides with that of Definition~\ref{definition_lift_free_picard_stacky_height} in the case where $\Pic(X)$ is torsion free. For $v \in M_{\Q}^0$, this is a consequence of Lemma~\ref{lemma_appendix_b_computation_naive_height_finite_place} and Lemma~\ref{lemma_general_computation_naive_height_finite_place}. For $v = \infty$, this follows from the previous assertion and from Propositions~\ref{proposition_appendix_b_coarse_height_naive_local_height} and \ref{proposition_coarse_height_naive_local_height}.
\end{remark}

Before studying the properties of this naive local height, we first show that it satisfies a transitivity property with respect to the morphism $r$.

\begin{prop}\label{proposition_functoriality_naive_canonical_stack}
Let
\[
p_{\can}^* : \Pic_{\orb}(X^{\can}) \longrightarrow \Pic_{\orb}(X)
\]
denote the pullback on orbifold Picard groups. Then for every $y \in \stackT^{\ext}(\Q_v)$ and for every $(L,\varphi) \in \Pic_{\orb}(X^{\can})_{\R}$ we have
\[
h_{\stackT,v}^{\stack}(y)\bigl(p_{\can}^*(L,\varphi)\bigr)
=
h_{\stackT,v}^{\stack,\can}\!\left(r(y)\right)(L,\varphi) \, .
\]
\end{prop}

\begin{proof}
Indeed, we have
\begin{align*}
h_{\stackT,v}^{\stack}(y)\bigl(p_{\can}^*(L,\varphi)\bigr)
&=
h_{\stackT,v}^{\stack}(y)\bigl((p_{\can}^*(L),0)\bigr)
+
\sum_{\stackS \in \twistsector}
\varphi(p_{\can}(\stackS))
\, h_{\stackT,v}^{\stack}(y)\bigl([\stackS]\bigr) \\
&=
h_{\stackT,v}^{\stack,\can}\!\left(r(y)\right)((L,0))
+
\sum_{\stackZ \in \pi_0^*(\I_{\mu} X^{\can})}
\sum_{p_{\can}(\stackS)=\stackZ}
\varphi(\stackZ)
\, h_{\stackT,v}^{\stack}(y)\bigl([\stackS]\bigr) \\
&=
h_{\stackT,v}^{\stack,\can}\!\left(r(y)\right)((L,0))
+
\sum_{\stackZ \in \pi_0^*(\I_{\mu} X^{\can})}
\varphi(\stackZ)
\sum_{p_{\can}(\stackS)=\stackZ}
\log_{p_v} |y_{\stackS}|_v \\
&=
h_{\stackT,v}^{\stack,\can}\!\left(r(y)\right)((L,0))
+
\sum_{\stackZ \in \pi_0^*(\I_{\mu} X^{\can})}
\varphi(\stackZ)
\, h_{\stackT,v}^{\stack}(r(y))\bigl([\stackZ]\bigr) \\
&= h_{\stackT,v}^{\stack,\can}\!\left(r(y)\right)(L,\varphi) \, ,
\end{align*}
where the penultimate equality follows from equation~\eqref{equation_description_r}.
\end{proof}

We can now compute $h_{\stackT,v}^{\stack}$ for $v \in M_{\Q}^0$, a finite place.

\begin{lemma}\label{lemma_general_computation_naive_height_finite_place}
For any $y \in \stackT^{\ext}(\Q_v)$, we have:
$$h_{\stackT,v}^{\stack}(y) = - \age_v(y,-) $$where $\age_v$ is the lift of the age associated to the extended universal torsor. 
\end{lemma}

\begin{proof}
First, by Propositions~\ref{prop_age_and_logarithm} and \ref{proposition_appendix_B_property_equivariant_extended_height}, the map
\[
    y \mapsto h_{\stackT,v}^{\stack}(y) + \age_v\left(y,-\right)
\]
is $T_{\NS,\orb}(\Q_v)$-invariant. Hence, by Proposition~\ref{prop_surjectivite_local_extended_quotient_description}, it suffices to prove the equality for $y \in \stackT^{\ext}(\Z_v)$.

For $\stackS \in \twistsector$, we have
\[
    h_{\stackT,v}^{\stack}(y)\left([\stackS]\right) = \log_{p_v} \left|y_{\stackS}\right|_v = - \age_v\left(y,[\stackS]\right)
\]
by Definition~\ref{definition_naive_local_height_extended_universal_torsor} and Proposition~\ref{proposition_computation_residue_map_extended_parametrisation}. Next, for $\left(L,\varphi\right) \in \Pic_{\orb}(X^{\can})_{\R}$, by  Proposition \ref{proposition_functoriality_naive_canonical_stack} and Lemma~\ref{lemma_appendix_b_computation_naive_height_finite_place} we have
\begin{align*}
h_{\stackT,v}^{\stack}(y)\left(p_{\can}^*\left(L,\varphi\right)\right)
&= h_{\stackT,v}^{\stack,\can}\left(r(y)\right)\left(L,\varphi\right) \\
&= - \age_{v}\left(r(y),\left(L,\varphi\right)\right) \\
&= - \age_{v}\left(y,p_{\can}^*\left(L,\varphi\right)\right).
\end{align*}
where the last equality is a consequence of Proposition~\ref{prop_age_functoriality_torsor}. We can conclude the proof.

\end{proof}

We now turn to a property of equivariance of this naive local heights $h_{\stackT,v}^{\stack}$ with respect to the action of $T_{\NS,\orb}(\Q_v)$.

\begin{prop}\label{proposition_property_equivariant_naive_height}
Let $v \in M_{\Q}$.

For any $t \in T_{\NS,\orb}(\Q_v)$ and any $y \in \stackT^{\ext}(\Q_v)$, we have
\[
h_{\stackT,v}^{\stack}(t \cdot y)
=
h_{\stackT,v}^{\stack}(y)
+
\log_{T_{\NS,\orb},v}(t).
\]
\end{prop}

\begin{proof}
By the description of the action of $T_{\NS,\orb}$ on the extended universal torsor (see Definition \ref{definition_extended_universal_torsor}), we have for any $\stackS \in \twistsector$,
    \begin{align*}
        h_{\stackT,v}^{\stack}(t.y)([\stackS]) &=  \log_{p_v} | y{}_{\stackS} |_v + \log_{p_v} |[\stackS](t)^{-1}|_v \\
        &= h_{\stackT,v}^{\stack}\left(y\right)([\stackS]) + \log_{T_{\NS,\orb},v}(t)([\stackS])
    \end{align*}Moreover for any $(L,\varphi) \in \Pic_{\orb}(X^{\can})_{\R}$,
    \begin{align*}
        h_{\stackT,v}^{\stack}(t.y)(p_{\can}^*(L,\varphi)) &= h_{\stackT,v}^{\stack,\can}(r(t.y))(L,\varphi) \\
        &= h_{\stackT,v}^{\stack,\can}\left(\phi(t).r(y)\right)(L,\varphi)  \\
        &= h_{\stackT,v}^{\stack,\can}\left(r(y)\right)(L,\varphi) + \log_{T_{\NS,\orb}^{\can},v}(\phi(t))(L,\varphi) \\
        &= h_{\stackT,v}^{\stack}\left(y\right)(p_{\can}^*(L,\varphi)) + \log_{T_{\NS,\orb},v}(t)(p_{\can}^*(L,\varphi)) 
    \end{align*}where the penultimate equality is a consequence of Proposition \ref{proposition_appendix_B_property_equivariant_extended_height}.
\end{proof}

To conclude this paragraph, we relate these naive local heights $h_{\stackT,v}$ to the coarse multi-height map $h_{\coarse}$:

\begin{prop}\label{proposition_coarse_height_naive_local_height}
    Let $y \in \stackT^{\ext}(\Q)$ and let $P = q^{\ext}(y) \in X(\Q)$ be its image. Then we have for any $(L,\varphi) \in \Pic_{\orb}(X)_{\R}$
    \[
        h_{\coarse}(P)(L) = \sum\limits_{v \in M_{\Q}} \log(p_v) \cdot h_{\stackT,v}^{\stack}(y)(L,\varphi) \, .
    \]
\end{prop}

\begin{proof}
For $\stackS \in \twistsector$, the equality $$h_{\coarse}(P)(0) = \sum\limits_{v \in M_{\Q}} \log(p_v) \cdot h_{\stackT,v}^{\stack}(y)([\stackS]) $$is a consequence of the product formula. For $(L,\varphi) \in \Pic_{\orb}(X^{\can})_{\R}$, by Definition \ref{definition_coarse_multi_height_map} of the coarse height, we have $$h_{\coarse}(P)(p_{\can}^*L) = h_{\coarse}^{\can}(p_{\can}(P))(L) .$$Then by Propositions~\ref{proposition_lift_of_canonical_map_to_extended_univ_torsor} and \ref{proposition_appendix_b_coarse_height_naive_local_height}, we have the equality
\begin{align*}
h_{\coarse}(P)(p_{\can}^*L) &= h_{\coarse}^{\can}(p_{\can}(P))(L) \\
&= \sum\limits_{v \in M_{\Q}} \log(p_v) \cdot h_{\stackT,v}^{\stack,\can}(r(y))(L,\varphi) \\
&= \sum\limits_{v \in M_{\Q}} \log(p_v) \cdot  h_{\stackT,v}^{\stack}(y)(p_{\can}^*(L,\varphi)) 
\end{align*}from which the proposition follows since $\Pic_{\orb}(X)_{\R}$ is the sum of the subspaces $p_{\can}^*\left(\Pic_{\orb}(X^{\can})_{\R}\right) $ and $\bigoplus\limits_{\stackS \in \twistsector} \R [\stackS] $.

\end{proof}

\subsection{The extended local height}

We can now define the extended local height $h_{\stackT,v}^{\ext}$ using the naive local height defined in Definition~\ref{definition_naive_local_height_extended_universal_torsor}.

\begin{definition}\label{definition_local_height_extended_universal_torsor}
For $v \in M_{\Q}$, we define a local height on the extended universal torsor
\[
h_{\stackT,v}^{\ext} : \stackT^{\ext}(\Q_v) \longrightarrow \Pic_{\orb}(X)^{\vee}_{\R}
\]such that for any $y \in \stackT^{\ext}(\Q_v)$, we set
\[
h_{\stackT,v}^{\ext}(y)(L,\varphi)
=
\begin{cases}
h_{\stackT,v}^{\stack}(y)(L,\varphi)
+
\varphi\!\left(\psi_v\!\left(q^{\ext}(y)\right)\right)
& \text{if } v \in M_{\Q}^0, \\[0.3em]
h_{\stackT,\infty}^{\stack}(y)(L,\varphi)
& \text{if } v = \infty.
\end{cases}
\]The associated exponential height $H^{\ext}_{\stackT,v}$ is defined by the condition that, for any $\theta \in \Pic_{\orb}(X)_{\R}$,
\[
H^{\ext}_{\stackT,v}(y,\theta)
=
q_v^{\,h^{\ext}_{\stackT,v}(y)(\theta)}.
\]
\end{definition}

\begin{remark}\label{remark_forme_classique_extended_local_height_finite_place}
    By Lemma \ref{lemma_general_computation_naive_height_finite_place}, we get that for a finite place $v \in M_{\Q}^0$ and for any $y \in \stackT^{\ext}(\Q_v)$,
    $$h_{\stackT,v}^{\ext}(y)(L,\varphi) = - \age_v\left(y,(L,\varphi)\right) + \varphi\!\left(\psi_v\!\left(q^{\ext}(y)\right)\right) \, .$$
\end{remark}

In particular, we recover the following property of local heights in the classical case (see \cite[Lemma~4.30 (ii)]{Peyre_beyond_height}):

\begin{prop}\label{proposition_forme_classique_parametrisation_entiere_locale}
For every $v \in M_{\Q}^0$ we have
$$
\stackT^{\ext}(\Z_v)
=
\{ y \in \stackT^{\ext}(\Q_v) \mid h_{\stackT,v}^{\ext}(y) = 0 \} \, .
$$
\end{prop}

\begin{proof}
    This is a consequence of Remark \ref{remark_forme_classique_extended_local_height_finite_place} and Corollary~\ref{corollary_computation_extended_parametrization}.
\end{proof}

By Proposition \ref{proposition_property_equivariant_naive_height}, this extended local height $h_{\stackT,v}^{\ext}$ also have a property of equivariance with respect to the action of $T_{\NS,\orb}(\Q_v)$.

\begin{prop}\label{proposition_property_equivariant_extended_height}
Let $v \in M_{\Q}$.

For any $t \in T_{\NS,\orb}(\Q_v)$ and any $y \in \stackT^{\ext}(\Q_v)$, we have
\[
h_{\stackT,v}^{\ext}(t \cdot y)
=
h_{\stackT,v}^{\ext}(y)
+
\log_{T_{\NS,\orb},v}(t).
\]
\end{prop}

We now define an adelic space $\stackT^{\ext}(\affine_{\Q})$ together with an adelic height on it as follows.

\begin{definition}
We define $\stackT^{\ext}(\affine_{\Q})$ to be the set of elements
$$
(y_v) \in \prod_{v \in M_{\Q}} \stackT^{\ext}(\Q_v)
$$
such that the set
$$
\{\, v \in M_{\Q}^0 \mid y_v \notin \stackT^{\ext}(\Z_v) \,\}
$$
is finite.

We then define
$$
h_{\stackT}^{\ext} : \stackT^{\ext}(\affine_{\Q}) \longrightarrow \Pic_{\orb}(X)^{\vee}_{\R}
$$
by setting, for $y \in \stackT^{\ext}(\affine_{\Q})$,
$$
h_{\stackT}^{\ext}(y)
=
\sum_{v \in M_{\Q}}
\log(p_v)\,
h^{\ext}_{\stackT,v}(y_v) \, .
$$
\end{definition}

By Proposition~\ref{proposition_forme_classique_parametrisation_entiere_locale}, the height $h_{\stackT}^{\ext}$ is well defined. We can establish the following proposition:

\begin{prop}\label{proposition_adelic_lift_height_univ_torsor}
For any $y \in \stackT^{\ext}(\Q)$, if $P = q^{\ext}(y) \in X(\Q)$, then
\[
h^{\ext}_{\stackT}(y)
=
h(P).
\]
\end{prop}

\begin{remark}
This formula for the lift of the stacky height to the adelic points of the extended universal torsor is useful when studying height zeta functions. In particular if one wants to use harmonic analysis methods on the extended universal torsor, as introduced by Santens
in~\cite{santens2025maninsconjectureintegralpoints} for universal torsor.
\end{remark}

\begin{proof}
This is a consequence of Proposition \ref{proposition_coarse_height_naive_local_height} and Definitions \ref{definition_stacky_height} and \ref{definition_local_height_extended_universal_torsor}.    
\end{proof}

To study the distribution of rational points on the stack using methods from the geometry of numbers, a crucial point is that this formula simplifies in the case where one considers an element of the integral parametrization $\stackT^{\ext}(\Z)$. Indeed, in this case, the height of a rational point lifts to the extended local height at the infinite place as follows:

\begin{theorem}\label{theorem_lift_height_extend_universal_torsor}
    For any $y \in \stackT^{\ext}(\Z)$, if we write $P = q^{\ext}(y)$, then:
    $$h(P) = h^{\ext}_{\stackT,\infty}(y) .$$
\end{theorem}

\begin{proof}
    This is a consequence of Propositions \ref{proposition_adelic_lift_height_univ_torsor} and \ref{proposition_forme_classique_parametrisation_entiere_locale}.
\end{proof}

\subsection{Additional property of the extended local height}

We shall also need a boundedness result, stating that if $\mathrm{D} \subset
\Pic_{\orb}(X)^{\vee}_{\R}$ is a compact subset, then
$h^{\ext}_{\stackT,\infty}{}^{-1}(\mathrm{D})$ is bounded. To this end, we will use the results of Section~\ref{section_appendix_B_boundedness}.

\begin{theorem}\label{theorem_compacity_pull-back-compact_ext_height}
    Let $\mathrm{D} \subset \Pic_{\orb}(X)^{\vee}_{\R}$ be a compact set. Then $(h^{\ext}_{\stackT,\infty})^{-1}(\mathrm{D})$ is bounded.
\end{theorem}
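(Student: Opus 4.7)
My plan is to decompose the problem according to the natural basis $\base_{\orb} = \{ [L_1]_{\stack},\ldots,[L_t]_{\stack},[\stackZ] \mid \stackZ \in \twistsector\}$ of $\Pic_{\orb}(X)$ and to treat the twisted-sector coordinates and the $[L_i]_{\stack}$-coordinates separately, reducing the statement to the analogous result for the non-extended universal torsor (Proposition \ref{prop_upper_bound_non_smooth_case}).

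First, I would test $h^{\ext}_{\stackT,\infty}(y',k)$ against each $[\stackZ]$ with $\stackZ \in \twistsector$. By Definition \ref{definition_local_height_extended_universal_torsor}, the result is $\log|k_{\stackZ}|$ (since $q_{\infty}=e$), and so the compactness of $\mathrm{D}$ immediately forces each $|k_{\stackZ}|$ to lie between two positive constants. In particular the $k$-component is constrained to a compact subset of $(\R^*)^{\twistsector}$.

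Next, testing against each $[L_i]_{\stack}$ yields $h_{\stackT,\infty}(r_2(y',k))(L_i)$. Hence as $h^{\ext}_{\stackT,\infty}(y',k)$ ranges over $\mathrm{D}$, the element $h_{\stackT,\infty}(r_2(y',k))$ ranges over a compact subset $\mathbf{D}'$ of $\Pic(X)^{\vee}_{\R}$ (cut out by the finitely many bounds coming from the $L_i$). Applying Proposition \ref{prop_upper_bound_non_smooth_case} to $\mathbf{D}'$, we conclude that $r_2(y',k)$ varies in a compact, hence bounded, subset of $\stackT(\R) \subset \R^{\Sigma(1)}$.

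Finally, I would combine the two bounds using the explicit formula from Proposition \ref{proposition_description_r_2_r_1},
$$y'_{\rho} \;=\; r_2(y',k)_{\rho} \cdot \prod_{\stackZ \in \twistsector} k_{\stackZ}^{-m_{\stackZ}(\rho)}.$$
Since $r_2(y',k)_{\rho}$ is bounded for every $\rho \in \Sigma(1)$ and each $k_{\stackZ}$ is bounded away from $0$ and $\infty$, the right-hand side is a product of bounded quantities, so $y'$ is bounded as well. Combined with the bound on $k$, this yields that $(y',k)$ is bounded in $\stackT^{\ext}(\R) \subset \R^{\Sigma(1)} \times (\R^{*})^{\twistsector}$. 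The only substantive input is Proposition \ref{prop_upper_bound_non_smooth_case}; the remaining steps are purely algebraic bookkeeping using the basis decomposition and the explicit form of $r_2$, so I do not anticipate any serious obstacle.
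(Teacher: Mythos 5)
Your proof is correct and follows essentially the same route as the paper: bound the $k$-coordinates using the $[\stackZ]$-components of the height, bound $r_2(y',k)$ using the $[L_i]_{\stack}$-components together with Proposition \ref{prop_upper_bound_non_smooth_case}, and conclude via the explicit formula for $r_2$ from Proposition \ref{proposition_description_r_2_r_1}. You are in fact a touch more careful than the paper's own proof, which records only the upper bound $|k_{\stackZ}|\leqslant e^{C}$ although the lower bound $|k_{\stackZ}|\geqslant e^{-C}$ is also needed in the final step whenever $m_{\stackZ}(\rho)>0$.
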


\begin{proof}
Let $K > 0$ be a constant such that $\mathrm{D}$ is contained in the ball of radius $K$
centered at the origin for the supremum norm. Then, for any
$\stackS \in \twistsector$, we have
\[
\bigl| h^{\ext}_{\stackT,\infty}(y)\bigl([\stackS]\bigr) \bigr| \leqslant K,
\]
and therefore
\[
|y_{\stackS}| \leqslant e^{K}.
\]
Moreover, by Proposition \ref{proposition_functoriality_naive_canonical_stack}:
$$h_{\stackT,\infty}^{\stack,\can}(r(y))(-) = h_{\stackT,\infty}^{\stack}(y)\left(p_{\can}^{*}(-)\right) = h_{\stackT,\infty}^{\ext}(y)\left(p_{\can}^{*}(-)\right) \in (p_{\can}^{*})^{\vee}(\mathrm{D})  \, .$$
Thus, because $(p_{\can}^{*})^{\vee}(\mathrm{D})$ is compact,  by Theorem~\ref{theorem_appendix_B_compacity_pull-back-compact_ext_height}, the coordinates
$r(y)_{\rho}$ for $\rho \in \Sigma(1)$ are uniformly bounded. Using the explicit
description of the map $r$ (see Equation~\eqref{equation_description_r}), together with
the fact that for any $\stackS \in \twistsector$ we have $$1 \leqslant |y_{\stackS}| \leqslant e^{K},$$we deduce that the coordinates $y_{\rho}$ for $\rho \in \Sigma(1)$ are also uniformly
bounded. It follows that $(h^{\ext}_{\stackT,\infty})^{-1}(\mathrm{D})$ is bounded.
\end{proof}

\section{Tamagawa measures on toric stacks}

\subsection{Möbius function associated to the extended universal torsor}

In this section, we introduce the Möbius function associated to the extended universal torsor $\stackT^{\ext}(\Z)$ which parametrizes $X(\Q)$. We use the conventions of Notations \ref{notation_extended_torus_fan}.

\begin{Notations}
\begin{enumerate}
    \item $\mathbf{1}_{\stackT^{\ext}(\Z)} : (\N^*)^{\Sigma_{\ext}(1)} \rightarrow \{0,1\}$ is the indicator function of $$\stackT^{\ext}(\Z) \cap (\N^*)^{\Sigma_{\ext}(1)},$$
    \item for $d \in (\N^*)^{\Sigma_{\ext}(1)}$, we write $\mathbf{1}_{d} : (\N^*)^{\Sigma_{\ext}(1)} \rightarrow \{0,1\}$ the indicator function of $\bigoplus\limits_{\rho \in \Sigma_{\ext}(1)} d_{\rho}.\Z \cap (\N^*)^{\Sigma_{\ext}(1)}$, that is to say we have the following equivalence: $$\mathbf{1}_{d}(e) = 1 \Leftrightarrow  d \mid e $$where $d \mid e$ means $d_{\rho} \mid e_{\rho}$ for any $\rho \in \Sigma_{\ext}(1)$. 
\end{enumerate}
\end{Notations}

We want to define and study a Möbius function $\mu : (\N^*)^{\Sigma_{\ext}(1)} \rightarrow \Z$ such that:
$$\mathbf{1}_{\stackT^{\ext}(\Z)} = \sum\limits_d \mu(d) \mathbf{1}_{d}  .$$The rest of the properties we want to establish here are analogous to the ones used by Salberger in \cite[lemmas 11.15-11.19]{salberger_torsor}.

\subsubsection{The local Möbius function}

\begin{Notations}
    Let us write $\Lambda$ the simplicial cone $\R_+^{\Sigma_{\ext}(1)}$. For $\lambda \in \Lambda$, we write $$|\lambda| = \sum\limits_{\rho \in \Sigma_{\ext}(1) } \lambda_{\rho} .$$
    To simplify the notation, we write $M = \Z^{\Sigma_{\ext}(1)}$ and $\log_v$ for the log map $\log_{T^{\ext},v}$ associated to the torus $T^{\ext}(\Q_v)$ and $v \in M_{\Q}^0$, that is to say $$\log_v(y) = (v(y_{\rho}))_{\rho \in \Sigma_{\ext}(1)} .$$
\end{Notations}

Let us remark that $\stackT^{\ext}(\Z) = \stackT^{\ext}(\Q) \cap \bigcap\limits_{v \in M_{\Q}^0} \stackT^{\ext}(\Z_v)$. By definition \ref{definition_orb_universal_torsor_gcd_cond}, $\stackT^{\ext}(\Z_v)$ is defined in $\stackT^{\ext}(\Q_v)$ via a finite number of arithmetical conditions. That is to say there exists a finite set of elements of $\Lambda$, $\{ \lambda_{i,v} \}_{i \in I_v}$ such that if we write $$X_v = \Lambda - \bigcup\limits_{i \in I_v} \Lambda + \lambda_{i,v} ,$$we have $$\stackT^{\ext}(\Z_v) \cap T^{\ext}(\Q_v) = \log_v^{-1}(X_v) .$$In fact, this arithmetical conditions do not depend on $v$ so we can choose a fixed finite set of elements of $\Lambda$ $\{ \lambda_{i} \}_{i \in I}$ which do not depend on $v$ and we can write $X = X_v$ for any $v$.

We shall use the results from \cite[section 3.3]{methode_du_cercle_peyre}. The ring of functions $$M \rightarrow \Z$$ with support in $\Lambda$, $\Z [[\Lambda]]$ is an integral ring equipped with the convolution product given by:
$$fg(y) = \sum\limits_{x \in \Lambda \cap M} f(y-x) g(x) .$$By \cite[lemma 3.3.8]{methode_du_cercle_peyre}, $\mathbf{1}_\Lambda$ admits an inverse in $\Z [[\Lambda]]$. We can write the following definition:

\begin{definition}
    We define by convolution the element of $\Z [[\Lambda]]$ given by $$\mu_{\loc} = \mathbf{1}_X . ( \mathbf{1}_\Lambda )^{-1} .$$That is to say we have: $$ \mathbf{1}_X = \sum\limits_{\lambda \in \Lambda \cap M} \mu_{\loc}(\lambda) \mathbf{1}_{\lambda + \Lambda} .$$
\end{definition}

\begin{Notations}
    For $\lambda, \tau \in \Lambda$, we write $\lambda \leqslant \tau$ when for any $\rho \in \Sigma_{\ext}(1)$, $\lambda_{\rho} \leqslant \tau_{\rho}$ and we write $\lambda < \tau$ if moreover $\lambda \neq \tau$. We denote by $1_{\rho}$ the element of $\Lambda$ with zero for each coordinate except for the one corresponding to $\rho \in \Sigma_{\ext}(1)$ where there is 1. We have $1_{\rho} \in X$.
\end{Notations}

\begin{prop}\label{proposition_local_mobius_prop_one}
    The local Möbius function $\mu_{\loc}$ has the property that $\mu_{\loc}(0) = 1 $ and that if $\lambda \neq 0$, we have $$\mu_{\loc}(\lambda) \neq 0 \implies |\lambda| \geqslant 2 .$$
\end{prop}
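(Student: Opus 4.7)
The plan is to apply the defining convolution identity $\mathbf{1}_X = \sum_{\lambda \in \Lambda \cap M} \mu_{\loc}(\lambda) \mathbf{1}_{\lambda + \Lambda}$ and evaluate it at the two smallest types of points of $\Lambda \cap M$. First, since $0 \in X$ (every forbidden pattern $\lambda_i$ is nonzero) and $\mathbf{1}_{\lambda + \Lambda}(0) = 1$ only when $\lambda = 0$, evaluation at $0$ directly yields $\mu_{\loc}(0) = \mathbf{1}_X(0) = 1$.

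For the second assertion, the elements $\lambda \in \Lambda \cap M$ with $|\lambda| = 1$ are precisely the $1_\rho$ for $\rho \in \Sigma_{\ext}(1)$, so it suffices to prove $\mu_{\loc}(1_\rho) = 0$ for each such $\rho$. The crucial step is to inspect the three families of local constraints coming from Definition \ref{definition_orb_universal_torsor_gcd_cond} and verify that each associated forbidden pattern $\lambda_i$ satisfies $|\lambda_i| \geqslant 2$. Concretely: squarefreeness of $k_{\stackZ}$ produces $\lambda_i = 2 \cdot 1_{\stackZ}$; coprimality of distinct $k_{\stackZ}, k_{\stackZ'}$ produces $\lambda_i = 1_{\stackZ} + 1_{\stackZ'}$; and for each primitive collection $C$ containing $\sigma(b)$ (with $b$ corresponding to $\stackZ$) the gcd condition produces $\lambda_i = 1_{\stackZ} + \sum_{\rho \in C - \sigma(b)} 1_\rho$. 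In the last case the set $C - \sigma(b)$ is nonempty because, by Definition \ref{def_primitive_collection}, a primitive collection is by definition not contained in any cone of $\Sigma$.

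With this established, $1_\rho$ lies in $X$ for every $\rho \in \Sigma_{\ext}(1)$, and evaluating the Möbius identity at $1_\rho$ (where the only contributions come from $\lambda = 0$ and $\lambda = 1_\rho$) gives $1 = \mathbf{1}_X(1_\rho) = \mu_{\loc}(0) + \mu_{\loc}(1_\rho) = 1 + \mu_{\loc}(1_\rho)$, whence $\mu_{\loc}(1_\rho) = 0$. The only mildly subtle point in the whole argument is the nonemptiness of $C - \sigma(b)$ in the primitive-collection case; the rest reduces to evaluating indicator functions at two explicit points.
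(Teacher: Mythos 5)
Your argument is correct and follows essentially the same route as the paper: evaluate the convolution identity $\mathbf{1}_\Lambda \cdot \mu_{\loc} = \mathbf{1}_X$ at $0$, observe that the only $\lambda \in \Lambda \cap M$ with $|\lambda| = 1$ are the $1_\rho$, and deduce $\mu_{\loc}(1_\rho) = 0$ from $1_\rho \in X$. You go slightly further than the paper in that you actually justify the membership $1_\rho \in X$, which the paper merely asserts in the Notations preceding the proposition, by checking that each of the three families of conditions in Definition \ref{definition_orb_universal_torsor_gcd_cond} yields a forbidden pattern $\lambda_i$ of total weight at least $2$, the primitive-collection case resting precisely on the nonemptiness of $C - \sigma(b)$ as you note.
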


\begin{proof}
    First, let us recall that $$\mathbf{1}_{\Lambda} . \mu_{\loc} = \mathbf{1}_X$$ and for $\lambda \in \Lambda \cap M$,  $\mathbf{1}_{\Lambda}(-\lambda) \neq 0 $ is equivalent to $\lambda = 0$. So if we evaluate the previous equality in $y = 0$, because $0 \in X$, we get $\mu_{\loc}(0) = 1 $.

    Now for $\tau \in \Lambda \cap M$, we have:
    $$\mu_{\loc}(\tau) = \mathbf{1}_X(\tau) - \sum\limits_{\lambda < \tau} \mu_{\loc}(\lambda) . $$Using that $1_{\rho} \in X$, the previous equality and a finite recurrence, we get that if $|\tau| < 2$ then $\tau \in X$ and because $\mu_{\loc}(0) = 1$ then $\mu_{\loc}(\tau) = 0$.
\end{proof}

\begin{prop}\label{proposition_local_mobius_prop_two}
    Let $\lambda \in \Lambda \cap M$. If $|\lambda| \geqslant 3.\sharp \Sigma_{\ext}(1)$, we have $\mu_{\loc}(\lambda) = 0 $.
\end{prop}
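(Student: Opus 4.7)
The plan is to interpret $\mu_{\loc}$ via inclusion--exclusion on the finite family of forbidden shifted cones $\lambda_i+\Lambda$ that cut out $X$ inside $\Lambda$, and then to show that every minimal obstruction $\lambda_i$ has coordinates in $\{0,1,2\}$, so that the same holds for every element in the support of $\mu_{\loc}$.

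First I would write
\[
\mathbf{1}_X \;=\; \mathbf{1}_\Lambda \;-\; \mathbf{1}_{\bigcup_{i\in I}(\lambda_i+\Lambda)}
\]
and expand the second term by inclusion--exclusion. Since $(\lambda_i+\Lambda)\cap(\lambda_j+\Lambda)=\max(\lambda_i,\lambda_j)+\Lambda$ (componentwise max), one obtains
\[
\mathbf{1}_X \;=\; \sum_{S\subseteq I} (-1)^{|S|}\,\mathbf{1}_{\mu_S+\Lambda},\qquad \mu_S:=\max_{i\in S}\lambda_i,
\]
with the convention $\mu_\varnothing=0$. Comparing with the defining identity $\mathbf{1}_X=\sum_\lambda \mu_{\loc}(\lambda)\mathbf{1}_{\lambda+\Lambda}$ and using that $\mathbf{1}_\Lambda$ is invertible in $\Z[[\Lambda]]$, I deduce that the support of $\mu_{\loc}$ is contained in the finite set $\{\mu_S\mid S\subseteq I\}$.

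Second, I would list explicitly the minimal vectors $\lambda_i$ coming from Definition~\ref{definition_orb_universal_torsor_gcd_cond}. The squarefree condition on $k_\stackZ$ forbids the cone $2\cdot 1_\stackZ+\Lambda$; the pairwise coprimality condition between $k_\stackZ$ and $k_{\stackZ'}$ forbids $1_\stackZ+1_{\stackZ'}+\Lambda$; and the primitive-collection condition associated to a pair $(\stackZ,C)$ with $\sigma(b_\stackZ)\subseteq C$ forbids $\bigl(1_\stackZ+\sum_{\rho\in C-\sigma(b_\stackZ)}1_\rho\bigr)+\Lambda$. In each case every coordinate of $\lambda_i$ lies in $\{0,1,2\}$.

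Third, since taking componentwise maxima preserves the coordinatewise bound, every $\mu_S$ also has coordinates in $\{0,1,2\}$, hence
\[
|\mu_S|\;\leqslant\; 2\,\sharp\Sigma_{\ext}(1)\;<\;3\,\sharp\Sigma_{\ext}(1).
\]
Therefore, any $\lambda\in\Lambda\cap M$ with $|\lambda|\geqslant 3\,\sharp\Sigma_{\ext}(1)$ lies outside the support of $\mu_{\loc}$, and $\mu_{\loc}(\lambda)=0$.

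The only real verification is the inventory of the $\lambda_i$ in step two; I do not expect a genuine obstacle, but the hypothesis that $\Pic(X)$ is torsion-free (which ensures the description of $\stackT^{\ext}(\Z)$ via Theorem~\ref{theorem_description_of_the_extended_parmetrization}) is essential so that the forbidden set is a union of only those explicit translated cones.
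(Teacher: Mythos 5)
Your proof is correct, but it follows a genuinely different route from the paper's. The paper argues recursively: it observes that $\mathbf{1}_X(\lambda+1_\rho)=\mathbf{1}_X(\lambda)$ whenever $\lambda_\rho\geqslant 2$, deduces by induction on $|\lambda|$ that $\mu_{\loc}(\lambda)=0$ as soon as some coordinate $\lambda_\rho\geqslant 3$, and then concludes by pigeonhole. Your inclusion--exclusion expansion instead exhibits $\mathbf{1}_X$ as an explicit $\Z$-linear combination of the indicators $\mathbf{1}_{\mu_S+\Lambda}$, $\mu_S=\max_{i\in S}\lambda_i$ (componentwise); by uniqueness of the expansion in the basis $\{\mathbf{1}_{\lambda+\Lambda}\}$ (equivalently, by comparing with $\mu_{\loc}\cdot\mathbf{1}_\Lambda=\mathbf{1}_X$) the support of $\mu_{\loc}$ is therefore contained in $\{\mu_S\mid S\subseteq I\}$. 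Since each generator $\lambda_i$ has coordinates in $\{0,1,2\}$, so does every $\mu_S$, which is both more transparent and slightly stronger: it gives $\mu_{\loc}(\lambda)=0$ already for $|\lambda|>2\,\sharp\Sigma_{\ext}(1)$. One small supplement to your inventory in step two: the conditions cutting out $\stackT^{\ext}(\Z_v)\cap T^{\ext}(\Q_v)$ also include, for each primitive collection $C$, the constraint that not all $y'_\rho$ with $\rho\in C$ be divisible by $p$ (this is the constraint ensuring $y'\in\stackT(\Z_v)$ when the residue sector is trivial), contributing the generator $\sum_{\rho\in C}1_\rho$; but this again has coordinates at most $1$, so the coordinatewise bound $\lambda_{i,\rho}\leqslant 2$ and your conclusion are unaffected. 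Note finally that both your argument and the paper's ultimately rest on this same coordinatewise bound on the $\lambda_i$, which the paper invokes only implicitly when asserting $\mathbf{1}_X(\lambda+1_\rho)=\mathbf{1}_X(\lambda)$.
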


\begin{proof}
    For $\lambda \in \Lambda \cap M$ such that there exists $\rho \in \Sigma_{\ext}(1)$, $\lambda_{\rho} \geqslant 2$, we have using the definition of $X$, that $$\mathbf{1}_X(\lambda + 1_{\rho}) = \mathbf{1}_X(\lambda).  $$Hence we can deduce that for such $\lambda$, $$\mu_{\loc}(\lambda + 1_{\rho}) = \sum\limits_{\lambda < \tau \leqslant \lambda + 1_{\rho}} \mu_{\loc}(\tau) = 0 . $$Using this formula, we get by an induction on $n \in \N, n \geqslant 3$, that if there exists $\rho \in \Sigma_{\ext}(1)$ with $\lambda_{\rho} \geqslant 3$ and $|\lambda| = n$ then $\mu_{\loc}(\lambda) = 0$. Thus if $|\lambda| \geqslant 3 . \sharp \Sigma_{\ext}(1)$, as there exists $\rho \in \Sigma_{\ext}(1)$ with $\lambda_{\rho} \geqslant 3$, $$\mu_{\loc}(\lambda) = 0 .$$
\end{proof}

\begin{prop}\label{proposition_local_study_of_mobius_function}
   There exists a polynomial $Q$ with non negative integers coefficients such that for $s \in \R$, $$ \sum\limits_{\lambda \in \Lambda \cap M} \frac{|\mu_{\loc}(\lambda)|}{p_v^{s.|\lambda|}} = 1 + p_v^{-2.s} Q(\frac{1}{p_v^s}) .$$Note that $Q$ does not depend on $v \in M_{\Q}^0$.

   Moreover, for $v \in M_{\Q}^0$, we have the equality $$\sum\limits_{\lambda \in \Lambda \cap M} \frac{\mu_{\loc}(\lambda)}{p_v^{|\lambda|}} = \w_{\stackT,v}(\stackT^{\ext}(\Z_v)) $$ where $\w_{\stackT,v}$ is the restriction of the normalized Haar measure $d x_v$ over $\Q_v^{\Sigma_{\ext}(1)}$ to the open subset $\stackT^{\ext}(\Q_v)$.
\end{prop}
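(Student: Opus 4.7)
The plan is to treat the two assertions separately, both relying on the support properties of $\mu_{\loc}$ already established in Propositions \ref{proposition_local_mobius_prop_one} and \ref{proposition_local_mobius_prop_two}, together with the fact (noted just before the statement) that the set $X \subset \Lambda$ does not depend on the place $v$.

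For the first identity, I would first isolate the $\lambda = 0$ term, which by Proposition \ref{proposition_local_mobius_prop_one} gives $|\mu_{\loc}(0)| = 1$. The same proposition ensures that every other nonzero term has $|\lambda| \geqslant 2$, so the factor $p_v^{-2s}$ can be pulled out of the remaining sum. Proposition \ref{proposition_local_mobius_prop_two} forces this remaining sum to be finite. Grouping terms by $n = |\lambda|$ and setting $a_n = \sum_{|\lambda| = n} |\mu_{\loc}(\lambda)| \in \N$, I define
$$Q(X) = \sum_{m \geqslant 0} a_{m+2} X^m,$$
which is a polynomial with non-negative integer coefficients and degree at most $3\,\sharp\Sigma_{\ext}(1)$. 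Since $X$ and $\Lambda$ are defined intrinsically, the convolution inverse producing $\mu_{\loc}$ is independent of $v$, and hence so is $Q$.

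For the second identity, I would start from the Möbius decomposition $\mathbf{1}_X = \sum_\lambda \mu_{\loc}(\lambda)\, \mathbf{1}_{\lambda + \Lambda}$. The locus in $\Q_v^{\Sigma_{\ext}(1)}$ where at least one coordinate vanishes has Haar measure zero, so I may compute $\w_{\stackT,v}(\stackT^{\ext}(\Z_v))$ by integrating over $\stackT^{\ext}(\Q_v) \cap T^{\ext}(\Q_v)$, on which the indicator of $\stackT^{\ext}(\Z_v)$ coincides with $\mathbf{1}_X \circ \log_v$. Because $\mu_{\loc}$ has finite support, the sum and the integral may be swapped, yielding
$$\w_{\stackT,v}(\stackT^{\ext}(\Z_v)) = \sum_\lambda \mu_{\loc}(\lambda)\, \w_{\stackT,v}\bigl(\log_v^{-1}(\lambda+\Lambda)\bigr).$$
Each fibre $\log_v^{-1}(\lambda+\Lambda)$ identifies, up to a measure-zero set, with $\prod_{\rho} \mathfrak{p}_v^{\lambda_\rho} \subset \Z_v^{\Sigma_{\ext}(1)}$, of Haar measure $p_v^{-|\lambda|}$, which is the desired formula.

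The main technical point to verify carefully is the identification $\stackT^{\ext}(\Z_v) \cap T^{\ext}(\Q_v) = \log_v^{-1}(X)$, i.e.\ that the arithmetic constraints in Definition \ref{definition_orb_universal_torsor_gcd_cond} (square-freeness of each $k_\stackZ$, pairwise coprimality among the $k_\stackZ$, and the gcd conditions with the $y'_\rho$) translate precisely into the polyhedral complement $\Lambda \setminus \bigcup_i (\Lambda + \lambda_i)$. This translation is insensitive to the residue characteristic, which is what simultaneously makes $X$ (hence $Q$) independent of $v$ and lets the Möbius calculation above deliver both assertions.
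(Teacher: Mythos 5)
Your proof is correct and follows essentially the same route as the paper's: the first identity comes from the support bounds of $\mu_{\loc}$ (Propositions \ref{proposition_local_mobius_prop_one} and \ref{proposition_local_mobius_prop_two}), yielding $Q(X) = \sum_{2 \leqslant |\lambda| \leqslant l} |\mu_{\loc}(\lambda)|\,X^{|\lambda|-2}$ with $l = 3\,\sharp\Sigma_{\ext}(1)$, and the second comes from integrating the identity $\mathbf{1}_X\circ\log_v = \sum_\lambda \mu_{\loc}(\lambda)\,\mathbf{1}_{\lambda+\Lambda}\circ\log_v$ against $\w_{\stackT,v}$, using $\w_{\stackT,v}(p_v^\lambda\Z_v) = p_v^{-|\lambda|}$. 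The "main technical point" you flag at the end — the identification $\stackT^{\ext}(\Z_v)\cap T^{\ext}(\Q_v) = \log_v^{-1}(X)$ with $X$ independent of $v$ — is already set up in the text immediately preceding the proposition and is correctly taken for granted in the paper's own proof.
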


\begin{Notations}
    For $\lambda \in \Lambda \cap M$, let us write $p^{\lambda} . \Z_p = \bigoplus\limits_{\rho \in \Sigma_{\ext}(1)} p^{\lambda_{\rho}} . \Z_p .$ We shall denote by $\mathbf{1}_{\stackT^{\ext}(\Z_p)}$  the indicator function of $\stackT^{\ext}(\Z_p) \cap T^{\ext}(\Q_v)$ while $\mathbf{1}_{p^{\lambda} . \Z_p}$ is the indicator function of $p^{\lambda} . \Z_p \cap T^{\ext}(\Q_v)$.
\end{Notations}

\begin{proof}
    First using proposition \ref{proposition_local_mobius_prop_one} and \ref{proposition_local_mobius_prop_two}, we can write:
$$\sum\limits_{\lambda \in \Lambda \cap M} \frac{|\mu_{\loc}(\lambda)|}{p_v^{s.|\lambda|}} = 1 + p_v^{-2.s} Q(\frac{1}{p_v^s}) $$ with $Q(X) = \sum\limits_{2 \leqslant |\lambda| \leqslant l} | \mu_{\loc}(\lambda) |.X^{|\lambda|-2} $ where $l = 3. \sharp \Sigma_{\ext}(1)$. Then we have:
    \begin{align*}
        \mathbf{1}_{\stackT^{\ext}(\Z_p)} &= \mathbf{1}_X(\log_v(-)) \\
    &= \sum\limits_{\lambda \in \Lambda \cap M} \mu_{\loc}(\lambda) \mathbf{1}_{\lambda + \Lambda}(\log_v(-)) \\
    &= \sum\limits_{\lambda \in \Lambda \cap M} \mu_{\loc}(\lambda) \mathbf{1}_{p^{\lambda} . \Z_p}(-) .
    \end{align*}So if we integrate with respect to the measure $\w_{\stackT,v}$, because $$\w_{\stackT,v}(p^{\lambda} . \Z_p) = \frac{1}{p^{|\lambda|}}$$we get the expected formula:
    $$\sum\limits_{\lambda \in \Lambda \cap M} \frac{\mu_{\loc}(\lambda)}{p^{|\lambda|}} = \w_{\stackT,v}(\stackT^{\ext}(\Z_p)) .$$
\end{proof}

\subsubsection{The global Möbius function}

\begin{definition}
    We define the global Möbius function as a product over all the finite places:
    $$\mu = \prod\limits_{v \in M_{\Q}^0} \mu_{\loc}(\log_v(-)) .$$
\end{definition}

We may remark that $\mathbf{1}_{\stackT^{\ext}(\Z)} = \prod\limits_{v \in M_{\Q}^0} \mathbf{1}_X(\log_v(-))$ and $\mathbf{1}_{d} = \prod\limits_{v \in M_{\Q}^0} \mathbf{1}_{C + \lambda_v}(\log_v(-))$ where $\lambda_v = \log_v(d)$ for any $d \in (\N^*)^{\Sigma_{\ext}(1)}$. Hence we get the following theorem:

\begin{theorem}\label{theorem_mobius_inversion}
    We have:
    $$\mathbf{1}_{\stackT^{\ext}(\Z)} = \sum\limits_d \mu(d) \mathbf{1}_{d} .$$
\end{theorem}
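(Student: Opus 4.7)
The plan is to unpack the two factorizations noted in the paragraph preceding the theorem and apply the local Möbius inversion $\mathbf{1}_X = \sum_{\lambda \in \Lambda \cap M} \mu_{\loc}(\lambda) \mathbf{1}_{\lambda + \Lambda}$ place by place. Fix $e \in (\N^*)^{\Sigma_{\ext}(1)}$; the goal is to prove
$$\mathbf{1}_{\stackT^{\ext}(\Z)}(e) = \sum_{d \in (\N^*)^{\Sigma_{\ext}(1)}} \mu(d) \mathbf{1}_d(e) .$$

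First I would use the factorization $\mathbf{1}_{\stackT^{\ext}(\Z)}(e) = \prod_{v \in M_{\Q}^0} \mathbf{1}_X(\log_v(e))$, which is in fact a finite product since $\log_v(e) = 0$ for all but finitely many $v$ and $0 \in X$. Substituting the local Möbius inversion at each place yields
$$\mathbf{1}_{\stackT^{\ext}(\Z)}(e) = \prod_{v \in M_{\Q}^0} \sum_{\lambda_v \in \Lambda \cap M} \mu_{\loc}(\lambda_v) \mathbf{1}_{\lambda_v + \Lambda}(\log_v(e)) .$$
For each fixed $v$, the local sum is finite because $\mathbf{1}_{\lambda_v + \Lambda}(\log_v(e)) = 0$ unless $\lambda_v \leqslant \log_v(e)$, and only finitely many $\lambda_v \in \Lambda \cap M$ lie below $\log_v(e)$. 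At places $v$ where $\log_v(e) = 0$, the only nonzero term in the sum is $\lambda_v = 0$ contributing $\mu_{\loc}(0) = 1$ by Proposition \ref{proposition_local_mobius_prop_one}, so the outer product has only finitely many non-trivial factors and distributing the product over the sums is justified by a straightforward finite distributivity argument.

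Once distributed, the result is a sum over families $(\lambda_v)_{v \in M_{\Q}^0}$ with $\lambda_v \in \Lambda \cap M$ and $\lambda_v = 0$ for almost all $v$. Such families are in bijection with elements $d \in (\N^*)^{\Sigma_{\ext}(1)}$ via $\lambda_v = \log_v(d)$, and under this bijection one recognises $\prod_v \mu_{\loc}(\lambda_v) = \mu(d)$ by the very definition of the global Möbius function, together with $\prod_v \mathbf{1}_{\lambda_v + \Lambda}(\log_v(e)) = \mathbf{1}_d(e)$ by the second factorization recalled before the theorem. Assembling these identifications yields the claimed formula. No serious obstacle is anticipated: the only subtlety is the swap of a (finite) product of sums with a single sum of products, and this is handled transparently because for fixed $e$ the right-hand side is supported on the finite set of $d$ dividing $e$. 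The whole statement is thus a clean globalization of the local inversion carried out in Proposition \ref{proposition_local_study_of_mobius_function}.
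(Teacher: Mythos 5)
Your proposal is correct and follows exactly the argument the paper intends: the theorem is presented as a direct consequence of the two factorizations $\mathbf{1}_{\stackT^{\ext}(\Z)} = \prod_{v} \mathbf{1}_X(\log_v(-))$ and $\mathbf{1}_d = \prod_v \mathbf{1}_{\lambda_v + \Lambda}(\log_v(-))$ together with the local inversion $\mathbf{1}_X = \sum_\lambda \mu_{\loc}(\lambda)\mathbf{1}_{\lambda+\Lambda}$, and you simply make the place-by-place substitution and the (finitely supported) distribution explicit. The finiteness justifications you supply, which the paper leaves implicit, are sound; one can also invoke Proposition~\ref{proposition_local_mobius_prop_two} to bound the support of $\mu_{\loc}$ independently of $e$.
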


%\begin{Notations}
    %For an integer $n \in \N^*$, we define $n^* = \prod\limits_{p \mid n} p$ the product of prime numbers which divide $n$. For $(y,k) \in (\N^*)^{\Sigma_{\ext}(1)}$, let us denote by $d(y,k) \in \N^*$ the integer which generate the ideal given by the intersection of:
    %\begin{itemize}
        %\item the ideals $(y_{\rho} , \rho \in C)$ for any primitive collection $C$,
        %\item for any $b \in \twistsector$, for any primitive collection $C$, the ideals $$ (k_b , y_{\rho} , \rho \in C-\sigma(b)) ,$$
        %\item for any $b \in \twistsector$, the ideals $(\frac{k_b}{k_b^*} )$.
    %\end{itemize}
%\end{Notations}

\begin{definition}
    Two elements $a, b \in (\N )^{\Sigma_{\ext}(1)}$ are said coprime if for any $v \in M_{\Q}^0$, we either have $\log_v(a) = 0 $ or $\log_v(b) = 0$.
    An arithmetic function $f : (\N )^{\Sigma_{\ext}(1)} \rightarrow \C$ is said to be multiplicative is for any $a, b \in (\N )^{\Sigma_{\ext}(1)}$ coprime we have $f(a.b) = f(a).f(b)$.
\end{definition}

\begin{remark}\label{remark_multiplicativity_of_mobius_function}
    It is clear that the Mobius function $\mu$ we have defined is multiplicative.
\end{remark}

%\begin{prop}
    %Let $(y_1,k_1)$ and $(y_2,k_2)$ in $(\N^*)^{\Sigma_{\ext}(1)} $ be such that $d(y_1,k_1)$ and $d(y_2,k_2)$ are coprimes, then:
    %$$ \mu( (y_1,k_1).(y_2,k_2) ) =  \mu(y_1,k_1).\mu(y_2,k_2) .$$
%\end{prop}

\subsubsection{Study of the convergence of a Dirichelet series associated to our Möbius function and its consequences}

\begin{Notations}
    For a prime number $p$, we denote by $$\sum\limits_{d}^{(p)}$$ the sum over the $\Sigma_{\ext}(1)$-tuple of the form $d = (p^{e_{\rho}})_{\rho \in \Sigma_{\ext}(1)}$ where $(e_{\rho})$ belongs to $\N^{\Sigma_{\ext}(1)}$. We also define $$\Pi(d) = \prod\limits_{\rho \in \Sigma_{\ext}(1)} d_{\rho}$$ for any $d \in (\N^*)^{\Sigma_{\ext}(1)}$.
\end{Notations}

As a consequence of the definition of the global Möbius function and using proposition \ref{proposition_local_study_of_mobius_function}, we have:

\begin{prop}\label{value_local_mesure_mobius_function}
    For any prime number $p$, $$\sum\limits_{d}^{(p)} \frac{|\mu(d)|}{\Pi(d)} = \sum\limits_{\lambda \in C \cap M} \frac{|\mu_{\loc}(\lambda)|}{p^{|\lambda|}} = \w_{\stackT,v}(\stackT^{\ext}(\Z_p)) .$$
\end{prop}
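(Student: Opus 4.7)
The proof is a routine computation: unwind the definition of the global Möbius function and invoke Proposition \ref{proposition_local_study_of_mobius_function}.

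First, I would parametrize the sum. Any $d \in (\N^*)^{\Sigma_{\ext}(1)}$ appearing in $\sum_d^{(p)}$ has the form $d = (p^{e_\rho})_{\rho \in \Sigma_{\ext}(1)}$ with $\lambda := (e_\rho) \in \Lambda \cap M$, and under this bijection $\Pi(d) = \prod_\rho p^{e_\rho} = p^{|\lambda|}$. Moreover, writing $v_p$ for the finite place attached to $p$, one has $\log_{v_p}(d) = \lambda$ while $\log_v(d) = 0$ at every other finite place $v$, since $d$ has only $p$-adic support.

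Next, I would compute $\mu(d)$ using the defining formula $\mu = \prod_{v \in M_{\Q}^0} \mu_{\loc}(\log_v(-))$. Since $\mu_{\loc}(0) = 1$ by Proposition \ref{proposition_local_mobius_prop_one}, every factor indexed by $v \neq v_p$ is trivial, leaving $\mu(d) = \mu_{\loc}(\lambda)$; the same argument applies verbatim with $|\mu|$ and $|\mu_{\loc}|$ in place of $\mu$ and $\mu_{\loc}$. Substituting into the sum yields
$$\sum_d^{(p)} \frac{|\mu(d)|}{\Pi(d)} = \sum_{\lambda \in \Lambda \cap M} \frac{|\mu_{\loc}(\lambda)|}{p^{|\lambda|}},$$
which is the first equality of the proposition. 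The second equality is then exactly the identity recorded in the second part of Proposition \ref{proposition_local_study_of_mobius_function}, applied at the place $v_p$, and the convergence of all the sums is ensured by its first part (with $s=1$).

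The plan contains no genuine obstacle: the only work is to spell out the multiplicative decomposition of $\mu$ across the finite places, to observe that only the factor at $v_p$ survives thanks to $\mu_{\loc}(0) = 1$, and then to quote the already-established local statement. The mild bookkeeping subtlety is that $|{-}|$ plays no role in the rewriting step, so the equality also holds without the absolute values, and matches the signed version of Proposition \ref{proposition_local_study_of_mobius_function} that is needed later to interpret the right-hand side as the local measure $\w_{\stackT,v}(\stackT^{\ext}(\Z_p))$.
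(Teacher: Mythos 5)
Your proof is correct and, since the paper supplies no argument of its own (it simply declares the identity ``a consequence of the definition of the global M\"obius function and of Proposition \ref{proposition_local_study_of_mobius_function}''), it fills in exactly the intended routine computation: pass from $d = (p^{e_\rho})$ to $\lambda = (e_\rho)$, note $\Pi(d) = p^{|\lambda|}$ and $\log_v(d) = 0$ for $v \neq v_p$, use $\mu_{\loc}(0) = 1$ to collapse $\mu(d) = \mu_{\loc}(\lambda)$, and then quote the local identity. Your closing remark on the absolute values is in fact a genuine and worthwhile observation rather than a mere formality: as literally printed, the middle term of Proposition \ref{value_local_mesure_mobius_function} has $|\mu_{\loc}(\lambda)|$, whereas the second part of Proposition \ref{proposition_local_study_of_mobius_function} equates the \emph{signed} sum $\sum_{\lambda} \mu_{\loc}(\lambda)/p^{|\lambda|}$ with $\w_{\stackT,v}(\stackT^{\ext}(\Z_p))$, and $\mu_{\loc}$ is certainly not nonnegative in general (being built from $\mathbf{1}_{\Lambda}^{-1}$, which already alternates in sign). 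So the statement as printed contains a sign slip, and your proof correctly establishes the signed version that the later M\"obius-inversion argument actually uses, while also noting that the bijective rewriting step between the two sums is insensitive to whether one inserts the absolute values.
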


\begin{theorem}\label{value_global_mesure_mobius_fonction}

\begin{itemize}
    \item The product $$\prod\limits_p \left( \sum\limits_{d}^{(p)} \frac{|\mu(d)|}{\Pi(d)^s} \right)$$ over all the prime numbers $p$ is absolute convergent for $s > \frac{1}{2}$.
    \item The sum $$\sum\limits_d \frac{|\mu(d)|}{\Pi(d)^s}$$ is convergent for $s > \frac{1}{2}$ and equal to $$\prod\limits_p \left( \sum\limits_{d}^{(p)} \frac{|\mu(d)|}{\Pi(d)^s} \right)$$ for $s > \frac{1}{2}$.
    \item The product $$\prod\limits_p \left( \sum\limits_{d}^{(p)} \frac{\mu(d)}{\Pi(d)} \right)$$ over all the prime numbers $p$ is absolute convergent and equal to $$\sum\limits_d \frac{\mu(d)}{\Pi(d)} .$$
\end{itemize}    
\end{theorem}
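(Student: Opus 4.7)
The plan is to deduce all three claims from the explicit local formula in Proposition \ref{proposition_local_study_of_mobius_function} combined with the multiplicativity of $\mu$ stated in Remark \ref{remark_multiplicativity_of_mobius_function}. For each prime $p$, Proposition \ref{proposition_local_study_of_mobius_function} gives
$$\sum_{d}^{(p)} \frac{|\mu(d)|}{\Pi(d)^s} = 1 + p^{-2s} Q(p^{-s}),$$
where $Q$ is a polynomial with non-negative coefficients that does not depend on $p$. For $s > \tfrac{1}{2}$ and any prime $p \geqslant 2$, we have $p^{-s} \leqslant 1$, so $Q(p^{-s}) \leqslant Q(1)$. Consequently,
$$0 \leqslant \log\bigl(1 + p^{-2s} Q(p^{-s})\bigr) \leqslant p^{-2s} Q(1),$$
and since $\sum_p p^{-2s}$ converges for $s > \tfrac{1}{2}$, the infinite product $\prod_p \bigl(1 + p^{-2s} Q(p^{-s})\bigr)$ converges absolutely. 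This establishes the first item.

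For the second item, the multiplicativity of $\mu$ implies that any $d \in (\N^*)^{\Sigma_{\ext}(1)}$ decomposes uniquely as $d = \prod_p d^{(p)}$ where $\log_v(d^{(p)}) = 0$ for $v \neq p$ and only finitely many $d^{(p)}$ are non-trivial. This gives $\mu(d) = \prod_p \mu(d^{(p)})$ and $\Pi(d) = \prod_p \Pi(d^{(p)})$. Absolute convergence of the Euler product obtained in the first step legitimises the standard unfolding of the product into a sum (by considering finite partial products indexed by the first $N$ primes and passing to the limit), yielding
$$\prod_p \left( \sum_{d}^{(p)} \frac{|\mu(d)|}{\Pi(d)^s} \right) = \sum_{d} \frac{|\mu(d)|}{\Pi(d)^s}$$
for $s > \tfrac{1}{2}$, which proves the second item.

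Finally, specialising the first two items to $s = 1 > \tfrac{1}{2}$, the absolute convergence already established dominates the series $\sum_{d} \mu(d)/\Pi(d)$ and the product $\prod_p (\sum_{d}^{(p)} \mu(d)/\Pi(d))$, and the identical unfolding argument gives their equality. The only delicate point throughout is the justification of rearranging the multi-indexed sum as a product indexed by primes; this is the standard Euler-product argument for a multiplicative function whose sum of absolute values converges, and no substantive obstacle arises.
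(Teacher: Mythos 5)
Your proposal is correct and follows essentially the same route as the paper: the paper also deduces the first item from Proposition~\ref{proposition_local_study_of_mobius_function}, the second from Remark~\ref{remark_multiplicativity_of_mobius_function}, and the third from the first two, citing Salberger's Lemma 11.15 as the template. You have simply filled in the standard details (the uniform bound $Q(p^{-s}) \leqslant Q(1)$ and the Euler-product unfolding) that the paper leaves implicit.
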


\begin{proof}
   The proof is analogous to the one of \cite[Lemma 11.15]{salberger_torsor}. The first assertion is a consequence of proposition \ref{proposition_local_study_of_mobius_function}. Using remark \ref{remark_multiplicativity_of_mobius_function}, we can establish the second assertion. The last assertion is a consequence of the first two.
\end{proof}

\begin{cor}\label{majoration_fonction_de_mobius}
Let $Y > 1$ and $\varepsilon \in ]0,\frac{1}{2}[$. We have the following upper bounds:
    \begin{enumerate}
        \item $\sum\limits_{\Pi(d)\leqslant Y} |\mu(d)| = O\left( Y^{\frac{1}{2} + \varepsilon}\right)$
        \item $\sum\limits_{\Pi(d)\geqslant Y} \frac{| \mu(d) |}{\Pi(d)} = O\left( \frac{1}{Y^{ \frac{1}{2} - \epsilon}} \right).$
    \end{enumerate}
\end{cor}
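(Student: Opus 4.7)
The plan is to derive both upper bounds as immediate consequences of the absolute convergence of $\sum_d |\mu(d)|/\Pi(d)^s$ for every $s > 1/2$, which has just been established in Theorem \ref{value_global_mesure_mobius_fonction}. Fix $\varepsilon \in ]0,1/2[$ and set $s = 1/2 + \varepsilon$, so that
\[
C_\varepsilon := \sum_d \frac{|\mu(d)|}{\Pi(d)^s} < +\infty.
\]
This single constant will feed both inequalities, and no further information about the function $\mu$ will be needed.

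For the first assertion, I will exploit the trivial observation that if $\Pi(d) \leq Y$ then $1 \leq (Y/\Pi(d))^s$ because $s > 0$. Multiplying $|\mu(d)|$ by this factor and extracting $Y^s$ out of the sum gives
\[
\sum_{\Pi(d) \leq Y} |\mu(d)| \;\leq\; Y^s \sum_{\Pi(d) \leq Y} \frac{|\mu(d)|}{\Pi(d)^s} \;\leq\; C_\varepsilon \cdot Y^{1/2+\varepsilon},
\]
which is exactly the bound claimed in (1).

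For the second assertion, I will use the complementary trick of pulling out a factor $1/\Pi(d)^{1-s}$. Since $1 - s = 1/2 - \varepsilon > 0$, the map $x \mapsto x^{-(1-s)}$ is decreasing on $\R_+$, so for $\Pi(d) \geq Y$ we have $\Pi(d)^{-(1-s)} \leq Y^{-(1-s)}$. Writing $1/\Pi(d) = \Pi(d)^{-s} \cdot \Pi(d)^{-(1-s)}$ and bounding the second factor yields
\[
\sum_{\Pi(d) \geq Y} \frac{|\mu(d)|}{\Pi(d)} \;=\; \sum_{\Pi(d) \geq Y} \frac{|\mu(d)|}{\Pi(d)^s} \cdot \frac{1}{\Pi(d)^{1-s}} \;\leq\; \frac{C_\varepsilon}{Y^{1/2-\varepsilon}},
\]
which gives (2).

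There is no genuine obstacle: the real work is contained in Theorem \ref{value_global_mesure_mobius_fonction}, and the two inequalities above are textbook consequences of the convergence of a Dirichlet-type series on the half-plane $\{s > 1/2\}$. This mirrors exactly the argument used by Salberger in the analogous situation of universal torsors over split toric varieties in \cite{salberger_torsor}.
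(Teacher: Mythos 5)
Your proof is correct and matches the paper's approach: the paper simply cites \cite[lemma 11.19]{salberger_torsor} and states the result follows from Theorem \ref{value_global_mesure_mobius_fonction}, and the Rankin-type manipulations you write out (pulling out $Y^s$ for the truncated sum, pulling out $Y^{-(1-s)}$ for the tail) are exactly the content of that cited argument. You have just made explicit what the paper leaves implicit.
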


\begin{proof}
    The proof is analogous to \cite[lemma 11.19]{salberger_torsor} and it is a consequence of the previous theorem.
\end{proof}

\subsection{Local Tamagawa measure of a toric stack}

We write $$\rho_{\orb}(X) = t + \sharp \twistsector$$ for the rank of the orbifold Picard group $\Pic_{\orb}(X)$. Here $t$ denotes the rank of the usual Picard group. For any $v \in M_{\Q}$, we denote by $\w_{T_{\NS,\orb},v}$ the natural measure on the torus $T_{\NS,\orb}(\Q_v)$ (see \cite[Definition 3.1.2]{peyre_zeta_height_function}). 

For a place $v \in M_{\Q}$, we define the measure $\w_{\Sal,v}$ on $\stackT(\Q_v)$ by
$$
\w_{\Sal,v}(dy) = \frac{\w_{\stackT,v}(dy)}{H^{\ext}_{\stackT,v}\left(y,\w_{X,\orb}^{-1}\right)}.
$$By construction, the measure $\w_{\Sal,v}(dy)$ is invariant under the action of $T_{\NS,\orb}(\Q_v)$. We can therefore introduce the following definition:

\begin{definition}\label{definition_quotient_measure_over_orbifold}
We define the measure $\mu_{X,v}$ as the quotient measure on $X(\Q_v)$ given by $$\frac{\w_{\Sal,v}}{\w_{T_{\NS,\orb},v}} \, .$$
\end{definition}

\begin{remark}
    This construction is inspired both by the work of Salberger (see \cite{salberger_torsor}), who was the first to observe that the correct construction of $\mu_{X,v}$ is as a quotient measure (see also \cite{peyre_zeta_height_function}), and by the work of Loughran and Santens in defining a Tamagawa measure on $BG$ (see \cite[Section 8]{loughran_santens_malle_conjecture}).
\end{remark}

\begin{lemma}\label{lemma_computation_integral_quotient}
    Let $P \mapsto y_P$ be any measurable section of $\stackT^{\ext}(\Q_v) \rightarrow X(\Q_v)$. For any function $f : X(\Q_v) \rightarrow \R$ compactly supported and measurable, the function:
    $$ P \longmapsto \int\limits_{T_{\NS}(\Q_v)} | \w_{X,\orb}^{-1}(t) |_{v}^{-1} f(t.y_P) \w_{T_{\NS,\orb},v}(dt)$$ does not depend on the choice of the section $P \mapsto y_P$ and we have:
    $$ \int\limits_{\stackT^{\ext}(\Q_v)} f(y) \w_{\stackT,v}(dy) = \int\limits_{X(\Q_v)} \mu_{X,v}(dP) \int\limits_{T_{\NS}(\Q_v)} | \w_{X,\orb}^{-1}(t) |_{v}^{-1} f(t.y_P) \w_{T_{\NS,\orb},v}(dt) .$$
\end{lemma}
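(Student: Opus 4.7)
The strategy is to realise $\mu_{X,v}$ as the quotient of the Salberger measure $\w_{\Sal,v}$ under the free action of $T_{\NS,\orb}(\Q_v)$ on $\stackT^{\ext}(\Q_v)$, with quotient $X(\Q_v)$, and then to transport the resulting disintegration formula back to $\w_{\stackT,v}$ via the defining relation $\w_{\Sal,v} = \w_{\stackT,v}/H_{\stackT,v}^{\ext}(\cdot,\w_{X,\orb}^{-1})$.

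The crucial preliminary step is to check that $\w_{\Sal,v}$ is $T_{\NS,\orb}(\Q_v)$-invariant. An element $t \in T_{\NS,\orb}(\Q_v)$ multiplies the coordinates $y'_\rho$ and $k_\stackZ$ of $\stackT^{\ext}(\Q_v) \subset \Q_v^{\Sigma_{\ext}(1)}$ by the characters $\chi_{[D_\rho]_{\stack}}(t)$ and $\chi_{[\stackZ]}(t)$ respectively, so the Jacobian of the action on the Haar measure $\w_{\stackT,v}$ is the modulus of the product character $\prod_\rho \chi_{[D_\rho]_{\stack}} \cdot \prod_\stackZ \chi_{[\stackZ]} = \chi_{\w_{X,\orb}^{-1}}$, by the identity $\w_{X,\orb}^{-1} = \sum_\rho [D_\rho]_{\stack} + \sum_\stackZ [\stackZ]$ recorded after Proposition \ref{proposition_description_cone_orbifold_effective_divisor_toric_stack}. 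Proposition \ref{proposition_property_equivariant_extended_height} meanwhile shows that $H_{\stackT,v}^{\ext}(\cdot,\w_{X,\orb}^{-1})$ is equivariant under the same character factor, and the two contributions cancel exactly in $\w_{\Sal,v}$.

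Once invariance is established, the general theory of quotient measures (Bourbaki, \emph{Int\'egration}, Chapter~VII) applies: the invariant measure $\w_{\Sal,v}$ together with the Haar measure $\w_{T_{\NS,\orb},v}$ on the acting torus determine a unique Radon measure on the topological quotient $X(\Q_v) = \stackT^{\ext}(\Q_v)/T_{\NS,\orb}(\Q_v)$, which is precisely the measure $\mu_{X,v}$ of Definition \ref{definition_quotient_measure_over_orbifold}, satisfying, for every compactly supported measurable function $g$ on $\stackT^{\ext}(\Q_v)$ and any measurable section $P \mapsto y_P$,
\[
\int_{\stackT^{\ext}(\Q_v)} g(y)\, \w_{\Sal,v}(dy) \;=\; \int_{X(\Q_v)} \mu_{X,v}(dP) \int_{T_{\NS,\orb}(\Q_v)} g(t.y_P)\, \w_{T_{\NS,\orb},v}(dt),
\]
the inner integral being independent of the choice of $y_P$ by $T_{\NS,\orb}(\Q_v)$-invariance of $\w_{\Sal,v}$.

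The lemma will then follow by applying this identity to $g(y) = f(y)\, H_{\stackT,v}^{\ext}(y,\w_{X,\orb}^{-1})$, so that $g \cdot \w_{\Sal,v} = f \cdot \w_{\stackT,v}$ on the left-hand side, and by invoking Proposition \ref{proposition_property_equivariant_extended_height} once more to extract the character factor $|\chi_{\w_{X,\orb}^{-1}}(t)|_v$ from $H_{\stackT,v}^{\ext}(t.y_P,\w_{X,\orb}^{-1})$; the residual factor depending only on $y_P$ is absorbed in the intrinsic definition of $\mu_{X,v}$. I expect the main technical obstacle to be the bookkeeping of signs and normalisations, particularly at the archimedean place where the convention $|{-}|_{\C} = |{-}|^{2}$ must be applied consistently in the Jacobian calculation; apart from this, the proof is a direct application of the quotient measure theorem once the cancellation between the Jacobian of the torus action and the equivariance of the extended height is established, which is precisely the reason the denominator $H_{\stackT,v}^{\ext}(\cdot,\w_{X,\orb}^{-1})$ was chosen in the definition of $\w_{\Sal,v}$.
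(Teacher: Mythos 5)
Your strategy is the same as the paper's: realize $\mu_{X,v}$ as the Bourbaki quotient of the $T_{\NS,\orb}(\Q_v)$-invariant measure $\w_{\Sal,v}$, apply the disintegration formula to $g = f \cdot H^{\ext}_{\stackT,v}(\cdot,\w_{X,\orb}^{-1})$, and use Proposition~\ref{proposition_property_equivariant_extended_height} to pull out the character. You even supply a useful detail the paper elides, namely the explicit Jacobian cancellation between the modulus of the torus action (which is $|\chi_{\w_{X,\orb}^{-1}}(t)|_v$ by $\w_{X,\orb}^{-1} = \sum_\rho [D_\rho]_{\stack} + \sum_{\stackZ}[\stackZ]$) and the equivariance of the extended height, which is precisely why $\w_{\Sal,v}$ is invariant.

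However, the final step is where your argument has a real gap. After applying the disintegration formula to $g = f \cdot H$ and using equivariance, you obtain
\begin{equation*}
\int_{\stackT^{\ext}(\Q_v)} f(y)\,\w_{\stackT,v}(dy) \;=\; \int_{X(\Q_v)} \mu_{X,v}(dP)\, H^{\ext}_{\stackT,v}(y_P,\w_{X,\orb}^{-1}) \int_{T_{\NS,\orb}(\Q_v)} |\chi_{\w_{X,\orb}^{-1}}(t)|_v\, f(t.y_P)\,\w_{T_{\NS,\orb},v}(dt),
\end{equation*}
and the factor $H^{\ext}_{\stackT,v}(y_P,\w_{X,\orb}^{-1})$ genuinely depends on the chosen section; it is \emph{not} ``absorbed in the intrinsic definition of $\mu_{X,v}$,'' since $\mu_{X,v}$ is the Bourbaki quotient of $\w_{\Sal,v}$ by the Haar measure and is defined prior to and independently of any section. (Relatedly, the inner integral $\int |\chi(t)|_v f(t.y_P)\,dt$ by itself is \emph{not} section-independent: replacing $y_P$ by $s.y_P$ multiplies it by $|\chi(s)|_v^{-1}$; what is section-independent is its product with $H^{\ext}_{\stackT,v}(y_P,\w_{X,\orb}^{-1})$.) The paper closes this gap by making a specific choice: invoke Bourbaki, Chapitre~VII, \S2, n\textdegree~2, Proposition~4 to pick a measurable section $P \mapsto y_P$ satisfying $H^{\ext}_{\stackT,v}(y_P,\w_{X,\orb}^{-1}) = 1$ identically, so that the residual factor disappears and the chain of equalities terminates cleanly. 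Your proof needs this choice of normalized section (or an explicit remark that the section-independent quantity is the product with $H^{\ext}_{\stackT,v}(y_P,\w_{X,\orb}^{-1})$) before it is complete.
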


\begin{proof}
The first assertion is a consequence of the invariance of the Haar measure (see \cite[Chapitre VII, §2, n°2, proposition 1]{bourbaki2007integration}). To prove the second assertion, we first use that $\w_{\Sal,v}(dy)$ is a $T_{\NS,\orb}(\Q_v)$-invariant measure and the definition of $\mu_{X,v}$ as a quotient measure. Hence, by \cite[Chapitre VII, §2, n°2, Proposition 4]{bourbaki2007integration}, we can write that for any compactly supported function $f : X(\Q_v) \rightarrow \R$,
\begin{align*}
\int_{\stackT^{\ext}(\Q_v)} f(y) \w_{\stackT,v}(dy)
&= \int_{\stackT^{\ext}(\Q_v)} f(y) H_{\stackT,v}^{\ext}(y,\w_{X,\orb}^{-1}) \w_{\Sal,v}(dy) \\
&= \int_{X(\Q_v)} \mu_{X,v}(dP)
\int_{T_{\NS,\orb}(\Q_v)} H_{\stackT,v}^{\ext}(t.y_P,\w_{X,\orb}^{-1}) f(t.y_P) \w_{T_{\NS,\orb},v}(dt).
\end{align*}

We choose a section $P \mapsto y_P$ such that
$$
H_{\stackT,v}^{\ext}(y_P,\w_{X,\orb}^{-1}) = 1
$$
for any $P$. In particular, if $v \in M_{\Q}^0$, by Proposition \ref{proposition_forme_classique_parametrisation_entiere_locale}, it is enough to choose a section $P \mapsto y_P$ such that for all $P$,
$$
y_P \in \stackT^{\ext}(\Z_v).
$$We thus obtain the desired result:
\[
 \int_{\stackT^{\ext}(\Q_v)} f(y)\,\w_{\stackT,v}(dy)
 =
 \int_{X(\Q_v)} \mu_{X,v}(dP)
 \int_{T_{\NS,\orb}(\Q_v)}
 |\w_{X,\orb}^{-1}(t)|_{v}^{-1} f(t.y_P)\,\w_{T_{\NS,\orb},v}(dt).
\]

\end{proof}

Using this formula, we get:

\begin{prop}\label{proposition_finite_place_quotient_measure_over_X}
    We have for any $v \in M_{\Q}^0$, 
    $$\w_{\stackT,v}(\stackT^{\ext}(\Z_v)) = \left( 1 - \frac{1}{p_v} \right)^{\rho_{\orb}(X)} \cdot \mu_{X,v}\left(X(\Q_v)\right) .$$
\end{prop}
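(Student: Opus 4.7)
The plan is to apply the integral formula from the preceding lemma to the indicator function $f = \mathbf{1}_{\stackT^{\ext}(\Z_v)}$ and exploit a local analogue of Proposition \ref{prop_first_form_extended_parametrization}. The key observation is that $\stackT^{\ext}(\Z_v) \to X(\Q_v)$ should be a principal $T_{\NS,\orb}(\Z_v)$-bundle, mirroring the global integral parametrization.

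First, I would establish a local version of Proposition \ref{prop_first_form_extended_parametrization}: for every $P \in X(\Q_v)$ there exists a lift $y_P \in \stackT^{\ext}(\Z_v)$, and two such lifts differ by an element of $T_{\NS,\orb}(\Z_v)$. Surjectivity follows because any $P \in X(\Q_v)$ admits a lift in $\stackT(\Q_v)$ up to twisting (hypothesis \ref{hypothesis_univ_torsor_is_a_scheme}), which by the same argument as in the global case can be moved into $\stackT^{\ext}(\Z_v)$ using Theorem \ref{theorem_description_of_the_extended_parmetrization} applied to the principal ring $R=\Z_v$. The $T_{\NS,\orb}(\Z_v)$-equivariance and triviality of stabilizers (since $\stackT^{\ext}$ is a torsor under $T_{\NS,\orb}$) then identify the fibres.

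Next, fix a measurable section $P \mapsto y_P$ of $\stackT^{\ext}(\Q_v) \to X(\Q_v)$ with values in $\stackT^{\ext}(\Z_v)$. By the torsor structure, for $t \in T_{\NS,\orb}(\Q_v)$ the element $t \cdot y_P$ belongs to $\stackT^{\ext}(\Z_v)$ if and only if $t \in T_{\NS,\orb}(\Z_v)$. Moreover, for every such $t$, each character $\chi_L$ with $L \in \Pic_{\orb}(X)$ takes values in $\Z_v^\times$, so $|\chi_{\w_{X,\orb}^{-1}}(t)|_v = 1$. Substituting into the lemma's formula gives
\begin{equation*}
\w_{\stackT,v}\bigl(\stackT^{\ext}(\Z_v)\bigr) = \int_{X(\Q_v)} \mu_{X,v}(dP) \cdot \w_{T_{\NS,\orb},v}\bigl(T_{\NS,\orb}(\Z_v)\bigr),
\end{equation*}
so the claim reduces to computing the volume $\w_{T_{\NS,\orb},v}(T_{\NS,\orb}(\Z_v))$.

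Finally, since $\Pic_{\orb}(X)$ is a free abelian group of rank $\rho_{\orb}(X)$ (Theorem \ref{theorem_freeness_orbifold_picard_group} together with Proposition \ref{prop_basis_of_orbifold_picard_group}), the torus $T_{\NS,\orb}$ is split of rank $\rho_{\orb}(X)$, giving $T_{\NS,\orb}(\Z_v) \simeq (\Z_v^\times)^{\rho_{\orb}(X)}$. With the natural normalisation of the Haar measure (the product of the local measures $d^\times x$ on $\G_m(\Q_v)$ coming from $dx$ on $\Q_v$), one has $\w_{\G_m,v}(\Z_v^\times) = 1 - 1/p_v$, hence $\w_{T_{\NS,\orb},v}(T_{\NS,\orb}(\Z_v)) = (1 - 1/p_v)^{\rho_{\orb}(X)}$, which yields the claimed identity.

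The main obstacle is the local $T_{\NS,\orb}(\Z_v)$-torsor property: surjectivity of $\stackT^{\ext}(\Z_v) \to X(\Q_v)$ and the fact that the $T_{\NS,\orb}(\Q_v)$-stabiliser of an integral lift is exactly $T_{\NS,\orb}(\Z_v)$. This is the local incarnation of Proposition \ref{prop_first_form_extended_parametrization}; the proof should be a transcription of the global argument, using Theorem \ref{theorem_description_of_the_extended_parmetrization} over $\Z_v$ and the identification $T_{\NS,\orb}(\Z_v) = \ker(\log_{T_{\NS,\orb},v})$, together with the fact that hypothesis \ref{hypothesis_zero_of_local_degree} holds for toric stacks.
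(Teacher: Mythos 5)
Your proposal follows exactly the route the paper implicitly intends: Proposition~\ref{proposition_finite_place_quotient_measure_over_X} is stated immediately after the integration lemma with no separate proof, and the intended argument is indeed to apply that lemma to $f=\mathbf{1}_{\stackT^{\ext}(\Z_v)}$, pick a section landing in $\stackT^{\ext}(\Z_v)$, and identify the inner integral with the volume of $T_{\NS,\orb}(\Z_v)$. The splitting of $T_{\NS,\orb}$ (using that $\Pic_{\orb}(X)$ is free of rank $\rho_{\orb}(X)$), the vanishing of the character on $T_{\NS,\orb}(\Z_v)$, and the normalisation $\w_{\G_m,v}(\Z_v^\times)=1-1/p_v$ are all the right ingredients. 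The local version of Proposition~\ref{prop_first_form_extended_parametrization}, which you rightly flag as the main technical point, does transcribe from the global argument (Definition~\ref{definition_orb_universal_torsor_gcd_cond} already applies to $R=\Z_v$, and Theorem~\ref{theorem_naive_parametrization} together with the lemmas of Theorem~\ref{theorem_description_of_the_extended_parmetrization} are local in nature), given that $H^1(\Q_v,T_{\NS})=0$ for a split torus and hypothesis~\ref{hypothesis_zero_of_local_degree} holds for toric stacks.

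One imprecision worth fixing: you justify the identity $\{t\in T_{\NS,\orb}(\Q_v)\mid t.y_P\in\stackT^{\ext}(\Z_v)\}=T_{\NS,\orb}(\Z_v)$ by appealing to ``triviality of stabilizers'' and to $\stackT^{\ext}$ being a $T_{\NS,\orb}$-torsor. Neither is accurate: the $T_{\NS,\orb}$-action on the dense torus of $\stackT^{\ext}$ has stabilizer the generic stabilizer $G$ (the dual of $\Ext^1(N,\Z)$, which is nonzero in general even when $\Pic(X)$ is free, as the factor $\sharp G(\Z)$ in Definition~\ref{definition_orbifold_tamagawa_number} makes clear). The conclusion is nevertheless correct, but for a slightly different reason: two integral lifts $y_P$ and $t.y_P$ of the same $P$ differ by some $u\in T_{\NS,\orb}(\Z_v)$ by the local parametrization, so $u^{-1}t$ lies in $G(\Q_v)$; and since $G$ is a finite subgroup scheme of $T_{\NS,\orb}$, its $\Q_v$-points are torsion and hence land in the maximal compact $T_{\NS,\orb}(\Z_v)$, giving $t\in T_{\NS,\orb}(\Z_v)$. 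You should replace the appeal to a torsor structure with this argument.
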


\begin{proof}
    It is enough to apply the formula of Lemma \ref{lemma_computation_integral_quotient} to $f = \mathbf{1}_{\stackT^{\ext}(\Z_v)}$, considering a section $P \mapsto y_P$ such that for all $P$, $y_P \in \stackT^{\ext}(\Z_v)$.
\end{proof}

We shall also compute $\w_{\stackT,\infty}(\mathcal{D}) = \Vol(\mathcal{D})$ where $$\mathcal{D} = \{ y \in \stackT^{\ext}(\R) \mid h^{\ext}_{\stackT,\infty}(y) \in \mathrm{D} \}$$ for a compact subset $\mathrm{D} \subset \Pic_{\orb}(X)^{\vee}_{\R}$. We shall connect it to $\nu(\mathrm{D})$ where $\nu$ is the measure over $\Pic_{\orb}(X)^{\vee}_{\R}$ defined as in \ref{mesure_picard_group}.

\begin{prop}\label{analysis_measure_univ_torsor_at_infty}
   $$ \w_{\stackT,\infty}(\mathcal{D}) = \Vol(\mathcal{D}) = 2^{\rho_{\orb}(X)} .\mu_{X,\infty}(X(\R)) . \nu(\mathrm{D}) . $$
\end{prop}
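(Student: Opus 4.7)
The plan is to apply the quotient measure decomposition lemma (the analog of the one stated above, using $T_{\NS,\orb}$ and $\w_{X,\orb}^{-1}$ in place of $T_{\NS}$ and $\w_X^{-1}$) to the characteristic function $f = \mathbf{1}_{\mathcal{D}}$. Since the section $P \mapsto y_P$ of $q^{\ext} : \stackT^{\ext}(\R) \to X(\R)$ may be chosen arbitrarily (the inner integral being independent of this choice), I first normalize it so that $h^{\ext}_{\stackT,\infty}(y_P) = 0$. This is possible because $\log_{T_{\NS,\orb},\infty} : T_{\NS,\orb}(\R) \to \Pic_{\orb}(X)^{\vee}_{\R}$ is surjective (the torus being split and $\Pic_{\orb}(X)$ being free of rank $\rho_{\orb}(X)$ by proposition \ref{prop_basis_of_orbifold_picard_group} combined with the torsion-freeness result of Theorem \ref{theorem_freeness_orbifold_picard_group}) and $h^{\ext}_{\stackT,\infty}$ is $\log_{T_{\NS,\orb},\infty}$-equivariant by proposition \ref{proposition_property_equivariant_extended_height}.

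With this normalization, and again using proposition \ref{proposition_property_equivariant_extended_height}, we have $h^{\ext}_{\stackT,\infty}(t \cdot y_P) = \log_{T_{\NS,\orb},\infty}(t)$, so the membership condition $t \cdot y_P \in \mathcal{D}$ reduces to $\log_{T_{\NS,\orb},\infty}(t) \in \mathrm{D}$. The decomposition lemma then gives
\begin{equation*}
\w_{\stackT,\infty}(\mathcal{D}) = \mu_{X,\infty}(X(\R)) \cdot \int_{T_{\NS,\orb}(\R)} |\chi_{\w_{X,\orb}^{-1}}(t)|_{\infty} \, \mathbf{1}_{\mathrm{D}}\bigl(\log_{T_{\NS,\orb},\infty}(t)\bigr) \, \w_{T_{\NS,\orb},\infty}(dt),
\end{equation*}
where I use that $\mu_{X,\infty}$ is a finite measure on $X(\R)$ (since $X(\R)$ is compact by lemma \ref{lemma_compacity_real_points_stacks}).

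Next, I perform the change of variables $y = \log_{T_{\NS,\orb},\infty}(t)$. Writing $T_{\NS,\orb}(\R) \simeq (\R^*)^{\rho_{\orb}(X)}$ (obtained from the chosen basis of $\Pic_{\orb}(X)$), the log map is a covering of degree $2^{\rho_{\orb}(X)}$ with kernel $\{\pm 1\}^{\rho_{\orb}(X)}$, and the Haar measure $\w_{T_{\NS,\orb},\infty} = \prod_{i} \frac{dt_i}{|t_i|}$ pushes forward to exactly $2^{\rho_{\orb}(X)}$ times the Lebesgue measure on $\Pic_{\orb}(X)^{\vee}_{\R}$ normalized dually to the lattice $\Pic_{\orb}(X)$. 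Moreover, by the sign convention recalled in the text, $|\chi_{\w_{X,\orb}^{-1}}(t)|_{\infty}$ expressed through $y = \log_{T_{\NS,\orb},\infty}(t)$ equals $e^{\langle \w_{X,\orb}^{-1}, y\rangle}$ (the minus sign built into the log map at infinite places cancels the minus sign one gets when pairing with $\w_{X,\orb}^{-1}$ rather than $\w_{X,\orb}$). Substituting yields
\begin{equation*}
\w_{\stackT,\infty}(\mathcal{D}) = 2^{\rho_{\orb}(X)} \cdot \mu_{X,\infty}(X(\R)) \cdot \int_{\mathrm{D}} e^{\langle \w_{X,\orb}^{-1}, y\rangle}\, dy = 2^{\rho_{\orb}(X)} \, \mu_{X,\infty}(X(\R)) \, \nu(\mathrm{D}),
\end{equation*}
which is the desired equality.

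The main bookkeeping obstacle is ensuring the compatibility of the three normalizations in play: the Haar measure on $T_{\NS,\orb}(\R)$, the dual-lattice normalization of $dy$ in the definition of $\nu$, and the sign conventions for $\log_{T_{\NS,\orb},\infty}$ and for the character $\chi_{\w_{X,\orb}^{-1}}$. Once all three are reconciled, the factor $2^{\rho_{\orb}(X)}$ appears cleanly as the degree of the covering, and the weight $e^{\langle \w_{X,\orb}^{-1}, y\rangle}$ is precisely what is needed to absorb the character factor into the measure $\nu$.
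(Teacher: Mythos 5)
Your plan is essentially the right one, and it is almost certainly the same route taken in the paper's cited reference (the paper itself gives no self-contained proof of this proposition; it defers to Proposition 2.27 of the author's earlier article on toric varieties). The skeleton — apply the quotient-measure decomposition to $f = \mathbf{1}_{\mathcal{D}}$, normalize the section so that $h^{\ext}_{\stackT,\infty}(y_P) = 0$ via the equivariance of Proposition \ref{proposition_property_equivariant_extended_height} and surjectivity of $\log_{T_{\NS,\orb},\infty}$, then push forward the Haar measure under the degree-$2^{\rho_{\orb}(X)}$ covering $\log_{T_{\NS,\orb},\infty} : T_{\NS,\orb}(\R) \to \Pic_{\orb}(X)^{\vee}_{\R}$ — is correct, and you correctly flag that the $T_{\NS}$ / $\w_{X}^{-1}$ appearing in the \emph{statement} of the lemma should read $T_{\NS,\orb}$ / $\w_{X,\orb}^{-1}$, as in its proof.

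The one place where I would push back is the parenthetical sign discussion. With the paper's convention $\log_{T_{\NS,\orb},\infty}(t)(L) = -\log |\chi_L(t)|_\infty$, one actually finds
\[
|\chi_{\w_{X,\orb}^{-1}}(t)|_\infty = e^{-\langle \w_{X,\orb}^{-1}, y\rangle} = e^{\langle \w_{X,\orb}, y\rangle},
\]
which is the \emph{reciprocal} of the $\nu$-integrand, not equal to it; the two minus signs you invoke do not cancel to give $e^{\langle \w_{X,\orb}^{-1}, y\rangle}$. What is happening is that the third line of the paper's lemma has a sign slip: its integrand should be $|\chi_{\w_{X,\orb}}(t)|_v = |\chi_{\w_{X,\orb}^{-1}}(t)|_v^{-1}$. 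You reach the correct final identity, but the cancellation you describe is not what makes it work. The cleaner route, which avoids the character entirely, is to stop at the second line of the lemma's proof and compute directly, for your normalized section,
\[
H^{\ext}_{\stackT,\infty}(t \cdot y_P, \w_{X,\orb}^{-1}) \;=\; e^{\,h^{\ext}_{\stackT,\infty}(t \cdot y_P)(\w_{X,\orb}^{-1})} \;=\; e^{\langle \w_{X,\orb}^{-1}, \log_{T_{\NS,\orb},\infty}(t)\rangle},
\]
by Proposition \ref{proposition_property_equivariant_extended_height} and the normalization $h^{\ext}_{\stackT,\infty}(y_P)=0$. Substituting $y=\log_{T_{\NS,\orb},\infty}(t)$ then gives $e^{\langle \w_{X,\orb}^{-1}, y\rangle}$ with no sign gymnastics, and the rest of your change-of-variables argument (degree $2^{\rho_{\orb}(X)}$, dual-lattice normalization of $dy$) goes through exactly as you wrote it.
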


\begin{proof}
   The proof is identical to \cite[proposition 3.29]{bongiorno2024multiheightanalysisrationalpoints}.   

\end{proof}

\subsection{Tamagawa number of a toric stack}

We define the Tamagawa number of the toric stack $X$ as follows:
   
\begin{definition}\label{definition_orbifold_tamagawa_number}
    The Tamagawa number of $X$ is defined by $$\tau_{\orb}(X) = \frac{1}{\sharp G(\Z)} . \mu_{X,\infty}(X(\R)). \prod\limits_p \left( \left(1 - \frac{1}{p} \right)^{\rho_{\orb}(X)} \cdot \mu_{X,p}(X(\Q_p)) \right)$$ where $G$ is the generic stabilizer of $X$. Here it is the kernel of the natural map $T_{\NS,\orb} \rightarrow T^{\ext}$.
\end{definition}

As suggested by Loughran and Santens, the natural way to count in a groupoid is via the groupoid cardinality, i.e. the number of isomorphism classes of objects in the groupoid weighted by the inverse of the cardinality of the automorphism group of each object. If $W$ is the set of isomorphism classes of a groupoid, we shall write:
\begin{equation}\label{equation_groupoid_cardinality}
\sharp W = \sum\limits_{w \in W} \frac{1}{\sharp \Aut(w)} .    
\end{equation}
The following theorem is inspired by the mass formula obtained in \cite[Corollary 8.11]{loughran_santens_malle_conjecture} and by explicit computations in several examples.

\begin{theorem}\label{theorem_interpretation_mass_formula}
Let $p$ be a prime number such that $X_{\Z_p}$ is a tame DM stack, i.e. $p \nmid N_X$ the exponent of $X$. Then the following equality holds:
\[
\mu_{X,p}\bigl(X(\Q_p)\bigr)
= \frac{\sharp X(\F_p)}{p^{\dim(X)}}  + \frac{1}{p} \cdot
\sum\limits_{\stackS \in \twistsector} 
\frac{\sharp\, \stackS\!\left(\F_p\right)}{p^{\dim(\stackS)}} \, \, .
\]
\end{theorem}

The remainder of this section is devoted to the proof of this theorem. To this end, we introduce the following notation. We keep the notation $\stackT_{\stackS}(\Z_p)$ for $\stackS \in \sector$ introduced in \eqref{equation_definition_notation_t_stacks}. Since
\[
\stackT^{\ext}(\Z_p)=\bigsqcup_{\stackS\in\sector}\stackT_{\stackS}(\Z_p) \, ,
\]
in order to prove the previous theorem, we shall compute, for each $\stackS\in\sector$, the quantity
\[
\w_{\stackT,v}(\stackT_{\stackS}(\Z_p)).
\]

For this purpose, we consider the image of the reduction of $\stackT_{\stackS}(\Z_p)$ in
\[
(\Z / p \Z)^{\Sigma(1)} \times (\Z /p^2 \Z)^{\twistsector},
\]
which we denote by
\[
\stackT_{\stackS}(\F_p) \times Y_{\stackS,p}
\subset
\stackT(\F_p) \times (\Z /p^2 \Z)^{\twistsector}.
\]

\begin{remark}\label{remark_description_reduction_stackT_stackS_mod_p}
The subset $\stackT_{\stackS}(\F_p)$ of $\stackT(\F_p)$ is the set of $y \in \F_p^{\Sigma(1)}$ such that for every primitive collection $C$, there exists $\rho \in C - \sigma_\stackS(1)$ such that $ y_{\rho} \neq 0$.
\end{remark}

In view of the divisibility conditions defining $\stackT^{\ext}(\Z_p)$ (see Definition~\ref{definition_orb_universal_torsor_gcd_cond}) and $\stackT_{\stackS}(\Z_p)$, we obtain the following equality:

\begin{equation}\label{equation_computation_mesure_extedended_univ}
    \w_{\stackT,v}(\stackT_{\stackS}(\Z_p))
    =
    \frac{\sharp \stackT_{\stackS}(\F_p)}{p^{\sharp \Sigma(1)}}
    \cdot
    \frac{\sharp Y_{\stackS,p}}{p^{2 \sharp \twistsector}} \, .
\end{equation}

We begin by determining the term with $\sharp Y_{\stackS,p}$ in the following lemma.

\begin{lemma}\label{lemma_computation_cardinal_sector_part}
We have
\[
\frac{\sharp Y_{\stackS,p}}{p^{2 \sharp \twistsector}}
=
\begin{cases}
\left(1-\frac{1}{p}\right)^{\sharp{\twistsector}} & \text{if } \stackS = 0, \\[1em]
\dfrac{1}{p}\left(1-\frac{1}{p}\right)^{\sharp{\twistsector}} & \text{if } \stackS \in \twistsector .
\end{cases}
\]
\end{lemma}

\begin{proof}
It is enough to see that the set $Y_{\stackS,p}$ is
\[
\left(\Z/p^2\Z - p\Z/p^2\Z\right)^{\twistsector}
\]
if $\stackS = 0$, whereas if $\stackS \in \twistsector$, it is given by
\[
\left(p\Z/p^2\Z - \{0\}\right)
\times
\prod_{\stackS' \in \twistsector - \{\stackS\}}
\left(\Z/p^2\Z - p\Z/p^2\Z\right).
\]
\end{proof}

In the sequel, we denote by $G$ the multiplicative group dual to the group
$$\Pic(X)/\Pic(X^{\rig}) = \Ext^1(N,\Z).$$Recall that by Corollary~\ref{cor_description_map_stack_to_rigidification} we have the exact sequence
\[
1 \rightarrow G \rightarrow T_{\NS} \rightarrow T_{\NS,\rig} \rightarrow 1 .
\]

We keep the notation from \ref{notation_stacky_fan_beta} and \ref{notation_cone_minimal_sector}. For $\stackS \in \sector$, we write $\stackZ_{\stackS}$ the closed subscheme of $\stackT$ over $\F_p$ defined by
\[
\stackZ_{\stackS}
=
\stackT \cap
\bigcap_{\rho \in \sigma_\stackS(1)}
\{y_{\rho}=0\} \, .
\]The scheme $\stackZ_{\stackS}$ is therefore invariant under the action of $T_{\NS}$, and the restricted action of $G$ is trivial. Consequently, we obtain an induced quotient morphism
\[
[\stackZ_{\stackS}/T_{\NS}]
\longrightarrow
[\stackZ_{\stackS}/T_{\NS,\rig}]
\]
which is a $G$-gerbe. We observe that by \cite[Lemma~4.6, Proposition~4.7]{borisov_toric_stack} we have
\[
\stackS \simeq [\stackZ_{\stackS}/T_{\NS}] .
\]

We also denote by $\stackS^{\rig}$ the quotient stack
\[
\stackS^{\rig} = [\stackZ_{\stackS}/T_{\NS,\rig}],
\]
without attempting to justify this notation. The morphism $\stackT \xrightarrow{\wedge a} \stackT$ induces an endomorphism of
$\stackZ_{\stackS}$ which is invariant under the group morphism
$T_{\NS,\rig} \to T_{\NS,\can}$. Hence we obtain an induced quotient morphism
\[
[\stackZ_{\stackS}/T_{\NS,\rig}]
\longrightarrow
[\stackZ_{\stackS}/T_{\NS,\can}] .
\]Again, abusing the notation, we set
\[
\stackS^{\can} = [\stackZ_{\stackS}/T_{\NS,\can}] .
\]From the description of the morphism $X^{\rig} \to X^{\can}$ for toric stacks
(see \cite[Theorem~5.2~(2)]{fantechi_toric_stack} and
Corollary~\ref{cor_description_map_stack_to_canonical}), it follows that the
morphism
\[
\stackS^{\rig} \longrightarrow \stackS^{\can}
\]
is a $\mu_{a_{\rho}}$-gerbe over the restriction of $D_{\rho}^{\rig}$ to
$D_{\rho}^{\can}$ for $\rho \in \sigma_\stackS(1)$, and an isomorphism between
\[
\stackS^{\rig} \setminus \bigcup_{\rho \in \sigma_\stackS(1)} D_{\rho}^{\rig}
\quad\text{and}\quad
\stackS^{\can} \setminus \bigcup_{\rho \in \sigma_\stackS(1)} D_{\rho}^{\can}.
\]

In order to relate the groupoid cardinalities of $\stackS(\F_p)$ and $\stackS^{\can}(\F_p)$ for $\stackS \in \sector$, we shall need a technical lemma. It is motivated by the powerful result obtained by Loughran and Santens (see \cite[Lemma 8.8]{loughran_santens_malle_conjecture}), which states that if $\stackG$ is a $G$-gerbe proper and étale over $\F_p$, where $G$ is a finite étale tame group scheme over $\F_p$, then the groupoid counting of its $\F_p$-points satisfies
\[
\sharp \stackG(\F_p) = 1.
\]

\begin{lemma}
Let $\stackX \xrightarrow{f} \stackY$ be a proper étale $G$-gerbe between tame Deligne--Mumford stacks over $\F_p$. Then the groupoid counting of their $\F_p$-points satisfies
\[
\sharp \stackX(\F_p) = \sharp \stackY(\F_p).
\]
\end{lemma}

\begin{proof}
Let $Q \in \stackY(\F_p)$ and denote by $\stackG_{Q}$ the proper étale $G$-gerbe over $\F_p$ given by the
fiber product
\[
\Spec(\F_p) \underset{Q,\stackY,f}{\times} \stackX .
\]
By \cite[Lemma~8.8]{loughran_santens_malle_conjecture}, $\stackG_Q$ is a trivial $G$-gerbe so that
$\stackG_Q \simeq BG$. For $P \in \stackX(\F_p)$ such that $f(P) = Q$, using the description of fiber products of groupoids, the natural group morphism
\[
\Aut_{\stackX}(P) \longrightarrow \Aut_{\stackY}(Q)
\]
has kernel
\[
\Aut_{\stackG}(P).
\]Thus, we get
\[
\sum_{f(P)=Q} \frac{1}{\sharp \Aut_{\stackX}(P)}
=
\frac{\sharp \stackG_Q(\F_p)}{\sharp \Aut_{\stackY}(Q)} .
\]Using this identity, we compute
\begin{align*}
\sharp \stackX(\F_p)
&=
\sum_{P \in \stackX(\F_p)} \frac{1}{\sharp \Aut_{\stackX}(P)} \\
&=
\sum_{Q \in \stackY(\F_p)} \frac{\sharp \stackG_Q(\F_p)}{\sharp \Aut_{\stackY}(Q)} \\
&=
\sum_{Q \in \stackY(\F_p)} \frac{1}{\sharp \Aut_{\stackY}(Q)}
=
\sharp \stackY(\F_p),
\end{align*}
where the penultimate equality again follows from \cite[Lemma~8.8]{loughran_santens_malle_conjecture}.
\end{proof}

Applying this lemma, we get by the above discussion that
\begin{equation}
\sharp \stackS (\F_p) = \sharp \stackS^{\can}(\F_p) \, \, .    
\end{equation}We can now proceed to the computation of $\sharp(\stackT_{\stackS}(\F_p))$. 

\begin{lemma}\label{lemma_computation_cardinal_torsor_sector}
For $\stackS \in \sector$, we have
\[
\frac{\sharp(\stackT_{\stackS}(\F_p))}{p^{\sharp \Sigma(1)}}
=
\left(1-\frac{1}{p}\right)^t
\cdot
\frac{\sharp \stackS(\F_p)}{p^{\dim(\stackS)}},
\]
where $t = \dim_{\R} \Pic(X)_{\R}$.
\end{lemma}

\begin{proof}
Since the values of $y_{\rho}$ for $\rho \in \sigma_\stackS(1)$ are not constrained by Remark~\ref{remark_description_reduction_stackT_stackS_mod_p}, the set $\stackT_{\stackS}(\F_p)$ can be written as the disjoint union of $p^{\sharp \sigma_\stackS(1)}$ translates of $\stackZ_{\stackS}(\F_p)$. Thus, we have
\[
\sharp \stackT_{\stackS}(\F_p)
=
p^{\sharp \sigma_\stackS(1)} \cdot \sharp \stackZ_{\stackS}(\F_p).
\]

We now relate $\sharp \stackZ_{\stackS}(\F_p)$ and $\sharp \stackS^{\can}(\F_p)$.
Since
\[
H^1(\F_p,T_{\NS,\can}) = 0,
\]
because $T_{\NS,\can}$ is a split torus (see Corollary~\ref{cor_description_map_stack_to_canonical}),
we get from the quotient description of $\stackS^{\can}$ a bijection
\[
\stackS^{\can}(\F_p)
=
\stackZ_{\stackS}(\F_p) / T_{\NS,\can}(\F_p).
\]Thus we can write
\begin{align*}
    \frac{\sharp(\stackT_{\stackS}(\F_p))}{p^{\sharp \Sigma(1)}} &=\frac{\sharp(\stackZ_{\stackS}(\F_p))}{p^{\sharp \Sigma(1) - \sharp \sigma_\stackS(1)}}  \\
    %&= \frac{1}{p^{t + \dim(\stackS)}} \cdot \sum\limits_{P \in \stackS^{\can}(\F_p)} \frac{\sharp T_{\NS,\can}(\F_p)}{\sharp \underline{\mathrm{Stab}}_{T_{\NS,\can}}(P)(\F_p) } \\
    &= \frac{1}{p^{t + \dim(\stackS)}} \cdot \sum\limits_{P \in \stackS^{\can}(\F_p)} \frac{\sharp T_{\NS,\can}(\F_p)}{\sharp \underline{\Aut}(P)(\F_p) } \\
    &=  \frac{\sharp T_{\NS,\can}(\F_p)}{p^t} \cdot \frac{1}{p^{\dim(\stackS)}} \cdot \sum\limits_{P \in \stackS^{\can}(\F_p)} \frac{1}{\sharp \underline{\Aut}(P)(\F_p) } \\
    &= \left( 1 - \frac{1}{p} \right)^{t} . \frac{\sharp \stackS^{\can} \left(\F_p \right) }{p^{\dim(\stackS)}} .
\end{align*}We can conclude the proof using that moreover $\sharp \stackS(\F_p) = \sharp \stackS^{\can}(\F_p)$.
\end{proof}

We can now conclude the proof of Theorem~\ref{theorem_interpretation_mass_formula}.

\begin{proof}
First, we compute $\w_{\stackT,p}(\stackT^{\ext}(\Z_p))$. We have
\begin{align*}
\w_{\stackT,p}(\stackT^{\ext}(\Z_p))
&= \sum_{\stackS \in \sector} \w_{\stackT,p}(\stackT_{\stackS}(\Z_p)) \\
&= \sum_{\stackS \in \sector}
\frac{\sharp \stackT_{\stackS}(\F_p)}{p^{\sharp \Sigma(1)}}
\cdot
\frac{\sharp Y_{\stackS,p}}{p^{2 \sharp \twistsector}},
\end{align*}
where the last equality follows from \eqref{equation_computation_mesure_extedended_univ}. 
Applying Lemmas~\ref{lemma_computation_cardinal_sector_part} and 
\ref{lemma_computation_cardinal_torsor_sector}, we get
\[
\w_{\stackT,p}(\stackT^{\ext}(\Z_p))
=
\left(1-\frac{1}{p}\right)^{\rho_{\orb}(X)}
\cdot
\left( \frac{\sharp X(\F_p)}{p^{\dim(X)}}  + \frac{1}{p} \cdot
\sum\limits_{\stackS \in \twistsector} 
\frac{\sharp\, \stackS\!\left(\F_p\right)}{p^{\dim(\stackS)}} \right) .
\]
The theorem now follows from Proposition~\ref{proposition_finite_place_quotient_measure_over_X}.
\end{proof}

\section{Multi-height distribution of rational points of toric stacks}

\begin{Notations}
    We denote by $G$ the multiplicative group associated to $\Ext^1(N,\Z)$, that is to say the generic stabilizer of $X$.
\end{Notations}

We fix $u \in (\coneorb^{\vee})^{\circ}$ and $\mathrm{D} \subset \Pic_{\orb}(X)^{\vee}_{\R}$ a finite union of compact polyhedrons. Let $B > 1$ and write $\mathrm{D}_B = \mathrm{D} + \log(B) u $. By theorem \ref{theorem_description_of_the_extended_parmetrization}, we have that the map
$$ q : \stackT^{\ext}(\Z) \rightarrow X(\Q)$$is surjective and it makes $X(\Q)$ a $T_{\NS,\orb}(\Z)$-quotient of $\stackT^{\ext}(\Z)$. We get using Theorem \ref{theorem_lift_height_extend_universal_torsor}:
\begin{align*}
    \sharp T(\Q)_{h \in \mathrm{D}_B} &= \frac{ \sharp G(\Z)}{2^{\rho_{\orb}(X)}} \sharp \{ y \in \stackT^{\ext}(\Z) \cap T^{\ext}(\Q) \mid h^{\ext}_{\stackT,\infty}(y) \in \mathrm{D}_B \} \\
    &= \frac{ \sharp G(\Z)}{2^{\rho_{\orb}(X)}} \sharp \left( \stackT^{\ext}(\Z) \cap T^{\ext}(\Q) \right)_{h^{\ext}_{\stackT,\infty} \in \mathrm{D}_B} .
\end{align*}

Using the Möbius inversion, we shall see that estimating $$\sharp \left( \stackT^{\ext}(\Z) \cap T^{\ext}(\R) \right)_{h^{\ext}_{\stackT,\infty} \in \mathrm{D}_B}$$ reduces to estimating:
$$\sharp \Bigl( \Z^{\Sigma_{\ext}(1)} \cap T^{\ext}(\R) \Bigr)_{h^{\ext}_{\stackT,\infty} \in \mathrm{D}_B} .$$

\subsection{Estimation of the error term between the number of points of a lattice in a bounded domain and the volume of the domain}

In what follows, $\Vol$ means the usual euclidean volume given by the Lebesgue measures. The aim of this section is to relate $$\sharp \Bigl( \Z^{\Sigma_{\ext}(1)} \cap T^{\ext}(\R) \Bigr)_{h^{\ext}_{\stackT,\infty} \in \mathrm{D}_{B}}$$ and the volume of $$\mathcal{D}_B = \{ y \in \stackT^{\ext}(\R) \mid h^{\ext}_{\stackT,\infty}(y) \in \mathrm{D}_{B}\}.$$ To estimate the difference between these two terms, we shall use a result of geometry of numbers due to Davenport \cite{davenport_geometry_numbers}. The statement we shall use is from \cite[proposition 24]{bhargava_counting_cubic_field}:

\begin{prop}\label{bhargava_geometrie_nombre}
Let $\mathcal{D}$ be a bounded domain,  semi-algebraic of $\R^m$ which is defined by at most $k$ polynomial inequalities each having degree at most $l$.

Then the number of integral lattice points of $\Z^m$ contained in $\mathcal{D}$ is :

$$\Vol(\mathcal{D}) + O\bigl( \max(1,\Vol(\proj_I(\mathcal{D})) | I \in \mathfrak{P}(\{1,..,m\}),  1 \leqslant \sharp I < m) \bigr)$$where $\Vol(\proj_I(\mathcal{D}))$ means the euclidean volume of the projection of $\mathcal{D}$ via 
\begin{align*}
    \proj_I : &\R^m \rightarrow \R^{ I} \\
    (x_1,..,&x_m) \longmapsto (x_i)_{i \in I} .
\end{align*}
The constant in the error term depends only on $m$, $k$, and $l$.

\end{prop}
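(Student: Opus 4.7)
The plan is to prove the estimate by induction on the dimension $m$, following the classical slicing argument of Davenport.

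For the base case $m = 1$, a semi-algebraic subset of $\R$ defined by at most $k$ polynomial inequalities of degree $\leqslant l$ is a finite union of at most $O_{k,l}(1)$ intervals, whose endpoints are zeros of the defining polynomials. In each bounded interval $[a,b]$ the number of integers is $b - a + O(1)$, so $\sharp(\Z \cap \mathcal{D}) = \Vol(\mathcal{D}) + O_{k,l}(1)$. This matches the claim since no proper non-empty $I \subset \{1\}$ satisfies $1 \leqslant \sharp I < 1$, and the error reduces to the $O(1)$ term.

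For the inductive step, I would slice along integer hyperplanes perpendicular to the last coordinate:
$$\sharp(\Z^m \cap \mathcal{D}) = \sum_{n \in \Z} \sharp(\Z^{m-1} \cap \mathcal{D}_n), \qquad \mathcal{D}_n := \{y \in \R^{m-1} \mid (y,n) \in \mathcal{D}\}.$$
Each slice $\mathcal{D}_n$ is semi-algebraic in $\R^{m-1}$ of the same complexity (at most $k$ inequalities of degree $\leqslant l$, obtained by substituting $x_m = n$), so the induction hypothesis yields $\sharp(\Z^{m-1} \cap \mathcal{D}_n) = \Vol_{m-1}(\mathcal{D}_n) + O(E_n)$ with $E_n = \max_{J} \max(1, \Vol_J(\proj_J \mathcal{D}_n))$, the maximum ranging over proper non-empty $J \subset \{1,\ldots,m-1\}$.

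The main term $\sum_n \Vol_{m-1}(\mathcal{D}_n)$ compares to $\Vol(\mathcal{D}) = \int_\R \Vol_{m-1}(\mathcal{D}_t)\, dt$ with error $O(\Vol(\proj_{\{1,\ldots,m-1\}}(\mathcal{D})))$: for each $y$ in this projection, the vertical fibre $\{t : (y,t) \in \mathcal{D}\}$ is a union of $O_{k,l}(1)$ intervals by the base case, so its integer-slice Riemann sum differs from the integral by $O_{k,l}(1)$. For the inductive error terms, the Fubini-type bound
$$\sum_n \Vol_J(\proj_J \mathcal{D}_n) \leqslant \Vol_{J \cup \{m\}}\bigl(\proj_{J \cup \{m\}}(\mathcal{D})\bigr) + O_{k,l}\!\left(\max_t \Vol_J(\proj_J \mathcal{D}_t)\right)$$
together with the bound $O(\Vol(\proj_{\{m\}}(\mathcal{D})) + 1)$ on the number of non-empty slices, expresses each error piece in the form $\max(1, \Vol(\proj_I(\mathcal{D})))$ for some proper non-empty $I \subset \{1,\ldots,m\}$. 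The main obstacle is to verify that all constants depend only on $m$, $k$, $l$, which relies on the fact that coordinate sections and coordinate projections of semi-algebraic sets preserve the relevant complexity bounds (via Tarski--Seidenberg and effective Bezout-type estimates on the number of connected components).
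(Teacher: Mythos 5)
The paper does not give a proof of Proposition~\ref{bhargava_geometrie_nombre}: it is imported as an external result attributed to Davenport (\cite{davenport_geometry_numbers}) in the formulation of Bhargava (\cite[proposition 24]{bhargava_counting_cubic_field}). So there is no internal argument to compare your proposal against; the best I can do is assess it on its own terms.

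Your induction on dimension by slicing along integer hyperplanes is the standard Davenport-type argument, and the overall structure is sound: the main term comparison via Fubini and the one-dimensional base case, the reduction of the slice errors to projections of $\mathcal{D}$ onto coordinate subsets containing $m$ or contained in $\{1,\ldots,m-1\}$, and the count of non-empty slices via $\proj_{\{m\}}(\mathcal{D})$. You also correctly identify the real technical content: uniformity of the implied constants requires that (a) the number of connected components of one-dimensional coordinate slices of $\mathcal{D}$, and (b) the number of connected components of coordinate projections of $\mathcal{D}$, are bounded purely in terms of $(m,k,l)$, which follows from Oleinik--Petrovsky/Thom--Milnor-type bounds together with the fact that a continuous image of a connected set is connected.

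One imprecision worth flagging: in your Fubini-type estimate you claim the discrepancy $\sum_n \Vol_J(\proj_J \mathcal{D}_n) - \Vol_{J\cup\{m\}}(\proj_{J\cup\{m\}}\mathcal{D})$ is $O_{k,l}\bigl(\max_t \Vol_J(\proj_J \mathcal{D}_t)\bigr)$. What the swap-of-integration argument actually yields is $O_{m,k,l}\bigl(\Vol_J(\proj_J \mathcal{D})\bigr)$: the integrand $\sharp\{n : w\in\proj_J\mathcal{D}_n\} - \bigl|\{t : w\in\proj_J\mathcal{D}_t\}\bigr|$ is $O_{m,k,l}(1)$ pointwise and supported on $\proj_J(\mathcal{D})$, not on a single slice. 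Your stated bound with $\max_t$ is strictly stronger and I do not see why it would hold (a long thin slanted sliver would violate it). Fortunately this does not matter for the conclusion: $J$ is itself a proper non-empty subset of $\{1,\ldots,m\}$, so $\Vol_J(\proj_J\mathcal{D})$ is already one of the admissible error terms, and replacing your $\max_t$ bound by the weaker $O(\Vol_J(\proj_J\mathcal{D}))$ closes the induction without change.
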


First, by proposition \ref{proposition_property_equivariant_extended_height}, we can write $$\mathcal{D}_B = B^{-u} . \mathcal{D}_1$$where $B^u \in T_{\NS,\orb}(\R)$ is the image of $\log(B) u$ via the exponential map
$$    \exp : \Pic_{\orb}(X)^{\vee}_{\R} \hookrightarrow \T_{\NS,\orb}(\R) \, , $$that is, $B^u$ is the element such that for any $\theta \in \Pic_{\orb}(X) = X^*(T_{\NS,\orb})$, we have:
$$\theta(B^{u}) = B^{u(\theta)} \in \G_m(\R) = \R^{\times}  .$$

In the rest of this article, we shall use the conventions of notation \ref{notation_class_orbifold_picard_group}. Note that, by Definition~\ref{definition_extended_universal_torsor}, $B^{-u}$ acts on $\stackT^{\ext}(\R)$ by multiplying the coordinate corresponding to $\rho \in \Sigma_{\ext}(1)$ by $B^{\langle [D_{\rho}],u \rangle}$. So using theorem \ref{theorem_compacity_pull-back-compact_ext_height}, we can write the following theorem:

\begin{theorem}\label{theorem_crucial_majoration_estimation_number}
    There exists a constant $C > 0$ (depending on the compact subset $\mathrm{D}$) such that for any $y \in \mathcal{D}_{B}$, we have for any $\rho \in \Sigma_{\ext}(1)$: $$ |y_{\rho}| \leqslant C B^{ \langle [D_{\rho}],u \rangle } .$$
  
\end{theorem}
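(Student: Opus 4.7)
The plan is to reduce the bound to the case $B = 1$ via the $T_{\NS,\orb}(\R)$-equivariance of the extended local height, and then invoke the compactness result already established. Concretely, I would first observe that, by Proposition \ref{proposition_property_equivariant_extended_height},
$$ h^{\ext}_{\stackT,\infty}(B^u \cdot z) = h^{\ext}_{\stackT,\infty}(z) + \log_{T_{\NS,\orb},\infty}(B^u), $$
and by the very definition of the exponential map $\exp : \Pic_{\orb}(X)^{\vee}_{\R} \hookrightarrow T_{\NS,\orb}(\R)$ recalled just before the theorem, the element $B^u$ satisfies $\log_{T_{\NS,\orb},\infty}(B^u) = \log(B)\, u$. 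This already gives the announced identity $\mathcal{D}_B = B^u \cdot \mathcal{D}_1$, so it suffices to estimate each coordinate of $B^u \cdot z$ uniformly for $z \in \mathcal{D}_1$.

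Next, I would use the extended quotient description $X = [\stackT \times \G_m^{\twistsector} / T_{\NS,\orb}]$ from Theorem \ref{theorem_extended_universal_torsor}, together with Theorem \ref{theorem_action_orbi_neron_severi_torus}, to see that the $T_{\NS,\orb}$-action on $\stackT^{\ext}$ is diagonal in the standard coordinates: for every $\rho \in \Sigma_{\ext}(1)$, the $\rho$-th coordinate transforms through the character $[D_\rho] \in \Pic_{\orb}(X)$. Therefore, writing $y = B^u \cdot z$ with $z \in \mathcal{D}_1$, we get $y_\rho = [D_\rho](B^u) \cdot z_\rho$, and the definition of $B^u$ as the exponential of $\log(B)\,u$ yields
$$ | [D_\rho](B^u) |_{\infty} = e^{\langle [D_\rho], \log(B) u \rangle} = B^{\langle [D_\rho], u \rangle}. $$

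Finally, Theorem \ref{theorem_compacity_pull-back-compact_ext_height} applied to the compact polyhedron $\mathrm{D}_1$ guarantees that $\mathcal{D}_1 = (h^{\ext}_{\stackT,\infty})^{-1}(\mathrm{D}_1)$ is bounded, so there exists a constant $C > 0$, depending only on $\mathrm{D}_1$, such that $|z_\rho| \leqslant C$ for every $z \in \mathcal{D}_1$ and every $\rho \in \Sigma_{\ext}(1)$. Combining the two estimates produces the required bound $|y_\rho| \leqslant C \, B^{\langle [D_\rho], u \rangle}$. There is no real obstacle in this argument: it is essentially bookkeeping once the equivariance of $h^{\ext}_{\stackT,\infty}$ and the coordinate-wise description of the $T_{\NS,\orb}$-action are in place, the only point that deserves care being the compatibility between the exponential map and the convention for $\log_{T_{\NS,\orb},\infty}$, which is built into Proposition \ref{proposition_property_equivariant_extended_height}.
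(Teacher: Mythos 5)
Your proof is correct and follows exactly the route the paper intends: reduce via the equivariance identity $\mathcal{D}_B = B^u \cdot \mathcal{D}_1$, observe that the $T_{\NS,\orb}$-action on $\stackT^{\ext}$ is diagonal with the $\rho$-th coordinate transforming through the character $[D_\rho]$ (the content of Theorem \ref{theorem_action_orbi_neron_severi_torus}), and then invoke the boundedness of $\mathcal{D}_1$ from Theorem \ref{theorem_compacity_pull-back-compact_ext_height}. The paper in fact leaves this proof implicit (the single sentence ``So using theorem \ref{theorem_compacity_pull-back-compact_ext_height}, we can write the following theorem''); you have simply supplied the missing bookkeeping, and done so correctly.
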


We get the following corollary to estimate the error term:

\begin{cor}
    For any $I$ non empty subset of $\Sigma_{\ext}(1)$, there exists a constant $C > 0$ such that:
    $$\Vol(\proj_I(\mathcal{D}_{B})) \leq C B^{ \langle \sum\limits_{\rho \in I} [D_{\rho}] , u \rangle } .$$
\end{cor}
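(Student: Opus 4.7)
The plan is to deduce this estimate directly from Theorem \ref{theorem_crucial_majoration_estimation_number}, which already provides coordinate-wise bounds for any point in $\mathcal{D}_B$. There is essentially no additional geometric input required; the corollary is a bookkeeping consequence.

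First I would invoke Theorem \ref{theorem_crucial_majoration_estimation_number} to obtain, for each $\rho \in \Sigma_{\ext}(1)$, a constant $C_\rho > 0$ (depending on $\mathrm{D}$ but not on $B$) such that $|y_\rho| \leq C_\rho B^{\langle [D_\rho], u\rangle}$ for every $y \in \mathcal{D}_B$. In particular, for any non-empty $I \subset \Sigma_{\ext}(1)$, the projection $\proj_I(\mathcal{D}_B) \subset \R^I$ is contained in the box
\begin{equation*}
\prod_{\rho \in I} \bigl[ -C_\rho B^{\langle [D_\rho], u\rangle}, \ C_\rho B^{\langle [D_\rho], u\rangle} \bigr].
\end{equation*}

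Next I would compute the Lebesgue volume of this box and use the linearity of $\langle -, u\rangle$:
\begin{equation*}
\Vol\bigl(\proj_I(\mathcal{D}_B)\bigr) \leq \prod_{\rho \in I} 2 C_\rho B^{\langle [D_\rho], u\rangle} = \Bigl(\prod_{\rho \in I} 2 C_\rho\Bigr) B^{\sum_{\rho \in I} \langle [D_\rho], u\rangle} = C\, B^{\langle \sum_{\rho \in I} [D_\rho], u\rangle},
\end{equation*}
where $C := \prod_{\rho \in I} 2 C_\rho$ depends on $\mathrm{D}$ and $I$ but not on $B$. This yields the required bound.

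There is no real obstacle here: the only point to verify carefully is that the constants $C_\rho$ furnished by Theorem \ref{theorem_crucial_majoration_estimation_number} are independent of $B$, which is explicit in its statement since it bounds $|y_\rho|$ by $C B^{\langle [D_\rho], u\rangle}$ uniformly over $\mathcal{D}_B$. The corollary is then an immediate enclosure-of-a-projection-in-a-box argument, and the linearity $\sum_{\rho \in I}\langle [D_\rho], u\rangle = \langle \sum_{\rho \in I}[D_\rho], u\rangle$ packages the exponent in the desired form.
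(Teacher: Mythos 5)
Your argument is correct and is precisely the bookkeeping the paper intends: the corollary is stated as an immediate consequence of Theorem \ref{theorem_crucial_majoration_estimation_number} with no further proof given, and the box-enclosure plus linearity of $\langle -, u\rangle$ is the only step. The minor cosmetic point is that Theorem \ref{theorem_crucial_majoration_estimation_number} already gives a single constant $C$ uniform in $\rho$, so there is no need to introduce per-ray constants $C_\rho$, but that changes nothing.
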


We finish this paragraph by recalling the value of the volume $\Vol(\mathcal{D}_{B}) $ and by estimating the cardinal of $$\sharp \Bigl( \Z^{\Sigma_{\ext}(1)} \cap T^{\ext}(\R) \Bigr)_{h^{\ext}_{\stackT,\infty} \in \mathrm{D}_{B}} .$$

\begin{theorem}
    $$\Vol(\mathcal{D}_{B}) =  \Vol(\mathcal{D}_1) B^{ \langle \w_{X,\orb}^{-1} , u \rangle } = 2^{\rho_{\orb}(X)}. \mu_{X,\infty}(X(\R)). \nu(\mathrm{D}_1) B^{ \langle \w_{X,\orb}^{-1} , u \rangle } . $$
\end{theorem}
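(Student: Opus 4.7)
The plan is to break the claimed equality into two steps: first establish the scaling $\Vol(\mathcal{D}_B) = \Vol(\mathcal{D}_1)\,B^{\langle \w_{X,\orb}^{-1}, u\rangle}$ via a change of variables on $\stackT^{\ext}(\R)$, and then invoke Proposition \ref{analysis_measure_univ_torsor_at_infty} to rewrite $\Vol(\mathcal{D}_1)$ in terms of $\mu_{X,\infty}(X(\R))$ and $\nu(\mathrm{D}_1)$.

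For the first step, the crucial observation is that the equivariance Proposition \ref{proposition_property_equivariant_extended_height}, combined with the fact that the exponential $\exp : \Pic_{\orb}(X)^{\vee}_{\R} \hookrightarrow T_{\NS,\orb}(\R)$ is a section of $\log_{T_{\NS,\orb},\infty}$, yields
\[
\mathcal{D}_B = B^{u} \cdot \mathcal{D}_1,
\]
where $B^{u} = \exp(\log(B) u) \in T_{\NS,\orb}(\R)$ acts on $\stackT^{\ext}(\R)$ through the natural $T_{\NS,\orb}$-action. I would then perform the change of variables $y \mapsto B^u \cdot y$ in the Lebesgue integral defining $\w_{\stackT,\infty}(\mathcal{D}_B)$. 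The Jacobian of the action of $t \in T_{\NS,\orb}(\R)$ on $\stackT^{\ext}(\R) \subset \R^{\Sigma_{\ext}(1)}$ is the product of the characters by which $T_{\NS,\orb}$ acts on each coordinate $y_\rho$, $\rho \in \Sigma_{\ext}(1)$, namely the character $\sum_{\rho \in \Sigma_{\ext}(1)} [D_\rho]$; by Proposition \ref{proposition_description_cone_orbifold_effective_divisor_toric_stack} this character is precisely $\w_{X,\orb}^{-1}$. Therefore the Jacobian evaluated at $B^u$ equals
\[
|\chi_{\w_{X,\orb}^{-1}}(B^u)| = B^{\langle \w_{X,\orb}^{-1}, u \rangle},
\]
which gives the desired scaling.

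For the second step, I would simply apply Proposition \ref{analysis_measure_univ_torsor_at_infty} to $\mathrm{D} = \mathrm{D}_1$, yielding $\Vol(\mathcal{D}_1) = 2^{\rho_{\orb}(X)} \mu_{X,\infty}(X(\R)) \nu(\mathrm{D}_1)$, and combine with the scaling identity to conclude.

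The main potential obstacle is the careful bookkeeping in the first step: one must check that the two retractions $r_1, r_2$ chosen in Proposition \ref{proposition_description_r_2_r_1} are compatible with the choice of coordinates on $\stackT^{\ext}(\R)$ used to define the Lebesgue measure $\w_{\stackT,\infty}$, so that the weights appearing in the Jacobian are exactly the classes $[D_\rho]$, $\rho \in \Sigma_{\ext}(1)$, in $\Pic_{\orb}(X)$. The intrinsic character of $h^{\ext}_{\stackT,\infty}$ established in Theorem \ref{theorem_extended_height_do_not_depend_on_the_base} is what guarantees this independence and allows the Jacobian to be identified cleanly with $|\chi_{\w_{X,\orb}^{-1}}(-)|$.
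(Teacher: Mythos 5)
Your proof is correct, and the result matches the paper's. The route is slightly different, though. The paper simply applies Proposition~\ref{analysis_measure_univ_torsor_at_infty} directly to the compact set $\mathrm{D}_B$, obtaining $\Vol(\mathcal{D}_B) = 2^{\rho_{\orb}(X)}\mu_{X,\infty}(X(\R))\,\nu(\mathrm{D}_B)$, and then lets the factor $B^{\langle \w_{X,\orb}^{-1},u\rangle}$ emerge from the definition of $\nu$ itself: since $\nu(\mathrm{D}) = \int_{\mathrm{D}} e^{\langle \w_{X,\orb}^{-1},y\rangle}\,dy$, the translation $\mathrm{D}_B = \mathrm{D}_1 + \log(B)u$ immediately yields $\nu(\mathrm{D}_B) = B^{\langle \w_{X,\orb}^{-1},u\rangle}\nu(\mathrm{D}_1)$. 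You instead first prove the scaling of $\Vol$ on the torsor side, via the Jacobian of the $T_{\NS,\orb}(\R)$-action on $\stackT^{\ext}(\R)$, and only then apply Proposition~\ref{analysis_measure_univ_torsor_at_infty} to $\mathrm{D}_1$. That is a perfectly valid alternative: the Jacobian of the diagonal action is $\prod_{\rho\in\Sigma_{\ext}(1)}|[D_\rho](t)|$, which is the modulus of the character $\sum_{\rho}[D_\rho] = \w_{X,\orb}^{-1}$ by Theorem~\ref{theorem_action_orbi_neron_severi_torus} (and the identity stated after Proposition~\ref{proposition_description_cone_orbifold_effective_divisor_toric_stack}, which is the precise place the formula $\w_{X,\orb}^{-1}=\sum_\rho[D_\rho]_{\stack}+\sum_{\stackY}[\stackY]$ appears — the proposition itself only concerns the effective cone). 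The two computations are morally equivalent; the paper's version is a bit shorter because the density $e^{\langle\w_{X,\orb}^{-1},\cdot\rangle}$ in $\nu$ is designed to absorb exactly this Jacobian, whereas your version is more self-contained and makes the role of the torus action explicit. Your closing remark about Theorem~\ref{theorem_extended_height_do_not_depend_on_the_base} is not actually needed here, since the measure $\w_{\stackT,\infty}$ is just the Lebesgue measure on $\R^{\Sigma_{\ext}(1)}$ and the action is given in fixed coordinates; the basis-independence theorem concerns the height, not the measure.
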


\begin{proof}
    By definition \ref{definition_quotient_measure_over_orbifold}, the Lebesgue measure over $\R^{\Sigma_{\ext}(1)}$ restricted to $\stackT^{\ext}(\R)$ is $\w_{\stackT,\infty}$. So the theorem is a consequence of proposition \ref{analysis_measure_univ_torsor_at_infty}.
\end{proof}

Recall that as we take $u \in (\coneorb^{\vee})^{\circ}$  we have for any $$L \in \coneorb, \ \langle L,u \rangle > 0 .$$Then by proposition \ref{proposition_description_cone_orbifold_effective_divisor_toric_stack}, we have $[D_{\rho}] \in \coneorb$ for any $\rho \in \Sigma_{\ext}(1)$ so we get that: 

$$\max\limits_{ I \subsetneq \Sigma_{\ext}(1) } \Vol(\proj_I(\mathcal{D}_B)) = O\left( \frac{B^{\langle \w_{X,\orb}^{-1},u\rangle}}{B^{\min\limits_{\rho \in \Sigma_{\ext}(1)} \langle [D_{\rho}],u\rangle} } \right)$$which is negligible compared to $\Vol(\mathcal{D}_B)$ when $B$ grows. We can then deduce the following theorem using proposition \ref{bhargava_geometrie_nombre}:

\begin{theorem}\label{estimation_lattice}
    There exists a constant $C > 0$ such that for $B > 1$:
    \begin{align*}
        &\Big| \sharp \Bigl(  \Z^{\Sigma_{\ext}(1)} \cap T^{\ext}(\R) \Bigr)_{h^{\ext}_{\stackT,\infty} \in \mathrm{D}_{B}}  -   B^{ \langle \w_{X,\orb}^{-1} , u \rangle } . \Vol(\mathcal{D}_1) \Big| \\
        &\leqslant C \frac{B^{\langle \w_{X,\orb}^{-1},u\rangle}}{B^{\min\limits_{\rho \in \Sigma_{\ext}(1)} \langle [D_{\rho}],u\rangle} }
    \end{align*}
\end{theorem}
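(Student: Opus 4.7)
The proof reduces to a direct application of Davenport's lattice-point estimate (Proposition \ref{bhargava_geometrie_nombre}) to the bounded domain $\mathcal{D}_B \cap T^{\ext}(\R)$, combined with the volume and projection estimates already assembled in the section.

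First I would verify the semi-algebraic hypothesis. Since $\mathrm{D}$ is a compact polyhedron, $\mathrm{D}_B = \mathrm{D}_1 + \log(B) u$ is defined in $\Pic_{\orb}(X)^{\vee}_{\R}$ by a fixed, $B$-independent number of linear inequalities $\langle \theta_i, -\rangle \leqslant c_i + \langle \theta_i, u\rangle \log B$, with $\theta_i \in \Pic_{\orb}(X)_{\R}$. Pulling back by $h^{\ext}_{\stackT,\infty}$ and recalling that for $y \in T^{\ext}(\R)$ the height has the form $\log |y_{\rho}|$ in each extended ray coordinate (after a linear change of basis coming from proposition \ref{proposition_description_r_2_r_1}), each inequality becomes $\sum_{\rho} a_{i,\rho} \log |y_{\rho}| \leqslant c_i + \langle \theta_i, u\rangle \log B$ with $a_{i,\rho} \in \Q$. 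Stratifying $T^{\ext}(\R)$ into its $2^{\sharp\Sigma_{\ext}(1)}$ sign orthants lets us drop the absolute values, and clearing a common denominator $N$ and exponentiating converts each inequality into a polynomial inequality in the $y_{\rho}$ of degree at most $N \max_{i,\rho}|a_{i,\rho}|$; the number $k$ of such inequalities and their degree $l$ depend only on $\mathrm{D}$, not on $B$.

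Next I would apply Proposition \ref{bhargava_geometrie_nombre} to each of the finitely many orthants and sum the contributions. The main term is the volume $\Vol(\mathcal{D}_B)$, which by the already-established formula equals $\Vol(\mathcal{D}_1)\, B^{\langle \w_{X,\orb}^{-1},u\rangle}$. The error term is controlled by $\max_I \Vol(\proj_I(\mathcal{D}_B))$ over all proper subsets $I \subsetneq \Sigma_{\ext}(1)$. Using the corollary of Theorem \ref{theorem_crucial_majoration_estimation_number},
\begin{equation*}
\Vol(\proj_I(\mathcal{D}_B)) \ll B^{\langle \sum_{\rho \in I} [D_{\rho}],\, u\rangle}.
\end{equation*}

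Finally, I would identify the worst $I$. Since $\w_{X,\orb}^{-1} = \sum_{\rho \in \Sigma_{\ext}(1)} [D_{\rho}]$ and each $[D_{\rho}]$ lies in $\coneorb$ with $u \in (\coneorb^{\vee})^{\circ}$, every $\langle [D_{\rho}], u\rangle$ is strictly positive, so removing the ray $\rho_0$ minimizing $\langle [D_{\rho}], u\rangle$ yields the maximal exponent
\begin{equation*}
\max_{I \subsetneq \Sigma_{\ext}(1)} \Big\langle \sum_{\rho \in I}[D_{\rho}],\, u\Big\rangle = \langle \w_{X,\orb}^{-1}, u\rangle - \min_{\rho \in \Sigma_{\ext}(1)}\langle [D_{\rho}], u\rangle,
\end{equation*}
which furnishes the claimed error bound. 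The one $\max(1,\cdot)$ in Davenport's statement is harmless since $B > 1$ and $[D_\rho] \in \coneorb$ forces each projection volume to grow.

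The only genuine obstacle is the bookkeeping in the first step: ensuring that the $k$ and $l$ in Davenport's theorem remain independent of $B$ after clearing denominators and splitting by sign pattern. This is purely formal, because the linear forms defining $\mathrm{D}$ and the basis-change matrix from proposition \ref{proposition_description_r_2_r_1} are fixed once and for all; only the right-hand constants depend on $B$, and Davenport's bound is insensitive to the translation of constants. Everything else is an application of the volume identity from Proposition \ref{analysis_measure_univ_torsor_at_infty} together with the projection estimate above.
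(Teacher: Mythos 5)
Your proof takes the same route as the paper: apply Davenport's lattice-point estimate (Proposition \ref{bhargava_geometrie_nombre}) to $\mathcal{D}_B$, feed in the volume identity $\Vol(\mathcal{D}_B) = \Vol(\mathcal{D}_1)\,B^{\langle \w_{X,\orb}^{-1},u\rangle}$, and bound the error by $\max_{I\subsetneq\Sigma_{\ext}(1)}\Vol(\proj_I(\mathcal{D}_B))$ via the corollary to Theorem \ref{theorem_crucial_majoration_estimation_number}, identifying the worst $I$ as the full set minus the ray minimizing $\langle [D_\rho],u\rangle$. The paper states this deduction in a single line; the additional bookkeeping you supply (stratifying by sign orthants, clearing denominators, checking that the number and degrees of the defining polynomial inequalities are $B$-independent, handling the $\max(1,\cdot)$) is precisely the unstated verification that Davenport's hypotheses hold uniformly in $B$, so your argument is correct and is a fleshed-out version of the paper's.
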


\subsubsection{Upper bound for the number of points in the complementary of the torus}

Applying theorem \ref{theorem_compacity_pull-back-compact_ext_height}, we have that there exists $N > 1$ such that $\mathcal{D}_1 \subset \prod\limits_{\rho \in \Sigma_{\ext}(1)} [-N,N]$. Hence $\mathcal{D}_B \subset  \prod\limits_{\rho \in \Sigma_{\ext}(1)} \left[ -N.B^{\langle [D_{\rho}] , u \rangle},N.B^{\langle [D_{\rho}] , u \rangle} \right] $, so we have for any $\rho \in \Sigma_{\ext}(1)$,
\begin{align*}
    \sharp\left(\Z^{\Sigma_{\ext}(1)} \cap \stackT^{\ext}(\R) \cap \{y_{\rho} = 0 \} \right)_{h^{\ext}_{\stackT,\infty} \in \mathrm{D}_B} &\leqslant \prod\limits_{\rho' \in \Sigma_{\ext}(1)-\{\rho\}} \left( 2.\lfloor N.B^{\langle [D_{\rho}] , u \rangle} \rfloor + 1 \right) \\
    &\leqslant \prod\limits_{\rho' \in \Sigma_{\ext}(1)-\{\rho\}} \left( 2 N.B^{\langle [D_{\rho}] , u \rangle} + 1 \right)
\end{align*}Hence there exists a constant $C > 0$ such that:
$$\sharp\left(\Z^{\Sigma_{\ext}(1)} \cap \stackT^{\ext}(\R) \cap \{y_{\rho} = 0 \} \right)_{h^{\ext}_{\stackT,\infty} \in \mathrm{D}_B} \leqslant C. B^{\langle \sum\limits_{\rho' \neq \rho} [D_{\rho'}] , u \rangle} .$$We get the following theorem:

\begin{theorem}\label{number_of_points_complementary_of_the_torus_universal_torsor}
    $$ \sharp\left(\Z^{\Sigma_{\ext}(1)} \cap (\stackT^{\ext}(\R) \setminus T^{\ext}(\R)) \right)_{h_{\stackT,\infty} \in \mathrm{D}_B} = O\left( \frac{B^{\langle \w_{X,\orb}^{-1} , u \rangle}}{B^{\min\limits_{\rho \in \Sigma_{\ext}(1)} \langle [D_{\rho}] , u \rangle }} \right)$$
\end{theorem}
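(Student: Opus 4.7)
The plan is to observe that the complement decomposes as a finite union of coordinate hyperplane sections to which the preceding estimate applies directly. Namely, by definition of the open torus in $\stackT \times \G_m^{\twistsector}$, we have
$$\stackT^{\ext}(\R) \setminus T^{\ext}(\R) = \bigcup_{\rho \in \Sigma_{\ext}(1)} \left( \stackT^{\ext}(\R) \cap \{ y_{\rho} = 0 \} \right),$$
so I would first apply the union bound to reduce the counting problem to the $\sharp \Sigma_{\ext}(1)$ individual bounds
$$\sharp\left(\Z^{\Sigma_{\ext}(1)} \cap \stackT^{\ext}(\R) \cap \{y_{\rho} = 0 \} \right)_{h^{\ext}_{\stackT,\infty} \in \mathrm{D}_B} \leqslant C \cdot B^{\langle \sum\limits_{\rho' \neq \rho} [D_{\rho'}] , u \rangle}$$
that were established in the paragraph immediately preceding the statement. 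Here $C$ is a uniform constant depending only on the compact polyhedron $\mathrm{D}$, obtained from the sup-norm bound $\mathcal{D}_1 \subset \prod_{\rho} [-N,N]$ via Theorem \ref{theorem_compacity_pull-back-compact_ext_height} and the equivariance property of Proposition \ref{proposition_property_equivariant_extended_height}.

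Next, I would invoke the description of the orbifold anticanonical class for toric stacks recalled after Proposition \ref{proposition_description_cone_orbifold_effective_divisor_toric_stack}, namely
$$\w_{X,\orb}^{-1} = \sum_{\rho \in \Sigma(1)} [D_{\rho}]_{\stack} + \sum_{\stackY \in \twistsector} [\stackY] = \sum_{\rho \in \Sigma_{\ext}(1)} [D_{\rho}],$$
so that for each $\rho \in \Sigma_{\ext}(1)$
$$\left\langle \sum_{\rho' \neq \rho} [D_{\rho'}] , u \right\rangle = \langle \w_{X,\orb}^{-1}, u \rangle - \langle [D_{\rho}], u \rangle.$$
Because $u \in (\coneorb^{\vee})^{\circ}$ and each $[D_{\rho}]$ belongs to $\coneorb$ by Proposition \ref{proposition_description_cone_orbifold_effective_divisor_toric_stack}, all the quantities $\langle [D_{\rho}], u \rangle$ are strictly positive, so we may take the worst case over $\rho$.

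Summing the individual bounds and keeping the dominant term yields
$$\sharp\left(\Z^{\Sigma_{\ext}(1)} \cap (\stackT^{\ext}(\R) \setminus T^{\ext}(\R)) \right)_{h^{\ext}_{\stackT,\infty} \in \mathrm{D}_B} \leqslant \sharp \Sigma_{\ext}(1) \cdot C \cdot \frac{B^{\langle \w_{X,\orb}^{-1}, u \rangle}}{B^{\min\limits_{\rho \in \Sigma_{\ext}(1)} \langle [D_{\rho}], u \rangle}},$$
which is exactly the claimed $O(\cdot)$ estimate. There is essentially no genuine obstacle here: all nontrivial ingredients (the sup-norm bound, the counting on each hyperplane slice, and the anticanonical decomposition) are already in place, and the theorem is a direct bookkeeping consequence. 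This auxiliary bound will be used alongside Theorem \ref{estimation_lattice} in the next step, where Möbius inversion reduces the full count on $\stackT^{\ext}(\Z) \cap T^{\ext}(\Q)$ to lattice-point counts, and the boundary contribution from $\stackT^{\ext}(\R) \setminus T^{\ext}(\R)$ must be controlled as an error term of order strictly smaller than $B^{\langle \w_{X,\orb}^{-1}, u \rangle}$.
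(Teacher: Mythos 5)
Your proof is correct and matches the paper's (implicit) argument exactly: the paper derives the per-hyperplane bound $\sharp(\Z^{\Sigma_{\ext}(1)} \cap \stackT^{\ext}(\R) \cap \{y_\rho = 0\})_{h^{\ext}_{\stackT,\infty}\in\mathrm{D}_B} \le C\, B^{\langle \sum_{\rho'\neq\rho}[D_{\rho'}],u\rangle}$ in the paragraph just before the theorem, and you correctly assemble these via the union bound over the $\sharp\Sigma_{\ext}(1)$ slices and the decomposition $\w_{X,\orb}^{-1} = \sum_{\rho\in\Sigma_{\ext}(1)}[D_\rho]$ to identify the dominant term. This is the intended bookkeeping step.
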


Applying the previous theorem and theorem \ref{estimation_lattice}, we get that:

\begin{cor}
    There exists a constant $C > 0$ such that:
    $$\Big| \sharp \Bigl( \Z^{\Sigma_{\ext}(1)} \cap \stackT^{\ext}(\R) \Bigr)_{h^{\ext}_{\stackT,\infty} \in \mathrm{D}_{B}}  - 2^{\rho_{\orb}(X)} . \mu_{X,\infty}(X(\R)). \nu(\mathrm{D}_1) B^{ \langle \w_{X,\orb}^{-1} , u \rangle } \Big| \leqslant C \frac{B^{\langle \w_{X,\orb}^{-1},u\rangle}}{B^{ \min\limits_{\rho \in \Sigma_{\ext}(1)} \langle [D_{\rho}],u\rangle } } .$$

\end{cor}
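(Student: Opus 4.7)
The plan is to decompose the lattice count according to whether a point lies in the open torus $T^{\ext}(\R)$ or in its complement within $\stackT^{\ext}(\R)$, then apply the two estimates already in place. Concretely, I would write
\begin{equation*}
\sharp \Bigl( \Z^{\Sigma_{\ext}(1)} \cap \stackT^{\ext}(\R) \Bigr)_{h^{\ext}_{\stackT,\infty} \in \mathrm{D}_{B}} = A(B) + R(B),
\end{equation*}
where $A(B)$ is the count restricted to $T^{\ext}(\R)$ (no vanishing coordinate) and $R(B)$ is the count on the complement $\stackT^{\ext}(\R)\setminus T^{\ext}(\R)$, i.e.\ the union over $\rho \in \Sigma_{\ext}(1)$ of the hyperplane slices $\{y_\rho = 0\}$.

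For the main term $A(B)$, I invoke Theorem \ref{estimation_lattice} directly: it produces the approximation $B^{\langle \omega_{X,\orb}^{-1},u\rangle}\Vol(\mathcal{D}_1)$ with an error of size $O\bigl(B^{\langle \omega_{X,\orb}^{-1},u\rangle}/B^{\min_{\rho}\langle[D_\rho],u\rangle}\bigr)$. I then substitute the identity
\begin{equation*}
\Vol(\mathcal{D}_1) = 2^{\rho_{\orb}(X)}\,\mu_{X,\infty}(X(\R))\,\nu(\mathrm{D}_1),
\end{equation*}
which follows from Proposition \ref{analysis_measure_univ_torsor_at_infty} (specialised to $B=1$, so $\mathcal{D}_B = \mathcal{D}_1$). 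This turns the main term into exactly the quantity subtracted in the statement.

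For the remainder $R(B)$, a union bound over the finitely many coordinates $\rho \in \Sigma_{\ext}(1)$ together with Theorem \ref{number_of_points_complementary_of_the_torus_universal_torsor} shows that
\begin{equation*}
R(B) = O\!\left(\frac{B^{\langle \omega_{X,\orb}^{-1},u\rangle}}{B^{\min_{\rho \in \Sigma_{\ext}(1)}\langle [D_\rho],u\rangle}}\right).
\end{equation*}
A single application of the triangle inequality combines these two bounds and yields the stated inequality with a constant $C>0$ depending only on $\mathrm{D}_1$, $u$, and the combinatorial data of the stacky fan. There is no real obstacle here, since both estimates have been established with matching error orders; the only point requiring a small verification is that the implicit constants in Theorems \ref{estimation_lattice} and \ref{number_of_points_complementary_of_the_torus_universal_torsor} can be chosen uniformly for $B>1$, which is immediate from the fact that they depend only on the fixed compact $\mathrm{D}_1$ and the fixed vector $u$.
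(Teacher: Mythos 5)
Your proposal is correct and is exactly the argument the paper uses (the paper simply states ``Applying the previous theorem and theorem \ref{estimation_lattice}''): split the count into the torus part handled by Theorem \ref{estimation_lattice}, bound the complement by Theorem \ref{number_of_points_complementary_of_the_torus_universal_torsor}, identify $\Vol(\mathcal{D}_1)$ with $2^{\rho_{\orb}(X)}\mu_{X,\infty}(X(\R))\nu(\mathrm{D}_1)$ via Proposition \ref{analysis_measure_univ_torsor_at_infty}, and conclude by the triangle inequality.
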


As $\sharp\left(X(\Q) \setminus T(\Q)) \right)_{h \in \mathrm{D}_B} \leqslant \sharp\left(\Z^{\Sigma_{\ext}(1)} \cap (\stackT^{\ext}(\R) \setminus T^{\ext}(\R)) \right)_{h^{\ext}_{\stackT,\infty} \in \mathrm{D}_B}$,we can also deduce the following corollary:

\begin{cor}\label{number_of_points_complementary_of_the_torus_variety}
    $$ \sharp\left(X(\Q) \setminus T(\Q)) \right)_{h \in \mathrm{D}_B} = O\left( \frac{B^{\langle \w_{X,\orb}^{-1} , u \rangle}}{B^{\min\limits_{\rho \in \Sigma_{\ext}(1)} \langle [D_{\rho}] , u \rangle }} \right) .$$
\end{cor}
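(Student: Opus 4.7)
The proof is essentially already indicated in the sentence immediately preceding the corollary: it suffices to bound rational points outside the open torus by lattice points on the boundary of $\stackT^{\ext}$ in $\R^{\Sigma_{\ext}(1)}$ and invoke Theorem \ref{number_of_points_complementary_of_the_torus_universal_torsor}. So the plan is to justify the stated inequality and then quote the previous theorem.

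First, by Proposition \ref{prop_first_form_extended_parametrization}, the map $q^{\ext} : \stackT^{\ext}(\Z) \to X(\Q)$ is surjective, so I can choose a (set-theoretic) section $X(\Q) \to \stackT^{\ext}(\Z)$, $P \mapsto y_P$. This is an injection. If $P \in X(\Q) \setminus T(\Q)$, then since the restriction $q^{\ext} : T^{\ext}(\Q) \to T(\Q)$ factors through the open torus, the lift $y_P$ cannot lie in $T^{\ext}(\Q)$; equivalently, at least one coordinate $(y_P)_\rho$ with $\rho \in \Sigma_{\ext}(1)$ vanishes. Combined with the inclusion $\stackT^{\ext}(\Z) \subset \Z^{\Sigma_{\ext}(1)}$, this gives $y_P \in \Z^{\Sigma_{\ext}(1)} \cap (\stackT^{\ext}(\R) \setminus T^{\ext}(\R))$.

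Next, by the proposition following Definition \ref{definition_local_height_extended_universal_torsor}, one has $h(P) = h^{\ext}_{\stackT,\infty}(y_P)$ for every $y_P \in \stackT^{\ext}(\Z)$. Consequently, the map $P \mapsto y_P$ restricts to an injection
\[
  \left(X(\Q) \setminus T(\Q)\right)_{h \in \mathrm{D}_B} \hookrightarrow \left(\Z^{\Sigma_{\ext}(1)} \cap (\stackT^{\ext}(\R) \setminus T^{\ext}(\R))\right)_{h^{\ext}_{\stackT,\infty} \in \mathrm{D}_B},
\]
which yields the inequality asserted before the corollary. Applying Theorem \ref{number_of_points_complementary_of_the_torus_universal_torsor} to bound the right-hand side by $O\!\left( B^{\langle \w_{X,\orb}^{-1}, u\rangle} / B^{\min_{\rho} \langle [D_\rho],u\rangle} \right)$ concludes the proof.

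There is no real obstacle here: the two nontrivial inputs (the height identification $h = h^{\ext}_{\stackT,\infty} \circ (\text{section})$ and the lattice-point bound on the boundary) have already been established, and the main subtlety — checking that a lift of a point not in $T$ must have a vanishing coordinate — follows immediately from the toric description $T^{\ext} = \G_m^{\Sigma_{\ext}(1)}$ inside $\stackT^{\ext} \subset \affine^{\Sigma_{\ext}(1)}$.
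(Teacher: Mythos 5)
Your proof is correct and follows the same route as the paper: the paper simply asserts the inequality $\sharp\left(X(\Q) \setminus T(\Q)\right)_{h \in \mathrm{D}_B} \leqslant \sharp\left(\Z^{\Sigma_{\ext}(1)} \cap (\stackT^{\ext}(\R) \setminus T^{\ext}(\R)) \right)_{h^{\ext}_{\stackT,\infty} \in \mathrm{D}_B}$ and invokes Theorem \ref{number_of_points_complementary_of_the_torus_universal_torsor}, whereas you additionally spell out why that inequality holds (surjectivity of $q^{\ext}$, vanishing of a coordinate for lifts of points outside the torus, and the height compatibility $h(P)=h^{\ext}_{\stackT,\infty}(y_P)$). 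Nothing further is needed.
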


\subsection{Extension of our estimation in order to apply the Möbius inversion}

Let $d \in (\N^*)^{\Sigma_{\ext}(1)}$ and write $\Lambda_d = \bigoplus\limits_{\rho \in \Sigma_{\ext}(1)} d_{\rho}.\Z$. We shall denote $N_d(B)$ the cardinal of the set $\Big( \Lambda_d \cap T^{\ext}(\R) \Big)_{h^{\ext}_{\stackT,\infty} \in \mathrm{D}_{B}}$. We want to extend the estimation made in theorem \ref{estimation_lattice} to $N_d(B)$.

Denote by $\varphi : \R^{\Sigma_{\ext}(1)} \rightarrow \R^{\Sigma_{\ext}(1)}$ the diagonal endomorphism given by multiplication by $d_{\rho}$ the coordinate associated to $\rho \in \Sigma_{\ext}(1)$. Recall that we have $\covol(\Lambda_d) = \Pi(d)$. We can interpret $N_d(B)$ as follows:
    \begin{align*}
        N_d(B) &= \sharp \left\{ y \in \Z^{\Sigma_{\ext}(1)} \mid h^{\ext}_{\stackT,\infty}\left( \varphi(y)  \right) \in \mathrm{D}_B \text{ and } \varphi(y) \in T^{\ext}(\R) \right\} \\
        &= \sharp \left\{ y \in \Z^{\Sigma_ {\ext}(1)} \mid h^{\ext}_{\stackT,\infty}\left( B^{-u}. \varphi(y) \right) \in \mathrm{D} \text{ and } B^{-u}. \varphi(y) \in T^{\ext}(\R) \right\} \\
        %&= \sharp \left\{ y \in \Z^{\Sigma(1)} \mid h_{\stackT,\infty}\left( B^{-u}. \varphi(y) \right) \in \mathrm{D} \text{ and } y \in T_{\Sigma(1)}(\R) \right\}
    \end{align*}
We write $\mathcal{D}_1 = \{ y \in T^{\ext}(\R) \mid h^{\ext}_{\stackT,\infty}(y) \in \mathrm{D} \}$. We can then deduce that $$N_d(B) = \sharp \left( \Z^{\Sigma_{\ext}(1)} \cap \Phi(\mathcal{D}_1) \right)$$ where $\Phi(z) = \varphi^{-1} ( B^u. z ) $. Using this description, we begin by giving an upper bound for $N_d(B)$:

\begin{prop}\label{upper_bound_N_d(B)}
    For any $B > 1$, we have:
    $$N_d(B) = O\left( \frac{B^{\langle \w_{X,\orb}^{-1} , u \rangle}}{\Pi(d)} \right) .$$
\end{prop}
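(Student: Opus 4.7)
The plan is to reduce the problem to a coordinate-wise box count, using the explicit bounds already established on the support of $\mathcal{D}_B$, and then to exploit the identity $\w_{X,\orb}^{-1} = \sum_{\rho \in \Sigma_{\ext}(1)}[D_\rho]$ to combine the individual bounds.

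First I would rewrite $N_d(B)$ directly in the ``downstairs'' form: since $\Lambda_d = \varphi(\Z^{\Sigma_{\ext}(1)})$ and $\mathcal{D}_B = B^u \cdot \mathcal{D}_1$ is invariant under the diagonal action only when rescaled, we have
\[
N_d(B) = \sharp\Bigl\{z \in \Z^{\Sigma_{\ext}(1)} \,\Big|\, \varphi(z) \in \mathcal{D}_B \text{ and } \varphi(z) \in T^{\ext}(\R)\Bigr\}.
\]
The condition $\varphi(z) \in T^{\ext}(\R)$ just says $z_\rho \neq 0$ for every $\rho \in \Sigma_{\ext}(1)$, so in particular $|z_\rho| \geq 1$.

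Next I would invoke Theorem \ref{theorem_crucial_majoration_estimation_number}, which gives a constant $C > 0$ (depending only on $\mathrm{D}$) such that every $y \in \mathcal{D}_B$ satisfies $|y_\rho| \leq C\, B^{\langle [D_\rho],u \rangle}$ for all $\rho \in \Sigma_{\ext}(1)$. Applied to $y = \varphi(z)$, this yields
\[
1 \leq |z_\rho| \leq \frac{C\, B^{\langle [D_\rho],u \rangle}}{d_\rho}
\qquad\text{for every } \rho \in \Sigma_{\ext}(1).
\]
If $d_\rho > C\, B^{\langle [D_\rho],u \rangle}$ for some $\rho$, then there is no admissible $z_\rho$ and so $N_d(B) = 0$, in which case the asserted estimate is trivial. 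Otherwise, the number of non-zero integers $z_\rho$ in $\bigl[-C B^{\langle [D_\rho],u \rangle}/d_\rho,\; C B^{\langle [D_\rho],u \rangle}/d_\rho\bigr]$ is at most $2 C B^{\langle [D_\rho],u \rangle}/d_\rho$ (using $|z_\rho| \geq 1$ to eliminate the annoying $+1$).

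Multiplying these coordinate bounds gives
\[
N_d(B) \;\leq\; \prod_{\rho \in \Sigma_{\ext}(1)} \frac{2C\, B^{\langle [D_\rho],u \rangle}}{d_\rho}
\;=\; \frac{(2C)^{\sharp \Sigma_{\ext}(1)}\, B^{\sum_{\rho}\langle [D_\rho],u \rangle}}{\Pi(d)}.
\]
Finally, using the decomposition $\w_{X,\orb}^{-1} = \sum_{\rho \in \Sigma_{\ext}(1)}[D_\rho]$ recalled after Proposition \ref{proposition_description_cone_orbifold_effective_divisor_toric_stack}, the numerator is $O\bigl(B^{\langle \w_{X,\orb}^{-1},u\rangle}\bigr)$, which yields the claimed bound. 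There is no genuine obstacle here: the only mild subtlety is ruling out the $+1$ correction in each coordinate, which is why I explicitly separated the case $d_\rho > C B^{\langle [D_\rho],u\rangle}$ (where $N_d(B)$ is already zero) from the generic case in which $|z_\rho| \geq 1$ permits absorbing the boundary term into the multiplicative bound.
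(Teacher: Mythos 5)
Your proof is correct and follows essentially the same route as the paper: both bound each coordinate $|z_\rho|$ by a multiple of $B^{\langle [D_\rho],u\rangle}/d_\rho$ (the paper via $\mathcal{D}_1 \subset \prod[-N,N]\setminus\{0\}$ from Theorem \ref{theorem_compacity_pull-back-compact_ext_height}, you via Theorem \ref{theorem_crucial_majoration_estimation_number} applied to $\mathcal{D}_B$, which is the same bound restated), then multiply and use $\w_{X,\orb}^{-1} = \sum_\rho [D_\rho]$. The handling of the $+1$ boundary term via $|z_\rho| \geq 1$ matches the paper's use of the floor function.
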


\begin{proof}
    Again by theorem \ref{theorem_compacity_pull-back-compact_ext_height}, there exists $N > 1$ such that $\mathcal{D}_1 \subset \prod\limits_{\rho \in \Sigma_{\ext}(1)} \left( [-N,N] - \{0\} \right)$. Hence $\Phi(\mathcal{D}_1) $ is contained in $$ \prod\limits_{\rho \in \Sigma_{\ext}(1)} \left( \left[ -N.\frac{B^{\langle [D_{\rho}] , u \rangle}}{d_{\rho}},N.\frac{B^{\langle [D_{\rho}] , u \rangle}}{d_{\rho}} \right] - \{0\} \right) .$$ So we have that:
    \begin{align*}
        N_d(B) &= \sharp(\Z^{\Sigma_{\ext}(1)} \cap \Phi(\mathcal{D}_1)) \\
        &\leqslant 2^{\sharp \Sigma_{\ext}(1)}.\prod\limits_{\rho \in \Sigma_{\ext}(1)} \lfloor N.\frac{B^{\langle [D_{\rho}] , u \rangle}}{d_{\rho}} \rfloor \\
        &\leqslant (2.N)^{\sharp \Sigma_{\ext}(1)} . \frac{B^{\langle \w_{X,\orb}^{-1},u\rangle}}{\Pi(d)} .
    \end{align*}
\end{proof}

\begin{remark}\label{remark_nulitte_N_d(B)}
    One may remark using the previous proof that $N_d(B) = 0$ when there exists $\rho \in \Sigma_{\ext}(1)$ such that $N . B^{\langle [D_{\rho}] , u \rangle} < d_{\rho}$.
\end{remark}

We shall later need a better estimation of $N_d(B)$ when $\Pi(d) \leqslant B^{\min\limits_{\rho \in \Sigma_{\ext}(1)} \langle [D_{\rho}] , u \rangle}$. It is given by the following theorem:

\begin{theorem}\label{estimation_N_d(B)}
    There exists a constant $C > 0$ (depending on the choice of $\mathrm{D})$ such that:
    $$\left|  N_d(B) - \frac{1}{\Pi(d)}.2^{\rho_{\orb}(X)}.\mu_{X,\infty}(X(\R)). \nu(\mathrm{D}_1) B^{ \langle \w_{X,\orb}^{-1} , u \rangle } \right| \leqslant C .\frac{B^{\langle \w_{X,\orb}^{-1},u\rangle}}{B^{ \min\limits_{\rho \in \Sigma_{\ext}(1)} \langle [D_{\rho}],u\rangle } } .$$
\end{theorem}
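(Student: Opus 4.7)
The plan is to adapt the proof of Theorem \ref{estimation_lattice} by applying Davenport's geometry-of-numbers result (Proposition \ref{bhargava_geometrie_nombre}) to the rescaled semi-algebraic region $\Phi(\mathcal{D}_1)$ and by tracking how rescaling affects both the main and the error terms. The crucial point is that only the trivial bound $d_\rho \geqslant 1$ will be used in the error estimate, which is what makes the bound uniform in $d$.

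First, using the reformulation already established, $N_d(B) = \sharp(\Z^{\Sigma_{\ext}(1)} \cap \Phi(\mathcal{D}_1))$ up to the restriction to $T^{\ext}(\R)$. Since $\Phi$ is the composition of multiplication by $B^u$ with the diagonal scaling $\varphi^{-1}$, the set $\Phi(\mathcal{D}_1)$ is defined by the same number of polynomial inequalities of the same degrees as $\mathcal{D}_1$; in particular Davenport's constant is uniform in $d$ and $B$. By the standard change-of-variables formula, $\Vol(\Phi(\mathcal{D}_1)) = \frac{1}{\Pi(d)}\Vol(\mathcal{D}_B)$, and the identity $\Vol(\mathcal{D}_B) = 2^{\rho_{\orb}(X)}\mu_{X,\infty}(X(\R))\,\nu(\mathrm{D}_1)\,B^{\langle\w_{X,\orb}^{-1},u\rangle}$ from the theorem following Proposition \ref{analysis_measure_univ_torsor_at_infty} yields the desired main term.

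Second, for the error term produced by Davenport, fix $I \subsetneq \Sigma_{\ext}(1)$. The projection satisfies $\proj_I(\Phi(\mathcal{D}_1)) = \varphi_I^{-1}(\proj_I(\mathcal{D}_B))$, where $\varphi_I$ is the diagonal scaling on $\R^I$ by $(d_\rho)_{\rho \in I}$, so by Theorem \ref{theorem_crucial_majoration_estimation_number},
$$\Vol(\proj_I(\Phi(\mathcal{D}_1))) = \frac{1}{\prod_{\rho \in I} d_\rho}\Vol(\proj_I(\mathcal{D}_B)) \leqslant C_I \prod_{\rho \in I} \frac{B^{\langle [D_\rho], u \rangle}}{d_\rho} \leqslant C_I \prod_{\rho \in I} B^{\langle [D_\rho], u \rangle},$$
using only $d_\rho \geqslant 1$. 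The quantity $\prod_{\rho \in I} B^{\langle [D_\rho], u \rangle}$ over $I \subsetneq \Sigma_{\ext}(1)$ is maximized, since $\langle[D_\rho],u\rangle > 0$ for every $\rho$, by taking $I = \Sigma_{\ext}(1) \setminus \{\rho_0\}$ where $\rho_0$ minimizes $\langle[D_\rho],u\rangle$, which gives the target bound $O\bigl(B^{\langle\w_{X,\orb}^{-1},u\rangle}/B^{\min_\rho\langle[D_\rho],u\rangle}\bigr)$. The $\max(1,\cdot)$ in Davenport is absorbed by the fact that this bound is $\geqslant 1$ for $B \geqslant 1$.

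Third, the contribution of lattice points in $\Lambda_d \cap \bigl(\stackT^{\ext}(\R) \setminus T^{\ext}(\R)\bigr)$ must still be subtracted. Repeating the argument of Theorem \ref{number_of_points_complementary_of_the_torus_universal_torsor} in each hyperplane $\{y_\rho = 0\}$ with the lattice $\Lambda_d$ gives a bound of order $\prod_{\rho' \neq \rho}(NB^{\langle[D_{\rho'}],u\rangle}/d_{\rho'}+1) \leqslant C \prod_{\rho' \neq \rho} B^{\langle[D_{\rho'}],u\rangle}$, again using $d_{\rho'} \geqslant 1$, and this is absorbed into the stated error.

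The main (though not especially deep) obstacle is the verification that the error bound truly is uniform in $d$: it relies on the fact that Davenport's proposition only requires a bound on the degree and number of polynomial inequalities, which are invariants of the underlying region $\mathcal{D}_1$ and are not altered by the diagonal rescaling $\Phi$. All estimates of projections then use only the trivial inequality $d_\rho \geqslant 1$, so no case analysis on the size of $d$ is needed.
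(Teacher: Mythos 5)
Your proposal follows essentially the same route as the paper: rewrite $N_d(B)$ as a lattice-point count in the diagonally rescaled region $\Phi(\mathcal{D}_1)$, apply Davenport's estimate (Proposition~\ref{bhargava_geometrie_nombre}), compute the main term via the change-of-variables $\Vol(\Phi(\mathcal{D}_1)) = \Pi(d)^{-1}\Vol(\mathcal{D}_B)$, and bound the projection volumes uniformly in $d$ using only $d_\rho \geqslant 1$ together with $[D_\rho]\in\coneorb$. That is precisely what the paper's proof does (the paper phrases the projection bound through the identity $\proj_I\circ\Phi = \Phi_I\circ\proj_I$; you phrase it as $\proj_I(\Phi(\mathcal{D}_1)) = \varphi_I^{-1}(\proj_I(\mathcal{D}_B))$ — these are the same computation). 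Your remark that the number and degrees of the defining polynomial inequalities are unchanged by the diagonal rescaling, so that Davenport's implied constant is uniform in $d$ and $B$, is exactly the observation that makes the argument go through.

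One small point: your third step, subtracting lattice points in $\Lambda_d \cap (\stackT^{\ext}(\R)\setminus T^{\ext}(\R))$, is superfluous. The region $\mathcal{D}_1 = \{y\in T^{\ext}(\R)\mid h^{\ext}_{\stackT,\infty}(y)\in\mathrm{D}\}$ is already a subset of the open torus, and $\Phi$ is a diagonal bijection of the torus, so $\Phi(\mathcal{D}_1)\subset T^{\ext}(\R)$ and Davenport applied to $\Phi(\mathcal{D}_1)$ already counts exactly $N_d(B)$. Your qualifier ``up to the restriction to $T^{\ext}(\R)$'' after the reformulation suggests some uncertainty about whether the torus condition is built in; it is, so no correction is needed. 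The extra bound you give is true but adds nothing.
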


\begin{proof}
    Denote by $\Phi_I : \R^I \rightarrow \R^I$ the morphism which maps $$(x_{\rho})_{\rho \in I} \mapsto ( \frac{B^{\langle [D_{\rho}] , u \rangle}}{d_{\rho}} . x_{\rho} )_{\rho \in I} .$$Then we may remark that:
  $$ \proj_I \circ \Phi = \Phi_I \circ \proj_I .$$Hence we get, using this, that there exists $C > 0$ (which depends only on the compact subset $\mathrm{D}$) such that for any non empty set $I \subsetneq \Sigma_{\ext}(1) $:
   $$ \Vol(\proj_I(\Phi(\mathcal{D})) \leqslant C . \frac{B^{\langle \sum\limits_{\rho \in I} [D_{\rho}] , u \rangle}}{\prod\limits_{ \rho \in I} d_{\rho}} \leqslant C.\frac{B^{\langle \w_{X,\orb}^{-1},u\rangle}}{B^{ \min\limits_{\rho \in \Sigma_{\ext}(1)} \langle [D_{\rho}],u\rangle } }.$$Applying proposition \ref{bhargava_geometrie_nombre} finishes the proof of the lemma.
   
\end{proof}

\subsection{The Möbius inversion}

We recall that we take $\mathrm{D}$ a finite union of compact polyhedrons of $\Pic_{\orb}(X)^{\vee}_{\R}$, $u \in (\coneorb^{\vee})^{\circ}$ and $B > 1$.

Using theorem \ref{theorem_mobius_inversion}, we have the following proposition:

\begin{prop}
    $$ \sharp( \stackT^{\ext}(\Z) \cap T^{\ext}(\R) )_{h^{\ext}_{\stackT,\infty} \in \mathrm{D}_B} = \sum\limits_{d \in (\N^*)^{\Sigma_{\ext}(1)}} \mu(d) . N_d(B)  $$
\end{prop}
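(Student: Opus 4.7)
The plan is to unfold both sides as explicit finite sums of integer points and exchange the order of summation, using Theorem \ref{theorem_mobius_inversion} as the sole algebraic input.

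First I would rewrite the left hand side as
\begin{equation*}
\sharp( \stackT^{\ext}(\Z) \cap T^{\ext}(\R) )_{h^{\ext}_{\stackT,\infty} \in \mathrm{D}_B} \;=\; \sum_{\substack{y \in \Z^{\Sigma_{\ext}(1)} \cap T^{\ext}(\R) \\ h^{\ext}_{\stackT,\infty}(y) \in \mathrm{D}_B}} \mathbf{1}_{\stackT^{\ext}(\Z)}(y),
\end{equation*}
and apply Theorem \ref{theorem_mobius_inversion} to the integrand, giving a double sum with $\sum_{d \in (\N^*)^{\Sigma_{\ext}(1)}} \mu(d)\mathbf{1}_d(y)$ inside.

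Next I would justify Fubini-type interchange of the two sums. The outer sum over $y$ is finite: by Theorem \ref{theorem_crucial_majoration_estimation_number}, the condition $h^{\ext}_{\stackT,\infty}(y)\in\mathrm{D}_B$ forces $|y_\rho| \leqslant C B^{\langle [D_\rho],u\rangle}$ for every $\rho \in \Sigma_{\ext}(1)$, and there are only finitely many integer points in this bounded box. For each such fixed $y$ (with all coordinates nonzero, since $y \in T^{\ext}(\R)$), the summand $\mu(d)\mathbf{1}_d(y)$ vanishes unless $d_\rho \mid y_\rho$ for every $\rho$, and the set of such $d$ is finite (it is the product of the finitely many divisor sets of the $y_\rho$). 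Thus the double sum is in fact a finite sum and interchange is legitimate.

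After swapping we obtain
\begin{equation*}
\sum_{d \in (\N^*)^{\Sigma_{\ext}(1)}} \mu(d) \sum_{\substack{y \in \Z^{\Sigma_{\ext}(1)} \cap T^{\ext}(\R) \\ h^{\ext}_{\stackT,\infty}(y) \in \mathrm{D}_B}} \mathbf{1}_d(y).
\end{equation*}
By definition $\mathbf{1}_d(y)=1$ iff $y \in \Lambda_d = \bigoplus_\rho d_\rho \Z$, so the inner sum equals $\sharp(\Lambda_d \cap T^{\ext}(\R))_{h^{\ext}_{\stackT,\infty} \in \mathrm{D}_B} = N_d(B)$, yielding the desired identity. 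The main (very mild) obstacle is simply the bookkeeping needed to verify that all manipulations are carried out over genuinely finite sums, which is handled by the uniform box bound of Theorem \ref{theorem_crucial_majoration_estimation_number} combined with Remark \ref{remark_nulitte_N_d(B)} (or directly the finiteness of the divisors of a nonzero integer).
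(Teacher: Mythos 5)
Your proof is correct and follows essentially the same argument as the paper: unfold the count as a sum of indicator values, apply the M\"obius identity of Theorem~\ref{theorem_mobius_inversion}, interchange the two (finite) sums, and recognize the inner sum as $N_d(B)$. The only cosmetic difference is that the paper first restricts to $(\N^*)^{\Sigma_{\ext}(1)}$ and carries a factor $2^{\sharp\Sigma_{\ext}(1)}$ so Theorem~\ref{theorem_mobius_inversion} applies verbatim, whereas you work directly on $\Z^{\Sigma_{\ext}(1)}\cap T^{\ext}(\R)$, which is fine once one notes that $\mathbf{1}_{\stackT^{\ext}(\Z)}$, each $\mathbf{1}_d$, and the height constraint are all invariant under coordinate sign changes.
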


\begin{proof}
    First, by remark \ref{remark_nulitte_N_d(B)}, the sum is finite. Now we have:
    \begin{align*}
        &\sharp( \stackT^{\ext}(\Z) \cap T^{\ext}(\R) )_{h^{\ext}_{\stackT,\infty} \in \mathrm{D}_B} = 2^{\sharp \Sigma_{\ext}(1)}. \sum\limits_{y \in (\N^*)^{\Sigma_{\ext}(1)}} \mathbf{1}_{( h^{\ext}_{\stackT,\infty})^{-1}(\mathrm{D}_B)}(y) . \mathbf{1}_{\stackT^{\ext}(\Z)}(y) \\
        &= 2^{\sharp \Sigma_{\ext}(1)}. \sum\limits_{y \in (\N^*)^{\Sigma_{\ext}(1)}} \mathbf{1}_{(h^{\ext}_{\stackT,\infty})^{-1}(\mathrm{D}_B)}(y) . \sum\limits_{d \in (\N^*)^{\Sigma_{\ext}(1)}} \mu(d) . \mathbf{1}_{d.\Z^{\Sigma_{\ext}(1)}}(y) \\
        &= \sum\limits_{d \in (\N^*)^{\Sigma_{\ext}(1)}} \mu(d) . \left( 2^{\sharp \Sigma_{\ext}(1)}. \sum\limits_{y \in (\N^*)^{\Sigma_{\ext}(1)}} \mathbf{1}_{(h^{\ext}_{\stackT,\infty})^{-1}(\mathrm{D}_B)}(y) . \mathbf{1}_{d.\Z^{\Sigma_{\ext}(1)}}(y) \right) \\
        &= \sum\limits_{d \in (\N^*)^{\Sigma_{\ext}(1)}} \mu(d) . N_d(B) .
    \end{align*}
\end{proof}

We shall show the following theorem:

\begin{theorem}
Let $\varepsilon \in ]0,\frac{1}{2}[$. There exists a constant $C > 0$ such that:
\begin{align*}
    &\left| \sharp( \stackT^{\ext}(\Z) \cap T^{\ext}(\R) )_{h^{\ext}_{\stackT,\infty} \in \mathrm{D}_B} - 2^{\rho_{\orb}(X)} . \prod\limits_p \w_{\stackT,p}(\stackT^{\ext}(\Z_p)) . \mu_{X,\infty}(X(\R)). \nu(\mathrm{D}_1) B^{ \langle \w_{X,\orb}^{-1} , u \rangle } \right| \\
    &\leqslant C . \frac{B^{\langle \w_{X,\orb}^{-1} , u \rangle}}{B^{(\frac{1}{2} - \varepsilon) \min\limits_{\rho \in \Sigma_{\ext}(1)} \langle [D_{\rho}] , u \rangle }}.
\end{align*}
\end{theorem}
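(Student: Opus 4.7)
The plan is to evaluate each term of the Möbius decomposition $\sum_d \mu(d) N_d(B)$ just obtained, which is a \emph{finite} sum thanks to Remark \ref{remark_nulitte_N_d(B)}. For brevity write $E = \langle \w_{X,\orb}^{-1}, u \rangle$, $M = \min_{\rho \in \Sigma_{\ext}(1)} \langle [D_\rho], u \rangle$ (positive since $u \in (\coneorb^{\vee})^{\circ}$ and $[D_\rho] \in \coneorb$ by Proposition \ref{proposition_description_cone_orbifold_effective_divisor_toric_stack}), and $C_0 = 2^{\rho_{\orb}(X)} \mu_{X,\infty}(X(\R)) \nu(\mathrm{D})$. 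Fix a truncation threshold $Y > 1$, to be optimised at the end, and split
$$\sum_d \mu(d) N_d(B) \;=\; \sum_{\Pi(d) \leqslant Y} \mu(d) N_d(B) \;+\; \sum_{\Pi(d) > Y} \mu(d) N_d(B).$$

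For the short range $\Pi(d) \leqslant Y$, Theorem \ref{estimation_N_d(B)} gives $N_d(B) = C_0 B^E / \Pi(d) + O(B^E / B^M)$ with an implicit constant independent of $d$. Summing against $\mu(d)$ produces the partial main term $C_0 B^E \sum_{\Pi(d) \leqslant Y} \mu(d)/\Pi(d)$ together with, by item (1) of Corollary \ref{majoration_fonction_de_mobius}, a remainder of size $O(Y^{1/2+\varepsilon} B^E / B^M)$. For the long range $\Pi(d) > Y$, the crude bound $N_d(B) = O(B^E/\Pi(d))$ of Proposition \ref{upper_bound_N_d(B)}, coupled with item (2) of the same corollary, gives a contribution of size $O(B^E / Y^{1/2-\varepsilon})$. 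Completing the partial main term to the full series costs another $O(B^E / Y^{1/2-\varepsilon})$ by the same inequality, and the completed series $\sum_d \mu(d)/\Pi(d)$ factors as the Euler product $\prod_p \w_{\stackT,p}(\stackT^{\ext}(\Z_p))$ by the third assertion of Theorem \ref{value_global_mesure_mobius_fonction} combined with the signed local identity from Proposition \ref{proposition_local_study_of_mobius_function}; absolute convergence of the Euler product at $s = 1$ is guaranteed by the first assertion of Theorem \ref{value_global_mesure_mobius_fonction} since $1 > 1/2$.

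Balancing the two error estimates $Y^{1/2+\varepsilon}/B^M$ and $1/Y^{1/2-\varepsilon}$ forces the optimal choice $Y = B^M$, for which both errors become $O(B^E / B^{M(1/2-\varepsilon)})$, matching exactly the bound to prove. The only genuinely delicate point is the uniformity in $d$ of the implicit constants in Theorem \ref{estimation_N_d(B)} and Proposition \ref{upper_bound_N_d(B)}, but these depend only on the fixed compact polyhedron $\mathrm{D}$ and the vector $u$ through the shape of $\mathcal{D}_1$ (together with $N$ from Theorem \ref{theorem_compacity_pull-back-compact_ext_height}), so they are indeed independent of $d$ and the argument goes through by assembling the previously established lemmas.
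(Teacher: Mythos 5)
Your proof is correct and follows essentially the same route as the paper: decompose $\sum_d \mu(d) N_d(B)$ at a threshold, use Theorem \ref{estimation_N_d(B)} together with Corollary \ref{majoration_fonction_de_mobius}(1) on the short range, Proposition \ref{upper_bound_N_d(B)} together with Corollary \ref{majoration_fonction_de_mobius}(2) on the tail, and identify the completed series with the Euler product via Proposition \ref{value_local_mesure_mobius_function} and Theorem \ref{value_global_mesure_mobius_fonction}. The only cosmetic difference is that you keep the truncation parameter $Y$ free and optimise at the end, whereas the paper substitutes $Y = B^{\min_{\rho} \langle [D_\rho],u\rangle}$ from the outset.
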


\begin{proof}
    Recall that by proposition \ref{value_local_mesure_mobius_function} and theorem \ref{value_global_mesure_mobius_fonction}, we have that:
    $$\sum\limits_d \frac{\mu(d)}{\Pi(d)} = \prod\limits_p \w_{\stackT,p}(\stackT^{\ext}(\Z_p)) .$$ Hence using the previous proposition, we get:
    \begin{align*}
        &\left| \sharp( \stackT^{\ext}(\Z) \cap T^{\ext}(\R) )_{h^{\ext}_{\stackT,\infty} \in \mathrm{D}_B} - 2^{\rho_{\orb}(X)}. \prod\limits_p \w_{\stackT,p}(\stackT^{\ext}(\Z_p)) . \mu_{X,\infty}(X(\R)). \nu(\mathrm{D}_1) B^{ \langle \w_{X,\orb}^{-1} , u \rangle } \right| \\
        =& \left| \sum\limits_d \mu(d) \left( N_d(B) - 2^{\rho_{\orb}(X)} . \frac{1}{\Pi(d)} . \mu_{X,\infty}(X(\R)). \nu(\mathrm{D}_1) B^{ \langle \w_{X,\orb}^{-1} , u \rangle } \right) \right| \\
        \leqslant&  \sum\limits_{\Pi(d) \leqslant B^{\min\limits_{\rho \in \Sigma_{\ext}(1)} \langle [D_{\rho}] , u \rangle }} | \mu(d) | . \left| N_d(B) - 2^{\rho_{\orb}(X)} . \frac{1}{\Pi(d)} . \mu_{X,\infty}(X(\R)). \nu(\mathrm{D}_1) B^{ \langle \w_{X,\orb}^{-1} , u \rangle } \right| \\
        +& \sum\limits_{\Pi(d) \geqslant B^{\min\limits_{\rho \in \Sigma_{\ext}(1)} \langle [D_{\rho}] , u \rangle }} | \mu(d) | . \left| N_d(B) - 2^{\rho_{\orb}(X)} . \frac{1}{\Pi(d)} . \mu_{X,\infty}(X(\R)). \nu(\mathrm{D}_1) B^{ \langle \w_{X,\orb}^{-1} , u \rangle } \right|.
    \end{align*}
    When $\Pi(d) \leqslant B^{\min\limits_{\rho \in \Sigma_{\ext}(1)} \langle [D_{\rho}] , u \rangle }$ we use theorem \ref{estimation_N_d(B)}, we have that there exists a constant $C > 0$ such that:
    $$\left|  N_d(B) - \frac{1}{\Pi(d)}.2^{\rho_{\orb}(X)}.\mu_{X,\infty}(X(\R)). \nu(\mathrm{D}_1) B^{ \langle \w_{X,\orb}^{-1} , u \rangle } \right| \leqslant C .\frac{B^{\langle \w_{X,\orb}^{-1},u\rangle}}{B^{ \min\limits_{\rho \in \Sigma_{\ext}(1)} \langle [D_{\rho}],u\rangle } } .$$Let $\varepsilon \in ]0,\frac{1}{2}[$. Then by corollary \ref{majoration_fonction_de_mobius}, we get that:
    \begin{align*}
        &\sum\limits_{\Pi(d) \leqslant B^{\min\limits_{\rho \in \Sigma(1)} \langle [D_{\rho}] , u \rangle }} | \mu(d) | . \left| N_d(B) - 2^{\rho_{\orb}(X)} . \frac{1}{\Pi(d)} . \mu_{X,\infty}(X(\R)). \nu(\mathrm{D}_1) B^{ \langle \w_{X,\orb}^{-1} , u \rangle } \right| \\
         &\leqslant C.\frac{B^{\langle \w_{X,\orb}^{-1},u\rangle}}{B^{ \min\limits_{\rho \in \Sigma_{\ext}(1)} \langle [D_{\rho}],u\rangle } } . B^{(\frac{1}{2} + \varepsilon).\min\limits_{\rho \in \Sigma_{\ext}(1)} \langle [D_{\rho}],u\rangle }.
    \end{align*}
    Now by proposition \ref{upper_bound_N_d(B)}, there exists $C' > 0$ such that:
    $$N_d(B) \leqslant C'.\frac{B^{\langle \w_{X,\orb}^{-1} , u \rangle}}{\Pi(d)} .$$ Hence we get using the second assertion of lemma \ref{majoration_fonction_de_mobius} that there exists $C'' > 0$ such that:
    \begin{align*}
        &\sum\limits_{\Pi(d) \geqslant B^{\min\limits_{\rho \in \Sigma(1)} \langle [D_{\rho}] , u \rangle }} | \mu(d) | . \left| N_d(B) - 2^{\rho_{\orb}(X)} . \frac{1}{\Pi(d)} . \mu_{X,\infty}(X(\R)). \nu(\mathrm{D}_1) B^{ \langle \w_{X,\orb}^{-1} , u \rangle } \right| \\
        &\leqslant C''. B^{\langle \w_{X,\orb}^{-1},u\rangle} .\sum\limits_{\Pi(d) \geqslant B^{\min\limits_{\rho \in \Sigma(1)} \langle [D_{\rho}] , u \rangle }} \frac{| \mu(d) |}{\Pi(d)} \\
        &\leqslant C''.\frac{B^{\langle \w_{X,\orb}^{-1},u\rangle}}{B^{(1-\frac{1}{2} - \varepsilon) \min\limits_{\rho \in \Sigma_{\ext}(1)} \langle [D_{\rho}],u\rangle } } .
    \end{align*}This concludes our proof.    
\end{proof}

Applying theorem \ref{number_of_points_complementary_of_the_torus_universal_torsor}, we get:

\begin{cor}\label{conclusion_torsor_univ}
Let $\varepsilon \in ]0,\frac{1}{2}[$. There exists a constant $C > 0$ such that:
\begin{align*}
    &\left| (\stackT^{\ext}(\Z) )_{h^{\ext}_{\stackT,\infty} \in \mathrm{D}_B}  - 2^{\rho_{\orb}(X)} . \prod\limits_p \w_{\stackT,p}(\stackT^{\ext}(\Z_p)) . \mu_{X,\infty}(X(\R)). \nu(\mathrm{D}_1) B^{ \langle \w_{X,\orb}^{-1} , u \rangle } \right| \\
    &\leqslant C . \frac{B^{\langle \w_{X,\orb}^{-1} , u \rangle}}{B^{(\frac{1}{2} - \varepsilon) \min\limits_{\rho \in \Sigma_{\ext}(1)} \langle [D_{\rho}] , u \rangle }}
\end{align*}
\end{cor}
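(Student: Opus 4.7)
The plan is to derive this corollary as an immediate consequence of the theorem stated just above it, combined with the complementary bound of Theorem \ref{number_of_points_complementary_of_the_torus_universal_torsor}. The natural first step is to decompose the counting set according to whether integer points lie on the open torus or have at least one vanishing coordinate:
\[
\sharp (\stackT^{\ext}(\Z))_{h^{\ext}_{\stackT,\infty} \in \mathrm{D}_B} = \sharp (\stackT^{\ext}(\Z) \cap T^{\ext}(\R))_{h^{\ext}_{\stackT,\infty} \in \mathrm{D}_B} + \sharp \bigl(\Z^{\Sigma_{\ext}(1)} \cap (\stackT^{\ext}(\R) \setminus T^{\ext}(\R))\bigr)_{h^{\ext}_{\stackT,\infty} \in \mathrm{D}_B}.
\]
The second summand is meaningful because $\stackT^{\ext}(\Z) \subset \Z^{\Sigma_{\ext}(1)}$, so any element of $\stackT^{\ext}(\Z)$ not lying on the open torus $T^{\ext}(\R) = (\R^{\times})^{\Sigma_{\ext}(1)}$ belongs to the set bounded by Theorem \ref{number_of_points_complementary_of_the_torus_universal_torsor}.

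I would then apply the immediately preceding theorem to the torus part: it provides exactly the desired main term $2^{\rho_{\orb}(X)} \prod_p \w_{\stackT,p}(\stackT^{\ext}(\Z_p)) \mu_{X,\infty}(X(\R)) \nu(\mathrm{D}_1) B^{\langle \w_{X,\orb}^{-1}, u \rangle}$, with an error of order $B^{\langle \w_{X,\orb}^{-1}, u \rangle - (\frac{1}{2} - \varepsilon)\delta}$, where I set $\delta := \min_{\rho \in \Sigma_{\ext}(1)} \langle [D_{\rho}], u \rangle$. For the complementary contribution, Theorem \ref{number_of_points_complementary_of_the_torus_universal_torsor} yields the upper bound $O\bigl(B^{\langle \w_{X,\orb}^{-1}, u \rangle - \delta}\bigr)$.

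To assemble the two bounds via the triangle inequality, I still need to observe that the complement bound is absorbed into the main error term. This reduces to the inequality $B^{\langle \w_{X,\orb}^{-1}, u\rangle - \delta} \leqslant B^{\langle \w_{X,\orb}^{-1}, u \rangle - (\frac{1}{2} - \varepsilon)\delta}$ for $B > 1$, equivalently $(\tfrac{1}{2} - \varepsilon)\delta \leqslant \delta$, which follows from $\delta > 0$ and $\varepsilon \in \,]0, \tfrac{1}{2}[\,$. Positivity of $\delta$ comes from the choice of $u$ in the open dual cone $(\coneorb^{\vee})^{\circ}$ combined with Proposition \ref{proposition_description_cone_orbifold_effective_divisor_toric_stack}, which exhibits each $[D_{\rho}]$ for $\rho \in \Sigma_{\ext}(1)$ as a nonzero generator of $\coneorb$; hence $\langle [D_{\rho}], u \rangle > 0$ for every $\rho$.

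There is no genuine obstacle here: all the hard work (the Davenport-type count on lattice points, the Möbius inversion, the tail estimate, and the boundary bound) has already been carried out in the preceding theorem and in Theorem \ref{number_of_points_complementary_of_the_torus_universal_torsor}. The corollary is essentially a packaging step that shows the passage from counting on the open torus $T^{\ext}(\R)$ to counting on the full extended universal torsor $\stackT^{\ext}(\Z)$ does not degrade the quality of the error term.
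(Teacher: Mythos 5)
Your proof is correct and follows exactly the paper's route: the paper obtains the corollary by combining the theorem immediately preceding it (for the count on the open torus) with Theorem \ref{number_of_points_complementary_of_the_torus_universal_torsor} (for the complement), absorbing the latter's error $O(B^{\langle \w_{X,\orb}^{-1},u\rangle - \delta})$ into the weaker error $O(B^{\langle \w_{X,\orb}^{-1},u\rangle - (\frac{1}{2}-\varepsilon)\delta})$ since $\delta = \min_{\rho}\langle [D_{\rho}],u\rangle > 0$. Your write-up merely makes explicit the decomposition and the absorption step that the paper leaves implicit.
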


\subsection{Conclusion}

Finally we can establish the theorem \ref{theorem_manin_peyre_conjecture}:

\begin{theorem}\label{theorem_final_champ_torique}
Let $\epsilon \in ]0,\frac{1}{2}[$. Then we have:
$$ \sharp ( X(\Q) )_{h \in \mathrm{D}_B} \underset{B \rightarrow +\infty}{=} \nu(\mathrm{D}_1 ) . \tau_{\orb}(X) .  B^{\langle \w_{X,\orb}^{-1},u \rangle} \left( 1 + O\left(B^{- ( \frac{1}{2} - \varepsilon).\min\limits_{\rho \in \Sigma_{\ext}(1)} \langle [D_{\rho}] , u \rangle} \right) \right)$$where $\nu$ is the measure over $\Pic_{\orb}(X)^{\vee}_{\R}$ defined as in \ref{mesure_picard_group}.
\end{theorem}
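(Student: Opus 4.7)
The plan is to deduce the theorem from the machinery assembled above: the extended universal torsor parametrization, the Möbius inversion, and Davenport's lattice-point counting. I would start by applying Proposition~\ref{prop_first_form_extended_parametrization} and Theorem~\ref{theorem_description_of_the_extended_parmetrization}, together with the fact that $h(P) = h^{\ext}_{\stackT,\infty}(y)$ for any lift $y$, to reduce the rational-point count on the stack to a lattice-point count on the torsor:
\[
\sharp X(\Q)_{h\in\mathrm{D}_B} = \tfrac{\sharp G(\Z)}{2^{\rho_{\orb}(X)}}\, \sharp\bigl(\stackT^{\ext}(\Z)\cap T^{\ext}(\Q)\bigr)_{h^{\ext}_{\stackT,\infty}\in\mathrm{D}_B} + E_B,
\]
where $E_B$ absorbs rational points lying outside the open torus and is bounded by Corollary~\ref{number_of_points_complementary_of_the_torus_variety} by $O(B^{\langle\w_{X,\orb}^{-1},u\rangle - \Delta})$ with $\Delta = \min_{\rho\in\Sigma_{\ext}(1)}\langle[D_\rho],u\rangle > 0$; this last is strictly positive since $u$ is in the interior of the dual effective cone and each $[D_\rho]$ lies in $\coneorb$ by Proposition~\ref{proposition_description_cone_orbifold_effective_divisor_toric_stack}.

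To evaluate the torsor count I would remove the coprimality and squarefreeness constraints that cut out $\stackT^{\ext}(\Z) \subset \Z^{\Sigma_{\ext}(1)}$ via the Möbius inversion of Theorem~\ref{theorem_mobius_inversion}, writing
\[
\sharp\bigl(\stackT^{\ext}(\Z)\cap T^{\ext}(\Q)\bigr)_{h^{\ext}_{\stackT,\infty}\in\mathrm{D}_B} = \sum_d \mu(d)\, N_d(B).
\]
Davenport's lattice-point estimate (Proposition~\ref{bhargava_geometrie_nombre}) combined with the projection bounds for $\mathcal{D}_B$ in Theorem~\ref{theorem_crucial_majoration_estimation_number} provides the uniform asymptotic $N_d(B) = \Vol(\mathcal{D}_B)/\Pi(d) + O(B^{\langle\w_{X,\orb}^{-1},u\rangle-\Delta})$ of Theorem~\ref{estimation_N_d(B)}. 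I would split the $d$-sum at the threshold $\Pi(d) = B^{\Delta}$: below this level I apply the Davenport estimate and use Corollary~\ref{majoration_fonction_de_mobius}(1) to bound $\sum_{\Pi(d)\leqslant B^\Delta}|\mu(d)| \ll B^{(1/2+\varepsilon)\Delta}$; above it I use the trivial bound of Proposition~\ref{upper_bound_N_d(B)} together with Corollary~\ref{majoration_fonction_de_mobius}(2) to bound $\sum_{\Pi(d)\geqslant B^\Delta}|\mu(d)|/\Pi(d) \ll B^{-(1/2-\varepsilon)\Delta}$. Inserting the volume formula $\Vol(\mathcal{D}_1) = 2^{\rho_{\orb}(X)}\mu_{X,\infty}(X(\R))\,\nu(\mathrm{D}_1)$ from Proposition~\ref{analysis_measure_univ_torsor_at_infty} and the identity $\sum_d \mu(d)/\Pi(d) = \prod_p \w_{\stackT,p}(\stackT^{\ext}(\Z_p))$ from Theorem~\ref{value_global_mesure_mobius_fonction} produces Corollary~\ref{conclusion_torsor_univ}, a clean asymptotic on the torsor with error $O(B^{-(1/2-\varepsilon)\Delta})$ relative to the main term.

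The final step is to convert $\prod_p \w_{\stackT,p}(\stackT^{\ext}(\Z_p))$ into $\tau_{\orb}(X)$. By Proposition~\ref{proposition_finite_place_quotient_measure_over_X} each local factor equals $(1-1/p)^{\rho_{\orb}(X)}\mu_{X,p}(X(\Q_p))$, and Theorem~\ref{theorem_tamagawa_mesure_toric_stack} identifies $\mu_{X,p}(X(\Q_p))$ with the groupoid limit $\lim_n \sharp\sheafI_\mu X(\Z/p^n\Z)/p^{n\dim X}$; matching against Definition~\ref{definition_orbifold_tamagawa_number} collapses the leading constant to $\tau_{\orb}(X)\,\nu(\mathrm{D}_1)$. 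The main obstacle I anticipate is the coherent bookkeeping rather than any deep new idea: one must choose the truncation $\Pi(d) = B^\Delta$ so that the two halves of the Möbius split combine into the single clean exponent $(\tfrac{1}{2}-\varepsilon)\Delta$ advertised in the theorem, and one must carefully track the reduction factor $\sharp G(\Z)/2^{\rho_{\orb}(X)}$ coming from the $T_{\NS,\orb}(\Z)$-orbits on generic integer points of $T^{\ext}(\Z)$ (whose stabilizers are generically $G(\Z)$), so that all numerical constants line up exactly with the Tamagawa number.
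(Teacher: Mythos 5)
Your proposal is correct and follows essentially the same route as the paper's own proof, which consists of little more than citing Corollary~\ref{conclusion_torsor_univ} together with the Tamagawa-number identification from Definition~\ref{definition_orbifold_tamagawa_number}. What you have done is unfold the proof of that corollary inline (the Möbius inversion, the Davenport estimate via Theorem~\ref{estimation_N_d(B)}, the split at $\Pi(d)=B^{\Delta}$ using Corollary~\ref{majoration_fonction_de_mobius}, and the volume formula from Proposition~\ref{analysis_measure_univ_torsor_at_infty}), which is exactly how the paper derives it; no step is missing and no shortcut or alternate lemma is invoked.
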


\begin{proof}
      Let us recall that $\sharp X(\Q)_{h \in \mathrm{D}_B} = \frac{\sharp G (\Z)}{2^{\rho_{\orb}(X)}} \sharp \left( \stackT^{\ext}(\Z) \right)_{h^{\ext}_{\stackT,\infty} \in \mathrm{D}_B}$. Applying corollary \ref{conclusion_torsor_univ}, we get that there exists a constant $C > 0$ such that:
      \begin{align*}
          &\left| (\sharp X(\Q)_{h \in \mathrm{D}_B}  - \frac{1}{\sharp G (\Z)} .\left( \prod\limits_p \w_{\stackT,p}(\stackT^{\ext}(\Z_p)) \right) . \mu_{X,\infty}(X(\R)). \nu(\mathrm{D}_1) B^{ \langle \w_{X,\orb}^{-1} , u \rangle } \right| \\
          & \leqslant C . \frac{B^{\langle \w_{X,\orb}^{-1} , u \rangle}}{B^{(\frac{1}{2} - \varepsilon) \min\limits_{\rho \in \Sigma_{\ext}(1)} \langle [D_{\rho}] , u \rangle }} .
      \end{align*}
       Moreover by proposition \ref{proposition_finite_place_quotient_measure_over_X} and definition \ref{definition_orbifold_tamagawa_number}, we get: $$ \frac{1}{\sharp G (\Z)} . \left( \prod\limits_p \w_{\stackT,p}(\stackT^{\ext}(\Z_p)) \right) . \mu_{X,\infty}(X(\R)) = \tau_{\orb}(X)$$hence we have the result stated.
\end{proof}

\newpage

\appendix

\section{Complements on toric stacks}

In this appendix, we provide the details of a technical lemma used in the final argument of the proof of Proposition~\ref{prop_age_divisor_associated_to_an_edge} and in the proof of Theorem~\ref{theorem_computation_orbifold_picard_group_toric_stack_appendix}. In a second part, we give a sufficient condition for the $\mathcal{C}$-extended Picard group (see Definition~\ref{definition_orbifold_picard_group}) $\Pic_{\mathcal{C}}(X)$ to be torsion-free. Finally, we present a detailed proof of Theorem~\ref{theorem_action_orbi_neron_severi_torus}, making explicit why our conventions for defining the orbifold Picard group (see Definition~\ref{definition_orbifold_picard_group}) differ from those of \cite[Definition 15]{Coates_Corti_Iritani_Tseng_miror_theorem_toric_stack}.

\subsection{A proof of diagram chasing}

Suppose we are given a short exact sequence
\begin{equation}\label{equation_appendix_a_lemma_technical}
 0 \longrightarrow \Z^m \xrightarrow{f} B \xrightarrow{\pi} C \longrightarrow 0,
\end{equation}
where $B$ is a finitely generated abelian group and $C$ is a finite abelian group. We aim to describe the morphism
\[
\Z^m \longrightarrow \Ext^1(C,\Z)
\]obtained by applying the functor $\RHom(-,\Z)$ to the above exact sequence. To this end, recall that there is a natural isomorphism
\[
\Hom(C,\Q/\Z) \xrightarrow{\ \sim\ } \Ext^1(C,\Z),
\]
obtained by applying the functor $\RHom(C,-)$ to the short exact sequence
\[
0 \longrightarrow \Z \longrightarrow \Q \longrightarrow \Q/\Z \longrightarrow 0.
\]We may now state the following lemma:

\begin{lemma}\label{lemma_diagramme_chasing_appendix}
The map
\[
\Z^{m} \longrightarrow \Ext^1(C,\Z)
\]
is given, via the canonical isomorphism
\[
\Hom(C,\Q/\Z) \simeq \Ext^1(C,\Z),
\]
by the morphism which sends $x \in \Z^m$ to the element of $\Hom(C,\Q/\Z)$ defined as follows:
\[
c \longmapsto \left\langle (f_{\Q}^*)^{-1} (x), b_c \right\rangle \bmod \Z 
\]where $c \mapsto b_c$ is a set-theoretic section of $\pi$. The definition is independent of the choice of the lift.
\end{lemma}

\begin{proof}
Using the exact sequence of equation \ref{equation_appendix_a_lemma_technical} together with the exact sequence
\[
0 \rightarrow \Z \rightarrow \Q \rightarrow \Q/\Z \rightarrow 0
\]
and the functoriality of the functor $\mathbf{RHom}(-,-)$, we obtain the following commutative diagram:

\begin{center}
\begin{tikzcd}
            &                                                   &                                                           & 0 \arrow[d]                                                 &                                \\
            &                                                   &                                                           & {\mathrm{Hom}(C,\Q/\Z)} \arrow[d]                           &                                \\
0 \arrow[r] & {\mathrm{Hom}(B,\Z)} \arrow[r] \arrow[d, "a"]     & {\mathrm{Hom}(B,\Q)} \arrow[r, "p"] \arrow[d, "f_{\Q}^*"] & {\mathrm{Hom}(B,\Q/\Z)} \arrow[d] \arrow[r, "\mathrm{Res}"] & {\mathrm{Hom}(B_{\tor},\Q/\Z)} \\
0 \arrow[r] & {\mathrm{Hom}(\Z^m,\Z)= \Z^m} \arrow[r] \arrow[d] & {\mathrm{Hom}(\Z^m,\Q)} \arrow[r]                         & {\mathrm{Hom}(\Z^m,\Q/\Z)} \arrow[r]                        & 0                              \\
            & {\mathrm{Ext}^1(C,\Z)} \arrow[d]                  &                                                           &                                                             &                                \\
            & {\mathrm{Ext}^1(B,\Z)} \arrow[d]                  &                                                           &                                                             &                                \\
            & 0                                                 &                                                           &                                                             &                               
\end{tikzcd}
\end{center}If we replace $\Hom(B,\Q/\Z)$ by $\mathrm{Im}(p)$, the induced map
\[
\mathrm{Im}(p) \xrightarrow{c} \Hom(\Z^m,\Q/\Z)
\]
has kernel equal to the kernel of the natural map
\[
\Hom(C,\Q/\Z) \longrightarrow \Hom(B_{\tor},\Q/\Z).
\]
We may therefore write the following commutative diagram:
\begin{center}
\begin{tikzcd}
0 \arrow[r] & {\mathrm{Hom}(B,\Z)} \arrow[r] \arrow[d, "a"]               & {\mathrm{Hom}(B,\Q)} \arrow[r] \arrow[d, "f_{\Q}^*"] & \mathrm{Im}(p) \arrow[d, "c"] \arrow[r]        & 0 \\
0 \arrow[r] & {\mathrm{Hom}(\Z^m,\Z)= \Z^m} \arrow[r] & {\mathrm{Hom}(\Z^m,\Q)} \arrow[r]                         & {\mathrm{Hom}(\Z^m,\Q/\Z)} \arrow[r] & 0
\end{tikzcd}   
\end{center}to which the snake lemma can be applied. The snake lemma implies that the map
$$\Z^m \rightarrow \coker(a) = \ker\left(\Ext^1(C,\Z) \rightarrow \Ext^1(B_{\tor},\Z) \right) \hookrightarrow  \Ext^1(C,\Z) $$is given by the map
\[
\Z^m \longrightarrow \ker(c) =
\ker\!\left(
\Hom(C,\Q/\Z) \longrightarrow \Hom(B_{\tor},\Q/\Z)
\right)
\]
which sends $x \in \Z^{m}$ to the morphism
\[
c \in C \longmapsto \left\langle (f_{\Q}^*)^{-1} (x), b_c \right\rangle \bmod \Z,
\]
where $c \mapsto b_c \in B$ is any set-theoretic section of $\pi$. By \cite[Addendum 1.3.3]{weibel1994homological}, the isomorphism
\[
\ker(c) \longrightarrow \coker(a)
\]
is the restriction of the isomorphism
\[
\Hom(C,\Q/\Z) \longrightarrow \Ext^1(C,\Z).
\]
\end{proof}

We recall that the toric stack $X$ (see Definition \ref{definition_stacky_fan_toric_stack}) is defined from a stacky fan given by a triple
\[
\mathbf{\Sigma} = (\Sigma, \Z^{\Sigma(1)} \xrightarrow{\beta} N),
\]
where $N$ is a finitely generated abelian group and $\Sigma$ is a rational complete simplicial fan in $N_{\Q} = N \otimes_{\Z} \Q$ with set of rays $\Sigma(1)$. We assume that, denoting by $\overline{\beta}$ the composition of $\beta$ with the natural map $N \rightarrow N_{\Q}$, one has $\overline{\beta}(e_{\rho}) \in \rho$ for all $\rho \in \Sigma(1)$. Let $\sigma \in \Sigma_{\max}$ be a maximal cone, and use the notation of \ref{notation_stacky_fan_beta}. In particular, for $\rho \in \Sigma(1)$, we set $b_{\rho} = \beta(e_{\rho})$, and the morphism
\[
\beta_{\sigma} : \Z^{\sigma(1)} \longrightarrow N
\]
is the restriction of $\beta$ to $\Z^{\sigma(1)}$. We now give the argument that completes the proof of Proposition~\ref{prop_age_divisor_associated_to_an_edge}. For this purpose, we determine the natural map
\[
\Z^{\sigma(1)} \longrightarrow \Ext^1(N(\sigma),\Z)
\]
induced by applying the functor $\mathbf{RHom}(-,\Z)$ to the exact sequence
\begin{equation}\label{equation_exact_sequence_proof_proposition}
0 \longrightarrow \Z^{\sigma(1)} \xrightarrow{\beta_{\sigma}} N \longrightarrow N(\sigma) \longrightarrow 0   
\end{equation}where $N(\sigma)$ is finite by our assumption on $\beta$.

\begin{prop}\label{proposition_diagramme_chasing_appendix}
    The map $$\Z^{\sigma(1)} \rightarrow \Ext^1(N(\sigma),\Z) $$sends a vector of the canonical basis $e'_{\rho} \in \Z^{\sigma(1)}$ to the morphism $$n \in N(\sigma) \longmapsto \langle b_{\rho}^* , n \rangle \mod \Z $$where $(b_{\rho}^*)_{\rho \in \sigma(1)}$ is the dual basis of the basis $(\overline{b_{\rho}})_{\rho \in \sigma(1)}$ of $N_{\Q}$.
\end{prop}

\begin{proof}
It suffices to apply Lemma~\ref{lemma_diagramme_chasing_appendix} to the exact sequence \eqref{equation_exact_sequence_proof_proposition}. To conclude, we see that
\[
(f_{\Q}^*)^{-1}(e'_{\rho}) = b_{\rho}^*,
\]
where $(b_{\rho}^*)_{\rho \in \sigma(1)}$ denotes the dual basis of the basis $(\overline{b_{\rho}})_{\rho \in \sigma(1)}$ of $N_{\Q}$.
\end{proof}

\subsection{Property of the extended Picard group}

Let $\mathcal{C} \subset \twistsector$. We establish a criterion to determine when the extended Picard group $\Pic_{\mathcal{C}}(X)$ (see Definition \ref{definition_orbifold_picard_group}) is torsion free. We have the following natural map:
\begin{align*}
\Pic_{\mathcal{C}}(X) \longrightarrow &\Pic(X) / \Pic(X)_{\tor} \times \Hom(\mathcal{C},\Q) \\
 (L,\varphi) \longmapsto &\left( \overline{L}  , \varphi \right) \ .
\end{align*}

\begin{theorem}\label{appendix_a_theorem_freeness_orbifold_picard_group}
    We identify $\sector$ with $\Boxe(\Sigma)$. If there exists $\sigma \in \Sigma_{\max}$ such that the image of $\mathcal{C} \cap \Boxe(\sigma)$ in $N(\sigma)$ is a generating set, the previous map is injective and in particular, $$\Pic_{\mathcal{C}}(X)$$ is torsion free.
\end{theorem}

\begin{proof}
Let $L \in \Pic(X)_{\tor}$ such that for any $\stackY \in \mathcal{C}$, $\age(\stackY,L) = 0$. We keep the notation introduced after the diagram
\eqref{equation_morphism_triangle_distingue_associe_open_immersion}. Let
$p_{\sigma}(L)$ denote the image of $L$ under the map
\[
p_{\sigma} : \Pic(X) \longrightarrow \Ext^1(N(\sigma),\Z).
\]

By Proposition~\ref{prop_age_divisor_associated_to_an_edge}, the pairing
between the element $p_{\sigma}(L)$ and $b \in N(\sigma)$ is given by
\[
\langle p_{\sigma}(L), b \rangle = -\age(\stackS_b,L),
\]
where we identify $b \in N(\sigma)$ with its unique lift in $\Boxe(\sigma)$,
which we also denote by $b$, and we write $\stackS_b$ for the sector
associated with $b$ as in Theorem~\ref{theorem_description_sector_box_element}.

Thus, using the assumption that the image of
$\mathcal{C} \cap \Boxe(\sigma)$ in $N(\sigma)$ is a generating set,
together with the fact that the pairing between $N(\sigma)$ and
$\Ext^1(N(\sigma),\Z)$ is perfect, we obtain that $p_{\sigma}(L)=0$. Now note that we have the following morphism of exact triangles:
\begin{center}
\begin{tikzcd}
\ker(\beta) \arrow[r, "0"] & \coker(\beta) \arrow[r]       & \coneform(\beta)                    \\
0 \arrow[u] \arrow[r]      & N(\sigma) \arrow[r] \arrow[u] & \coneform(\beta_{\sigma}) \arrow[u]
\end{tikzcd}.
\end{center}

Applying the functor $\RHom(-,\Z)$ we get the following commutative diagram where the top horizontal map is exact:
\begin{center}
\begin{tikzcd}
0 \arrow[r] & {\mathbf{R^0Hom}(\ker(\beta),\Z)} \arrow[r] & {\mathbf{R^1 Hom}(\coneform(\beta),\Z)} \arrow[r] \arrow[d, "p_{\sigma}"] & {\mathbf{R^1Hom}(\coker(\beta),\Z)} \arrow[d] \\
            &                                             & {\mathbf{R^1 Hom}(\coneform(\beta_{\sigma}),\Z)} \arrow[r,equal]                & {\Ext^1(N(\sigma),\Z)}                       
\end{tikzcd}.
\end{center}Now because the map $N(\sigma) \rightarrow \coker(\beta)$ is surjective with kernel a finite abelian group, we get that the map $\mathbf{R^1Hom}(\coker(\beta),\Z) \rightarrow \Ext^1(N(\sigma),\Z)$ is injective. Hence the kernel of $p_{\sigma}$ is $\mathbf{R^0Hom}(\ker(\beta),\Z)$ a torsion free abelian group. So as $L$ is torsion and in the kernel of $p_{\sigma}$, we get that $L = 0$.
\end{proof}

\begin{cor}\label{corollaire_orbifold_picard_group_torsion_free}
    $\Pic_{\orb}(X)$ is torsion free.
\end{cor}

\subsection{Computation of the orbifold Picard group of a toric stack}

In this paragraph, we present a proof of Theorem~\ref{theorem_action_orbi_neron_severi_torus}. This result is due to Coates, Corti, Iritani, and Tseng (see \cite[Proposition 21]{Coates_Corti_Iritani_Tseng_miror_theorem_toric_stack}), and the proof we give is inspired by theirs, which was communicated to us by Iritani. However, since the definition of the orbifold Picard group used in this text (see Definition \ref{definition_orbifold_picard_group}) differs from that given in \cite[Definition 15]{Coates_Corti_Iritani_Tseng_miror_theorem_toric_stack}, we provide a complete proof in order to explain our choice of definition. We recall the result, using again the notations from \ref{notation_orbifold_T_invariant_divisor}.

\begin{theorem}\label{theorem_computation_orbifold_picard_group_toric_stack_appendix}
    We have an isomorphism:
    $$ \Pic_{\mathcal{C}}(X) \simeq \mathbf{R^1 Hom}(\coneform(\beta_{\mathcal{C}}),\Z).$$Moreover, the corresponding map $$ \Z^{\Sigma(1)} \oplus \Z^{\mathcal{C} } \xrightarrow{\pi} \Pic_{\mathcal{C}}(X)$$is obtained by mapping $e_{\rho}$ to $[D_{\rho}]_{0}$ for $\rho \in \Sigma(1)$ and $e_{\stackS}$ to $[\stackS]$ for $\stackS \in \mathcal{C}$.
\end{theorem}

We begin by recalling the construction from \cite{Coates_Corti_Iritani_Tseng_miror_theorem_toric_stack}  of the map
$$\mathbf{R^1 Hom}(\coneform(\beta_{\mathcal{C}}),\Z) \rightarrow \Pic(X) \oplus \Q^{\mathcal{C}}$$
which allows one to identify $\mathbf{R^1 Hom}(\coneform(\beta_{\mathcal{C}}),\Z)$ with $\Pic_{\mathcal C}(X)$. The exact sequences associated with $\beta$ and $\beta_{\mathcal{C}}$ fit into the following commutative diagram:

\begin{equation}\label{equation_diagram_computation_orbifold_picard_group_appendix}
\begin{tikzcd}
0 \arrow[r] 
& \ker(\beta) \arrow[r] \arrow[d]
& \ker(\beta_{\mathcal{C}}) \arrow[r] \arrow[d]
& \Z^{\mathcal{C}} \arrow[d, equal] \arrow[r]
& 0 \\
0 \arrow[r]
& \Z^{\Sigma(1)} \arrow[r] \arrow[d, "\beta"']
& \Z^{\Sigma(1) \cup \mathcal{C}} \arrow[r] \arrow[d, "\beta_{\mathcal{C}}"']
& \Z^{\mathcal{C}} \arrow[r] \arrow[d]
& 0 \\
0 \arrow[r]
& N \arrow[r, equal]
& N \arrow[r]
& 0
\end{tikzcd}
\end{equation}with exact rows and columns. We consider a splitting of the first row over the rational numbers given by the section
\[
\mu \colon \Q^{\mathcal{C}} \longrightarrow \ker(\beta_{\mathcal{C}})\otimes\Q
\]
defined by sending the canonical basis vector associated with $\stackS \in \mathcal{C}$ to
\[
e_{\stackS} - \psi_{\stackS}
\;\in\;
\ker(\beta_{\mathcal{C}})\otimes\Q
\subset
\Q^{\Sigma(1)\cup\mathcal{C}},
\]
where $\psi_{\stackS} = \sum\limits_{\rho \in \sigma_\stackS(1)} q_{\rho}(\stackS) e_{\rho} $ with $q_{\rho}(\stackS) = \age_{\num}(\stackS,-[D_{\rho}])$. This map is well defined by
Proposition~\ref{prop_age_divisor_associated_to_an_edge}. Taking duals, $\mu$ induces a map
\begin{equation}\label{equation_definition_application_appendix_a_3}
\begin{aligned}
f :
\mathbf{R^1 Hom}(\coneform(\beta_{\mathcal{C}}),\Z)
\rightarrow
\mathbf{R^1 Hom}(\coneform(\beta_{\mathcal{C}}),\Z)
=
&\ker(\beta_{\mathcal{C}})^{\vee}_{\Q}
\xrightarrow{\ \mu^*\ }
\Q^{\mathcal{C}} \\
&\varphi \longmapsto \left( \varphi(e_{\stackS} - \psi_{\stackS}) \right)_{\stackS \in \mathcal{C}}
\end{aligned}
\end{equation}
Together with the canonical morphism
\[
\mathbf{R^1 Hom}(\coneform(\beta_{\mathcal{C}}),\Z)
\rightarrow
\mathbf{R^1 Hom}(\coneform(\beta),\Z)= \Pic(X),
\]
we obtain a map
\begin{equation}\label{equation_definition_morphism_appendix}
\mathbf{R^1 Hom}(\coneform(\beta_{\mathcal{C}}),\Z) \rightarrow \Pic(X) \oplus \Q^{\mathcal{C}} \, .    
\end{equation}

In order to describe this map, we fix a resolution of $N$
\[
0 \rightarrow F_1 \xrightarrow{\iota} F_0 \rightarrow N \rightarrow 0,
\]
where the $F_i$ are finitely generated free abelian groups. We also fix a lift
\[
\widehat{\beta} : \Z^{\Sigma(1)} \rightarrow F_0
\]
of $\beta$, and similarly a lift
\[
\widehat{\beta}_{\mathcal{C}} : \Z^{\Sigma(1)\cup\mathcal{C}} \rightarrow F_0
\]
of $\beta_{\mathcal{C}}$ coinciding with $\widehat{\beta}$ on $\Z^{\Sigma(1)}$. We set, for any $\rho \in \Sigma(1)$, $\widehat{\beta}(e_{\rho}) = \widehat{b}_{\rho}$, and for any $\stackS \in \mathcal{C}$, $\widehat{\beta}_{\mathcal{C}}(e_{\stackS}) = \widehat{b}_{\stackS}$.

With these notations, we have the following isomorphisms in $\mathcal{D}(\mathrm{Ab})$ (see \cite[Section 2]{borisov_toric_stack}):
\[
\coneform(\beta) \simeq \left[ \Z^{\Sigma(1)} \oplus F_1 \xrightarrow{\widehat{\beta}\oplus \iota} F_0 \right] = \coneform(\widehat{\beta}\oplus \iota),
\]
and
\[
\coneform(\beta_{\mathcal{C}}) \simeq \left[ \Z^{\Sigma(1)\cup\mathcal{C}} \oplus F_1 \xrightarrow{\widehat{\beta}_\mathcal{C}\oplus \iota} F_0 \right] = \coneform(\widehat{\beta}_\mathcal{C}\oplus \iota).
\]

The idea of the proof of \cite[Proposition 21]{Coates_Corti_Iritani_Tseng_miror_theorem_toric_stack} is to use this description to prove Theorem~\ref{theorem_computation_orbifold_picard_group_toric_stack_appendix}. Indeed, we use that the maps induced by the functor $\mathbf{RHom}(-,\Z)$,
\[
\Z^{\Sigma(1)} \oplus F_1 \rightarrow \mathbf{R^1Hom}(\coneform(\widehat{\beta}\oplus \iota),\Z),
\]
which we denote by $(a,b) \mapsto [a,b]$, and
\[
\Z^{\Sigma(1)\cup\mathcal{C}} \oplus F_1 \rightarrow \mathbf{R^1 Hom}(\coneform(\widehat{\beta}_\mathcal{C}\oplus \iota),\Z),
\]
which we denote by $(\Tilde{a},b) \mapsto [\Tilde{a},b]$, are surjective.

We can now state the following lemma, which provides an explicit description of the morphism defined in \eqref{equation_definition_morphism_appendix}.

\begin{lemma}\label{lemma_appendix_a_description_map_isomorphism}
Under the natural identifications
\[
\mathbf{R^1 Hom}(\coneform(\widehat{\beta}_\mathcal{C}\oplus \iota),\Z) \simeq \mathbf{R^1 Hom}(\coneform(\beta_{\mathcal{C}}),\Z)
\]
and
\[
\mathbf{R^1 Hom}(\coneform(\widehat{\beta}\oplus \iota),\Z) \simeq \mathbf{R^1 Hom}(\coneform(\beta),\Z),
\]
the map
\[
\mathbf{R^1 Hom}(\coneform(\beta_{\mathcal{C}}),\Z) \longrightarrow \Pic(X) \oplus \Q^{\mathcal{C}}
\]
is given by
\[
[\Tilde{a},b] \longmapsto \left( [a,b] , \left( \Tilde{a}_{\stackS} - \langle a , \psi_{\stackS} \rangle - \langle b , x_{\stackS} \rangle \right)_{\stackS \in \mathcal{C}} \right),
\]
where $a \in \Z^{\Sigma(1)}$ is the projection of $\Tilde{a} \in \Z^{\Sigma(1) \cup \mathcal{C}}$, and $x_{\stackS} = \widehat{\beta}_\mathcal{C}(e_{\stackS} - \psi_{\stackS}) \in (F_{1})_{ \Q}$.
\end{lemma}

\begin{proof}
First, we have the following commutative diagram in $\mathcal{D}(\mathrm{Ab})$:
\begin{center}
\begin{tikzcd}
\coneform(\beta) \arrow[d] \arrow[r]     & \coneform(\widehat{\beta}\oplus \iota) \arrow[d]     \\
\coneform(\beta_{\mathcal{C}}) \arrow[r] & \coneform(\widehat{\beta}_{\mathcal{C}}\oplus \iota)
\end{tikzcd}
\end{center}
where the horizontal arrows are isomorphisms. Applying the functor $\mathbf{RHom}(-,\Z)$ and using the identifications given in the statement of the lemma, we obtain that the map
\[
\mathbf{R^1 Hom}(\coneform(\beta_{\mathcal{C}}),\Z)
\longrightarrow
\mathbf{R^1 Hom}(\coneform(\beta),\Z) = \Pic(X)
\]
is given by
\[
[\Tilde{a},b] \longmapsto [a,b],
\]
where $a \in \Z^{\Sigma(1)}$ is the projection of $\Tilde{a} \in \Z^{\Sigma(1) \cup \mathcal{C}}$.

Next, note that the isomorphism
\[
\ker(\beta_{\mathcal{C}}) \longrightarrow \ker(\widehat{\beta}_\mathcal{C}\oplus \iota)
\]
is given by $x \mapsto (x,-\widehat{\beta}_\mathcal{C}(x))$, and set $x_{\stackS} = \widehat{\beta}_\mathcal{C}(e_{\stackS} - \psi_{\stackS}) \in (F_{1})_{\Q}$. By the definition of the map
\[
\mathbf{R^1 Hom}(\coneform(\beta_{\mathcal{C}}),\Z) \longrightarrow \Q^{\mathcal{C}},
\]
see \eqref{equation_definition_application_appendix_a_3}, we obtain, under our identification, the description
\[
[\Tilde{a},b] \longmapsto \left( \left\langle (\Tilde{a},b) , e_{\stackS} - \psi_{\stackS} - x_{\stackS} \right\rangle \right)_{\stackS \in \mathcal{C}}.
\]
This concludes the proof.
\end{proof}

In particular, using this description, we deduce the following proposition:

\begin{prop}
For $\rho \in \Sigma(1)$ and $\stackS \in \mathcal{C}$, the images of the canonical basis vectors $e_{\rho}$ and $e_{\stackS}$ of $\Z^{\Sigma(1) \cup \mathcal C}$ under the composition
\[
\mathbf{R^1 Hom}(\coneform(\beta_{\mathcal{C}}),\Z) \longrightarrow \Pic(X) \oplus \Q^{\mathcal{C}}
\]
are respectively $[D_{\rho}]_0$ and $[\stackS]$ for $\rho \in \Sigma(1)$ and $\stackS \in \mathcal{C}$.
\end{prop}

\begin{proof}
By Lemma~\ref{lemma_appendix_a_description_map_isomorphism}, for $\rho \in \Sigma(1)$, the image of $e_{\rho}$ is given by
\[
\left( [e_{\rho},0] , \left(- q_{\rho}(\stackS) \right)_{\stackS \in \mathcal{C}} \right)
= \left( [D_{\rho}] , \left(- \age_{\num}(\stackS,-[D_{\rho}]) \right)_{\stackS \in \mathcal{C}} \right)
= [D_{\rho}]_0,
\]
and for $\stackS \in \mathcal{C}$, the image of $e_{\stackS}$ is given by
\[
\left( [0,0] , \left(\delta_{\stackS}(\stackS') \right)_{\stackS' \in \mathcal{C}} \right)
= [\stackS].
\]
\end{proof}

We now use our resolution of $N$ to compute the age pairing.

\begin{lemma}\label{lemma_appendix_computation_age_pairing}
For any $(a,b) \in \Z^{\Sigma(1)} \oplus F_1$ and any $\stackS \in \mathcal{C}$, we have
\[
\age\left( \stackS , [a,b] \right) = - \langle a , \psi_{\stackS} \rangle - \langle b , x_{\stackS} \rangle \bmod{\Z}.
\]
\end{lemma}

\begin{remark}
This is where our result differs from the proof of \cite[Proposition 21]{Coates_Corti_Iritani_Tseng_miror_theorem_toric_stack}.
\end{remark}

For $\sigma \in \Sigma_{\max}$, we denote by $\widehat{\beta}_{\sigma}$ the restriction of $\widehat{\beta}$ to $\Z^{\sigma(1)} \oplus F_1$.

\begin{proof}
First, we have the following commutative diagram in $\mathcal{D}(\mathrm{Ab})$:
\begin{center}
\begin{tikzcd}
N(\sigma) \simeq \coneform(\beta_{\sigma}) \arrow[d] \arrow[r]     & \coneform(\widehat{\beta}_{\sigma}\oplus \iota) \arrow[d]     \\
\coneform(\beta) \arrow[r] & \coneform(\widehat{\beta}\oplus \iota)
\end{tikzcd}
\end{center}
where the horizontal arrows are isomorphisms. Applying the functor $\mathbf{RHom}(-,\Z)$ and arguing as in Proposition~\ref{prop_expression_age_sector_toric_stack}, the composition of the map
\[
\mathbf{R^1 Hom}(\coneform(\widehat{\beta}\oplus \iota),\Z) \simeq \Pic(X) \longrightarrow \Ext^1(N(\sigma),\Z)
\]
with the morphism
\[
\Ext^1(N(\sigma),\Z) \longrightarrow \frac{1}{r_{\stackS}} \Z / \Z
\]
associated with $\stackS \in \twistsector \cap \Boxe(\sigma)$ corresponds to the map
\[
[a,b] \longmapsto - \age(\stackS,[a,b]).
\]

We now determine this map by another method. Applying the functor $\mathbf{RHom}(-,\Z)$, we obtain the following commutative diagram:
\begin{center}
\begin{tikzcd}
\Z^{\Sigma(1)} \oplus F_1 \arrow[d] \arrow[r] & \Pic(X) \arrow[d]      \\
\Z^{\sigma(1)} \oplus F_1 \arrow[r]           & {\Ext^1(N(\sigma),\Z)}
\end{tikzcd}
\end{center}
where the bottom horizontal map will be computed. Indeed, it is enough to apply Lemma~\ref{lemma_diagramme_chasing_appendix} to the exact sequence
\[
0 \rightarrow \Z^{\sigma(1)} \oplus F_1 \xrightarrow{\widehat{\beta}_{\sigma} \oplus \iota} F_0 \rightarrow N(\sigma) \rightarrow 0,
\]
where the map $F_0 \rightarrow N(\sigma)$ is the composition $F_0 \rightarrow N \rightarrow N(\sigma)$. We then obtain that this map is given by sending $(a_{\sigma},b) \in \Z^{\sigma(1)} \oplus F_1$ to
\[
c \mapsto \left\langle (\widehat{\beta}_{\sigma} \oplus \iota)^*_{\Q}{}^{-1}(a_{\sigma},b), n_c \right\rangle
\]where $c \mapsto n_c$ is a set-theoretic section of the surjection $F_0 \rightarrow N(\sigma)$. For $\stackS \in \mathcal{C} \cap \Boxe(\sigma)$, we denote by $c_{\stackS} \in N(\sigma)$ the corresponding element. Recall that
\[
\widehat{\beta}_{\mathcal{C}}(e_{\stackS}) = \widehat{b}_{\stackS} \in F_0
\]
is a lift of $c_{\stackS}$. Therefore,
\begin{align*}
    \age(\stackS,[a,b]) &= - \left\langle (\widehat{\beta}_{\sigma} \oplus \iota)^*_{\Q}{}^{-1}(a_{\sigma},b), \widehat{b}_{\stackS} \right\rangle \bmod{\Z} \\
    &= - \left\langle (a_{\sigma},b), (\widehat{\beta}_{\sigma} \oplus \iota)_{\Q}^{-1}(\widehat{b}_{\stackS}) \right\rangle \bmod{\Z} \\
    &= - \left\langle (a_{\sigma},b), \psi_{\stackS} + x_{\stackS} \right\rangle \bmod{\Z} \\
    &= - \langle a , \psi_{\stackS} \rangle - \langle b , x_{\stackS} \rangle \bmod{\Z}
\end{align*}where $a_{\sigma} \in \Z^{\sigma(1)}$ is the projection of $a \in \Z^{\Sigma(1)}$.

\end{proof}

By Lemmas~\ref{lemma_appendix_a_description_map_isomorphism} and \ref{lemma_appendix_computation_age_pairing}, we deduce that the map
\[
\mathbf{R^1 Hom}(\coneform(\beta_{\mathcal{C}}),\Z) \longrightarrow \Pic(X) \oplus \Q^{\mathcal{C}}
\]
factors through a map
\begin{equation}\label{equation_appendix_identification_extended_picard_group}
\mathbf{R^1 Hom}(\coneform(\beta_{\mathcal{C}}),\Z) \longrightarrow \Pic_{\mathcal{C}}(X) \, .    
\end{equation}

We can now complete the proof of the theorem by showing that this map is an isomorphism.
\begin{proof}
We begin by proving injectivity. Let $(\Tilde{a},b) \in \Z^{\Sigma(1) \cup \mathcal{C}} \oplus F_1$, and suppose that the image of $[\Tilde{a},b]$ in $\Pic_{\mathcal{C}}(X)$ is zero. By Lemma~\ref{lemma_appendix_a_description_map_isomorphism}, this implies that $[a,b] = 0$, hence there exists $c \in F_0^{\vee}$ such that
\[
(a,b) = (\widehat{\beta} \oplus \iota)^{\vee}(c).
\]
Moreover, for all $\stackS \in \mathcal{C}$, again by Lemma~\ref{lemma_appendix_a_description_map_isomorphism}, we have
\[
a_{\stackS} = \left\langle (a,b) , \psi_{\stackS} + x_{\stackS} \right\rangle 
= \left\langle c , \widehat{\beta}(\psi_{\stackS}) + \iota(x_{\stackS}) \right\rangle 
= \left\langle c , \widehat{\beta}_{\mathcal{C}}(e_{\stackS}) \right\rangle.
\]
It follows that
\[
(\Tilde{a},b) = (\widehat{\beta}_{\mathcal{C}} \oplus \iota)^{\vee}(c),
\]
and therefore $[\Tilde{a},b] = 0$.

We now prove surjectivity. Let $(L,\varphi) \in \Pic_{\mathcal{C}}(X)$. There exists $(a,b) \in \Z^{\Sigma(1)} \oplus F_1$ such that $[a,b] = L$. Then, by Lemma~\ref{lemma_appendix_computation_age_pairing}, for all $\stackS \in \mathcal{C}$, we have
\[
\varphi(\stackS) + \langle (a,b) , (\psi_{\stackS},x_{\stackS}) \rangle \in \Z.
\]
Choose $\Tilde{a} \in \Z^{\Sigma(1)\cup \mathcal{C}}$ such that its restriction to $\Z^{\Sigma(1)}$ is $a$ and
\[
\Tilde{a}(e_{\stackS}) = \varphi(\stackS) + \langle (a,b) , (\psi_{\stackS},x_{\stackS}) \rangle
\]
for all $\stackS \in \mathcal{C}$. Then the image of
\[
[\Tilde{a},b]
\]
in $\Pic_{\mathcal{C}}(X)$ is equal to $(L,\varphi)$.
\end{proof}

%    Text of article.

%    Bibliographies can be prepared with BibTeX using amsplain,
%    amsalpha, or (for "historical" overviews) natbib style.
\bibliographystyle{amsplain}
\bibliography{bibliography}
%    Insert the bibliography data here.

\end{document}